\documentclass[reqno, dvipdfmx]{amsart}
\usepackage{amsthm,amsmath,amssymb,amsthm}
\usepackage[left=3.5cm, right=3.5cm, top=3cm, bottom=3cm, includefoot]{geometry}
\usepackage[all]{xy}
\usepackage{color}
\usepackage{appendix}
\usepackage{tikz}
\usepackage{tikz-cd}
\usepackage{bbm}
\usepackage{mathtools}
% \usepackage{refcheck}

%%%%%%%%%%%%%%%%% Environment %%%%%%%%%%%%%%%%%%%%%%
\newtheoremstyle{mystyle}%   % Name
  {}%                         % Space above
  {}%                         % Space below
  {}%                         % Body font
  {}%                         % Indent amount
  {\bfseries}%                % Theorem head font
  {.}%                        % Punctuation after theorem head
  { }%                        % Space after theorem head, ' ', or \newline
  {}%                         % Theorem head spec (can be left empty, meaning `normal')

\theoremstyle{mystyle}
\theoremstyle{definition}
\newtheorem{theorem}{Theorem}[section]
\newtheorem{proposition}{Proposition}[section]
\newtheorem{lemma}{Lemma}[section]
\newtheorem{corollary}{Corollary}[section]

\newtheorem{example}{Example}[section]
\newtheorem{definition}{Definition}[section]
\newtheorem{remark}{Remark}[section]
\numberwithin{equation}{section}

%%%%%%%%%%%%%%%%%%%%%%%%%%%%%%%%%%%%%%%%%%%%%%%%%%%%%%%%%%%%%%%%%%%%%%%%%%%%%%%%%%%%%%%%%%%%%%%%%%%%%%%%%%%%%%%%%%%%%%%%%%%%%%%%%%%%%%%%%%%%%%%%
\begin{document}
\title[Dynamical relationship between CAR algebras and determinantal point processes]{Dynamical relationship between CAR algebras and determinantal point processes: point processes at finite temperature and stochastically positive KMS systems}
\author[R. Sato]{Ryosuke SATO}
\address{Department of Mathematics, Faculty of Science, Hokkaido University, Kita 10, Nishi 8, Kita-ku, Sapporo, Hokkaido, 060-0810, Japan}
\email{r.sato@math.sci.hokudai.ac.jp}

\begin{abstract}
    The aim of this paper is threefold. Firstly, we develop the author's previous work on the dynamical relationship between determinantal point processes and CAR algebras. Secondly, we present a novel application of the theory of stochastic processes associated with KMS states for CAR algebras and their quasi-free states. Lastly, we propose a unified theory of algebraic constructions and analysis of stationary processes on point configuration spaces with respect to determinantal point processes. As a byproduct, we establish an algebraic derivation of a determinantal formula for space-time correlations of stochastic processes, and we analyze several limiting behaviors of these processes.
\end{abstract}

\maketitle

\allowdisplaybreaks{
%%%%%%%%%%%%%%%%%%%%%%%%%%%%%%%%%%%%%%%%%%%%%%%%%%%%%%%%%%%%%%%%%%%%%%%%%%%%%%%%%%%%%%%%%%%%%%%%%%%%%%%%%%%%%%%%%%%%%%%%%%%%%%%%%%%%%%%%%%%%%%%%
\section{Introduction}
In this paper, we develop the author's previous work \cite{S24} exploring the dynamical relationship between operator algebras and point processes. Let $\mathfrak{X}$ be a locally compact, second-countable, Hausdorff space. A \emph{point configuration} in $\mathfrak{X}$ is a locally finite collection of its points, and we denote by $\mathcal{C}(\mathfrak{X})$ the set of all such configurations. A probability measure on $\mathcal{C}(\mathfrak{X})$ is called a \emph{point process} on $\mathfrak{X}$. \emph{Correlation functions} play a fundamental role in analysis of point processes. In particular, the \emph{$n$-point} correlation function, defined on $\mathfrak{X}^n$, describes the statistical property of $n$ points from a point process. If the $n$-point correlation function can be expressed as the determinant of a so-called \emph{correlation kernel} defined on $\mathfrak{X}\times \mathfrak{X}$ for all $n\geq 1$, then a point process is called a \emph{determinantal} point process (DPP). %DPPs arise in various fields of mathematics and mathematical physics, including random matrix theory, combinatorics, asymptotic representation theory, and statistical mechanics. See \cite{BFBDHRRSW, B11,BG11,BO04, Lyons, Soshnikov} for comprehensive surveys on these topics\footnote{This is so far from the complete list}.

Points in a DPP can be viewed as having fermionic property, that is, they interact through repulsive forces. From the perspective of operator algebras, a \emph{CAR algebra}, generated by \emph{fermionic} creation and annihilation operators, naturally describes the behavior of fermions in quantum statistical mechanics (see e.g. \cite{BR2}). Thus, developing an operator-algebraic approach to DPPs is a quite natural research direction (see \cite{L02, LM07, Olshanski20,Koshida, S24}). Throughout this paper, we always assume that the state space $\mathfrak{X}$ is a countable set. Then, the space of all continuous functions on $\mathcal{C}(\mathfrak{X})$ is embedded into a CAR algebra. Hence, by the Riesz--Markov--Kakutani theorem, an appropriate linear functional on the CAR algebra, referred to as a \emph{state} in operator algebra theory, gives rise to a point process on $\mathfrak{X}$. Furthermore, the determinantal property of the point process can be characterized in terms of an algebraic property of the corresponding state. More precisely, a \emph{quasi-free} state on the CAR algebra produces a DPP.

The aim of the paper is to extend the \emph{static} relationship between DPPs and quasi-free states on CAR algebras to the dynamical setting. In many contexts, stationary processes with respect to DPPs have attracted significant interest. For instance, such stochastic processes provide a mathematical description of the time evolution of random interacting particle systems in statistical mechanics. In random matrix theory, dynamical models of random matrices naturally give rise to stochastic processes on DPPs (see e.g. \cite{Katori13, K20} for expositions on this field). In this paper, we are especially interested in stochastic processes on DPPs with representation-theoretic origin (see \cite{BO06,BO12,BO13, C18, BB19}, etc).

The essential idea of our work relies heavily upon the pioneering work \cite{KL81} by Klein and Landau, who studied stochastic processes associated with KMS (Kubo--Martin--Schwinger) states on operator algebras. KMS states are known to characterize thermal equilibrium states in quantum statistical mechanics (see e.g. \cite{BR2}). Motivated by constructive quantum field theory, many researchers have made efforts to connect two distinct types of time evolutions, one from quantum mechanics and the other from probability theory. More specially, Klein and Landau invented a framework for constructing stochastic processes via analytic continuation of quantum time evolution under KMS states. In this paper, we apply their framework for CAR algebras and quasi-free states. This forms our central approach to construct and analyze stochastic processes that are stationary with respect to DPPs. 

As mentioned above, KMS states characterize thermal equilibrium states in quantum system \emph{at finite temperature}. Thus, we can naturally regard DPPs associated with quasi-free KMS states on a CAR algebra as interacting particle system at finite temperature. More precisely, the fundamental setting in this paper is as follows: Let $H$ be a self-adjoint operator on $\ell^2(\mathfrak{X})$, which plays the role of a Hamiltonian of a quantum system. Then, there exists a DPP $\mathbb{P}_{H; \beta}$ on $\mathfrak{X}$ whose correlation kernel is given by $e^{-\beta H}(1+e^{-\beta H})^{-1}$. Here, $\beta>0$ is an inverse temperature. In the main result of this paper (see Theorem \ref{thm:dynamical_correlation}), under certain conditions, we construct a stochastic process on the configuration space $\mathcal{C}(\mathfrak{X})$ that is stationary with respect to $\mathbb{P}_{H; \beta}$. Moreover, the resulting stochastic process satisfies a determinantal formula for the space-time correlations. The algebraic nature of our construction also allows us to analyze limiting behavior of stochastic processes when $\beta$ tends to $\infty$ (i.e., the zero temperature limit) or $H$ converges to another self-adjoint operator in the strong resolvent sense (see Section \ref{sec:zero_temp_limit}, \ref{sec:LT_by_strong_res_conv}).

Let us compare our works in this paper and the previous paper \cite{S24}. In the previous one, we considered DPPs satisfying the following properties:
\begin{itemize}
    \item The correlation kernel is given as the spectral projection of a self-adjoint operator.
    \item There exists a representation of a (gauge-invariant) CAR algebra on the $L^2$-space of the DPP, and the vector state corresponding to the constant function $\mathbbm{1}$ coincides with the associated quasi-free state.
\end{itemize}

These conditions will be relaxed in this paper. We consider DPPs whose correlation kernels are given by functional calculus of self-adjoint operators, and we examine the limiting behavior of these kernels toward spectral projections. The second condition states that the $L^2$-space of a DPP admits the GNS (Gelfand--Naimark--Segal)-representation associated with a quasi-free state. Although it induces a Markov semigroup on the $L^2$-space of a DPP from a linear semigroup on a CAR algebra, as discussed in \cite{S24}, we clarify that it is \emph{not} essential for constructing stochastic processes. However, it is related to the Markov property of stochastic processes.

\medskip
This paper is organized as follows: In Section \ref{sec:construction}, we discuss an abstract framework for the construction of stochastic processes. While such constructions typically rely on Kolmogorov's extension theorem, we reformulate the approach in terms of function spaces of stochastic processes, making it more adaptable to our algebraic construction. In Section \ref{sec:DPP_CAR}, we review the static relationship between DPPs and CAR algebras. We will also demonstrate that our framework is applicable for the infinite wedge representation of CAR algebras and cylindric Plancherel process in \cite{BB19}. In Section \ref{sec:stoc_proc}, we examine stochastically positive KMS systems, which were introduced by Klein and Landau \cite{KL81}, and we present our central idea for constructing stochastic processes via an operator algebraic method. Section \ref{sec:density_op_fermion_Fock_sp} constitutes the main part of the paper, where we study stochastically positive KMS systems arising from CAR algebras and quasi-free states. These systems yield stochastic processes on configuration space that are stationary with respect to DPPs. In Sections \ref{sec:6} \ref{sec:stoc_from_op}, we explore concrete examples of DPPs related to orthogonal polynomials. Finally, the appendix addresses the notion of \emph{perfectness} for DPPs, introduced by Olshanski \cite{Olshanski20}.

\medskip
The author gratefully acknowledges Professor Makoto Katori for his suggestive comments on the draft of this paper and throughout the entire duration of this project. The author would also like to thank Dr. Shinji Koshida and Professor Syota Esaki for suggesting a deeper exploration of parts of this work. Finally, the author appreciate Professor Vadim Gorin for informing the reference \cite{BR05}. This work was supported by JSPS Research Fellowship for Young Scientists PD (KAKENHI Grand Number 22J00573).

%%%%%%%%%%%%%%%%%%%%%%%%%%%%%%%%%%%%%%%%%%%%%%%%%%%%%%%%%%%%%%%%%%%%%%%%%%%%%%%%%%%%%%%%%%%%%%%%%%%%%%%%%%%%%%%%%%%%%%%%%%%%%%%%%%%%%%%%%%%%%%%%
% \section{Notations}

%%%%%%%%%%%%%%%%%%%%%%%%%%%%%%%%%%%%%%%%%%%%%%%%%%%%%%%%%%%%%%%%%%%%%%%%%%%%%%%%%%%%%%%%%%%%%%%%%%%%%%%%%%%%%%%%%%%%%%%%%%%%%%%%%%%%%%%%%%%%%%%
\section{Abstract construction}\label{sec:construction}
The aim of this section is to rephrase the usual construction of stochastic processes, relying on Kolmogorov's extension theorem, in terms of function spaces. Thus, this section does not contain a new result. Klein and Landau \cite{KL81} discussed such a construction in an operator algebraic setting. We will follow their idea but keep our focus in a probability theoretic setting.

Let $E$ be a compact, second countable space and $\mathcal{B}$ its Borel $\sigma$-algebra. We fix an interval $I\subset \mathbb{R}$ and define $I^n_\leq:=\{(t_1\leq \cdots \leq t_n)\in I^n\}$. Let $\{\mathcal{S}_{f_1, \dots, f_n}\}_{f_1, \dots, f_n\in C(E), n\geq 1}$ be a family of $\mathbb{C}$-valued continuous functions on $I^n_\leq$ labeled by every $n$-tuple of continuous functions $f_1, \dots, f_n$ on $E$ for every $n\geq 1$. We assume the following five conditions:
\begin{description}
    \item[(multi-linearlity)] $\mathcal{S}_{f_1, \dots, f_n}(t_1, \dots, t_n)$ is multi-linear with respect to $f_1, \dots, f_n\in C(E)$ for any $(t_1, \dots, t_n)\in I^n_\leq$,
    \item[(boundedness)] for any $(t_1, \dots, t_n)\in I^n_\leq$, there exists a positive constant $C>0$ such that $|\mathcal{S}_{f_1, \dots, f_n}(t_1, \dots, t_n)|\leq C \|f_1\|\cdots \|f_n\|$, where $\|\cdot\|$ denotes the sup norm,
    \item[(consistency)] $\mathcal{S}_{f_1, \dots, f_n}(t_1, \dots, t_n)=\mathcal{S}_{f_1, \dots, f_{i-1}, f_{i+1}, \dots, f_n}(t_1, \dots, t_{i-1}, t_{i+1}, \dots, t_n)$ holds if $f_i=\mathbbm{1}_E$ for some $i=1, \dots, n$,
    \item[(normalization)] $\mathcal{S}_{\mathbbm{1}_E}(t)\equiv 1$ for any $t\in I$,
    \item[(positivity)] $\mathcal{S}_{f_1, \dots, f_n}(t_1, \dots, t_n)\geq 0$ for any $(t_1, \dots, t_n)\in I^n_\leq$ if $f_1, \dots, f_n\geq 0$.
\end{description}

By the consistency and the normalization, $\mathcal{S}_{\mathbbm{1}_E, \dots, \mathbbm{1}_E}(t_1, \dots, t_n)\equiv 1$ holds for every $n\geq 1$.

\medskip
The goal of this section is the following:
\begin{theorem}\label{thm:construction_process}
    There exists a $E$-valued stochastic process $(X_t)_{t\in I}$ such that
    \[\mathbb{E}[f_1(X_{t_1})\cdots f_n(X_{t_n})]=\mathcal{S}_{f_1, \dots, f_n}(t_1, \dots, t_n)\]
    for any $f_1, \dots, f_n\in C(E)$, $(t_1, \dots, t_n)\in I^n_\leq$, and $n\geq 1$.
\end{theorem}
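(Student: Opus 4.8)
The plan is to realize $(X_t)_{t\in I}$ as the coordinate process on the compact product space $E^I$, obtaining its law from a single application of the Riesz--Markov--Kakutani theorem rather than invoking Kolmogorov's theorem directly; this is the ``function space'' repackaging announced in the section. First I would fix a finite strictly increasing tuple $t_1<\cdots<t_n$ and consider the $n$-linear map $(f_1,\dots,f_n)\mapsto \mathcal S_{f_1,\dots,f_n}(t_1,\dots,t_n)$. By multilinearity and positivity this is a positive $n$-linear form on $C(E)^n$, and by boundedness it is norm-continuous; the standard fact that a positive multilinear form on spaces of continuous functions over compact spaces is represented by a measure then yields a positive Borel measure $\mu_{t_1,\dots,t_n}$ on $E^n$ with $\int f_1\otimes\cdots\otimes f_n\,d\mu_{t_1,\dots,t_n}=\mathcal S_{f_1,\dots,f_n}(t_1,\dots,t_n)$. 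Normalization forces $\mu_{t_1,\dots,t_n}$ to be a probability measure. Here positivity is essential: it is what upgrades a merely bounded multilinear form, which need not be integral, to one represented by a genuine measure on the product.

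Next I would verify the Kolmogorov compatibility of the family $\{\mu_{t_1,\dots,t_n}\}$ indexed by finite subsets of $I$. Symmetry is automatic once we index by increasing tuples. The marginalization property---that the pushforward of $\mu_{t_1,\dots,t_n}$ under deletion of the $i$-th coordinate returns $\mu_{t_1,\dots,\widehat{t_i},\dots,t_n}$---is exactly the (consistency) hypothesis, since setting $f_i=\mathbbm{1}_E$ integrates out the $i$-th variable. Since $E$ is compact and second countable, hence Polish, Kolmogorov's extension theorem produces a probability measure $\mathbb P$ on $E^I$ whose finite-dimensional distributions are the $\mu_{t_1,\dots,t_n}$; equivalently, the consistent functional extends, via Stone--Weierstrass and the density of cylinder functions, to a state on $C(E^I)$, and Riesz--Markov--Kakutani returns $\mathbb P$. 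I then take $(X_t)_{t\in I}$ to be the coordinate process on $(E^I,\mathbb P)$.

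For strictly increasing times the identity $\mathbb E[f_1(X_{t_1})\cdots f_n(X_{t_n})]=\mathcal S_{f_1,\dots,f_n}(t_1,\dots,t_n)$ then holds by construction. The remaining and most delicate point is the case of coinciding times: because $X_t$ is a single random variable, one necessarily has $\mathbb E[f(X_t)g(X_t)]=\mathbb E[(fg)(X_t)]$, so the formula on all of $I^n_\le$ forces the multiplicative collapse $\mathcal S_{\dots,f_i,f_{i+1},\dots}(\dots,t,t,\dots)=\mathcal S_{\dots,f_if_{i+1},\dots}(\dots,t,\dots)$, equivalently that the two-time law $\mu_{t,t}$ is carried by the diagonal of $E\times E$. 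I expect this to be the main obstacle, and the step where continuity of $\mathcal S$ in the time variables---together with the other axioms, or with the specific structure of the families to which the theorem is applied---must be used to identify the coinciding-time distributions. Once this reduction is in hand, a general tuple in $I^n_\le$ is treated by grouping the equal times into blocks, merging the corresponding test functions into their products, and appealing to the strictly increasing case already established, which completes the proof.
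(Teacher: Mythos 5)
Your proposal takes essentially the same route as the paper: your first step is exactly Lemma \ref{lem:const_prob_meas} (whose proof supplies the ``standard fact'' you invoke --- Riesz's theorem applied one variable at a time to extend the form to bounded Borel functions, positivity checked on simple tensors, a Cauchy--Schwarz bound to pass to the norm closure, and compactness of $E$ to conclude that $C(E^{\underline{t}})$ lies in that closure), and your second step --- Kolmogorov consistency from the consistency axiom, extension to $E^I$, coordinate process --- is exactly the paper's proof of Theorem \ref{thm:construction_process}. Your parenthetical variant (a single application of Riesz--Markov--Kakutani on the compact space $E^I$) would also work, but the paper uses Kolmogorov.

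The coinciding-time obstacle you flag is genuine, and you were right not to claim you could resolve it: it cannot be resolved from the five axioms. Take $\mathcal{S}_{f_1,\dots,f_n}(t_1,\dots,t_n):=\prod_{i=1}^n\int_E f_i\,d\mu$ for a probability measure $\mu$ on $E$ that is not a Dirac mass; all five axioms hold (and each $\mathcal{S}$ is continuous in the $t_i$), yet any process satisfying the asserted identity at a repeated time $t_1=t_2=t$ would give $\mathbb{E}[f(X_t)^2]=\bigl(\int_E f\,d\mu\bigr)^2=\mathbb{E}[f(X_t)]^2$ for every real $f\in C(E)$, forcing $f(X_t)$ to be almost surely constant and hence $\mu$ to be a Dirac mass. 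So the identity can hold on all of $I^n_\leq$ only if the multiplicative collapse $\mathcal{S}_{\dots,f_i,f_{i+1},\dots}(\dots,t,t,\dots)=\mathcal{S}_{\dots,f_if_{i+1},\dots}(\dots,t,\dots)$ is assumed in addition; under the stated axioms the theorem is true, and is proved in the paper, only for distinct times. Indeed, the paper's Kolmogorov step treats $\underline{t}$ as a finite subset of $I$, and the measures $\mathbb{P}_{(t,t)}$ produced by Lemma \ref{lem:const_prob_meas} are never identified with the law of $(X_t,X_t)$ --- by the example above they cannot be in general. This imprecision is harmless for the paper's applications, where the collapse holds automatically for algebraic reasons: see the identity recorded in the first line of the proof of Lemma \ref{lem:cond_stoc_positivity_abst}. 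In short, the ``delicate point'' you isolated is a defect of the statement, not of your argument, and your argument coincides with the paper's on the part that is actually provable.
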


Let $(E_t, \mathcal{B}_t):=(E, \mathcal{B})$ for each $t\in I$ and $(E^{\underline{t}}, \mathcal{B}^{\otimes \underline{t}}):=(E_{t_1}\times \cdots \times E_{t_n}, \mathcal{B}_{t_1}\otimes \cdots \otimes \mathcal{B}_{t_n})$ for any $\underline{t}=(t_1, \dots, t_n)\in I^n_\leq$. We remark that $\mathcal{B}^{\otimes \underline{t}}$ coincides with the Borel $\sigma$-algebra of $E^{\underline{t}}$ since $E$ is second countable.

\medskip
The following lemma is essential for the proof of Theorem \ref{thm:construction_process}:
\begin{lemma}\label{lem:const_prob_meas}
    For any $\underline{t}=(t_1, \dots, t_n)\in I^n_\leq$ there exists a unique Borel probability measure $\mathbb{P}_{\underline{t}}$ on $(E^{\underline{t}}, \mathcal{B}^{\otimes \underline{t}})$ such that
    \[\int_{E^{\underline{t}}} f_1(x_1)\cdots f_n(x_n) d\mathbb{P}_{\underline{t}}(x_1, \dots, x_n)=\mathcal{S}_{f_1, \dots, f_n}(t_1, \dots, t_n)\]
    for all $f_1, \dots, f_n\in C(E)$.
\end{lemma}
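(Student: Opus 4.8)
The plan is to fix $\underline{t}=(t_1,\dots,t_n)\in I^n_\leq$ and to reinterpret the data as a single normalized, positive, multilinear functional, to which the Riesz--Markov--Kakutani theorem applies. Concretely, I would consider the linear span $\mathcal{A}\subset C(E^{\underline{t}})$ of the elementary tensors $f_1\otimes\cdots\otimes f_n$, where $(f_1\otimes\cdots\otimes f_n)(x_1,\dots,x_n):=f_1(x_1)\cdots f_n(x_n)$, and define a linear functional $\Lambda$ on $\mathcal{A}$ by $\Lambda(f_1\otimes\cdots\otimes f_n):=\mathcal{S}_{f_1,\dots,f_n}(\underline{t})$. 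The multi-linearity hypothesis is exactly what guarantees, via the universal property of the algebraic tensor product, that $\Lambda$ is well defined and linear. Since $C(E)$ separates the points of $E$ and is closed under complex conjugation, $\mathcal{A}$ is a unital $*$-subalgebra of $C(E^{\underline{t}})$ separating points, hence dense by Stone--Weierstrass; and the normalization together with the consistency condition gives $\Lambda(\mathbbm{1})=1$. If $\Lambda$ is positive on $\mathcal{A}$, then positivity yields $|\Lambda(F)|\leq\Lambda(\mathbbm{1})\|F\|_\infty=\|F\|_\infty$ by the standard estimate for positive functionals on a unital function algebra, so $\Lambda$ extends continuously to $C(E^{\underline{t}})$ as a positive normalized functional and Riesz--Markov--Kakutani yields $\mathbb{P}_{\underline{t}}$. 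Uniqueness is then automatic, for \emph{any} representing measure induces the same functional on the dense subspace $\mathcal{A}$, hence on $C(E^{\underline{t}})$.

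The main obstacle is precisely the positivity of $\Lambda$ on $\mathcal{A}$: the positivity hypothesis only asserts $\mathcal{S}_{f_1,\dots,f_n}(\underline{t})\geq 0$ for a \emph{single} product of non-negative functions, whereas a general $F\in\mathcal{A}$ with $F\geq 0$ pointwise need not be a non-negative combination of such products. To circumvent this I would not prove positivity on $\mathcal{A}$ directly but instead construct $\mathbb{P}_{\underline{t}}$ coordinate-by-coordinate, by induction on $n$, invoking Riesz--Markov at each stage; the resulting measure then reproduces $\Lambda$ a posteriori. For $n=1$ the functional $f\mapsto\mathcal{S}_f(t_1)$ is positive, linear and normalized on $C(E)$, so Riesz--Markov gives a probability measure $\mathbb{P}_{(t_1)}$.

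For the inductive step I would fix non-negative $f_1,\dots,f_{n-1}$ and apply Riesz--Markov to the positive functional $f_n\mapsto\mathcal{S}_{f_1,\dots,f_n}(\underline{t})$ on $C(E)$, obtaining a finite measure $\nu_{f_1,\dots,f_{n-1}}$ on $E$ with $\mathcal{S}_{f_1,\dots,f_n}(\underline{t})=\int_E f_n\,d\nu_{f_1,\dots,f_{n-1}}$; its total mass is $\mathcal{S}_{f_1,\dots,f_{n-1},\mathbbm{1}_E}(\underline{t})=\mathcal{S}_{f_1,\dots,f_{n-1}}(\cdots)$ by consistency, uniformly controlled by the boundedness condition. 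The uniqueness clause of Riesz--Markov, combined with multi-linearity, shows that for each fixed Borel $B\subseteq E$ the map $(f_1,\dots,f_{n-1})\mapsto\nu_{f_1,\dots,f_{n-1}}(B)$ is again multilinear and positive on non-negative tuples, so the induction hypothesis supplies a measure $\mu_B$ on $E^{(t_1,\dots,t_{n-1})}$ representing it. I would then define $\mathbb{P}_{\underline{t}}$ on measurable rectangles by $\mathbb{P}_{\underline{t}}(A\times B):=\mu_B(A)$ and extend it by the monotone class / Carathéodory theorem, checking finally by linearity that it reproduces $\mathcal{S}_{f_1,\dots,f_n}(\underline{t})$ on all elementary tensors and is a probability measure via $\mathcal{S}_{\mathbbm{1}_E,\dots,\mathbbm{1}_E}\equiv 1$. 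I expect the gluing step---verifying countable additivity of $B\mapsto\mu_B(A)$ through monotone convergence and the uniform mass bound---to be the genuinely technical heart, with everything else amounting to bookkeeping built on repeated use of Riesz--Markov--Kakutani.
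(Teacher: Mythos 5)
Your setup and your diagnosis of the obstacle are exactly right, and the coordinate-by-coordinate use of Riesz--Markov--Kakutani is also how the paper proceeds. The gap is in the gluing step, which you defer to ``monotone class / Carath\'eodory.'' To apply Carath\'eodory's extension theorem you need the rectangle function to be a \emph{premeasure on the semiring of measurable rectangles}: whenever $A\times B=\bigsqcup_{k}A_k\times B_k$ is a countable partition into rectangles varying in \emph{both} coordinates, you must show $\mu_B(A)=\sum_k\mu_{B_k}(A_k)$. This joint $\sigma$-additivity does not follow from the separate countable additivity of $B\mapsto\mu_B(A)$ that you propose to check by monotone convergence: a separately $\sigma$-additive rectangle function (a bimeasure) need not extend to any measure on the product $\sigma$-algebra --- this is the classical bimeasure extension problem, and signed counterexamples exist even on products of compact metric spaces --- so a correct proof must genuinely use the positivity and the Radon regularity of your measures $\mu_B$. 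With those in hand one can finish (e.g., approximate $A\times B$ from inside by a compact rectangle and each $A_k\times B_k$ from outside by open rectangles, then use compactness and finite subadditivity on the semiring; or note $\mu_B\le\mu_E$, take Radon--Nikodym derivatives, and disintegrate into a transition kernel on the Polish space $E$), but this is a substantive argument, not bookkeeping, and it is exactly the step your proposal leaves open.

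The paper avoids this issue altogether by a device worth knowing. It performs the same iterated Riesz extension, but at the level of the \emph{functional}: $\rho_{\underline{t}}$ is extended coordinate-by-coordinate from $C(E)$ to the bounded Borel functions $M_b(E)$, positivity on products of nonnegative functions surviving each extension. On the algebra $\Xi(E)^{\otimes\underline{t}}$ of simple functions, positivity then becomes checkable: a pointwise nonnegative simple function, written over disjoint rectangles, has nonnegative coefficients, i.e.\ it really is a nonnegative combination of products of nonnegative functions, which is the very form of positivity that the hypothesis supplies. Cauchy--Schwarz then gives boundedness, $C(E^{\underline{t}})$ lies in the uniform closure of $\Xi(E)^{\otimes\underline{t}}$ since each continuous function is a uniform limit of simple ones, and a single final application of Riesz--Markov--Kakutani produces $\mathbb{P}_{\underline{t}}$. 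If you want to keep your induction, replace the rectangle-gluing by this simple-function argument (or supply the regularity/disintegration argument above); as written, the step you yourself call the ``genuinely technical heart'' is precisely where the proof is missing.
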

\begin{proof}
    We remark that $\mathbb{P}_{\underline{t}}$ is unique if it exist since $C(E)^{\otimes \underline{t}}$ is dense in $C(E^{\underline{t}})$.

    Let $\rho_{\underline{t}}\colon C(E)^{\otimes \underline{t}}\to \mathbb{C}$ be a linear functional given by
    \[\rho_{\underline{t}}(f_1\otimes \cdots \otimes f_n):=\mathcal{S}_{f_1, \dots, f_n}(t_1, \dots, t_n).\]
    By the multi-linearlity, $\rho_{\underline{t}}$ is well defined. To show the existence of $\mathbb{P}_{\underline{t}}$, we confirm that $\rho_{\underline{t}}$ can be extended to a positive linear form on $C(E^{\underline{t}})$. Let $M_b(E)$ denote the space of bounded Borel measurable functions on $E$. Firstly, $f_1$ can be extended to $M_b(E)$. In fact, since any continuous functions can be decomposed into a sum of four nonnegative continuous functions, we may assume that $f_2, \dots, f_n$ are nonnegative. Thus, $f_1\in C(E)\mapsto \rho_{\underline{t}}(f_1\otimes \cdots \otimes f_n)$ is a positive continuous linear form. By Riesz's representation theorem, it is represented by a integration of a unique Borel (Radon) measure on $(E, \mathcal{B})$, and $\rho_{\underline{t}}$ can be extended to $M_b(E)\otimes C(E)^{\otimes \underline{t}\backslash\{t_1\}}$. By the same argument repeatedly, $\rho_{\underline{t}}$ can be eventually extended on $M_b(E)^{\otimes \underline{t}}$. If $g_1, \dots, g_n\geq 0$ in $M_b(E)$, then we have 
    \[0\leq \rho_{\underline{t}}(g_1\otimes \cdots g_n)\leq C\|g_1\|\cdots \|g_n\|,\]
    where $C>0$ is a constant independent of $g_1, \dots, g_n$.

    Next, we discuss a positivity of $\rho_{\underline{t}}$. Let $\Xi(E)$ denote the space of simple functions, i.e., $\Xi(E)$ is the linear span of characteristic functions of Borel sets in $E$. Obviously, we have $\Xi(E)\subset M_b(E)$. Let $f=\sum_{k=1}^m\alpha_k \mathbbm{1}_{A_{t_1}^{(k)}}\otimes \cdots \otimes \mathbbm{1}_{A_{t_n}^{(k)}}\in \Xi(E)^{\otimes \underline{t}}$ such that $\prod_{j=1}^n A_{t_j}^{(k)}$ ($k=1, \dots m$) are disjoint. We suppose that $f$ is nonnegative on $E^{\underline{t}}$, i.e., $\alpha_k>0$ for each $k=1, \dots, m$. Thus, we have
    \[0\leq \rho_{\underline{t}}(f) \leq \|f\| \rho_{\underline{t}}\left(\mathbbm{1}_E\otimes \cdots \otimes \mathbbm{1}_E\right)\leq C\|f\|.\]
    By the Cauchy--Schwartz inequality, 
    \[|\rho_{\underline{t}}(f)|^2 = |\rho_{\underline{t}}(f\cdot \mathbbm{1})|^2\leq \rho_{\underline{t}}(|f|^2) \rho_{\underline{t}}(\mathbbm{1})\leq C\|f\|^2\] 
    holds for any $f\in \Xi(E)^{\otimes \underline{t}}$. Thus, $\rho_{\underline{t}}$ can be extended to a positive linear form on the norm closure of $\Xi(E)^{\otimes \underline{t}}$.

    Since $E$ is compact, $C(E)$ is contained in the norm closure of $\Xi(E)$. Therefore, we have
    \[C(E^{\underline{t}})=\overline{C(E)^{\otimes \underline{t}}} \subset \overline{\Xi(E)^{\otimes \underline{t}}}.\]
    In particular, $\rho_{\underline{t}}$ provides a positive linear form on $C(E^{\underline{t}})$. By the Riesz representation theorem, we obtain a desired probability measure on $(E^{\underline{t}}, \mathcal{B}^{\otimes \underline{t}})$.
\end{proof}

\begin{proof}[Proof of Theorem \ref{thm:construction_process}]
    Let $\underline{t}\in I^n_\leq $ and $\underline{s}\in I^m_\leq$ such that $m< n$ and $\underline{s}\subset \underline{t}$ as sets. We define $\Theta^{\underline{t}}_{\underline{s}}\colon E^{\underline{t}}\to E^{\underline{s}}$ by $\Theta^{\underline{t}}_{\underline{s}}(x):=(x_s)_{s\in \underline{s}}$ for any $x=(x_t)_{t\in \underline{t}}\in E^{\underline{t}}$. By the consistency condition, the probability measures $\mathbb{P}_{\underline{t}}$ on $(E^{\underline{t}}, \mathcal{B}^{\otimes \underline{t}})$ in Lemma \ref{lem:const_prob_meas} are consistent with respect to $(\Theta^{\underline{t}}_{\underline{s}})_{\underline{s}\subset \underline{t}}$, that is, $\mathbb{P}_{\underline{s}}\circ \Theta^{\underline{t}}_{\underline{s}}=\mathbb{P}_{\underline{t}}$ holds. Therefore, by Kolmogorov's extension theorem, we obtain a probability measure $\mathbb{P}$ on $(E^{I}, \mathcal{B}^{\otimes I})$ such that $\mathbb{P}_{\underline{t}}\circ \Theta_{\underline{t}}=\mathbb{P}$, where $\Theta_{\underline{t}}\colon E^{I}\to E^{\underline{t}}$ is defined by $\Theta_{\underline{t}}(x):=(x_t)_{t\in \underline{t}}$. Finally, we obtain a desired stochastic process $(X_t)_{t\in I}$ by $X_t:=\Theta_{(t)}$ for every $t\in I$.
\end{proof}

%%%%%%%%%%%%%%%%%%%%%%%%%%%%%%%%%%%%%%%%%%%%%%%%%%%%%%%%%%%%%%%%%%%%%%%%%%%%%%%%%%%%%%%%%%%%%%%%%%%%%%%%%%%%%%%%%%%%%%%%%%%%%%%%%%%%%%%%%%%%%%%%
\section{Discrete point processes and CAR algebras}\label{sec:DPP_CAR}
In this section, we review the static relationship between point processes on a discrete space and an operator algebra, called a CAR algebra. See \cite{Olshanski20,Koshida,S24}. We will extend the static relationship to a dynamical level in subsequent sections.

%%%%%%%%%%%%%%%%%%%%%%%%%%%%%%%%%%%%%%%%%%%%%%%%%%%%%%%%%%%%%%%%%%%%%%%%
\subsection{Discrete point processes}
Let $\mathfrak{X}$ be a countable set with discrete topology, which is referred as a space of particle positions throughout the paper. The set $\mathcal{C}(\mathfrak{X})$ of (\emph{simple}) \emph{point configurations} in $\mathfrak{X}$ is defined as $\mathcal{C}(\mathfrak{X}):=\{0, 1\}^\mathfrak{X}$ equipped with the product topology. Every point configuration $\omega=(\omega_x)_{x\in \mathfrak{X}\in \mathcal{C}(\mathfrak{X})}$ naturally represents a state that $x\in \mathfrak{X}$ is occupied by a particle if $\omega_x=1$; otherwise, $x$ is empty. A Borel probability measure $\mathbb{P}$ on $\mathcal{C}(\mathfrak{X})$ is called a \emph{point process} on $\mathfrak{X}$. For every $n\geq 1$ we define the \emph{$n$-th correlation function} $\rho^{(n)}_\mathbb{P}\colon \mathfrak{X}^n\to \mathbb{R}$ by
\[\rho^{(n)}_\mathbb{P}(x_1, \dots, x_n):=\begin{dcases}\mathbb{P}(\{\omega\in \mathcal{C}(\mathfrak{X})\mid \omega_{x_i}=1 \text{ for }i=1, \dots, n\}) & x_i\text{'s are distinct}, \\ 0 & \text{otherwise}.\end{dcases}\]
If there exists a function $K$ on $\mathfrak{X}\times \mathfrak{X}$ such that
\[\rho^{(n)}_\mathbb{P}(x_1, \dots, x_n)=\det[K(x_i, x_j)]_{i, j=1}^N\]
holds for every $n\geq 1$, then $\mathbb{P}$ is said to be \emph{determinantal}, and $K$ is called a \emph{correlation kernel}. Under a certain condition, a point process is uniquely determined by its correlation functions. In particular, a determinantal point process is uniquely determined by its correlation kernel (see \cite{Lenard, Soshnikov}). However, a correlation kernel is \emph{not} unique for given determinantal point process. %In fact, if $f$ is a non-vanishing function on $\mathfrak{X}$, then the kernel $K_f$ given as $K_f(x, y)=f(x)K(x, y)f(y)^{-1}$ ($x, y\in \mathfrak{X}$) provides the same correlation functions.

The concrete examples will be discussed in Section \ref{sec:6}.

\subsection{CAR algebras and gauge-invariant quasi-free states}
Here, we summarize basic facts about CAR algebras and their gauge-invariant quasi-free states, which are intimately related to determinantal point processes.

Let $\mathcal{H}$ be a separable complex Hilbert space. The \emph{CAR algebra } $\mathfrak{A}(\mathcal{H})$ over $\mathcal{H}$ is the universal unital $C^*$-algebra generated by $\{a(h) \mid h\in \mathcal{H}\}$ such that the mapping $h \mapsto a^*(h):=a(h)^*$ is complex linear and the \emph{canonical anti-commutation relation} (CAR) holds true, that is,
\[a(h)a(k)+a(k)a(h)=0, \quad a^*(h)a(k)+a(k)a^*(h)=\langle h, k\rangle 1\quad (h, k\in \mathcal{H}),\]
where the inner product is complex linear in the left argument.

A state $\varphi$ on $\mathfrak{A}(\mathcal{H})$ is said to be (\emph{gauge-invariant}) \emph{quasi-free} if we have
\[\varphi(a^*(h_n)\cdots a^*(h_1)a(k_1)\cdots a(k_m))=\delta_{n, m}\det[\varphi(a^*(h_i)a(k_j))]_{i, j=1}^n\]
for every $h_1, \dots, h_n, k_1, \dots, k_m\in\mathcal{H}$ and $n, m\geq1$. By the Riesz representation theorem, there is a unique bounded linear operator $K$ on $\mathcal{H}$ such that $0\leq K\leq 1$ and $\varphi(a^*(h)a(k))=\langle Kh, k\rangle$ for every $h, k\in\mathcal{H}$. Conversely, if $0\leq K\leq 1$, there exists a quasi-free state $\varphi_K$ such that $\varphi_K(a^*(h)a(k))=\langle Kh, k\rangle$ for any $h, k\in \mathcal{H}$.

For instance, if $K=0$, the associated quasi-free state $\varphi_0$ is called the \emph{Fock state} of $\mathfrak{A}(\mathcal{H})$, and it can be realized as follows: Let $\mathcal{F}_a(\mathcal{H})$ denote the \emph{anti-symmetric Fock space} over $\mathcal{H}$, that is, $\mathcal{F}_a(\mathcal{H}):=\bigoplus_{n=0}^\infty\mathcal{H}^{\wedge n}$, where $\mathcal{H}^{\wedge n}$ is the $n$-th anti-symmetric tensor product of $\mathcal{H}$ for every $n\geq 1$, and $\mathcal{H}^{\wedge 0}$ is the linear span of a distinguished unit vector $\Omega$, called a \emph{vacuum vector}. For any $h\in \mathcal{H}$ the bounded linear operator $\pi_0(a^*(h))$ on $\mathcal{F}_a(\mathcal{H})$ is defined as
\[\pi_0(a^*(h))\Omega:=h, \quad \pi_0(a^*(h))h_1\wedge \cdots \wedge h_n:=h\wedge h_1\wedge \cdots \wedge h_n\]
for any $h_1, \dots, h_n\in \mathcal{H}$ and $n\geq 1$. In addition, we naturally define $\pi_0(a(h)):= \pi_0(a^*(h))^*$. Then, these operators satisfy the CAR. Thus, by the universality of $\mathfrak{A}(\mathcal{H})$, we obtain a $*$-algebra representation $(\pi_0, \mathcal{F}_a(\mathcal{H}))$ of $\mathfrak{A}(\mathcal{H})$, called the \emph{Fock representation}. Moreover, the vector state of $\Omega$ is nothing but the Fock state $\varphi_0$, that is, $\varphi_0(A)=\langle \pi_0(A)\Omega, \Omega\rangle$ for any $A\in \mathfrak{A}(\mathcal{H})$. It is known that the Fock representation is faithful, and hence $\mathfrak{A}(\mathcal{H})$ can be identified its representation $\pi_0(\mathfrak{A}(\mathcal{H}))$. Moreover, $a^*(h)$ and $a(h)$ are called the \emph{creation} and \emph{annihilation} operators associated with $h\in \mathcal{H}$.

Similar to the Fock state case, if a bounded linear operator $K$ on $\mathcal{H}$ satisfies $0\leq K\leq 1$, then there exists a $*$-algebra representation $(\pi_K, \mathcal{H}_K)$ with distinguished unit vector $\Omega_K\in \mathcal{H}_K$ such that $\varphi_K(A)=\langle \pi_K(A)\Omega_K, \Omega_K\rangle$ for any $A\in \mathfrak{A}(\mathcal{H})$. In what follows, we always assume that $\pi_K(\mathfrak{A}(\mathcal{H}))\Omega_K$ is dense in $\mathcal{H}_K$, i.e., $(\pi_K, \mathcal{H}_K, \Omega_K)$ is the GNS-triple associated with $\varphi_K$. Up to unitary equivalence, the GNS-triple is unique.

%%%%%%%%%%%%%%%%%%%%%%%%%%%%%%%%%%%%%%%%%%%%%%%%%%%%%%%%%%%%%%%%%%%%%%%
\subsection{Relationship to discrete point processes}
Let us assume that $\mathcal{H}:=\ell^2(\mathfrak{X})$, where $\mathfrak{X}$ is a countable set as in the previous section. The CAR algebra $\mathfrak{A}(\ell^2(\mathfrak{X}))$ is simply written as $\mathfrak{A}(\mathfrak{X})$. We define a $C^*$-subalgebra $\mathfrak{D}(\mathfrak{X})$ generated by $\{\rho_x:=a^*_xa_x \mid x\in \mathfrak{X}\}$, where $a_x:=a(\delta_x)$ and $(\delta_x)_{x\in \mathfrak{X}}$ is the canonical orthonormal basis for $\ell^2(\mathfrak{X})$,  By the CAR, $\mathfrak{D}(\mathfrak{X})$ is a commutative algebra. Moreover, $\mathfrak{D}(\mathfrak{X})$ is isomorphic to the $C^*$-algebra $C(\mathcal{C}(\mathfrak{X}))$ of continuous functions on $\mathcal{C}(\mathfrak{X})$. Then, $\rho_x$ is assigned to the characteristic function of $\{\omega\in \mathcal{C}(\mathfrak{X})\mid \omega_x=1\}$. See, e.g., \cite[Section 2]{S24} for more details. In what follows, we freely identify $\mathcal{D}(\mathfrak{X})$ and $C(\mathcal{C}(\mathfrak{X}))$.

Let $\varphi$ be a state on $\mathfrak{A}(\mathfrak{X})$. Since $\mathfrak{D}(\mathfrak{X})\cong C(\mathcal{C}(\mathfrak{X}))$, by the Riesz--Markov--Kakutani theorem, the restriction $\varphi|_{\mathfrak{D}(\mathfrak{X})}$ gives rise to a point process on $\mathfrak{X}$, denoted by $\mathbb{P}_\varphi$. Furthermore, by the definition of correlation functions, for every $n\geq 1$ and $x_1, \dots, x_n\in \mathfrak{X}$ we have
\begin{equation}\label{eq:corr_state}
    \rho^{(n)}_{\mathbb{P}_\varphi}(x_1, \dots, x_n)=\varphi(a^*_{x_n}\cdots a^*_{x_1} a_{x_1}\cdots a_{x_n}).
\end{equation}
In fact, $a_{x_n}^*\cdots a_{x_1}^*a_{x_1}\cdots a_{x_n}= \rho_{x_1}\cdots \rho_{x_n}$ holds if $x_1, \dots ,x_n$ are distinct; otherwise, it is equal to 0. In particular, if $\varphi$ is a quasi-free state, then $\mathbb{P}_\varphi$ is determinantal whose correlation kernel is given as $K(x, y):=\varphi(a_x^*a_y)$. Moreover, we have $K(x, y)=\langle Ke_x, e_y\rangle$ for any $x, y\in \mathfrak{X}$ if $\varphi=\varphi_K$ for some positive contraction operator $K$ on $\ell^2(\mathfrak{X})$.

\begin{remark}
    Any quasi-free states on $\mathfrak{A}(\mathfrak{X})$ produces a determinantal point processes on $\mathfrak{X}$. However, this correspondence is not one-to-one.
\end{remark}

Let $\varphi$ be a state on $\mathfrak{A}(\mathfrak{X})$ and $(\pi_\varphi, \mathcal{H}_\varphi, \Omega_\varphi)$ the associated GNS-triple. We obtain an isometrically embedding $L^2(\mathcal{C}(\mathfrak{X}), \mathbb{P}) \hookrightarrow\mathcal{H}_\varphi$ by $f\in C(\mathcal{C}(\mathfrak{X}))\mapsto \pi_K(f)\Omega_\varphi$. In general, the range of this embedding does not coincides with $\mathcal{H}_\varphi$. In a sense, $\mathfrak{A}(\mathfrak{X})$ is too large, and hence, so is $\mathcal{H}_\varphi$. 

Let $\mathfrak{I}(\mathfrak{X})$ denote the $C^*$-subalgebra of $\mathfrak{A}(\mathfrak{X})$ generated by $\{a^*(h)a(k)\mid h, k\in \ell^2(\mathfrak{X})\}$. In other words, $\mathfrak{I}(\mathfrak{X})$ consists of elements fixed under the \emph{gauge action} $\gamma\colon \mathbb{T}\curvearrowright\mathfrak{A}(\mathfrak{X})$ defined by $\gamma_z(a(h)):=a(zh)=\bar z a(h)$ for any $h\in \ell^2(\mathfrak{X})$ and $z\in \mathbb{T}$. Here, $\mathbb{T}:=\{z\in\mathbb{C}\mid |z|=1\}$ is the torus group. Namely, we have 
\[\mathfrak{I}(\mathfrak{X})=\{A\in \mathfrak{A}(\mathfrak{X})\mid \gamma_z(A)=A \text{ for all } z\in \mathbb{T}\}.\]
For that reason, $\mathfrak{I}(\mathfrak{X})$ is called the \emph{gauge-invariant} CAR (GICAR) algebra.

By definition, we have $\mathfrak{D}(\mathfrak{X})\subset \mathfrak{I}(\mathfrak{X})$. Thus, $\pi_\varphi(\mathfrak{D}(\mathfrak{X}))\Omega_\varphi \subset \pi_\varphi(\mathfrak{I}(\mathfrak{X}))\Omega_\varphi$. However, even for $\mathfrak{I}(\mathfrak{X})$, the embedding of $L^2(\mathcal{C}(\mathfrak{X}), \mathbb{P}_\varphi)$ does not generally coincides with $\overline{\pi_\varphi(\mathfrak{I}(\mathfrak{X}))\Omega_\varphi}$. On the other hand, if the embedding of $L^2(\mathcal{C}(\mathfrak{X}), \mathbb{P}_\varphi)$ coincides with $\overline{\pi_\varphi(\mathfrak{I}(\mathfrak{X}))\Omega_\varphi}$, i.e., if the GNS-representation space for $\varphi|_{\mathfrak{I}(\mathfrak{X})}$ can be identified with $L^2(\mathcal{C}(\mathfrak{X}), \mathbb{P}_\varphi)$, then many properties of $\mathbb{P}_\varphi$ can be characterized by $\varphi$. This research policy was proposed by Olshanski \cite{Olshanski20} when $\varphi$ is a quasi-free state, i.e., $\mathbb{P}_\varphi$ is determinantal. Moreover, Koshida \cite{Koshida} studied the case of Pfaffian point processes and quasi-free states on \emph{self-dual} CAR algebras.

In this paper, we investigate the dynamical relationship between point processes and states on CAR algebras. However, as well as in the static relation, the size of $\pi_\varphi(\mathfrak{D}(\mathfrak{X}))\Omega_\varphi$ in the GNS-representation space is important for us, too. See Corollary \ref{cor:suf_cond_Markov}, Propositions \ref{prop:Markov_property}, \ref{prop:Markov_2}.

%%%%%%%%%%%%%%%%%%%%%%%%%%%%%%%%%%%%%%%%%%%%%%%%%%%%%%%%%%%%%%%%%%%%%%%%%%%%%%%%%%%%%%%%%%%%%%%%%%%%%%%%%%%%%%%%%%%%%%%%%%%%%%%%%%%%%%%%%%%%%%%%%%
\subsection{Systems in Section \ref{sec:construction} from the infinite wedge representation}
Our fundamental idea for constructing stochastic processes on $\mathcal{C}(\mathfrak{X})$ is to produce systems in Section \ref{sec:construction} from the CAR algebra $\mathfrak{A}(\mathfrak{X})$. In the subsequent sections, we will discuss such a system manufactured from quasi-free states on $\mathfrak{A}(\mathfrak{X})$ satisfying the so-called KMS condition. Before that, we point out that a similar system has also been discussed for \emph{periodic Schur measures} and the infinite wedge representation of a CAR algebra. See \cite{BB19} for more details.

First, we prepare some necessary facts about symmetric functions and their realization via the infinite wedge space. See \cite{M, Kac,Okounkov01} for more details.

Let $\mathsf{Sym}$ be the $\mathbb{C}$-algebra of symmetric functions. This has two kinds of algebraically independent generators, called the \emph{power sum} symmetric functions $\{p_k\}_{k=1}^\infty$ and the \emph{complete homogeneous} symmetric functions $\{h_k\}_{k=1}^\infty$. They are formally defined by 
\[p_k(x):=\sum_{i=1}^\infty x_i^k, \quad h_k(x):=\sum_{i_1\leq \cdots \leq i_k}x_{i_1}\cdots x_{i_k}.\]

A sequence $\lambda=(\lambda_1\geq \lambda_2\geq \cdots)\in \mathbb{Z}_{\geq 0}^\infty$ is called a \emph{partition} of $n$ if $|\lambda|:=\lambda_1+\lambda_2+\cdots = n$ holds. The integer $\ell(\lambda)$ such that $\lambda_{\ell(\lambda)}\neq 0$ and $\lambda_{\ell(\lambda)+1}=0$ is called the \emph{length} of $\lambda$. As the convention, we set $\emptyset :=(0, 0, \dots)$ and $\ell(\emptyset)=0$. Let $\mathcal{P}$ denote the set of all partitions of integers. 

For every $\lambda\in \mathcal{P}$, the \emph{Schur} function $s_\lambda$ is defined by 
\[s_\lambda:=\det[h_{\lambda_i-i+j}]_{i, j=1}^{\ell(\lambda)},\]
where $h_0:=1$ and $h_k:=0$ if $k<0$. Moreover, for any $\lambda, \mu\in \mathcal{P}$, the \emph{skew Schur} function $s_{\lambda/\mu}$ is defined by 
\[ s_{\lambda/\mu}:=\begin{dcases} \det[h_{\lambda_i-\mu_j-i+j}]_{i, j=1}^{\ell(\lambda)} & \lambda \supset \mu, \\ 0 & \text{otherwise},\end{dcases}\]
where we write $\lambda\supset \mu$ if $\lambda_i\geq \mu_i$ for every $i=1, 2, \dots$. We remark that $\{s_\lambda\}_{\lambda\in \mathcal{P}}$ form a linear basis of $\mathsf{Sym}$. Moreover, every skew Schur function is expanded to a sum of Schur functions with non-negative integer coefficients.

A $\mathbb{C}$-algebra homomorphism $\rho\colon \mathsf{Sym}\to \mathbb{C}$ is called a \emph{specialization}. Following the convention, we write $f(\rho)$ instead of $\rho(f)$ for any $f\in \mathsf{Sym}$. For arbitrary specialization $\rho$, the Miwa variables 
\[t_k:=\frac{1}{k}p_k(\rho) \quad (k=1, 2, \dots)\] 
uniquely determine the specialization $\rho$ since $\{p_k\}_{k=1}^\infty$ are algebraically independent. A specialization $\rho$ is said to be \emph{Schur-positive} if $s_\lambda(\rho)\geq 0$ for every $\lambda\in \mathcal{P}$. It implies that $s_{\lambda/\mu}(\rho)\geq 0$ for every $\lambda, \mu \in \mathcal{P}$. For instance, for any $\gamma>0$ the following specialization $\mathrm{ex}_\gamma$ is Schur-positive:
\[t_k:=\begin{dcases} \gamma & k=1, \\ 0 & k\geq 2.
\end{dcases}\]

Let $\mathcal{F}$ be a complex Hilbert space with orthonormal basis $\{v_\lambda\}_{\lambda\in \mathcal{P}}$. We call $\mathcal{F}$ the \emph{infinite wedge space}\footnote{More precisely, $\mathcal{F}$ is a \emph{charge zero} subspace of the infinite wedge space, and it realized by the infinite wedge vectors. See \cite{Kac,Okounkov01} for more details.}. For a specialization $\rho\colon \mathsf{Sym}\to \mathbb{C}$, we define the \emph{vertex operators} $\Gamma_\pm(\rho)$ on $\mathcal{F}$ by 
\[\Gamma_-(\rho)v_\mu:=\sum_{\lambda\in \mathcal{P}}s_{\lambda/\mu}(\rho) v_\lambda, \quad \Gamma_+(\rho)v_\lambda := \sum_{\mu\in \mathcal{P}}s_{\lambda/\mu}(\rho)v_\mu.\]
Here, the right-hand side of the second equality is a finite sum. By the skew Cauchy identity,
\[\sum_{\lambda\in \mathcal{P}}s_{\lambda/\mu}(\rho)^2=Z_{\rho, \rho} \sum_{\kappa\in \mathcal{P}}s_{\mu/\kappa}(\rho)^2<\infty\]
holds when $Z_{\rho, \rho}<\infty$, where 
\[Z_{\rho, \rho'}:=\sum_{\lambda\in \mathcal{P}}s_\lambda(\rho)s_\lambda(\rho')=\exp\left(\sum_{k=1}^\infty kt_k t'_k\right)\]
for arbitrary specializations $\rho$ and $\rho'$ with Miwa variables $(t_k)_{k=1}^\infty$ and $(t'_k)_{k=1}^\infty$. Thus, $\Gamma_-(\rho)$ and $\Gamma_+(\rho)$ are well defined. By definition, we have $s_{\lambda/\mu}(\rho)=\langle \Gamma_-(\rho)v_\mu, v_\lambda\rangle = \langle \Gamma_+(\rho)v_\lambda, v_\mu\rangle$. 

For any two specializations $\rho, \rho'$, the skew Cauchy identity also implies 
\begin{equation}\label{eq:vertex_op}\Gamma_+(\rho)\Gamma_-(\rho')=Z_{\rho, \rho'}\Gamma_-(\rho')\Gamma_+(\rho), \quad \Gamma_\pm(\rho)\Gamma_\pm(\rho')=\Gamma_\pm(\rho+\rho').
\end{equation}
Here, $\rho+\rho'$ is the specialization whose Miwa variables is given by $(t_k+t'_k)_{k=1}^\infty$.

\medskip
Let $\mathfrak{S}(\lambda):=\{\lambda_i-i+1/2\}_{i=1}^\infty\subset \mathbb{Z}':=\mathbb{Z}+1/2$ for any $\lambda\in \mathcal{P}$. Hence, every partition $\lambda\in \mathcal{P}$ gives the point configuration $\mathfrak{S}(\lambda)\in \mathcal{C}(\mathbb{Z}')$ in $\mathbb{Z}'$.

A representation of $\mathfrak{A}(\mathbb{Z}')$ on $\mathcal{F}$ is given as follows: For every $x\in \mathbb{Z}'$, the creation operators $a^*_x$ acts on $\mathfrak{F}$ by 
\[a^*_xv_\lambda:=\begin{dcases}(-1)^i v_\nu & \lambda_i-i+1/2 < x < \lambda_{i+1}-i-1/2 \text{ for some }i\geq 1, \\ 0 & \text{ otherwise},\end{dcases}\]
where $\nu\in \mathcal{P}$ satisfies $\mathfrak{S}(\nu)=\mathfrak{S}(\lambda)\cup \{x\}$. Similarly, the annihilation operator $a_x$ acts on $\mathfrak{F}$ by 
\[a_xv_\lambda:=\begin{dcases}(-1)^{i-1} v_\mu & x=\lambda_i-i+1/2 \text{ for some }i=1, 2, \dots, \\ 0 & \text{ otherwise},\end{dcases}\]
where $\mu\in \mathcal{P}$ satisfies $\mathfrak{S}(\lambda)=\mathfrak{S}(\mu)\backslash\{x\}$. Since the above operators satisfy the CAR, the representation of $\mathfrak{A}(\mathbb{Z}')$ on $\mathcal{F}$ is well defined\footnote{In the usual convention, the creation and annihilation operator on $\mathcal{F}$ are denoted by $\psi_x$ and $\psi^*_x$ ($x\in \mathbb{Z}'$).}. By definition, for any $x\in \mathbb{Z}'$ and $\lambda\in \mathcal{P}$ we have \
\begin{equation}\label{eq:num_op_on_inf_wedge}
    \rho_xv_\lambda=\begin{dcases}v_\lambda & x \in \mathfrak{S}(\lambda), \\ 0 & \text{otherwise}.\end{dcases}
\end{equation}

\medskip
We define the \emph{energy operator} $H$ on $\mathcal{F}$ by $Hv_\lambda:=|\lambda|v_\lambda$ ($\lambda\in \mathcal{P}$). We have
\begin{equation}\label{eq:vertex_energy}
    \Gamma_\pm(\rho)u^H=u^H\Gamma_\pm(u^\pm\cdot \rho),
\end{equation}
where a new specialization $u\cdot \rho$ is defined by $p_k(u\cdot \rho)=u^k p_k(\rho)$ for every $k\geq 1$. 

\medskip
Following \cite{BB19}, for any $\vartheta, t\geq 0$ we define
\[T^\vartheta_t:=e^{\vartheta^2(e^{-t}-1)}\Gamma_-(\mathrm{ex}_{\vartheta(1-e^{-t})})e^{-tH}\Gamma_+(\mathrm{ex}_{\vartheta(1-e^{-t})}).\]
By Equations \eqref{eq:vertex_op}, \eqref{eq:vertex_energy}, we have
\[T^\vartheta_t T^\vartheta_s=T^\vartheta_{t+s}\quad (t, s\geq0).\]
Moreover, for every $\lambda, \mu\in \mathcal{P}$ we have 
\begin{equation}\label{eq:matrix_entries_transition_op}
    \langle T^\vartheta_t v_\mu, v_\lambda\rangle = e^{\vartheta^2(e^{-t}-1)}\sum_{\kappa \in \mathcal{P}}e^{-t |\kappa|} s_{\lambda/\kappa}(\mathrm{ex}_{\vartheta(1-e^{-t})}) s_{\mu/\kappa}(\mathrm{ex}_{\vartheta(1-e^{-t})}).
\end{equation}
Thus, $\langle T^\vartheta_tv_\mu, v_\lambda\rangle\geq 0$ since $\mathrm{ex}_{\vartheta(1-e^{-t})}$ is Schur-positive.

For any $n\geq 1$ and $A_1, \dots, A_n\in \mathfrak{A}(\mathbb{Z}')$ we define $\mathcal{S}_{A_1, \dots, A_n}\in C([0, \beta]^n_\leq)$ by 
\[\mathcal{S}_{A_1, \dots, A_n}(t_1, \dots, t_n):=\frac{1}{\mathrm{Tr}(T^\vartheta_\beta)}\mathrm{Tr}(T^\vartheta_{t_1}A_1T^\vartheta_{t_2-t_1}A_2\cdots T^\vartheta_{t_n-t_{n-1}}A_n T^\vartheta_{\beta-t_n}).\]
Since $\mathfrak{D}(\mathbb{Z}')\cong C(\mathcal{C}(\mathbb{Z}'))$, we can obtain $\mathcal{S}_{f_1, \dots, f_n} \in C([0, \beta]^n_\leq)$ for every $f_1, \dots, f_n\in C(\mathcal{C}(\mathbb{Z}'))$. By definition, the multi-linearlity, the consistency, and the normalization conditions holds true. 

The following is a consequence of Equations \eqref{eq:num_op_on_inf_wedge}, \eqref{eq:matrix_entries_transition_op}. See also \cite[Section 6]{BB19}.
\begin{lemma}
    For any $n\geq 1$, $f_1, \dots, f_n\in C(\mathcal{C}(\mathbb{Z}'))$, and $(t_1, \dots, t_n)\in [0, \beta]^n_\leq$ we have 
    \begin{align*}
        \mathcal{S}_{f_1, \dots, f_n}(t_1, \dots, t_n)
        &=\frac{1}{\mathrm{Tr}(T^\vartheta_\beta)}\sum_{\lambda^0, \dots, \lambda^n \in \mathcal{P}}\prod_{j=1}^{n+1} f_j(\mathfrak{S}(\lambda^j)) \langle T^\vartheta_{t_i-t_{i-1}} v_{\lambda^j}, v_{\lambda^{j-1}}\rangle,
    \end{align*}
    where $t_0=0$, $t_{n+1}=\beta$, and $\lambda^{n+1}=\lambda^0$. In particular, the positivity and the normalization conditions holds.
\end{lemma}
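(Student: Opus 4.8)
The plan is to reduce the entire statement to the single fact that the commutative algebra $\mathfrak{D}(\mathbb{Z}')$ acts diagonally on the orthonormal basis $\{v_\lambda\}_{\lambda\in\mathcal{P}}$. First I would record the elementary but decisive observation that, under the identification $\mathfrak{D}(\mathbb{Z}')\cong C(\mathcal{C}(\mathbb{Z}'))$, every $f\in C(\mathcal{C}(\mathbb{Z}'))$ acts on $\mathcal{F}$ by
\[ f\,v_\lambda = f(\mathfrak{S}(\lambda))\,v_\lambda \qquad (\lambda\in\mathcal{P}). \]
For the generators $\rho_x$ this is precisely \eqref{eq:num_op_on_inf_wedge}: the vector $v_\lambda$ is a joint eigenvector of all the $\rho_x$, and its eigenvalues $([x\in\mathfrak{S}(\lambda)])_{x\in\mathbb{Z}'}$ are exactly the values of the coordinate functions $\mathbbm{1}_{\{\omega_x=1\}}$ at the configuration $\mathfrak{S}(\lambda)$. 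Hence the character $A\mapsto\langle Av_\lambda,v_\lambda\rangle$ of $\mathfrak{D}(\mathbb{Z}')$ is the point evaluation at $\mathfrak{S}(\lambda)$, so the displayed identity holds for all $*$-polynomials in the $\rho_x$, and then for every continuous $f$ by norm density of such polynomials together with continuity of $f\mapsto f\,v_\lambda$ (the isomorphism being isometric).

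With this in hand I would simply expand the trace defining $\mathcal{S}_{f_1,\dots,f_n}$ over the basis $\{v_\lambda\}$. Writing $t_0=0$, $t_{n+1}=\beta$ and inserting a resolution of the identity $\sum_{\lambda^j}\lvert v_{\lambda^j}\rangle\langle v_{\lambda^j}\rvert$ between each pair of consecutive transfer operators, the diagonal action from the first step converts each factor $f_j$, sandwiched at the index $\lambda^j$, into the scalar $f_j(\mathfrak{S}(\lambda^j))$, while the surviving matrix elements assemble into $\prod_{j=1}^{n+1}\langle T^\vartheta_{t_j-t_{j-1}}v_{\lambda^j},v_{\lambda^{j-1}}\rangle$, the cyclic convention $\lambda^{n+1}=\lambda^0$ arising from the trace. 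This produces the asserted formula.

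The positivity and normalization statements then follow as corollaries. Each matrix element $\langle T^\vartheta_{t_j-t_{j-1}}v_{\lambda^j},v_{\lambda^{j-1}}\rangle$ is nonnegative by \eqref{eq:matrix_entries_transition_op} and the Schur-positivity of $\mathrm{ex}_{\vartheta(1-e^{-t})}$, and $\mathrm{Tr}(T^\vartheta_\beta)>0$ (e.g.\ the $\lambda^0=\emptyset$ diagonal entry is $e^{\vartheta^2(e^{-\beta}-1)}>0$); hence if $f_1,\dots,f_n\geq 0$ every summand is nonnegative, and so is $\mathcal{S}_{f_1,\dots,f_n}(t_1,\dots,t_n)$. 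For normalization I would set $f_1=\dots=f_n=\mathbbm{1}$ and collapse the sum over the interior indices $\lambda^1,\dots,\lambda^n$ using the semigroup relation $T^\vartheta_tT^\vartheta_s=T^\vartheta_{t+s}$: the product of transfer operators telescopes to $\langle T^\vartheta_\beta v_{\lambda^0},v_{\lambda^0}\rangle$ because the increments $t_j-t_{j-1}$ sum to $\beta$, and summing over $\lambda^0$ yields $\mathrm{Tr}(T^\vartheta_\beta)$, which cancels the prefactor and leaves $1$.

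The one point demanding genuine care — and the step I expect to be the main obstacle — is the legitimacy of the repeated insertion of resolutions of the identity and the ensuing interchange of the infinite summations over $\lambda^0,\dots,\lambda^n$, both in deriving the formula and in performing the telescoping collapse. This is exactly where the nonnegativity of the matrix entries is used: once $f_j\geq 0$, every quantity in sight is nonnegative, so Tonelli's theorem justifies each rearrangement. I would therefore establish the formula first in the nonnegative case and then pass to general complex $f_j$ by decomposing each into a combination of nonnegative continuous functions and invoking multilinearity.
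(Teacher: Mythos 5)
Your proof is correct and follows exactly the route the paper intends: the paper gives no details, merely asserting the lemma as "a consequence of Equations \eqref{eq:num_op_on_inf_wedge}, \eqref{eq:matrix_entries_transition_op}," and your argument is precisely the fleshed-out version of that — diagonal action of $\mathfrak{D}(\mathbb{Z}')$ on the basis $\{v_\lambda\}$ via \eqref{eq:num_op_on_inf_wedge}, expansion of the trace with resolutions of the identity, and positivity of the matrix entries via \eqref{eq:matrix_entries_transition_op}. Your additional care about the Tonelli justification of the rearrangements and the density argument extending the diagonal action from the $\rho_x$ to all of $C(\mathcal{C}(\mathbb{Z}'))$ supplies details the paper leaves implicit, but introduces no new method.
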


Therefore, by Theorem \ref{thm:construction_process}, we obtain a stochastic process $(X_t)_{t\in [0, \beta]}$ on $\mathcal{P}$($\subset \mathcal{C}(\mathbb{Z}')$) such that for any $t_0=0\leq t_1\leq \cdots \leq t_n\leq t_{n+1}=\beta$ and $\lambda_0,  \dots, \lambda_{n+1} \in \mathcal{P}$ such that 
\[\mathbb{P}[X_0=\lambda_0, X_{t_1}=\lambda_1, \dots, X_\beta=\lambda_{n+1}]=\frac{1}{\mathrm{Tr}(T_\beta^\vartheta)}\prod_{j=1}^{n+1}\langle T^\vartheta_{t_j-t_{j-1}}v_{\lambda^j}, v_{\lambda^{j+1}}\rangle.\]
This stochastic process is named the \emph{stationary cylindric Plancherel process} of period $\beta$ and intensity $\vartheta$. See \cite{BB19} fore more details.

\medskip
Let us recall that our construction of stochastic processes essentially consists of two steps (and it is the same as the usual argument in probability theory). The first step (see Lemma \ref{lem:const_prob_meas}) is to give finite-dimensional distributions of a desired stochastic process. In the second step, we apply Kolmogorov's extension theorem. Here, the set $\mathcal{P}$ of integer partitions is countable, and so is $\mathcal{P}^n$ for every $n\geq 1$. Thus, a system in Section \ref{sec:construction} is not necessarily required, that is, we can directly give the finite-dimensional distributions of $(X_t)_{t\in [0, \beta]}$. However, as will be seen later, the observation in this section suggests a unified algebraic approach to construct and analyze stochastic processes on a point configuration space. The common idea is as follows:
\begin{itemize}
    \item a CAR algebra contains a commutative $C^*$-subalgebra isomorphic to the algebra of continuous functions on a point configuration space.
    \item a state on a CAR algebra gives a point process through its restriction to the commutative $C^*$-subalgebra (the static relationship between CAR algebras and point processes).
    \item a ``time evolution'' on a CAR algebra gives rise to a system in Section \ref{sec:construction}, and hence, it produces a stochastic process on a point configuration space.
\end{itemize}

%%%%%%%%%%%%%%%%%%%%%%%%%%%%%%%%%%%%%%%%%%%%%%%%%%%%%%%%%%%%%%%%%%%%%%%%%%%%%%%%%%%%%%%%%%%%%%%%%%%%%%%%%%%%%%%%%%%%%%%%%%%%%%%%%%%%%%%%%%%%%%%%%%
\section{Stochastically positive KMS systems and associated stochastic processes}\label{sec:stoc_proc}
In this section, we introduce the notion of \emph{stochastically positive KMS systems} due to Klein and Landau \cite{KL81}, which play an essential role in producing systems in Section \ref{sec:construction} and in constructing and analyzing stochastic processes. In this section, we discuss an abstract setting of $C^*$-algebras and then return to the setting of the CAR algebra $\mathfrak{A}(\mathfrak{X})$ in the next section.

%%%%%%%%%%%%%%%%%%%%%%%%%%%%%%%%%%%%%%%%%%%%%%%%%%%%%%%%%%%%%%%%%%%%%%%%%%%%%%%%%%%%%%%%%%%%%%%%%%%%%%%%%%%%%%%%%%%%%%%%%%%%%%%%%%%%%%%%%%%%%%%%%%
\subsection{Stochastically positive KMS systems}
Let $\mathfrak{A}$ be a separable unital $C^*$-algebra. We assume that $\alpha\colon \mathbb{R}\curvearrowright \mathfrak{A}$ is an $\mathbb{R}$-flow, i.e., $t\mapsto \alpha_t$ is a group homomorphism to the group of $*$-algebra automorphisms on $\mathfrak{A}$ such that $t\mapsto \alpha_t(A)$ is continuous for any $A\in \mathfrak{A}$. 

A state $\varphi$ on $\mathfrak{A}$ is called a \emph{$\alpha$-KMS state at $\beta>0$} if for every $A, B\in \mathfrak{A}$ there exists a bounded continuous function $F_{A, B}$ on the strip region $\{z\in \mathbb{C} \mid 0\leq \mathrm{Im}(z)\leq \beta\}$, which is analytic in $\{z\in \mathbb{C} \mid 0 < \mathrm{Im}(z) <\beta\}$, such that
\[F_{A, B}(t)=\varphi(A\alpha_t(B)), \quad F_{A, B}(t+\mathrm{i}\beta)=\varphi(\alpha_t(B)A) \quad (t\in \mathbb{R}).\]
If $\beta\to \infty$, we obtain a \emph{$\alpha$-ground state}. More precisely, a state $\varphi$ on $\mathfrak{A}$ is called a $\alpha$-ground state if for any $A, B\in \mathfrak{A}$ there exists a continuous function $F_{A, B}$ on the region $\{z\in \mathbb{C}\mid \mathrm{Im}(z)\geq 0\}$, which is analytic in $\{z\in \mathbb{C}\mid \mathrm{Im}(z)>0\}$, such that $F_{A, B}(t)=\varphi(A\alpha_t(B))$ holds for all $t\in \mathbb{R}$. In this sense, we can refer to a ground state as a KMS state at $\beta=\infty$. See \cite[Proposition 5.3.19]{BR2} for more details.

In the context of quantum statistical mechanics, an $\mathbb{R}$-flow on a $C^*$-algebra is regarded as a time evolution of a quantum system. Moreover, a KMS state describes a thermal equilibrium state, like Gibbs states. Roughly speaking, the above definition says that under a KMS state, quantum time evolution along the real axis can be continued to the strip region. The extendability of time evolution is quite essential for our purpose. In fact, we will later observe that stochastic dynamics would appear as the time evolution along the imaginary axis.

\medskip
We remark that the KMS condition is extended to multiple time. Let $I_\beta(n):=\bigcup_{k=0}^n I_\beta(n)_k$ for every $n\geq 1$, where $I_\beta(n)_k$ is the set of all $n$ complex numbers $(z_1, \dots, z_n)\in \mathbb{C}^n$ such that 
\[-\beta/2< \mathrm{Im}(z_1)< \dots < \mathrm{Im}(z_k)< 0 < \mathrm{Im}(z_{k+1})<\cdots < \mathrm{Im}(z_n)< \beta/2.\]
Let $\varphi$ be a $\alpha$-KMS state at $\beta\in \mathbb{R}_{>0}$ and $(\pi_\varphi, \mathcal{H}_\varphi, \Omega_\varphi)$ denote the associated GNS-triple. Namely, $(\pi_\varphi, \mathcal{H}_\varphi)$ is a representation of $\mathfrak{A}$ and $\Omega_\varphi \in \mathcal{H}_\varphi$ is a unit vector such that $\varphi(A)=\langle \pi_\varphi(A)\Omega_\varphi, \Omega_\varphi\rangle$ for all $A\in \mathfrak{A}$, and $\pi_\varphi(\mathfrak{A})\Omega_\varphi$ is dense in $\mathcal{H}_\varphi$. Since $\varphi$ is $\alpha$-invariant (see \cite[Proposition 5.3.3]{BR2}), there exists a unique self-adjoint operator $H$ on $\mathcal{H}_\varphi$ such that $e^{\mathrm{i}t H}\pi_\varphi(A)\Omega_\varphi=\pi_\varphi(\alpha_t(A))\Omega_\varphi$ for all $A\in \mathfrak{A}$ and $t\in \mathbb{R}$. By \cite[Theorem 3.2]{Araki73} (see also \cite[Theorem 2.1]{KL81}), for any $A_1, \dots, A_n\in \mathfrak{A}$, the vector-valued function 
\[(z_1, \dots, z_n) \mapsto e^{\mathrm{i}z_1 H}\pi_\varphi(A_1)e^{\mathrm{i}(z_2-z_1)H}\pi_\varphi(A_2)\cdots e^{\mathrm{i}(z_n-z_{n-1})H}\pi_\varphi(A_n)\Omega_\varphi\]
is analytic in $I_\beta(n)_0$ and continuous, uniformly bounded by $\|A_1\|\cdots \|A_n\|$ on $\overline{I_\beta(n)_0}$. Thus, the following function $\mathcal{W}_{A_1, \dots, A_n}$ on $\overline{I_\beta(n)}$ is well defined:
\begin{align}\label{eq:W}
    &\mathcal{W}_{A_1, \dots, A_n}(z_1, \dots, z_n) \nonumber\\
    &:=\langle e^{\mathrm{i}z_{k+1} H}\pi_\varphi(A_{k+1})\cdots e^{\mathrm{i}(z_n-z_{n-1})H}\pi_\varphi(A_n)\Omega_\varphi, e^{-\mathrm{i}z_k H}\pi_\varphi(A_k^*)\cdots e^{-\mathrm{i}(z_2-z_1)H}\pi_\varphi(A_1^*)\Omega_\varphi\rangle
\end{align}
for any $(z_1, \dots, z_n)\in \overline{I_\beta(n)_k}$. Moreover, $\mathcal{W}_{A_1, \dots, A_n}$ is analytic in $I_\beta(n)$ and continuous, uniformly bounded by $\|A_1\|\cdots \|A_n\|$ on $\overline{I_\beta(n)}$. By definition, for any $(t_1, \dots, t_n)\in \mathbb{R}^n$, we have
\[\mathcal{W}_{A_1, \dots, A_n}(t_1, \dots, t_n)=\varphi(\alpha_{t_1}(A_1)\cdots \alpha_{t_n}(A_n)).\]

Let us recall that $A\in \mathfrak{A}$ is said to be \emph{entire analytic for $\alpha$} if the mapping $t\in \mathbb{R}\mapsto \alpha_t(A)\in \mathfrak{A}$ extends to the whole complex region $\mathbb{C}$, and $\omega(\alpha_z(A))$ is analytic in $z\in \mathbb{C}$ for any $\omega\in \mathfrak{A}^*$. If $A_1, \dots, A_n\in \mathfrak{A}$ be entire analytic for $\alpha$, then, we have
\begin{equation}\label{eq:analytic}
    \mathcal{W}_{A_1, \dots, A_n}(z_1, \dots, z_n) = \varphi(\alpha_{z_1}(A_1)\cdots \alpha_{z_n}(A_n))
\end{equation}
holds for any $(z_1, \dots, z_n)\in I_n(\beta)$. 

\begin{remark}\label{rem:approximation}
    It is known that the set of entire analytic elements for $\alpha$ in the unit ball of $\mathfrak{A}$ is weakly dense in the unit ball of $\mathfrak{A}$ (see \cite[Proposition 2.5.22]{BR1}). Moreover, by the Hahn--Banach separation theorem, the former is norm dense in the unit ball of $\mathfrak{A}$. Thus, for any $A_1, \dots, A_n\in \mathfrak{A}$, there exist sequences $(A_{i, k})_{k=1}^\infty$ of entire analytic elements for $\alpha$ such that $\|A_{i, k}\|\leq \|A_i\|$ and $\lim_{k\to \infty}A_{i, k}=A_i$ in the norm topology. By \cite[Theorem 2.2]{KL81} and Equation \eqref{eq:analytic}, we have 
    \[\mathcal{W}_{A_1, \dots, A_n}(z_1, \dots, z_n)=\lim_{k\to\infty} \varphi(\alpha_{z_1}(A_{1, k})\cdots \alpha_{z_n}(A_{n, k}))\]
    for every $(z_1, \dots, z_n)\in I_n(\beta)$.
\end{remark}

By the KMS condition, for any $A, B\in \mathfrak{A}$, we have
\begin{equation}\label{eq:beta_half}
    \mathcal{W}_{A, B}(-\mathrm{i}\beta/2, \mathrm{i}\beta/2)=\varphi(BA).
\end{equation}

We assume that $\varphi$ is a $\alpha$-ground state. By \cite[Proposition 5.3.19]{BR2}, the associated self-adjoint operator $H$ on $\mathcal{H}_\varphi$ is positive. Thus, $e^{\mathrm{i}z H}$ is bounded and $\|e^{\mathrm{i}zH}\|\leq 1$ for any $z\in \mathbb{C}$ with $\mathrm{Im}(z)>0$. Thus, for any $A_1, \dots, A_n\in \mathfrak{A}$,
\[\mathcal{W}_{A_1, \dots, A_n}(z_1, \dots, z_n):=\langle e^{\mathrm{i}z_1 H}\pi_\varphi(A_1)e^{\mathrm{i}(z_2-z_1) H}\pi_\varphi(A_2)\cdots e^{\mathrm{i}(z_n-z_{n-1})}\pi_\varphi(A_n)\Omega_\varphi, \Omega_\varphi\rangle\]
is well define and continuous for any $(z_1, \dots, z_n)\in \overline{I_\infty(n)_0}$. Moreover, $\mathcal{W}_{A_1, \dots, A_n}$ is bounded above by $\|A_1\|\cdots \|A_n\|$. If $A_1, \dots, A_n$ are entire analytic, then Equation \eqref{eq:analytic} holds true.

Let $\beta\in \mathbb{R}_{>0}\cup \{\infty\}$ and $[-\beta/2, \beta/2]^n_\leq :=\{(t_1\leq \dots\leq t_n)\in [-\beta/2, \beta/2]^n\}$ for every $n\geq 1$. We define
\begin{equation}\label{eq:S_func_from_KMS}
    \mathcal{S}_{A_1, \dots, A_n}(t_1, \dots, t_n):=\mathcal{W}_{A_1, \dots, A_n}(\mathrm{i}t_1, \dots, \mathrm{i}t_n)
\end{equation}
for every $(t_1, \dots, t_n)\in [-\beta/2, \beta/2]^n_\leq$ and $A_1, \dots, A_n\in \mathfrak{A}$.

\medskip
Let $\mathfrak{D}$ be a commutative $C^*$-algebra with common unit $1\in \mathfrak{A}$. The following notion was introduced by Klein and Landau \cite{KL81}.

\begin{definition}
    A $\alpha$-KMS state $\varphi$ on $\mathfrak{A}$ at $\beta$ is said to be \emph{stochastically positive} with respect to a commutative $C^*$-subalgebra $\mathfrak{D}$ if
    $\mathcal{S}_{A_1, \dots, A_n}(t_1, \cdots, t_n)\geq 0$ holds for any $A_1, \dots, A_n\in \mathfrak{D}_+$, $(t_1, \cdots, t_n)\in [-\beta/2, \beta/2]^n_\leq$, and $n\geq 1$. In this case, the quadruplet $(\mathfrak{A}, \mathfrak{D}, \alpha, \varphi)$ is called a \emph{stochastically positive KMS system}.
\end{definition}

\begin{remark}
    In this paper, we have slightly relaxed the condition for stochastically positive KMS systems compared to the original paper \cite{KL81}. However, it is sufficient for our purpose, which is to construct stochastic processes.
\end{remark}

We give a few properties of $\{\mathcal{S}_{A_1, \dots, A_n}\}_{A_1, \dots, A_n\in \mathfrak{A}}$, which are related to properties of the associated stochastic process given in Theorem \ref{thm:stochastic_process}.
\begin{lemma}\label{lem:properties_S}
    Let $A_1, \dots, A_n\in \mathfrak{A}$ and $(t_1, \dots, t_n)\in [-\beta/2, \beta/2]^n_\leq$. The following holds true:
    \begin{enumerate}
        \item (stationarity) If $s\in \mathbb{R}$ and $(t_{k+1}+s-\beta, \dots, t_n+s-\beta, t_1+s, \dots, t_k+s)\in [-\beta/2, \beta/2]^n_\leq $ holds, then we have
        \[\mathcal{S}_{A_{k+1}, \dots, A_n, A_1, \dots, A_k}(t_{k+1}+s-\beta, \dots, t_n+s-\beta, t_1+s, \dots, t_n+s)=\mathcal{S}_{A_1, \dots, A_n}(t_1, \dots, t_n).\]
        \item (symmetry) $\overline{\mathcal{S}_{A_n^*, \dots, A_1^*}(-t_n, \dots, -t_1)}=\mathcal{S}_{A_1, \dots, A_n}(t_1, \dots, t_n)$ holds.
        \item (OS(Osterwalder--Schrader) positivity) Let $t_1\geq 0$. For any $m\geq 1$, $A_{k, 1}, \dots, A_{k, n}\in \mathfrak{A}$ ($k=1, \dots, m$), and $c_1, \dots, c_m\in \mathbb{C}$, we have 
        \[\sum_{k, l=1}^m \overline{c_k}c_l\mathcal{S}_{A_{k, n}^*, \dots, A_{k, 1}^*, A_{l, 1}, \dots, A_{l, n}}(-t_n, \dots, -t_1, t_1, \dots, t_n)\geq 0.\]
    \end{enumerate}
\end{lemma}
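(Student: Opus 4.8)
The plan is to prove each of the three properties by reducing, via the density argument of Remark \ref{rem:approximation}, to the case where $A_1, \dots, A_n$ are entire analytic elements for $\alpha$, so that the clean formula \eqref{eq:analytic} applies and $\mathcal{S}_{A_1, \dots, A_n}(t_1, \dots, t_n) = \varphi(\alpha_{\mathrm{i}t_1}(A_1)\cdots \alpha_{\mathrm{i}t_n}(A_n))$. Since each $\mathcal{W}$ is continuous and uniformly bounded on $\overline{I_\beta(n)}$ and the density approximation converges by \cite[Theorem 2.2]{KL81}, all three identities (two equalities and one inequality) pass to the limit. The real content is therefore to verify each property for entire analytic elements, where I may manipulate $\alpha_{\mathrm{i}t}$ freely as analytic continuations of the flow.

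For \emph{stationarity}, I would use that $\varphi$ is a $\alpha$-KMS state at $\beta$, which (together with analyticity) encodes a cyclic invariance under shifting imaginary times by $\mathrm{i}\beta$: moving the first $k$ arguments past the others and across the strip of width $\beta$ corresponds exactly to the KMS boundary relation $F_{A,B}(t) = \varphi(A\alpha_t(B))$, $F_{A,B}(t+\mathrm{i}\beta)=\varphi(\alpha_t(B)A)$, applied iteratively. Concretely, $\alpha$-invariance of $\varphi$ gives the $\mathbb{R}$-shift $t_j \mapsto t_j + s$, while the KMS condition supplies the wrap-around $t_j \mapsto t_j - \beta$ for the first block of indices; combining these yields the stated reindexing. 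For \emph{symmetry}, I would take the complex conjugate of $\varphi(\alpha_{\mathrm{i}t_1}(A_1)\cdots \alpha_{\mathrm{i}t_n}(A_n))$ using that $\varphi(X)^* = \varphi(X^*)$ and that $\alpha_z$ is a $*$-homomorphism with $\alpha_z(A)^* = \alpha_{\bar z}(A^*)$; since $\overline{\mathrm{i}t_j} = -\mathrm{i}t_j$, reversing the product order produces $\varphi(\alpha_{-\mathrm{i}t_n}(A_n^*)\cdots \alpha_{-\mathrm{i}t_1}(A_1^*))$, which is precisely $\mathcal{S}_{A_n^*,\dots,A_1^*}(-t_n,\dots,-t_1)$.

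For \emph{OS positivity}, I would work directly in the GNS representation rather than with the symbolic formula. Write $\Psi_k := e^{-t_1 H}\pi_\varphi(A_{k,1})\cdots e^{-(t_n - t_{n-1})H}\pi_\varphi(A_{k,n})\Omega_\varphi$, using that $e^{\mathrm{i}(\mathrm{i}t)H} = e^{-tH}$ and that $t_1 \geq 0$ guarantees all the intermediate imaginary-time steps land in the region where the vector-valued function \eqref{eq:W} is defined and bounded. The key observation is that the mixed term indexed by $(k,l)$ in the double sum is exactly the inner product $\langle \Psi_l, \Psi_k\rangle$ — this is where I unfold the definition \eqref{eq:W} of $\mathcal{W}$ at the reflected argument $(-t_n,\dots,-t_1,t_1,\dots,t_n)$, whose reflection point sits at the origin and splits the word into a ``bra'' half (from the $A_{k}^*$'s) and a ``ket'' half (from the $A_l$'s). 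Then $\sum_{k,l}\overline{c_k}c_l\langle\Psi_l,\Psi_k\rangle = \big\|\sum_l c_l \Psi_l\big\|^2 \geq 0$.

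The \textbf{main obstacle} is the OS positivity step, specifically justifying that the reflected correlation $\mathcal{S}_{A_{k,n}^*,\dots,A_{k,1}^*,A_{l,1},\dots,A_{l,n}}(-t_n,\dots,-t_1,t_1,\dots,t_n)$ really equals $\langle \Psi_l,\Psi_k\rangle$ with the correct placement of adjoints and the correct identification of the reflection index $k$ in the piecewise definition \eqref{eq:W}. One must check that the argument $(-t_n,\dots,-t_1,t_1,\dots,t_n)$ lands in the closure $\overline{I_\beta(2n)_n}$ (the case $k = n$ in the stratification), so that the bra-side carries the first $n$ entries $A_{k,n}^*,\dots,A_{k,1}^*$ and the ket-side the last $n$ entries $A_{l,1},\dots,A_{l,n}$, matching \eqref{eq:W} verbatim. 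The condition $t_1 \geq 0$ is exactly what places the reflection at the symmetric point $0$ and keeps both halves in the analyticity strip; once this bookkeeping is settled the positivity is immediate from the Gram structure.
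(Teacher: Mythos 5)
Your proposal is correct and takes essentially the same route as the paper: reduce to entire analytic elements via Remark \ref{rem:approximation}, obtain stationarity from the KMS condition combined with $\alpha$-invariance of $\varphi$, symmetry from $\overline{\varphi(X)}=\varphi(X^*)$ together with $\alpha_z(A)^*=\alpha_{\bar z}(A^*)$, and OS positivity from a Gram/positivity structure. The only divergence is in how OS positivity is packaged: the paper stays at the algebraic level, setting $A:=\sum_{k}c_k\,\alpha_{\mathrm{i}t_1}(A_{k,1})\cdots\alpha_{\mathrm{i}t_n}(A_{k,n})$ and invoking $\varphi(A^*A)\geq 0$, whereas you unfold Equation \eqref{eq:W} at the reflected point and write the double sum as $\bigl\|\sum_l c_l\Psi_l\bigr\|^2$ in the GNS space --- the same argument, since $\Psi_k=\pi_\varphi\bigl(\alpha_{\mathrm{i}t_1}(A_{k,1})\cdots\alpha_{\mathrm{i}t_n}(A_{k,n})\bigr)\Omega_\varphi$ for entire analytic elements, though your formulation has the mild advantage of applying to arbitrary elements of $\mathfrak{A}$ without the density step.
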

\begin{proof}
    By Remark \ref{rem:approximation}, we may assume that $A_1, \dots, A_N \in \mathfrak{A}$ are entire analytic. Since $\varphi$ is a $\alpha$-KMS state at $\beta$ (see \cite[Proposition 5.3.7]{BR2}), we have 
    \begin{align*}
        &\mathcal{S}_{A_{k+1}, \dots, A_n, A_1, \dots, A_k}(t_{k+1}+s-\beta, \dots, t_n+s-\beta, t_1+s, \dots, t_n+s)\\
        &=\varphi(\alpha_{-\mathrm{i}\beta}(\alpha_{\mathrm{i}(t_{k+1}+s)}(A_{k+1})\cdots \alpha_{\mathrm{i}(t_n+s)}(A_n))\alpha_{\mathrm{i}(t_1+s)}(A_1)\cdots \alpha_{\mathrm{i}(t_k+s)}(A_k))\\
        &= \varphi(\alpha_{\mathrm{i}(t_1+s)}(A_1)\cdots \alpha_{\mathrm{i}(t_n+s)}(A_n))\\
        &=\mathcal{S}_{A_1, \dots, A_n}(t_1, \dots, t_n).
    \end{align*}
    For the last equation, we used $\alpha$-invariance of $\varphi$ (see \cite[Proposition 5.3.3]{BR2}). Next, we have 
    \begin{align*}
        \mathcal{S}_{A_n^*, \dots, A_1^*}(-t_n, \dots, -t_1)
        &= \varphi(\alpha_{-\mathrm{i}t_n}(A_n^*)\cdots \alpha_{-\mathrm{i}t_1}(A_1^*))\\
        &= \varphi((\alpha_{\mathrm{i}t_1}(A_1)\cdots \alpha_{\mathrm{i}t_n}(A_n))^*)\\
        &=\overline{\mathcal{S}_{A_1, \dots, A_n}(t_1, \dots, t_n)}.
    \end{align*}
    Finally, let $A:=\sum_{k=1}^mc_k \alpha_{\mathrm{i}t_1}(A_{k, 1})\cdots \alpha_{\mathrm{i}t_n}(A_{k, n})$, and we have 
    \[\sum_{k, l=1}^m \overline{c_k}c_l\mathcal{S}_{A_{k, n}^*, \dots, A_{k, 1}^*, A_{l, 1}, \dots, A_{l, n}}(-t_n, \dots, -t_1, t_1, \dots, t_n) = \varphi(A^*A)\geq 0.\]
\end{proof}

%%%%%%%%%%%%%%%%%%%%%%%%%%%%%%%%%%%%%%%%%%%%%%%%%%%%%%%%%%%%%%%%%%%%%%%%%%%%%%%%%%%%%%%%%%%%%%%%%%%%%%%%%%%%%%%%%%%%%%%%%%%%%%%%%%%%%%%%%%%%%%%%%%%%%%%%
\subsection{Stochastic processes from stochastically positive KMS systems}

We keep the same notations in the previous section. Let $E$ denote the Gelfand spectrum of $\mathfrak{D}$, i.e., $E$ is a unique compact space, up to homeomorphism, such that $C(E)\cong \mathfrak{D}$. In what follows, we freely identify $\mathfrak{D}$ with $C(E)$. Moreover, we assume that $E$ is second countable.

We say that two stochastic processes are \emph{equivalent} if these finite dimensional distributions are the same. We obtain a $E$-valued stochastic process if $\varphi$ is stochastically positive.

\begin{theorem}[{see also \cite[Theorem 6.1]{KL81}}]\label{thm:stochastic_process}
    Assume that $(\mathfrak{A}, \mathfrak{D}, \alpha, \varphi)$ is a stochastically positive KMS-system, where $\varphi$ is a $\alpha$-KMS state at $\beta\in \mathbb{R}_{>0}\cup\{\infty\}$. There exists a stochastic process $(X_t)_{t\in [-\beta/2, \beta/2]}$ on the spectrum $E$ of $\mathfrak{D}$ such that
    \begin{equation}\label{eq:time_dependent_correlation}
        \mathbb{E}[f_1(X_{t_1})\cdots f_n(X_{t_n})]=\mathcal{S}_{f_{t_1}, \dots, f_{t_n}}(t_1, \dots, t_n)
    \end{equation}
    for every $(t_1, \dots, t_n)\in [-\beta/2, \beta/2]^n_\leq$ and $f_1, \dots, f_n\in C(E)$. Moreover, the following holds:
    \begin{enumerate}
        \item (periodicity) $X_{-\beta/2}=X_{\beta/2}$ holds almost surely if $\beta<\infty$.
        \item (stationarity) $(\widetilde{X}_t)_{t\in \mathbb{R}}$ is a stationary process, where $\widetilde{X}_t:=X_{\tilde t}$ and $t\mapsto \tilde t$ denotes the canonical map from $\mathbb{R}$ to $\mathbb{R}/\beta\mathbb{Z}\cong [-\beta/2, \beta/2]$. Namely, $(\widetilde X_{t+s})_{t\in \mathbb{R}}$ is equivalent to $(\widetilde X_t)_{t\in \mathbb{R}}$ as stochastic processes for all $s\in \mathbb{R}$.
        \item (symmetry) $(X_{-t})_{t\in [-\beta/2, \beta/2]}$ is equivalent to $(X_t)_{t\in [-\beta/2, \beta/2]}$ as stochastic processes.
        \item (OS positivity) $\mathbb{E}[\overline{F(X_{-t_1}, \dots, X_{-t_n})} F(X_{t_1}, \dots, X_{t_n})]\geq 0$ holds for any $(t_1, \dots, t_n)\in [0, \beta/2]^n_\leq $, bounded measurable function $F$ on $E^n$, and $n\geq 1$.
    \end{enumerate}
\end{theorem}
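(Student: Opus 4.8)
The plan is to reduce the entire statement to the abstract construction of Theorem \ref{thm:construction_process}, applied to the compact second countable spectrum $E$, the interval $I=[-\beta/2,\beta/2]$, and the family $\{\mathcal{S}_{f_1,\dots,f_n}\}$ obtained by restricting the functions \eqref{eq:S_func_from_KMS} to $f_1,\dots,f_n\in\mathfrak{D}=C(E)$. The core of the argument is thus the verification of the five hypotheses of Theorem \ref{thm:construction_process} for this family. Once they hold, Theorem \ref{thm:construction_process} directly yields a process $(X_t)_{t\in[-\beta/2,\beta/2]}$ satisfying the correlation identity \eqref{eq:time_dependent_correlation}, and the four additional properties are then read off from Lemma \ref{lem:properties_S}.

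For the five axioms: multi-linearity follows since, on entire analytic elements, $\mathcal{W}_{A_1,\dots,A_n}=\varphi(\alpha_{z_1}(A_1)\cdots\alpha_{z_n}(A_n))$ by \eqref{eq:analytic}, which is manifestly $\mathbb{C}$-multilinear in the $A_i$; the general case passes through the approximation of Remark \ref{rem:approximation}. Boundedness with constant $C=1$ is exactly the uniform bound $\|A_1\|\cdots\|A_n\|$ already recorded for $\mathcal{W}$. Consistency holds because $\alpha_z$ and $\varphi$ fix the unit, so inserting $f_i=\mathbbm{1}_E$ deletes the $i$-th factor together with its time variable. Normalization is $\mathcal{S}_{\mathbbm{1}_E}(t)=\varphi(\alpha_{\mathrm{i}t}(\mathbbm{1}))=1$. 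Finally, positivity is literally the defining property of a stochastically positive KMS system.

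Properties (2)--(4) are then translations of Lemma \ref{lem:properties_S}, since finite-dimensional laws are determined by the $\mathcal{S}$-functions: stationarity on the circle $\mathbb{R}/\beta\mathbb{Z}$ corresponds to the cyclic-shift invariance in part (1); the symmetry $(X_{-t})\sim(X_t)$ corresponds to part (2), using $f^*=f$ and that $\mathcal{S}$ is real for real $f$, after commuting the product and re-sorting $-t_n\le\cdots\le-t_1$ into increasing order; and OS positivity corresponds to part (3) after expanding $F$ over product functions and ordering the $2n$ times $-t_n\le\cdots\le-t_1\le t_1\le\cdots\le t_n$. The one property demanding genuine work is periodicity. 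Here I would first observe $H\Omega_\varphi=0$ (apply $e^{\mathrm{i}tH}\pi_\varphi(\cdot)\Omega_\varphi=\pi_\varphi(\alpha_t(\cdot))\Omega_\varphi$ to $\mathbbm{1}$), so that $\mathcal{W}_A(z)=\langle\pi_\varphi(A)\Omega_\varphi,\Omega_\varphi\rangle=\varphi(A)$ is constant in $z$ for $n=1$; hence $\mathcal{S}_{fg}(\pm\beta/2)=\varphi(fg)$. Combining this with $\mathcal{S}_{f,g}(-\beta/2,\beta/2)=\mathcal{W}_{f,g}(-\mathrm{i}\beta/2,\mathrm{i}\beta/2)=\varphi(gf)$ from \eqref{eq:beta_half} gives, for real $f$, $\mathbb{E}[(f(X_{\beta/2})-f(X_{-\beta/2}))^2]=\varphi(f^2)-2\varphi(f^2)+\varphi(f^2)=0$; since $E$ is second countable, a countable separating family of $f$ forces $X_{-\beta/2}=X_{\beta/2}$ almost surely.

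The main obstacle I anticipate is bookkeeping rather than conceptual. One point is the clean handling of $\beta=\infty$: there the ground-state $\mathcal{W}$ is only defined on the forward domain $\overline{I_\infty(n)_0}$ and the index interval is unbounded, so the periodicity and circular-stationarity assertions must be interpreted (or suppressed) accordingly, and Theorem \ref{thm:construction_process} is then invoked on the appropriate half-line. The second is matching the cyclic reindexing modulo $\beta$ in the stationarity claim precisely with the index shift and the $\pm\beta$ wrap-around appearing in Lemma \ref{lem:properties_S}(1). The analytic heavy lifting---existence, continuity, and the uniform bound for $\mathcal{W}$---has already been carried out before the statement, so no new estimates are needed here.
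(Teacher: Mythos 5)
Your proposal is correct and follows essentially the same route as the paper: existence via Theorem \ref{thm:construction_process} (with the five axioms checked from the properties of $\mathcal{W}$ and the stochastic positivity assumption), periodicity by expanding $\mathbb{E}[|f(X_{-\beta/2})-f(X_{\beta/2})|^2]$ into $\mathcal{S}$-values and killing the cross terms with Equation \eqref{eq:beta_half} together with $\alpha$-invariance (your $H\Omega_\varphi=0$ observation is the same fact), and properties (2)--(4) read off from Lemma \ref{lem:properties_S}. The extra detail you supply (explicit axiom verification, the countable separating family on $E$) only fleshes out steps the paper leaves implicit.
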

\begin{proof}
    The existence of a stochastic process $(X_t)_{t\in [-\beta/2, \beta/2]}$ satisfying Equation \eqref{eq:time_dependent_correlation} follows from Theorem \ref{thm:construction_process}. We show the periodicity. For any $f\in C(E)$, we have
    \begin{align*}
    &\mathbb{E}[|f(X_{-\beta/2})-f(X_{\beta/2})|^2]\\
    &=\mathcal{S}_{f, f^*}(-\beta/2, -\beta/2)-\mathcal{S}_{f, f^*}(-\beta/2, \beta/2)-\mathcal{S}_{f^*, f}(-\beta/2, \beta/2)+\mathcal{S}_{f^*, f}(\beta/2, \beta/2)
    \end{align*}
    Since $\varphi$ is $\alpha$-invariant, $\mathcal{S}_{f, f^*}(-\beta/2, -\beta/2)=\varphi(ff^*)$ and $\mathcal{S}_{f^*, f}(\beta/2, \beta/2)=\varphi(f^*f)$ hold. By Equation \eqref{eq:beta_half}, we have $\mathcal{S}_{f. f^*}(-\beta/2, \beta/2)=\varphi(f^*f)$ and $\mathcal{S}_{f^*, f}(-\beta/2, \beta/2)=\varphi(ff^*)$. Thus, in conclusion, $\mathbb{E}[|f(X_{-\beta/2})-f(X_{\beta/2})|^2]=0$ holds for any $f\in C(E)$, and hence, $X_{-\beta/2}=X_{\beta/2}$ almost surely. 
    
    Other properties are immediately follows from Lemma \ref{lem:properties_S}.
\end{proof}

\medskip
Let $(\mathfrak{A}, \mathfrak{D}, \alpha, \varphi)$ be a stochastically positive KMS system and $X=(X_t)_{t\in [-\beta/2, \beta/2]}$ the associated $E$-valued stochastic process. By the quotient map $\mathbb{R}\to \mathbb{R}/\beta\mathbb{Z}\cong [-\beta/2, \beta/2]$, we also obtain the periodic stochastic process, denoted by the same symbol $(X_t)_{t\in \mathbb{R}}$. Here, we discuss the known result about its Markov property. See also \cite{KL81}.

Let $(Q, \mathcal{F}, \mathbb{P})$ be an underlying probability space of $X$ and $\mathcal{F}_I$ the $\sigma$-algebra on $Q$ generated by $X_t$ ($t\in I$) for any $I\subset \mathbb{R}$. Throughout this section, we always suppose that $\mathcal{F}=\mathcal{F}_{[-\beta/2, \beta/2]}$. By the monotone class theorem, the linear span of 
\[\{F(X_{t_1}, \dots, X_{t_n})\mid F \text{ is bounded measurable on } E^n, (t_1, \dots, t_n)\in I^n, n\geq 1\}\]
is dense $L^2(Q, \mathcal{F}_I, \mathbb{P})$ for any $I\subset \mathbb{R}$. Here, the function $F(X_{t_1}, \dots, X_{t_n})$ on $Q$ is defined by 
\[F(X_{t_1}, \dots, X_{t_n})(q):=F(X_{t_1}(q), \dots, X_{t_n}(q)) \quad (q\in Q).\]
The stationarity and symmetry induce two unitary operators $U(t)$ ($t\in \mathbb{R}$) and $R$ on $L^2(Q, \mathcal{F}, \mathbb{P})$:
\[U(t)F(X_{t_1}, \dots, X_{t_n}):=F(X_{t_1+t}, \dots, X_{t_n+t}), \quad RF(X_{t_1}, \dots, X_{t_n}):=F(X_{-t_1}, \dots, X_{-t_n})\]
for any bounded measurable function $F$ on $E^n$ and $(t_1, \dots, t_n)\in [-\beta/2, \beta/2]^n$. We call $U(t)$ ($t\in \mathbb{R}$) and $R$ the \emph{time-shift} and \emph{time-reversal} operators, respectively. By definition, we have $U(t)U(s)=U(t+s)$ and $R^2=I$. If $\beta<\infty$, then $U(t+\beta)=U(t)$ holds by the periodicity of $X$.

For any $I\subset \mathbb{R}$, we denote by $E_I$ the orthogonal projection onto $L^2(Q, \mathcal{F}_I, \mathbb{P})\subset L^2(Q, \mathcal{F}, \mathbb{P})$. We remark that $E_I$ is nothing but the conditional expectation with respect to $\mathcal{F}_I$, that is,
\[E_If = \mathbb{E}[f| \mathcal{F}_I]\]
holds for any $f\in L^2(Q, \mathcal{F}, \mathbb{P})$. Moreover, $U(t)E_I = E_{I+t} U(t)$ holds for all $t\in \mathbb{R}$.

First, we examine the periodic case ($\beta<\infty$). Here, the usual Markov property dose not make sense. Thus, we should consider the \emph{two-sided Markov property}. The stochastic process $X$ is said to be \emph{two-sided Markov} if 
\[E_{[t-\beta/2, t]}E_{[t, t+\beta/2]}=E_{\{t, t+\beta/2\}}\]
holds for any $t\in[-\beta/2, \beta/2]$. By the stationarity, $X$ is two-sided Markov if and only if the above equality holds for $t=0$, i.e., $E_{[-\beta/2, 0]}E_{[0, \beta/2]}=E_{\{0, \beta/2\}}$. By the periodicity, we have $RE_{\{0, \beta/2\}}=E_{\{0, \beta/2\}}=E_{\{0, \beta/2\}} R$. Thus, $X$ is two-sided Markov if and only if 
\[E_{[0, \beta/2]}RE_{[0, \beta/2]}=E_{\{0, \beta/2\}}\]
holds true (see \cite[Proposition 11.1]{KL81}).

Let $(\pi_\varphi, \mathcal{H}_\varphi, \Omega_\varphi)$ denote the GNS-triple associated with $\varphi$, and $H$ denotes the self-adjoint operator on $\mathcal{H}_\varphi$ so that $e^{\mathrm{i}t H}\pi_\varphi(A)\Omega_\varphi=\pi_\varphi(\alpha_t(A))\Omega_\varphi$ for all $t\in \mathbb{R}$. For any $f_1, \dots, f_n\in C(E)$, $(t_1, \dots, t_n)\in [0, \beta/2]^n_\leq$, and $n\geq 1$, we define 
\[V(f_1\otimes \cdots f_n)(X_{t_1}, \dots, X_{t_n}):=e^{-t_1 H}\pi_\varphi(f_1) e^{-(t_2-t_1) H}\pi_\varphi(f_2)\cdots e^{-(t_n-t_{n-1})H} \pi_\varphi(f_n)\Omega_\varphi.\]
By Equation \eqref{eq:time_dependent_correlation}, $V$ is a well-defined linear map $L^2(Q, \mathcal{F}_{[0, \beta/2]}, \mathbb{P})$ satisfying that 
\[\langle VF, VG\rangle=(RF, G) \quad (F, G\in L^2(Q, \mathcal{F}_{[0, \beta/2]}, \mathbb{P})).\] 
Here, $\langle\,,\,\rangle$ and $(\,,\,)$ denote the inner products of $\mathcal{H}_\varphi$ and $L^2(Q, \mathcal{F}_{[0, \beta/2]}, \mathbb{P})$, respectively. 

Let $\mathcal{H}_{\varphi, I}$ denote the closure of $VL^2(Q, \mathcal{F}_I, \mathbb{P})$ in $\mathcal{H}_\varphi$ for any $I\subset [0, \beta/2]$. The following characterization is known (see \cite[Theorem 11.2]{KL81}):
\begin{proposition}
    $X$ is two-sided Markov if and only if $\mathcal{H}_{\varphi, [0, \beta/2]}=\mathcal{H}_{\varphi, {\{0, \beta/2\}}}$.
\end{proposition}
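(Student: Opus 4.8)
The plan is to characterize the two-sided Markov property through the projection identity $E_{[0,\beta/2]}RE_{[0,\beta/2]}=E_{\{0,\beta/2\}}$ and then transport this to a statement about the subspaces $\mathcal{H}_{\varphi,I}\subset\mathcal{H}_\varphi$ using the intertwining map $V$. The crucial structural fact available to me is that $V$ satisfies $\langle VF,VG\rangle=(RF,G)$ for all $F,G\in L^2(Q,\mathcal{F}_{[0,\beta/2]},\mathbb{P})$. This means $V$ is essentially a ``square root'' of the time-reversal $R$: writing $V^*V$ for the operator on $L^2(Q,\mathcal{F}_{[0,\beta/2]},\mathbb{P})$ defined by $(V^*VF,G)=\langle VF,VG\rangle$, we have $V^*V=R|_{L^2(Q,\mathcal{F}_{[0,\beta/2]},\mathbb{P})}$. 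Since $R$ is a self-adjoint unitary with $R^2=I$, it is in particular positive only on its $+1$-eigenspace, so one must be careful: $V$ is not an isometry but an operator whose Gram form is $R$. The key point is that $\|VF\|^2=(RF,F)$, and this vanishes precisely when $RF=0$ in the appropriate sense, i.e. never for $F\ne0$ since $R$ is unitary — rather, the nondegeneracy of the form $(RF,G)$ guarantees that $V$ has trivial kernel and $\overline{\operatorname{ran}V}=\mathcal{H}_{\varphi,[0,\beta/2]}$.

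First I would establish the dictionary between subspaces on the two sides. For an interval $I\subset[0,\beta/2]$, the orthogonal projection in $\mathcal{H}_\varphi$ onto $\mathcal{H}_{\varphi,I}$ should correspond, under $V$, to the conditional expectation $E_I$ restricted to $L^2(Q,\mathcal{F}_{[0,\beta/2]},\mathbb{P})$. Concretely, I would show that for $F\in L^2(Q,\mathcal{F}_{[0,\beta/2]},\mathbb{P})$ the orthogonal projection of $VF$ onto $\mathcal{H}_{\varphi,I}$ equals $V(E_IF)$; this uses the relation $\langle VF,VG\rangle=(RF,G)$ together with the fact that $R$ commutes appropriately with the projections $E_{\{0,\beta/2\}}$ and, via the stationarity/symmetry structure, interacts predictably with $E_I$. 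The relation $\mathcal{H}_{\varphi,[0,\beta/2]}=\mathcal{H}_{\varphi,\{0,\beta/2\}}$ then translates into the statement that $V(L^2(Q,\mathcal{F}_{[0,\beta/2]},\mathbb{P}))$ is already contained in the closure of $V(L^2(Q,\mathcal{F}_{\{0,\beta/2\}},\mathbb{P}))$, which via the Gram form says that for every $F$ the vector $F-E_{\{0,\beta/2\}}F$ lies in the radical of the form $\langle V\cdot,V\cdot\rangle$ relative to $\mathcal{F}_{\{0,\beta/2\}}$.

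Next I would unwind both conditions into a single operator identity. For the forward direction, I would assume $X$ is two-sided Markov, i.e. $E_{[0,\beta/2]}RE_{[0,\beta/2]}=E_{\{0,\beta/2\}}$, and compute, for $F\in L^2(Q,\mathcal{F}_{[0,\beta/2]},\mathbb{P})$,
\[
\|VF-V(E_{\{0,\beta/2\}}F)\|^2=(R(F-E_{\{0,\beta/2\}}F),F-E_{\{0,\beta/2\}}F).
\]
Expanding the right-hand side and invoking $RE_{\{0,\beta/2\}}=E_{\{0,\beta/2\}}R=E_{\{0,\beta/2\}}$ together with the Markov identity should collapse everything to zero, forcing $VF\in\mathcal{H}_{\varphi,\{0,\beta/2\}}$ and hence $\mathcal{H}_{\varphi,[0,\beta/2]}=\mathcal{H}_{\varphi,\{0,\beta/2\}}$. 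For the converse, I would reverse the computation: the equality of subspaces gives $VF=V(E_{\{0,\beta/2\}}F)$ in $\mathcal{H}_\varphi$ (as the projection of $VF$ onto the smaller subspace must recover $VF$), and taking inner products against $VG$ for arbitrary $G$ recovers $(RF,G)=(RE_{\{0,\beta/2\}}F,G)=(E_{\{0,\beta/2\}}F,G)$, which upon restriction to $F,G\in L^2(Q,\mathcal{F}_{[0,\beta/2]},\mathbb{P})$ yields $E_{[0,\beta/2]}RE_{[0,\beta/2]}=E_{\{0,\beta/2\}}$, i.e. the Markov property.

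I expect the main obstacle to be the degeneracy of the Gram form $(RF,G)$: because $R$ is a symmetry rather than a positive operator, $V$ is not an isometry, and one cannot naively identify $\|VF\|=\|F\|$ or treat $V$ as unitary onto its range. The careful point is to verify that $VF=0$ genuinely forces $F$ to lie in the null space relevant to the Markov identity, and that the projection formula $P_{\mathcal{H}_{\varphi,I}}VF=V(E_IF)$ holds despite this degeneracy — this is where the interplay between $R$, the periodicity relation $RE_{\{0,\beta/2\}}=E_{\{0,\beta/2\}}$, and the positivity of $e^{-tH}$ (which ensures $V$ is well defined by Equation \eqref{eq:time_dependent_correlation} and controls the closures $\mathcal{H}_{\varphi,I}$) must be handled with precision. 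Once the projection formula is secured, both implications follow by the short inner-product computations sketched above, and the equivalence is the desired characterization from \cite[Theorem 11.2]{KL81}.
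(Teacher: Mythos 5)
Your proposal is correct and takes essentially the same route as the paper's proof: the Gram relation $\langle VF,VG\rangle=(RF,G)$ combined with $RE_{\{0,\beta/2\}}=E_{\{0,\beta/2\}}R=E_{\{0,\beta/2\}}$ gives the intertwining $P_{\mathcal{H}_{\varphi,\{0,\beta/2\}}}VF=V(E_{\{0,\beta/2\}}F)$, after which both implications reduce to the same short inner-product computations (your forward direction, expanding $\|V(F-E_{\{0,\beta/2\}}F)\|^2$ and collapsing it via the Markov identity, is just a norm-level repackaging of the paper's argument). Two asides in your write-up are wrong but harmless and should simply be dropped: the form $(RF,G)$ on $L^2(Q,\mathcal{F}_{[0,\beta/2]},\mathbb{P})$ is only positive semidefinite (that is OS positivity), so $V$ may genuinely have a kernel---injectivity is never needed, and $\overline{\operatorname{ran}V}=\mathcal{H}_{\varphi,[0,\beta/2]}$ holds by definition of $\mathcal{H}_{\varphi,[0,\beta/2]}$; likewise the projection formula for a general interval $I\subset[0,\beta/2]$ need not hold, since $R$ carries $\mathcal{F}_I$ to $\mathcal{F}_{-I}$, but the only case your argument uses is $I=\{0,\beta/2\}$, precisely where $RE_I=E_IR=E_I$ makes it valid.
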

\begin{proof}
    Let $e_{\{0, \beta\}}$ be the orthogonal projection onto $\mathcal{H}_{\varphi, \{0, \beta/2\}}\subset \mathcal{H}_{\varphi, [0, \beta/2]}$. We have 
    \[\langle VF, VG\rangle = (RF, G) =(E_{\{0, \beta/2\}}R F, G) =(RE_{\{0, \beta\}}F, G)=\langle VE_{\{0, \beta/2\}}F, VG\rangle\]
    for any $F\in L^2(Q, \mathcal{F}_{[0, \beta/2]}, \mathbb{P})$ and $G\in L^2(Q, \mathcal{F}_{\{0, \beta/2\}}, \mathbb{P})$. Hence, $e_{\{0, \beta\}}V=VE_{\{0, \beta\}}$ holds on $L^2(Q, \mathcal{F}_{[0, \beta/2]}, \mathbb{P})$. Moreover, for any $F, G\in L^2(Q, \mathcal{F}_{[0, \beta/2]}, \mathbb{P})$, we have 
    \[\langle e_{\{0, \beta/2\}} VF, VG\rangle = (RE_{\{0, \beta/2\}}F, G)= (E_{\{0, \beta/2\}}F, G).\]
    In particular, if $X$ is two-sided Markov, i.e., $E_{[0, \beta/2]}RE_{[0, \beta/2]}=E_{\{0, \beta/2\}}$ holds, then we have 
    \[\langle e_{\{0, \beta/2\}} VF, VG\rangle = (RF, G)=\langle VF, VG\rangle\]
    for any $F, G\in L^2(Q, \mathcal{F}_{[0, \beta/2]}, \mathbb{P})$. Namely, $\mathcal{H}_{\varphi, [0, \beta/2]}=\mathcal{H}_{\varphi, {\{0, \beta/2\}}}$ holds. 
    
    Conversely, if $\mathcal{H}_{\varphi, [0, \beta/2]}=\mathcal{H}_{\varphi, {\{0, \beta/2\}}}$, then we have 
    \[(RF, G)=\langle VF, VF\rangle =\langle e_{\{0, \beta/2\}}VF, VG\rangle =(E_{\{0, \beta/2\}}F, G)\]
    for any $F, G\in L^2(Q, \mathcal{F}_{[0, \beta/2]}, \mathbb{P})$, and it implies $E_{[0, \beta/2]}RE_{[0, \beta/2]}=E_{\{0, \beta/2\}}$.
\end{proof}

Next, we discuss the non-periodic ($\beta=\infty$) case. Let us recall that $X$ is said to be \emph{Markov} if 
\[E_{(-\infty, t]}E_{[t, \infty)}=E_t\]
holds for all $t\in \mathbb{R}$. Similarly to the above case, $X$ is Markov if and only if $E_{(-\infty, 0]}E_{[0, \infty)}=E_0$ since $X$ is stationary. Moreover, it holds if and only if $E_{[0, \infty)}RE_{[0, \infty)}=E_0$.

By the same argument, we obtain the following characterization of Markov property.
\begin{proposition}
    $X$ is Markov if and only if $\mathcal{H}_{\varphi, [0, \infty)}=\mathcal{H}_{\varphi, \{0\}}$.
\end{proposition}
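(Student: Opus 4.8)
The plan is to mimic the argument of the preceding proposition, replacing the finite half-period $[0,\beta/2]$ by the half-line $[0,\infty)$ and the endpoint set $\{0,\beta/2\}$ by the single time $\{0\}$. Recall that in the ground-state case ($\beta=\infty$) the GNS generator $H$ is positive by \cite[Proposition 5.3.19]{BR2}, so that $e^{-tH}$ is a contraction for every $t\geq 0$. Hence the formula
\[V(f_1\otimes\cdots\otimes f_n)(X_{t_1},\dots,X_{t_n}):=e^{-t_1H}\pi_\varphi(f_1)e^{-(t_2-t_1)H}\pi_\varphi(f_2)\cdots e^{-(t_n-t_{n-1})H}\pi_\varphi(f_n)\Omega_\varphi\]
still makes sense for $0\leq t_1\leq\cdots\leq t_n$, and by Equations \eqref{eq:S_func_from_KMS} and \eqref{eq:time_dependent_correlation} it defines a linear map $V\colon L^2(Q,\mathcal{F}_{[0,\infty)},\mathbb{P})\to\mathcal{H}_\varphi$ satisfying $\langle VF,VG\rangle=(RF,G)$, exactly as on the finite half-period. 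I would then set $\mathcal{H}_{\varphi,I}:=\overline{VL^2(Q,\mathcal{F}_I,\mathbb{P})}$ for $I\subset[0,\infty)$ and let $e_{\{0\}}$ denote the orthogonal projection onto $\mathcal{H}_{\varphi,\{0\}}\subset\mathcal{H}_{\varphi,[0,\infty)}$.

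The first computational step is the intertwining relation $e_{\{0\}}V=VE_{\{0\}}$ on $L^2(Q,\mathcal{F}_{[0,\infty)},\mathbb{P})$. Since time reversal fixes the time $0$, we have $RE_{\{0\}}=E_{\{0\}}R$, and $R$ acts as the identity on $L^2(Q,\mathcal{F}_{\{0\}},\mathbb{P})$. Therefore, for $F\in L^2(Q,\mathcal{F}_{[0,\infty)},\mathbb{P})$ and $G\in L^2(Q,\mathcal{F}_{\{0\}},\mathbb{P})$,
\[\langle VF,VG\rangle=(RF,G)=(E_{\{0\}}RF,G)=(RE_{\{0\}}F,G)=\langle VE_{\{0\}}F,VG\rangle,\]
which yields $e_{\{0\}}V=VE_{\{0\}}$. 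Using in addition that $RE_{\{0\}}=E_{\{0\}}$ (as $R$ is the identity on $L^2(Q,\mathcal{F}_{\{0\}},\mathbb{P})$), one obtains, for all $F,G\in L^2(Q,\mathcal{F}_{[0,\infty)},\mathbb{P})$,
\[\langle e_{\{0\}}VF,VG\rangle=\langle VE_{\{0\}}F,VG\rangle=(RE_{\{0\}}F,G)=(E_{\{0\}}F,G).\]

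From here both implications follow formally. If $X$ is Markov, then $E_{[0,\infty)}RE_{[0,\infty)}=E_0=E_{\{0\}}$, and since $E_{[0,\infty)}$ acts as the identity on $L^2(Q,\mathcal{F}_{[0,\infty)},\mathbb{P})$, the displayed identity gives $\langle e_{\{0\}}VF,VG\rangle=(RF,G)=\langle VF,VG\rangle$ for all $F,G$; as the range of $V$ is dense in $\mathcal{H}_{\varphi,[0,\infty)}$ and $e_{\{0\}}VF-VF\in\mathcal{H}_{\varphi,[0,\infty)}$, this forces $e_{\{0\}}VF=VF$, i.e. $\mathcal{H}_{\varphi,[0,\infty)}=\mathcal{H}_{\varphi,\{0\}}$. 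Conversely, if $\mathcal{H}_{\varphi,[0,\infty)}=\mathcal{H}_{\varphi,\{0\}}$ then $e_{\{0\}}$ is the identity on $\mathcal{H}_{\varphi,[0,\infty)}$, so $(RF,G)=\langle VF,VG\rangle=\langle e_{\{0\}}VF,VG\rangle=(E_{\{0\}}F,G)$ for all $F,G\in L^2(Q,\mathcal{F}_{[0,\infty)},\mathbb{P})$, which is precisely $E_{[0,\infty)}RE_{[0,\infty)}=E_0$, the Markov property.

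Each step is routine once $V$ is in hand, so I expect the main obstacle to lie in justifying that $V$ is well defined and bounded on the infinite half-line: unlike the periodic case the times $t_j$ are unbounded, so one must genuinely exploit the positivity of $H$ (the ground-state condition) to control the contractions $e^{-(t_{j+1}-t_j)H}$ and to guarantee, via Remark \ref{rem:approximation} and continuity, that $\langle VF,VG\rangle=(RF,G)$ extends from the cylinder generators to all of $L^2(Q,\mathcal{F}_{[0,\infty)},\mathbb{P})$.
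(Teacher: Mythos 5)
Your proposal is correct and is essentially the paper's own proof: the paper disposes of this proposition with the single line ``by the same argument'' (referring to the two-sided Markov case), and your argument is exactly that adaptation, including the intertwining relation $e_{\{0\}}V = VE_{\{0\}}$, the identity $\langle e_{\{0\}}VF, VG\rangle = (E_{\{0\}}F, G)$, and the use of the characterization $E_{[0,\infty)}RE_{[0,\infty)} = E_0$. Your closing concern about well-definedness of $V$ on the half-line is already handled by the paper's earlier observation that for a ground state the GNS generator $H$ is positive, so $\|e^{\mathrm{i}zH}\|\leq 1$ for $\mathrm{Im}(z)>0$.
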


Thus, we obtain a sufficient condition for (two-sided) Markov property:
\begin{corollary}\label{cor:suf_cond_Markov}
    Let us consider the following conditions:
    \begin{description}
        \item [periodic ($\beta<\infty$) case] $\{\pi_\varphi(f_1)e^{-\beta/2H}\pi_\varphi(f_2)\Omega_\varphi \mid f_1, f_2\in C(E) \}$ is dense in $\mathcal{H}_\varphi$,
        \item [non-periodic ($\beta=\infty$) case] $\{\pi_\varphi(f)\Omega_\varphi\mid f\in C(E)\}$ is dense in $\mathcal{H}_\varphi$.
    \end{description}
    These conditions implies the (two-sided) Markov property of $X$.
\end{corollary}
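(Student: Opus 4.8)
The plan is to reduce both assertions to the two preceding propositions, which characterize the (two-sided) Markov property by the coincidence of the subspaces $\mathcal{H}_{\varphi, [0,\beta/2]}$ and $\mathcal{H}_{\varphi, \{0,\beta/2\}}$ in the periodic case, and of $\mathcal{H}_{\varphi, [0,\infty)}$ and $\mathcal{H}_{\varphi, \{0\}}$ in the non-periodic case. The key observation is that the vectors appearing in the two density hypotheses are exactly the images under $V$ of the one- or two-point cylinder functions. Since $V$ satisfies $\langle VF, VG\rangle = (RF, G)$ with $R$ unitary, $V$ is a contraction, so $\mathcal{H}_{\varphi, I}$ coincides with the closure of $V$ applied to any dense subset of $L^2(Q, \mathcal{F}_I, \mathbb{P})$.

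First I would treat the non-periodic case. As $E$ is compact and $\mathbb{P}\circ X_0^{-1}$ is a Borel probability measure on it, $C(E)$ is dense in $L^2(Q, \mathcal{F}_{\{0\}}, \mathbb{P})\cong L^2(E, \mathbb{P}\circ X_0^{-1})$. Evaluating the definition of $V$ at the single time $t_1=0$ gives $Vf(X_0)=\pi_\varphi(f)\Omega_\varphi$, so $\mathcal{H}_{\varphi, \{0\}}$ equals the closed linear span of $\{\pi_\varphi(f)\Omega_\varphi\mid f\in C(E)\}$. The hypothesis says precisely that this span is dense in $\mathcal{H}_\varphi$, i.e.\ $\mathcal{H}_{\varphi, \{0\}}=\mathcal{H}_\varphi$. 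Since $\mathcal{H}_{\varphi, \{0\}}\subseteq \mathcal{H}_{\varphi, [0,\infty)}\subseteq \mathcal{H}_\varphi$, the sandwich forces $\mathcal{H}_{\varphi, [0,\infty)}=\mathcal{H}_{\varphi, \{0\}}$, and the corresponding proposition yields the Markov property.

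For the periodic case the argument is identical except that the relevant $\sigma$-algebra is generated by the pair $(X_0, X_{\beta/2})$. By Stone--Weierstrass, $C(E)\otimes C(E)$ is dense in $C(E\times E)$, which is in turn dense in $L^2(Q, \mathcal{F}_{\{0,\beta/2\}}, \mathbb{P})$; hence it suffices to consider tensor products $f_1\otimes f_2$. Evaluating the definition of $V$ at $(t_1,t_2)=(0,\beta/2)$ gives $V(f_1\otimes f_2)(X_0, X_{\beta/2})=\pi_\varphi(f_1)e^{-\beta/2\,H}\pi_\varphi(f_2)\Omega_\varphi$, so $\mathcal{H}_{\varphi, \{0,\beta/2\}}$ is the closed span of the vectors in the density hypothesis. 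Again this equals $\mathcal{H}_\varphi$ by assumption, and the inclusions $\mathcal{H}_{\varphi, \{0,\beta/2\}}\subseteq \mathcal{H}_{\varphi, [0,\beta/2]}\subseteq \mathcal{H}_\varphi$ force the equality required by the proposition, giving the two-sided Markov property.

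The argument is essentially a bookkeeping reduction to the Markov characterizations already established; the only point requiring care is the passage from $V$ applied to arbitrary $L^2$ functions to the closed span of the specific vectors, which is where the contractivity of $V$ together with the density of tensor products (Stone--Weierstrass) and of $C(E)$ in $L^2$ is used. I expect no serious obstacle beyond verifying these density facts carefully.
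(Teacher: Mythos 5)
Your proof is correct and follows exactly the route the paper intends: the corollary is stated there without proof as an immediate consequence of the two preceding propositions, and your reduction supplies precisely the missing bookkeeping. Identifying the hypothesis vectors as $V$ applied to the cylinder functions $f(X_0)$ and $(f_1\otimes f_2)(X_0, X_{\beta/2})$, invoking contractivity of $V$ together with density of $C(E)$ (resp.\ $C(E)\otimes C(E)$, via Stone--Weierstrass) in the relevant $L^2$ spaces, and then applying the sandwich $\mathcal{H}_{\varphi,\{0\}}\subseteq \mathcal{H}_{\varphi,[0,\infty)}\subseteq \mathcal{H}_\varphi$ (and its periodic analogue) is exactly the argument the paper's propositions are designed to feed.
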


\begin{remark}
    The above sufficient condition for (two-sided) Markov property suggests that the \emph{perfectness} of DPPs in \cite{Olshanski20} play a crucial role in the dynamical relationship between DPPs and quasi-free states on (gauge-invariant) CAR algebras. We will return to this point of view later. See Propositions \ref{prop:Markov_property}, \ref{prop:Markov_2}.
\end{remark}

\medskip
In the rest of this section, we discuss limit transitions of stochastically positive KMS states. Let $\varphi_N$ be an $\alpha$-KMS state on $\mathfrak{A}$ at $\beta_N>0$ for every $N\geq 1$. We define $T_N:=[-\beta_N/2, \beta_N/2]$ and $\mathcal{S}^{(N)}_{A_1, \dots, A_n}\in C({T_N}^n_\leq)$ for any $A_1, \dots, A_n\in \mathfrak{A}$ and $n\geq1$ by Equation \eqref{eq:S_func_from_KMS} with $\varphi_N$. Let $T:=[-\beta/2, \beta/2]$ for $\beta\in \mathbb{R}_{>0}\cup \{\infty\}$.

\begin{lemma}\label{lem:convergence}
    Let $\varphi$ be a state on $\mathfrak{A}$ so that $\varphi=\lim_{N\to\infty}\varphi_N$ in the weak${}^*$ topology and assume that $\beta_N\nearrow \beta\in \mathbb{R}\cup \{\infty\}$. Then, $\varphi$ is an $\alpha$-KMS state at $\beta$, and 
    \[\lim_{N\to\infty}\mathcal{S}^{(N)}_{A_1, \dots, A_n}(t_1, \dots, t_n)= \mathcal{S}_{A_1, \dots, A_n}(t_1, \dots, t_n)\]
    holds for any $A_1, \dots, A_n \in \mathfrak{A}$, $(t_1, \dots, t_n)\in T^n_\leq$, and $n\geq1$, where $\mathcal{S}_{A_1, \dots, A_n}$ is defined by Equation \eqref{eq:S_func_from_KMS} with $\varphi$, and $N$ is large enough so that $t_1,\dots, t_n\in T$. In particular, if $\varphi_N$ are stochastically positive with respect to $\mathfrak{D}$ for all $N\geq1$, then so is $\varphi$.
\end{lemma}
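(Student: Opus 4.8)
The plan is to establish the three assertions in turn: that $\varphi$ is an $\alpha$-KMS state at $\beta$, the convergence of the $\mathcal{S}$-functions, and --- as an immediate corollary of the second, since a pointwise limit of nonnegative functions is nonnegative --- stochastic positivity. Everything hinges on two $N$-uniform inputs: the bound $|\mathcal{W}^{(N)}_{A_1, \dots, A_n}| \le \|A_1\| \cdots \|A_n\|$, which holds on the whole domain independently of $\beta_N$, and the fact (Remark \ref{rem:approximation}) that the entire analytic elements for $\alpha$ are norm dense --- the \emph{same} dense set for every $N$, since the flow $\alpha$ does not depend on $N$.

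For the KMS property I would first fix an arbitrary $A \in \mathfrak{A}$ together with an \emph{entire analytic} $B$, and set $F^{(N)}_{A, B}(z) := \varphi_N(A \alpha_z(B))$, an entire function of $z$. The KMS condition for $\varphi_N$ makes its restriction to the strip $0 \le \mathrm{Im}(z) \le \beta_N$ a bounded analytic function, and the three-line theorem applied to its two boundary values $\varphi_N(A \alpha_t(B))$ and $\varphi_N(\alpha_t(B) A)$ bounds it there by $\|A\| \|B\|$. For each fixed $z$ the element $A \alpha_z(B)$ is a single, $N$-independent element of $\mathfrak{A}$, so weak${}^*$ convergence yields the pointwise limit $F^{(N)}_{A, B}(z) \to \varphi(A \alpha_z(B)) =: F_{A, B}(z)$, an entire function bounded by $\|A\| \|B\|$ on $0 \le \mathrm{Im}(z) \le \beta$ (on the whole upper half-plane when $\beta = \infty$, the strips exhausting it as $\beta_N \nearrow \infty$). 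Its lower boundary value is $\varphi(A \alpha_t(B))$ by construction. For $\beta < \infty$ the upper boundary value is pinned down by evaluating at $t + \mathrm{i}\beta_N$: on one side $F^{(N)}_{A, B}(t + \mathrm{i}\beta_N) = \varphi_N(\alpha_t(B) A) \to \varphi(\alpha_t(B) A)$, while on the other $F^{(N)}_{A, B}(t + \mathrm{i}\beta_N) = \varphi_N(A \alpha_{t + \mathrm{i}\beta_N}(B)) \to \varphi(A \alpha_{t + \mathrm{i}\beta}(B)) = F_{A, B}(t + \mathrm{i}\beta)$, the second limit using $\|\alpha_{t + \mathrm{i}\beta_N}(B) - \alpha_{t + \mathrm{i}\beta}(B)\| \to 0$ by norm-continuity of $z \mapsto \alpha_z(B)$. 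Hence $F_{A, B}$ realizes the KMS (for $\beta = \infty$, the ground-state) condition for the pair $(A, B)$.

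To reach an arbitrary $B \in \mathfrak{A}$ I would approximate $B$ in norm by entire analytic $B_k$ with $\|B_k\| \le \|B\|$; since $B_k - B_l$ is again entire analytic, the bound above gives $|F_{A, B_k}(z) - F_{A, B_l}(z)| = |F_{A, B_k - B_l}(z)| \le \|A\| \|B_k - B_l\|$ uniformly on the strip, so $(F_{A, B_k})_k$ converges uniformly to a continuous $F_{A, B}$, analytic inside the strip, with the correct boundary values by norm-continuity of $\alpha_t$. This verifies that $\varphi$ is an $\alpha$-KMS state at $\beta$ (a ground state when $\beta = \infty$) for all $A, B$. Granting this, the convergence of the $\mathcal{S}$-functions is short: for entire analytic $A_1, \dots, A_n$ and $(t_1, \dots, t_n) \in T^n_\le$ for which the relevant quantities are defined, Equation \eqref{eq:analytic} writes both $\mathcal{S}^{(N)}_{A_1, \dots, A_n}(t_1, \dots, t_n)$ and $\mathcal{S}_{A_1, \dots, A_n}(t_1, \dots, t_n)$ as the value of $\varphi_N$, resp. $\varphi$, on the single fixed element $\alpha_{\mathrm{i}t_1}(A_1) \cdots \alpha_{\mathrm{i}t_n}(A_n)$, so weak${}^*$ convergence settles this dense case; for general $A_i$ I would telescope the difference $\mathcal{S}^{(N)}_{A_1, \dots, A_n} - \mathcal{S}^{(N)}_{A_{1, k}, \dots, A_{n, k}}$ using multilinearity and the uniform bound $|\mathcal{S}^{(N)}_{A_1, \dots, A_n}| \le \|A_1\| \cdots \|A_n\|$, and conclude by a standard $\varepsilon/3$ interchange of the limits in $N$ and $k$. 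Finally, if each $\varphi_N$ is stochastically positive then $\mathcal{S}^{(N)}_{A_1, \dots, A_n}(t_1, \dots, t_n) \ge 0$ for $A_i \in \mathfrak{D}_+$, and the established convergence --- together with continuity of $\mathcal{S}$ up to the endpoints of $T$ --- forces $\mathcal{S}_{A_1, \dots, A_n}(t_1, \dots, t_n) \ge 0$, i.e.\ $\varphi$ is stochastically positive with respect to $\mathfrak{D}$.

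I expect the main obstacle to be the identification of the \emph{upper} boundary value in the finite-$\beta$ case, where the state $\varphi_N$ and the evaluation point $t + \mathrm{i}\beta_N$ move simultaneously; this is precisely where entire analyticity of $B$ is indispensable, since it supplies the norm-continuity of $z \mapsto \alpha_z(B)$ that lets one trade $\beta_N$ for $\beta$. The analogous subtle point when $\beta = \infty$ --- that the limit must be a ground state, i.e.\ have positive generator --- is handled painlessly here by propagating the $N$-uniform three-line bound across strips that fill out the entire upper half-plane.
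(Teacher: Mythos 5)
Your proposal is correct, and on the heart of the lemma---the convergence of the $\mathcal{S}$-functions and the inheritance of stochastic positivity---it is exactly the paper's argument: for entire analytic $A_i$, Equation \eqref{eq:analytic} turns $\mathcal{S}^{(N)}_{A_1,\dots,A_n}(t_1,\dots,t_n)$ into the value of $\varphi_N$ on the single fixed element $\alpha_{\mathrm{i}t_1}(A_1)\cdots\alpha_{\mathrm{i}t_n}(A_n)$, so weak${}^*$ convergence applies, and the general case follows by the telescoping/uniform-bound density argument of Remark \ref{rem:approximation}; positivity then passes to the pointwise limit. The one place you diverge is the KMS property of $\varphi$: the paper disposes of this in one line by citing \cite[Proposition 5.3.23]{BR2} (stability of the KMS condition under weak${}^*$ limits with $\beta_N\to\beta$, including the ground-state case $\beta=\infty$), whereas you reprove that proposition from scratch---the three-line bound on the strip for entire analytic $B$, the identification of $z\mapsto\varphi_N(A\alpha_z(B))$ with the KMS function of $\varphi_N$ (this rests on Schwarz-reflection uniqueness of the analytic continuation, essentially \cite[Proposition 5.3.7]{BR2}, and would be worth stating explicitly), the two-sided evaluation at $t+\mathrm{i}\beta_N$ to pin down the upper boundary value, and the uniform-Cauchy estimate $|F_{A,B_k}-F_{A,B_l}|\leq \|A\|\,\|B_k-B_l\|$ to reach general $B$. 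Your reconstruction is sound: the moving boundary is handled correctly because $\|\alpha_{t+\mathrm{i}\beta_N}(B)-\alpha_{t+\mathrm{i}\beta}(B)\|=\|\alpha_{\mathrm{i}\beta_N}(B)-\alpha_{\mathrm{i}\beta}(B)\|\to 0$ for entire analytic $B$, and the a priori boundedness needed for the three-line theorem holds since $\sup_{0\leq s\leq\beta_N}\|\alpha_{\mathrm{i}s}(B)\|<\infty$. What the citation buys the paper is brevity; what your in-line proof buys is a self-contained treatment that makes visible exactly where $\beta_N\nearrow\beta$ and entire analyticity enter.
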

\begin{proof}
    By \cite[Proposition 5.3.23]{BR2}, $\varphi$ is an $\alpha$-KMS state at $\beta$. By Equation \eqref{eq:analytic}, we have
    \[\lim_{N\to\infty}\mathcal{S}^{(N)}_{A_1, \dots, A_n}(t_1, \dots, t_n)= \mathcal{S}_{A_1, \dots, A_n}(t_1, \dots, t_n)\]
    for any $(t_1, \dots, t_n)\in T^n_\leq$ if $A_1, \dots, A_n\in \mathfrak{A}$ are entire analytic for $\alpha$. By the argument in Remark \ref{rem:approximation}, the statement holds true for all $A_1, \dots, A_n\in \mathfrak{A}$.
\end{proof}

We assume that $\varphi_N$ is stochastically positive with respect to $\mathfrak{D}$ for all $N\geq 1$, and $(X^{(N)}_t)_{t\in T_N}$ denotes the associated $E$-valued stochastic process by Theorem \ref{thm:stochastic_process}. In addition, by Lemma \ref{lem:convergence}, we obtain a $E$-valued stochastic process $(X_t)_{t\in T}$ associated with $\varphi$.

\begin{proposition}\label{prop:convergence}
    $(X^{(N)}_t)_{t\in T_N}$ converges to $(X_t)_{t\in T}$ as $N\to \infty$ in the following sense: for any $n\geq 1$ and $t_1\leq \cdots \leq t_n$, the distribution of $(X^{(N)}_{t_1}, \dots, X^{(N)}_{t_n})$ weakly converges to the distribution of $(X_{t_1}, \dots, X_{t_n})$, where $N$ is large enough that $t_1, \dots, t_n\in T_N$.
\end{proposition}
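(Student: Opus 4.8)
The plan is to establish the asserted finite-dimensional weak convergence by testing the finite-dimensional laws against the algebra of tensor-product functions, where the convergence is exactly the content of Lemma~\ref{lem:convergence}, and then to upgrade to all continuous test functions by a Stone--Weierstrass density argument. Fix $n\geq 1$ and $t_1\leq \cdots \leq t_n$ with $t_1,\dots,t_n\in T$, and restrict throughout to $N$ large enough that $t_1,\dots,t_n\in T_N$ (which is eventually the case for interior times, since $\beta_N\nearrow\beta$). Let $\mu^{(N)}$ and $\mu$ denote the distributions of $(X^{(N)}_{t_1},\dots,X^{(N)}_{t_n})$ and of $(X_{t_1},\dots,X_{t_n})$ on $E^n$, respectively; both are Borel probability measures. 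Since $E$ is compact and second countable, $E^n$ is compact metrizable, so weak convergence $\mu^{(N)}\to\mu$ is equivalent to $\int_{E^n} g\,d\mu^{(N)}\to \int_{E^n} g\,d\mu$ for every $g\in C(E^n)$.

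First I would verify this convergence for the tensor-product functions. For $f_1,\dots,f_n\in C(E)$ write $g=f_1\otimes\cdots\otimes f_n$ for the function $(x_1,\dots,x_n)\mapsto f_1(x_1)\cdots f_n(x_n)$. By Theorem~\ref{thm:stochastic_process} applied to $\varphi_N$ and to $\varphi$, together with the definition of $\mu^{(N)}$ and $\mu$ as laws, one has $\int_{E^n} g\,d\mu^{(N)}=\mathcal{S}^{(N)}_{f_1,\dots,f_n}(t_1,\dots,t_n)$ and $\int_{E^n} g\,d\mu=\mathcal{S}_{f_1,\dots,f_n}(t_1,\dots,t_n)$. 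Since $(t_1,\dots,t_n)\in T^n_\leq$ and each $f_i\in\mathfrak{D}$ under the identification $\mathfrak{D}\cong C(E)$, Lemma~\ref{lem:convergence} yields
\[
    \int_{E^n} g\,d\mu^{(N)} \longrightarrow \int_{E^n} g\,d\mu \qquad (N\to\infty).
\]
By linearity the same convergence holds for every element of the linear span $\mathcal{A}$ of such tensor products.

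To pass from $\mathcal{A}$ to all of $C(E^n)$ I would invoke the Stone--Weierstrass theorem: $\mathcal{A}$ is a unital $*$-subalgebra of $C(E^n)$ (it is closed under products and complex conjugation, and contains the constants) which separates the points of $E^n$, because $C(E)$ separates the points of $E$. Hence $\mathcal{A}$ is uniformly dense in $C(E^n)$. Given $g\in C(E^n)$ and $\varepsilon>0$, choose $h\in\mathcal{A}$ with $\|g-h\|_\infty<\varepsilon$. Since $\mu^{(N)}$ and $\mu$ are probability measures,
\[
    \left|\int_{E^n} g\,d\mu^{(N)} - \int_{E^n} g\,d\mu\right| \leq \left|\int_{E^n} h\,d\mu^{(N)} - \int_{E^n} h\,d\mu\right| + 2\|g-h\|_\infty,
\]
and letting $N\to\infty$ (the middle term vanishes by the previous paragraph) followed by $\varepsilon\to 0$ shows $\int_{E^n} g\,d\mu^{(N)}\to \int_{E^n} g\,d\mu$. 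This establishes the weak convergence $\mu^{(N)}\to\mu$, which is the claim.

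The argument is essentially routine once Lemma~\ref{lem:convergence} is available; the only mild points requiring attention are the tacit restriction to $N$ with $t_1,\dots,t_n\in T_N$ and the use of the ordering $(t_1,\dots,t_n)\in T^n_\leq$ so that Lemma~\ref{lem:convergence} applies verbatim. The genuine analytic input---the convergence $\mathcal{S}^{(N)}\to\mathcal{S}$ built from the weak${}^*$ convergence $\varphi_N\to\varphi$ and the approximation by entire analytic elements of Remark~\ref{rem:approximation}---has already been absorbed into Lemma~\ref{lem:convergence}, so I expect no further hard estimate to be needed here; the remaining work is purely the density upgrade above.
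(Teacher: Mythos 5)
Your proof is correct and follows essentially the same route as the paper: the paper's proof consists precisely of invoking Lemma~\ref{lem:convergence} to get $\lim_{N\to\infty}\mathbb{E}[f_1(X^{(N)}_{t_1})\cdots f_n(X^{(N)}_{t_n})]=\mathbb{E}[f_1(X_{t_1})\cdots f_n(X_{t_n})]$ for all $f_1,\dots,f_n\in C(E)$, leaving the upgrade to weak convergence implicit. Your Stone--Weierstrass density argument merely makes that implicit step explicit, so there is no substantive difference.
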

\begin{proof}
    By Lemma \ref{lem:convergence}, we have $\lim_{N\to\infty}\mathbb{E}[f_1(X^{(N)}_{t_1})\cdots f_n(X^{(N)}_{t_n})]=\mathbb{E}[f_1(X_{t_1})\cdots f_n(X_{t_n})]$ for any $f_1, \dots, f_n\in C(E)$, $t_1\leq \cdots \leq t_n$, and $n\geq1$.
\end{proof}

%%%%%%%%%%%%%%%%%%%%%%%%%%%%%%%%%%%%%%%%%%%%%%%%%%%%%%%%%%%%%%%%%%%%%%%%%%%%%%%%%%%%%%%%%%%%%%%%%%%%%%%%%%%%%%%%%%%%%%%%%%%%%%%%%%%%%%%%%%%%%%%%%
\section{Stochastically positive quasi-free states on CAR algebras}\label{sec:density_op_fermion_Fock_sp}

We already discussed our perspective to study DPPs on a discrete countable space $\mathfrak{X}$ in Section \ref{sec:DPP_CAR}. Namely, the CAR algebra $\mathfrak{A}(\mathfrak{X})$ determined by $\ell^2(\mathfrak{X})$ contains the commutative $C^*$-subalgebra $\mathfrak{D}(\mathfrak{X})$ isomorphic to $C(\mathcal{C}(\mathfrak{X}))$. Moreover, a quasi-free state on $\mathfrak{A}(\mathfrak{X})$ gives rise to a DPP on $\mathfrak{X}$ with the correlation kernel given by $(x, y)\in \mathfrak{X}\times \mathfrak{X}\mapsto \varphi(a^*_xa_y)$. In this section, we discuss quasi-free states on $\mathfrak{A}(\mathfrak{X})$ that are stochastically positive for $\mathfrak{D}(\mathfrak{X})$. By the previous results, they produce stationary processes with respect to the associated DPPs (see Section \ref{sec:6}).

\subsection{Quasi-free states and associated stochastic processes}

Let $H$ be a self-adjoint (possibly, unbounded) operator on $\ell^2(\mathfrak{X})$. We define 
\begin{equation}\label{eq:H_K}
    K_{H, \beta}:=e^{-\beta H}(1+e^{-\beta H})^{-1}
\end{equation} 
Since $K_{H, \beta}$ is positive contractive for all $\beta>0$, we obtain a quasi-free state $\varphi_{H, \beta}$ on $\mathfrak{A}(\mathfrak{X})$ such that $\varphi_{H, \beta}(a^*(h)a(k))=\langle K_{H, \beta} h, k\rangle$ for any $h, k\in \ell^2(\mathfrak{X})$. In particular, the associated determinantal point process, denoted by $\mathbb{P}_{H, \beta}$, has the correlation kernel $K_{H, \beta}(x, y):=\langle K_{H, \beta}\delta_x, \delta_y\rangle$. It is nothing but the so-called $L$-ensemble with $L=e^{-\beta H}$.

It is known that $\varphi_{H, \beta}$ is a $\alpha^H$-KMS state at $\beta$ (see \cite[Exapmle 5.3.2]{BR2}). Here, the $\mathbb{R}$-flow $\alpha^H\colon \mathbb{R}\curvearrowright \mathfrak{A}(\mathfrak{X})$ is defined by $\alpha^H_t(a(h)):=a(e^{\mathrm{i}t H} h)$ for any $h\in \ell^2(\mathfrak{X})$.

\begin{example}%\label{ex:density_operator}
    For all $t\in \mathbb{R}$, a unitary operator $\Gamma(e^{\mathrm{i}t H})$ on the anti-symmetric Fock space $\mathcal{F}_a(\ell^2(\mathfrak{X}))$ is defined by
    \[\Gamma(e^{\mathrm{i}t H})\Omega:=\Omega, \quad \Gamma(e^{\mathrm{i}t H})h_1\wedge \cdots \wedge h_n:=(e^{\mathrm{i}t H}h_1)\wedge \cdots \wedge (e^{\mathrm{i}t H}h_n)\]
    for any $h_1, \dots, h_n\in \ell^2(\mathfrak{X})$ and $n\geq 1$. By Stone's theorem, there exists a unique self-adjoint operator $d\Gamma(H)$ on $\mathcal{F}_a(\mathcal{H})$, called the \emph{second quantization} of $H$, such that $e^{\mathrm{i}t d\Gamma(H)}=\Gamma(e^{\mathrm{i}t H})$ for all $t\in \mathbb{R}$. If $e^{-\beta H}$ is trace class, then $e^{-\beta d\Gamma(H)}$ is also trace class on $\mathcal{F}_a(\ell^2(\mathfrak{X}))$ (see \cite[Proposition 5.2.22]{BR2}). Then, by \cite[Proposition 5.2.23]{BR2}, we obtain the quasi-free state $\varphi_{H, \beta}$ as follows
    \begin{equation}\label{eq:qf_state_density_op}
        \varphi_{H, \beta}(A):=\frac{\mathrm{Tr}(e^{-\beta d\Gamma(H)} A)}{\mathrm{Tr}(e^{-\beta d\Gamma(H)})} \quad (A\in \mathfrak{A}(\mathfrak{X})).
    \end{equation}
    Here, $\mathfrak{A}(\mathfrak{X})$ is identified with its Fock representation on $\mathcal{F}_a(\ell^2(\mathfrak{X}))$.
\end{example}

\begin{remark}\label{rem:limit_trans_projection}
    By \cite[Theorem VIII.5]{RS1}, we have $\lim_{\beta\to\infty}K_{H, \beta}=K_H^{(-)}$ strongly, where $K_H^{(-)}$ denotes the spectral projection onto $\{H < 0\}$. In this sense, the determinantal point process with correlation kernel $K_{H, \beta}$ can be regarded as a \emph{finite temperature} version of the determinantal point process with correlation kernel $K_H^{(-)}$.
\end{remark}

In what follows, $\mathcal{W}$ and $\mathcal{S}$ denote functions defined by \eqref{eq:W}, \eqref{eq:S_func_from_KMS} for $\varphi_{H, \beta}$.
\begin{lemma}\label{lem:qf_state_at_analytic}
    For any $h_1, \dots, h_n, k_1, \dots, k_n\in \ell^2(\mathfrak{X})$,
    \[\mathcal{S}_{a^*(h_1)a(k_1), \dots, a^*(h_n)a(k_n)}(t_1, \dots, t_n)=\det[B_{i, j}]_{i, j=1}^n\]
    holds, where $(t_1, \dots, t_n)\in [-\beta/2, \beta/2]^n_\leq$ and
    \[B_{i, j}:=\begin{dcases} \mathcal{S}_{a^*(h_i), a(k_j)}(t_i, t_j) & i\leq j, \\ -\mathcal{S}_{a(k_j), a^*(h_i)}(t_j, t_i) & i>j.
    \end{dcases}\]
\end{lemma}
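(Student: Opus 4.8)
The plan is to reduce the multi-time quantity $\mathcal{S}_{a^*(h_1)a(k_1),\dots,a^*(h_n)a(k_n)}(t_1,\dots,t_n)$ to a single \emph{time-ordered} expectation of a product of creation and annihilation operators, and then to exploit the determinantal (fermionic Wick) structure of the gauge-invariant quasi-free state $\varphi_{H,\beta}$. First I would assume that $h_1,\dots,h_n,k_1,\dots,k_n$ are analytic vectors for $H$ (e.g.\ with compactly supported spectral measure), so that each $A_i:=a^*(h_i)a(k_i)$ is entire analytic for $\alpha^H$ and both $\alpha^H_{\mathrm{i}t}(a^*(h))=a^*(e^{-tH}h)$ and $\alpha^H_{\mathrm{i}t}(a(h))=a(e^{-tH}h)$ hold. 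Writing $g_i:=e^{-t_iH}h_i$, $l_i:=e^{-t_iH}k_i$ and $K:=K_{H,\beta}$, Equation \eqref{eq:analytic} then gives
\[\mathcal{S}_{A_1,\dots,A_n}(t_1,\dots,t_n)=\varphi_{H,\beta}\Big(\prod_{i=1}^n a^*(g_i)a(l_i)\Big),\]
while the two-point entries become $\mathcal{S}_{a^*(h_i),a(k_j)}(t_i,t_j)=\varphi_{H,\beta}(a^*(g_i)a(l_j))=\langle K g_i,l_j\rangle$ (for $i\le j$) and $\mathcal{S}_{a(k_j),a^*(h_i)}(t_j,t_i)=\varphi_{H,\beta}(a(l_j)a^*(g_i))$ (for $i>j$); thus the claimed matrix is exactly $B_{ij}=\varphi_{H,\beta}(a^*(g_i)a(l_j))$ for $i\le j$ and $B_{ij}=-\varphi_{H,\beta}(a(l_j)a^*(g_i))$ for $i>j$.

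The heart of the argument is therefore the purely algebraic identity, valid for any gauge-invariant quasi-free state,
\[\varphi_{H,\beta}\Big(\prod_{i=1}^n a^*(g_i)a(l_i)\Big)=\det[C_{ij}]_{i,j=1}^n,\qquad C_{ij}=\begin{cases}\varphi_{H,\beta}(a^*(g_i)a(l_j)) & i\le j,\\ -\varphi_{H,\beta}(a(l_j)a^*(g_i)) & i>j.\end{cases}\]
I would prove this by induction on $n$ using the quasi-free contraction (Wick) recursion: expanding $\varphi_{H,\beta}(a^*(g_1)a(l_1)\cdots a^*(g_n)a(l_n))$ by contracting the leftmost factor $a^*(g_1)$, only the contractions with annihilation operators survive by gauge invariance, which yields $\varphi_{H,\beta}(\prod a^*(g_i)a(l_i))=\sum_{m=1}^n \langle K g_1,l_m\rangle\,\varphi_{H,\beta}(Q_m)$, where $Q_m=a(l_1)a^*(g_2)\cdots\widehat{a(l_m)}\cdots a^*(g_n)a(l_n)$ is obtained by deleting $a^*(g_1)$ and $a(l_m)$. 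Reordering $Q_m$ into the standard creation-before-annihilation form via the CAR and applying the induction hypothesis, this must be matched against the cofactor expansion $\det[C]=\sum_{m}(-1)^{1+m}\langle K g_1,l_m\rangle\det[C^{(1,m)}]$ of $\det[C_{ij}]$ along its first row.

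The main obstacle is precisely the sign bookkeeping in this last comparison: one must verify that reordering $Q_m$ produces exactly the factor $(-1)^{1+m}$ relative to the minor $C^{(1,m)}$, and that the surviving two-point functions reassemble into the correct entries $C_{ij}$ of the minor --- in particular that the case $i>j$ genuinely carries the extra minus sign coming from a single anticommutation $a(l)a^*(g)=\langle g,l\rangle 1-a^*(g)a(l)$ rather than from $\varphi_{H,\beta}$ itself. The base case $n=2$ already displays the mechanism: using the CAR and the defining determinantal identity one computes $\varphi_{H,\beta}(a^*(g_1)a(l_1)a^*(g_2)a(l_2))=\langle K g_1,l_1\rangle\langle K g_2,l_2\rangle+\langle(1-K)g_2,l_1\rangle\langle K g_1,l_2\rangle$, which equals $C_{11}C_{22}-C_{12}C_{21}$ precisely because $C_{21}=-\langle(1-K)g_2,l_1\rangle$. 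Finally, since $\mathcal{S}_{A_1,\dots,A_n}$ is multilinear and bounded by $\|A_1\|\cdots\|A_n\|$ in its operator arguments and each $B_{ij}$ depends norm-continuously on $h_i,k_j$, I would remove the analyticity assumption by approximating arbitrary $h_i,k_i\in\ell^2(\mathfrak{X})$ by analytic vectors (Remark \ref{rem:approximation}) and passing to the limit on both sides.
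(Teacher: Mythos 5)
Your overall route is the same as the paper's: reduce to entire analytic elements (Remark \ref{rem:approximation}), use Equation \eqref{eq:analytic} to rewrite the Schwinger function as $\varphi_{H,\beta}$ applied to a product of imaginary-time-evolved creation and annihilation operators, and then invoke a Wick-type determinant identity for gauge-invariant quasi-free states. The only structural difference lies in that last step: the paper invokes its Appendix Lemma \ref{lem:qf_st_calc}, which is proved by embedding $\mathfrak{A}(\mathfrak{X})$ into a self-dual CAR algebra and reducing a Pfaffian to a determinant using the vanishing of the gauge-variant blocks, whereas you propose to prove the identity directly by induction via Wick contractions and the CAR. That inductive argument is standard and sound (your $n=2$ check is correct, including the sign $C_{21}=-\langle(1-K)g_2,l_1\rangle$), so this substitution is legitimate, if longer than citing the appendix lemma; what the paper's route buys is that the same Pfaffian machinery also covers the genuinely Pfaffian (self-dual) setting, while yours is self-contained within $\mathfrak{A}(\mathfrak{X})$.

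There is, however, a concrete error in your reduction step: you assert $\alpha^H_{\mathrm{i}t}(a(h))=a(e^{-tH}h)$. This is false. Since $h\mapsto a(h)$ is conjugate linear, the entire analytic extension of $t\mapsto \alpha^H_t(a(k))=a(e^{\mathrm{i}tH}k)$ is $z\mapsto a(e^{\mathrm{i}\bar z H}k)$, so that $\alpha^H_{\mathrm{i}t}(a(k))=a(e^{+tH}k)$: creation and annihilation operators evolve with \emph{opposite} exponentials in imaginary time (the paper states this explicitly, and it is precisely what produces the kernel $e^{-(\beta-s+t)H}(1+e^{-\beta H})^{-1}$ depending on $s-t$ and makes the time ordering $t_i\leq t_j$ matter for boundedness). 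Consequently your definition $l_i:=e^{-t_iH}k_i$ should be $l_i:=e^{+t_iH}k_i$, and the displayed identities $\mathcal{S}_{A_1,\dots,A_n}(t_1,\dots,t_n)=\varphi_{H,\beta}\bigl(\prod_i a^*(g_i)a(l_i)\bigr)$ and $\mathcal{S}_{a^*(h_i),a(k_j)}(t_i,t_j)=\langle Kg_i,l_j\rangle$ are false as written. The error is self-consistent — the same wrong $l_j$ appears on both ends of your chain of equalities, and your purely algebraic Wick identity is valid for arbitrary vectors — so the architecture of the proof survives: replacing $e^{-t_iH}k_i$ by $e^{t_iH}k_i$ throughout repairs everything. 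But as written, the proof asserts incorrect intermediate equalities and must be corrected.
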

\begin{proof}
    By the same argument in Remark \ref{rem:approximation}, we may assume that $h_1, \dots, h_n, k_1, \dots, k_n$ are entire analytic for $(e^{\mathrm{i}t H})_{t\in \mathbb{R}}$. Thus, $a^*(h_1), \dots a^*(h_n), a(k_1), \dots, a(k_n)$ are also entire analytic for $\alpha^{H}$, and we have 
    \[\alpha^{H}_z(a^*(h_i))=a^*(e^{\mathrm{i}z H}h_i), \quad \alpha^{H}_z(a(k_j))=\alpha^{H}_{\bar z}(a^*(k_j))^*=a(e^{\mathrm{i}\bar z H}k_j)\]
    for any $z\in \mathbb{C}$. Hence, by Equation \eqref{eq:analytic} and Lemma \ref{lem:qf_st_calc}, we have 
    \begin{align*}
        &\mathcal{S}_{a^*(h_1)a(k_1), \dots, a^*(h_n)a(k_n)}(t_1, \dots, t_n) \\
        &= \varphi_{H, \beta}(\alpha_{\mathrm{i}t_1}(a^*(h_1)a(k_1)) \cdots \alpha^{H}_{\mathrm{i}t_n}(a^*(h_n)a(k_n)))\\
        &= \varphi_{H, \beta}(a^*(e^{-t_1 H}h_1)a(e^{t_1 H}k_1)\cdots a^*(e^{-t_n H}h_n)a(e^{t_nH}k_n)) \\
        &= \det[B_{i, j}]_{i, j=1}^n.
    \end{align*}
\end{proof}

We give a sufficient condition for the stochastic positivity of $\varphi_{H, \beta}$ with respect to $\mathfrak{D}(\mathfrak{X})$. Let us recall that $\rho_x:=a_x^*a_x$ for every $x\in \mathfrak{X}$.

\begin{lemma}\label{lem:cond_stoc_positivity_abst}
    $\varphi_{H, \beta}$ is stochastically positive with respect to $\mathfrak{D}(\mathfrak{X})$ if $\mathcal{S}_{\rho_{x_1}, \dots, \rho_{x_n}}(t_1, \dots, t_n)\geq 0$ holds for any $x_1, \dots, x_n\in \mathfrak{X}$, $(t_1, \dots, t_n)\in [-\beta/2, \beta/2]^n_\leq$, and $n\geq 1$.
\end{lemma}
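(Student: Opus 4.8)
The plan is to show that positivity of $\mathcal{S}$ on the generators $\rho_x$ propagates to every positive element of $\mathfrak{D}(\mathfrak{X})$ by realizing the candidate finite-dimensional distributions as genuine point processes on a space--time lattice. First I would fix $n\geq 1$ and $(t_1,\dots,t_n)\in[-\beta/2,\beta/2]^n_\leq$ and reduce the class of test elements. Since $\mathcal{W}$, and hence $\mathcal{S}$, is multilinear and bounded by $\|A_1\|\cdots\|A_n\|$, the functional $(A_1,\dots,A_n)\mapsto\mathcal{S}_{A_1,\dots,A_n}(t_1,\dots,t_n)$ is norm-continuous in each argument; as cylinder functions are dense in $\mathfrak{D}(\mathfrak{X})\cong C(\mathcal{C}(\mathfrak{X}))$ and a positive continuous function is a norm limit of positive cylinder functions, it suffices to prove positivity when each $A_i$ lies in the finite-dimensional subalgebra $\mathfrak{D}_S$ generated by $\{\rho_x:x\in S\}$ for a common finite $S\subset\mathfrak{X}$. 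The positive cone of $\mathfrak{D}_S\cong C(\{0,1\}^S)$ is spanned by the minimal projections $e_\eta=\prod_{x\in S}\rho_x^{\eta_x}(1-\rho_x)^{1-\eta_x}$ ($\eta\in\{0,1\}^S$), so by multilinearity the statement reduces to $\mathcal{S}_{e_{\eta_1},\dots,e_{\eta_n}}(t_1,\dots,t_n)\geq 0$.

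Next I would interpret these numbers probabilistically. Using the coincident-time collapse $\mathcal{S}_{\dots,\rho_x,\rho_{x'},\dots}(\dots,t,t,\dots)=\mathcal{S}_{\dots,\rho_x\rho_{x'},\dots}(\dots,t,\dots)$ (which follows from Equation \eqref{eq:analytic} and the approximation in Remark \ref{rem:approximation}), together with the consistency and normalization conditions, the values $\mathcal{S}_{\prod_{x\in T_1}\rho_x,\dots,\prod_{x\in T_n}\rho_x}(t_1,\dots,t_n)$ are exactly the candidate correlation functions of a point process on the finite space--time set $S\times\{1,\dots,n\}$, whereas the $\mathcal{S}_{e_{\eta_1},\dots,e_{\eta_n}}$ are its occupation probabilities; the two families are related by Möbius inversion, i.e. by inclusion--exclusion over occupied versus empty sites. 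By hypothesis every such correlation function is nonnegative, and by Lemma \ref{lem:qf_state_at_analytic} (with $\rho_x=a^*(\delta_x)a(\delta_x)$) they are determinantal, with an explicit space--time kernel assembled from the two-point functions $\mathcal{S}_{a^*(\delta_x),a(\delta_y)}$. The goal thereby becomes to deduce nonnegativity of the occupation probabilities from nonnegativity of these determinantal correlation functions.

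The main obstacle is precisely this last step: the inclusion--exclusion sums carry alternating signs, so nonnegativity of the correlation functions does not formally force nonnegativity of the occupation probabilities. Already at $n=2$ one has $\mathcal{S}_{1-\rho_x,1-\rho_y}(t_1,t_2)=1-K_{H,\beta}(x,x)-K_{H,\beta}(y,y)+\mathcal{S}_{\rho_x,\rho_y}(t_1,t_2)$, which mixes positive and negative contributions even though each summand is controlled by the hypothesis. This is the familiar gap between a correlation sequence and a genuine random point field. I would close it with a Lenard-type existence criterion for determinantal point processes on a discrete space (in the spirit of \cite{Lenard,Soshnikov} and the perfectness circle of ideas of \cite{Olshanski20}), using the determinantal form of the kernel from Lemma \ref{lem:qf_state_at_analytic} to control the signed sums: the determinantal structure expresses each occupation probability as a single (signed) minor, and the nonnegativity hypothesis supplies exactly the input needed for these minors to behave like those of a bona fide correlation kernel.

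Once the finite space--time distributions are shown to be genuine probability measures, their nonnegativity on the minimal projections $e_\eta$ is immediate, and unwinding the two reductions (minimal projections $\to$ positive cone of $\mathfrak{D}_S$, and $\mathfrak{D}_S\to\mathfrak{D}(\mathfrak{X})$ by density) yields $\mathcal{S}_{A_1,\dots,A_n}(t_1,\dots,t_n)\geq 0$ for all $A_i\in\mathfrak{D}(\mathfrak{X})_+$, which is exactly stochastic positivity with respect to $\mathfrak{D}(\mathfrak{X})$. I expect the determinantal-to-occupation-probability passage to be the only genuinely nontrivial point; the reductions themselves are routine continuity and linear-algebra arguments.
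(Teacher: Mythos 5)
Your two preliminary reductions are sound, and in fact more careful than the argument the paper gives: cutting down to a finite $S\subset\mathfrak{X}$ by multilinearity and boundedness, and describing the positive cone of $\mathfrak{D}_S\cong C(\{0,1\}^S)$ as the positive span of the minimal projections $e_\eta=\prod_{x\in S}\rho_x^{\eta_x}(1-\rho_x)^{1-\eta_x}$, is the right setup, and you correctly locate the crux in the passage from nonnegative correlation-type quantities $\mathcal{S}_{\rho_{x_1},\dots,\rho_{x_n}}$ to nonnegative occupation-type quantities $\mathcal{S}_{e_{\eta_1},\dots,e_{\eta_n}}$. The genuine gap is that your proposed repair of this step does not work. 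A ``Lenard-type existence criterion'' cannot convert nonnegativity of correlation functions into nonnegativity of the inclusion--exclusion sums: Lenard's positivity condition \emph{is} the nonnegativity of those alternating sums, i.e.\ it is the conclusion you need, not a tool for reaching it. Nor does the determinantal structure by itself close the gap: on a one-point space the $1\times 1$ ``kernel'' $K=2$ has all correlation minors nonnegative ($\rho^{(1)}=2$, and $\rho^{(n)}=0$ for $n\geq 2$ since rows repeat), yet the hole probability is $1-K=-1<0$. So nonnegativity of correlation-type minors of a kernel --- in particular of the non-Hermitian space-time kernel $R_{H,\beta}$ --- does not formally imply nonnegativity of the mixed particle--hole minors; any actual proof must exploit the specific structure of $\varphi_{H,\beta}$ (for instance $0\leq K_{H,\beta}\leq 1$ at equal times and the KMS/time-ordered form of $R_{H,\beta}$), which your outline never does. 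The proposal is therefore incomplete precisely at its decisive step.

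For comparison, the paper's own proof is much shorter and takes a different route: after the same equal-time collapse $\mathcal{S}_{\dots,A_jB,\dots}(\dots,t_j,\dots)=\mathcal{S}_{\dots,A_j,B,\dots}(\dots,t_j,t_j,\dots)$ that you use, it asserts that every nonnegative element of $C(\mathcal{C}(\mathfrak{X}))$ is a norm limit of \emph{positive} linear combinations of products of the $\rho_x$ alone, with no $(1-\rho_x)$ factors, so that the hypothesis applies directly. Be aware that this assertion is itself problematic: every function in that cone attains its supremum at the fully occupied configuration (each product $\prod_{x\in T}\rho_x$ does), so for example $1-\rho_x$ does not lie in its norm closure. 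In other words, the paper sidesteps, rather than resolves, the same particle--hole difficulty you identified. Your diagnosis of where the real work lies is correct, but neither your outline nor the density claim in the paper's proof contains an argument for that step.
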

\begin{proof}
    We remark that for any $A_1, \dots, A_n, B\in \mathfrak{A}(\mathfrak{X})$ and $j=1, \dots, n$
    \[\mathcal{S}_{A_1, \dots, A_jB, A_{j+1}, \dots, A_n}(t_1, \dots, t_n)= \mathcal{S}_{A_1, \dots, A_j, B, A_{j+1}, \dots, A_n}(t_1, \dots, t_j, t_j, \dots, t_n).\]
    Thus, by assumption, $\mathcal{S}_{A_1, \dots, A_n}(t_1, \dots, t_n)\geq 0$ holds when $A_1, \dots, A_n$ are products of $\{\rho_x\}_{x\in \mathfrak{X}}$. Since any nonnegative continuous functions on $\mathcal{C}(\mathfrak{X})$ can be approximated by linear combinations of products of $\{\rho_x\}_{x\in \mathfrak{X}}$ with positive coefficients, $\varphi_{H, \beta}$ is stochastically positive.
\end{proof}

We suppose that $e^{-\beta H}$ is trace class on $\ell^2(\mathfrak{X})$. Hence, $e^{-\beta L}$ is also trace class on $\mathcal{F}_a(\ell^2(\mathfrak{X}))$, where $L:=d\Gamma(H)$. By \cite[Theorem 17.5]{KL81}, we have
\begin{align}\label{eq:schwinger_func_trace}
     & \mathcal{S}_{a_1, \dots, a_n}(t_1, \dots, t_n) \nonumber                                                                                                            \\
     & =\frac{1}{\mathrm{Tr}(e^{-\beta L})}\mathrm{Tr}(e^{-(t_1+\beta/2)L} a_1 e^{-(t_2-t_1)L} a_2\cdots e^{-(t_n-t_{n-1})L} a_n e^{-(\beta/2 -t_n)L})
\end{align}
for any $(t_1, \dots, t_n)\in [-\beta/2, \beta/2]^n_\leq$ and $a_1, \dots, a_n\in \mathfrak{A}(\mathfrak{X})$.

In addition, we assume that $\mathfrak{X}$ is equipped with a linear order $\leq$ and fix the orthonormal basis $\{\Omega\}\cup \{\delta_{x_1}\wedge \cdots \wedge \delta_{x_n}\}_{x_1< \cdots < x_n; n\geq 1}$ for $\mathcal{F}_a(\ell^2(\mathfrak{X}))$. 
\begin{lemma}\label{lem:cond_stoc_positivity}
    $\varphi_{H, \beta}$ is stochastically positive with respect to $\mathfrak{D}(\mathfrak{X})$ if 
    \[\langle e^{-tL}\delta_{x_1}\wedge \cdots \wedge \delta_{x_n}, \delta_{y_1}\wedge \cdots \wedge \delta_{y_n}\rangle=\det[\langle e^{-t H}\delta_{x_i}, \delta_{y_j}\rangle]_{i, j=1}^n\geq 0\] 
    for any $x_1<\cdots<x_n$, $y_1<\cdots < y_n$ in $\mathfrak{X}$ and $0\leq t\leq \beta$.
\end{lemma}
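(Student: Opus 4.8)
The plan is to invoke Lemma \ref{lem:cond_stoc_positivity_abst}, which reduces the assertion to showing that $\mathcal{S}_{\rho_{x_1}, \dots, \rho_{x_n}}(t_1, \dots, t_n)\geq 0$ for all $x_1, \dots, x_n\in \mathfrak{X}$ and $(t_1, \dots, t_n)\in [-\beta/2, \beta/2]^n_\leq$. For this I would start from the trace formula \eqref{eq:schwinger_func_trace} with $a_j=\rho_{x_j}$ and $L=d\Gamma(H)$, so that $\mathcal{S}_{\rho_{x_1}, \dots, \rho_{x_n}}(t_1, \dots, t_n)=\mathrm{Tr}(e^{-\beta L})^{-1}\,\mathrm{Tr}(e^{-s_0 L}\rho_{x_1}e^{-s_1 L}\rho_{x_2}\cdots e^{-s_{n-1}L}\rho_{x_n}e^{-s_n L})$, where $s_0:=t_1+\beta/2$, $s_j:=t_{j+1}-t_j$ for $1\leq j\leq n-1$, and $s_n:=\beta/2-t_n$. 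The ordering $(t_1, \dots, t_n)\in [-\beta/2, \beta/2]^n_\leq$ guarantees exactly that all $s_j\geq 0$ and $\sum_{j=0}^n s_j=\beta$.

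The key computational step is to expand this trace in the orthonormal basis $\{\Omega\}\cup\{\delta_{x_1}\wedge\cdots\wedge\delta_{x_k}\}_{x_1<\cdots<x_k}$, which I index by finite subsets $S\subset\mathfrak{X}$ writing $v_S$ (with $v_\emptyset=\Omega$). The crucial point is that each number operator acts diagonally, $\rho_{x_j}v_S=\mathbbm{1}[x_j\in S]\,v_S$, i.e.\ as the projection onto configurations occupying $x_j$. Using cyclicity of the trace to merge the two boundary exponentials into $e^{-(s_n+s_0)L}$ and inserting resolutions of the identity, the $n$ diagonal projections collapse the adjacent summation indices, leaving a sum over configurations $S_1, \dots, S_n$ of the shape
\[\sum_{S_1, \dots, S_n}\Big(\prod_{j=1}^n\mathbbm{1}[x_j\in S_j]\Big)\Big(\prod_{j=1}^{n-1}\langle e^{-s_j L}v_{S_{j+1}}, v_{S_j}\rangle\Big)\langle e^{-(s_n+s_0)L}v_{S_1}, v_{S_n}\rangle.\]
Every matrix element here is nonnegative: since $e^{-sL}=\Gamma(e^{-sH})$ preserves particle number it vanishes unless $|S|=|S'|$, and when the cardinalities agree it equals $\det[\langle e^{-sH}\delta_a, \delta_b\rangle]_{a\in S, b\in S'}$, which is $\geq 0$ precisely by the hypothesis of the lemma (note that $0\leq s_j\leq\beta$ and $0\leq s_0+s_n\leq\beta$, so the hypothesis applies to each factor). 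Thus the displayed sum is a sum of products of nonnegative numbers, hence nonnegative; dividing by $\mathrm{Tr}(e^{-\beta L})>0$ preserves this, and Lemma \ref{lem:cond_stoc_positivity_abst} closes the argument.

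The main obstacle I anticipate lies entirely in the careful bookkeeping of the basis expansion: one must track how the $n$ diagonal projections collapse the inserted indices down to the $n$ free configurations $S_1, \dots, S_n$, and in particular handle the cyclic recombination of the two end exponentials $e^{-(t_1+\beta/2)L}$ and $e^{-(\beta/2-t_n)L}$ into the single factor $e^{-(s_n+s_0)L}$. I would also record that all operators involved are trace class, so that the cyclic rearrangement and the interchange of the trace with the basis sum are legitimate; this follows from the standing assumption that $e^{-\beta H}$, and hence $e^{-\beta L}$, is trace class on $\mathcal{F}_a(\ell^2(\mathfrak{X}))$, together with $s_j\geq 0$ and $\sum_j s_j=\beta$.
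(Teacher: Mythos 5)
Your proposal is correct and takes essentially the same approach as the paper's proof: reduce to $\mathcal{S}_{\rho_{x_1},\dots,\rho_{x_n}}\geq 0$ via Lemma \ref{lem:cond_stoc_positivity_abst}, apply the trace formula \eqref{eq:schwinger_func_trace} with $a_j=\rho_{x_j}$, and expand the trace in the wedge basis, where the $\rho_{x_j}$ act as diagonal projections and every matrix element of $e^{-sL}$ is a determinant that is nonnegative by the hypothesis. The only difference is cosmetic: the paper keeps the two boundary exponentials $e^{-(t_1+\beta/2)L}$ and $e^{-(\beta/2-t_n)L}$ separate rather than merging them by cyclicity, and leaves the basis-expansion bookkeeping implicit.
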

\begin{proof}
    We remark that $\langle \rho_x \delta_{x_1}\wedge \cdots \wedge \delta_{x_n}, \delta_{y_1}\wedge \cdots \wedge \delta_{y_n}\rangle$ is equal to 1 if $x\in \{x_1, \dots, x_n\}$ and $\{x_1, \dots, x_n\}=\{y_1, \dots, y_n\}$ for any $x\in \mathfrak{X}$; otherwise, it is equal to 0. Thus, by assumption,
    \[\mathrm{Tr}(e^{-(t_1+\beta/2)L}\rho_{x_1}e^{-(t_2-t_1)L} \rho_{x_2} \cdots e^{-(t_n-t_{n-1})L} \rho_{x_n}e^{-(\beta/2-t_n)L})\geq 0\]
    holds true for any $x_1, \dots, x_n\in \mathfrak{X}$ and $(t_1, \dots, t_n)\in [-\beta/2, \beta/2]^n_\leq$. Moreover, by Equation \eqref{eq:schwinger_func_trace} and Lemma \ref{lem:cond_stoc_positivity_abst}, $\varphi_{H, \beta}$ is stochastically positive.
\end{proof}

Let us return to the general setting ($e^{-\beta H}$ is not necessarily trace class). The following is our first main result:
\begin{theorem}\label{thm:dynamical_correlation}
    Assume that $\varphi_{H, \beta}$ is stochastically positive with respect to $\mathfrak{D}(\mathfrak{X})$ and let $(X^{(\beta)}_t)_{t\in [-\beta/2, \beta/2]}$ denote the associated stochastic process on $\mathcal{C}(\mathfrak{X})$ by Theorem \ref{thm:stochastic_process}. For any $n\geq 1$, $x_1, \dots, x_n\in \mathfrak{X}$, and $-\beta/2\leq t_1\leq \cdots \leq t_n\leq \beta/2$, we have
    \[\mathbb{P}[(X^{(\beta)}_{t_1})_{x_1}=1, \dots, (X^{(\beta)}_{t_n})_{x_n}=1]=\det[R_{H, \beta}(x_i, t_i; x_j, t_j)]_{i, j=1}^n,\]
    where
    \begin{equation}\label{eq:zero_temp_limit}
        R_{H, \beta}(x, t; y, s)= \begin{dcases} \langle e^{-(\beta-s+t)H}(1+e^{-\beta H})^{-1}\delta_x, \delta_y\rangle & t\leq s, \\ -\langle e^{-(t-s)H}(1+e^{-\beta H})^{-1}\delta_x, \delta_y\rangle & t>s.\end{dcases}
    \end{equation}
\end{theorem}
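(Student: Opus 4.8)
The plan is to read the left-hand side as a value of the function $\mathcal{S}$ attached to $\varphi_{H,\beta}$, and then to reduce the whole computation to two-point functions by invoking the determinantal identity already available for quasi-free states.

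\emph{Reduction to $\mathcal{S}$ at number operators.} Under the identification $\mathfrak{D}(\mathfrak{X})\cong C(\mathcal{C}(\mathfrak{X}))$, the operator $\rho_x=a_x^*a_x$ is exactly the characteristic function of $\{\omega\in\mathcal{C}(\mathfrak{X}) : \omega_x=1\}$. Hence $\prod_{i=1}^n\rho_{x_i}(X^{(\beta)}_{t_i})$ is the indicator of the event in question, and by the defining relation \eqref{eq:time_dependent_correlation} of the associated process,
\[
\mathbb{P}[(X^{(\beta)}_{t_1})_{x_1}=1,\dots,(X^{(\beta)}_{t_n})_{x_n}=1]=\mathcal{S}_{\rho_{x_1},\dots,\rho_{x_n}}(t_1,\dots,t_n).
\]
Writing $\rho_{x_i}=a^*(\delta_{x_i})a(\delta_{x_i})$ and applying Lemma \ref{lem:qf_state_at_analytic} with $h_i=k_i=\delta_{x_i}$, the right-hand side equals $\det[B_{i,j}]_{i,j=1}^n$, where $B_{i,j}=\mathcal{S}_{a^*(\delta_{x_i}),a(\delta_{x_j})}(t_i,t_j)$ for $i\le j$ and $B_{i,j}=-\mathcal{S}_{a(\delta_{x_j}),a^*(\delta_{x_i})}(t_j,t_i)$ for $i>j$. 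It then remains to identify each $B_{i,j}$ with $R_{H,\beta}(x_i,t_i;x_j,t_j)$.

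\emph{The two two-point functions.} I would evaluate the two types of entries using the $\alpha^H$-dynamics together with \eqref{eq:analytic}; since $\delta_x$ need not be entire analytic when $H$ is unbounded, this is carried out on the dense set of entire analytic vectors and extended by the approximation of Remark \ref{rem:approximation}, as in the proof of Lemma \ref{lem:qf_state_at_analytic}. From $\alpha^H_z(a^*(h))=a^*(e^{\mathrm{i}zH}h)$ and $\alpha^H_z(a(k))=a(e^{\mathrm{i}\bar zH}k)$ one gets, for $s\le t$,
\[
\mathcal{S}_{a^*(h),a(k)}(s,t)=\varphi_{H,\beta}\big(a^*(e^{-sH}h)\,a(e^{tH}k)\big)=\langle e^{tH}K_{H,\beta}e^{-sH}h,\,k\rangle,
\]
and, after using the anticommutation relation $a(v)a^*(u)=\langle u,v\rangle 1-a^*(u)a(v)$,
\[
\mathcal{S}_{a(k),a^*(h)}(s,t)=\varphi_{H,\beta}\big(a(e^{sH}k)\,a^*(e^{-tH}h)\big)=\langle e^{sH}(1-K_{H,\beta})e^{-tH}h,\,k\rangle.
\]
Since every operator here is a function of $H$, the simplifications $e^{tH}K_{H,\beta}e^{-sH}=e^{-(\beta-t+s)H}(1+e^{-\beta H})^{-1}$ and $1-K_{H,\beta}=(1+e^{-\beta H})^{-1}$ turn the first identity (with $s=t_i\le t=t_j$) into $R_{H,\beta}(x_i,t_i;x_j,t_j)$, matching the upper branch of \eqref{eq:zero_temp_limit}; the second identity (with $s=t_j\le t=t_i$) equals $-R_{H,\beta}(x_i,t_i;x_j,t_j)$, so that $B_{i,j}=-\mathcal{S}_{a(\delta_{x_j}),a^*(\delta_{x_i})}(t_j,t_i)$ reproduces the lower branch. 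The diagonal reduces to $\langle K_{H,\beta}\delta_{x_i},\delta_{x_i}\rangle$, consistently.

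\emph{Main obstacle.} The genuine difficulty is analytic rather than algebraic: $e^{\pm tH}$ is generally unbounded, so the displayed manipulations are literally valid only on the dense domain of entire analytic vectors, and the passage to arbitrary $\delta_x$ must be justified by the approximation and continuity arguments underlying \eqref{eq:analytic} and Lemma \ref{lem:qf_state_at_analytic}. A secondary point of care is the bookkeeping at coincident times: when $t_i=t_j$ the two branches of $R_{H,\beta}$ differ by the term $\langle\delta_{x_i},\delta_{x_j}\rangle$ coming from the anticommutator, which vanishes precisely when $x_i\ne x_j$; thus the determinantal formula holds as stated for distinct space-time points, with the diagonal entries encoding the static occupation probabilities $K_{H,\beta}(x_i,x_i)$.
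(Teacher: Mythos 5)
Your proposal is correct and follows essentially the same route as the paper's proof: reduce the occupation probability via Equation \eqref{eq:time_dependent_correlation} to $\mathcal{S}_{\rho_{x_1},\dots,\rho_{x_n}}(t_1,\dots,t_n)$, expand by Lemma \ref{lem:qf_state_at_analytic}, and identify the two-point entries $\mathcal{S}_{a^*(h),a(k)}$ and $\mathcal{S}_{a(k),a^*(h)}$ with the two branches of $R_{H,\beta}$ by computing on entire analytic vectors and extending by density, noting that the composite operators $e^{-(\beta-s+t)H}(1+e^{-\beta H})^{-1}$ and $e^{-(t-s)H}(1+e^{-\beta H})^{-1}$ are bounded for the relevant time orderings. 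Your additional bookkeeping at coincident times and the explicit identification of $\rho_x$ with the indicator function are consistent with, and slightly more detailed than, the paper's argument.
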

\begin{proof}
    The statement immediately follows Equation \eqref{eq:time_dependent_correlation} and Lemma \ref{lem:qf_state_at_analytic}. In fact, if $h, k\in \ell^2(\mathfrak{X})$ are entire analytic for $(e^{\mathrm{i}t H})_{t\in \mathbb{R}}$, then $a^*(h)$ and $a(k)$ are also entire analytic for $\alpha^{H}$, and for any $-\beta/2 \leq t\leq s\leq \beta/2$, we have 
    \[\mathcal{S}_{a^*(h), a(k)}(t, s)=\varphi_{H, \beta}( a^*(e^{-t H}h)a(e^{s H}k))=\langle e^{-(\beta-s+t)H}(1+e^{-\beta H})^{-1} h, k\rangle.\]
    Here, we remark that $e^{-(\beta-s+t)H}(1+e^{-\beta H})^{-1}$ is bounded since $0\leq s-t\leq \beta$. Similarly, for any $-\beta/2\leq s<t\leq \beta/2$, we have 
    \[\mathcal{S}_{a(k), a^*(h)}(s, t)=\varphi_{H, \beta}(a(e^{s H}k)a^*(e^{-t H}h))=\langle e^{-(t-s)H}(1+e^{-\beta H})^{-1} h, k\rangle.\]
    Since the space of entire analytic vectors for $(e^{\mathrm{i} t H})_{t\in \mathbb{R}}$ is dense, we obtain Equation \eqref{eq:zero_temp_limit}.
\end{proof}

For fixed $-\beta/2\leq t_1\leq \cdots t_n\leq \beta/2$, the theorem states that the point process $X^{(\beta)}_{t_1}\cup \cdots \cup X^{(\beta)}_{t_n}$ on $\mathfrak{X}\cup\cdots \cup \mathfrak{X}$ is determinantal with correlation kernel $R_{H; \beta}$. Such a statement is commonly known as the Eynard--Metha type theorem (see \cite{BR05}).

\medskip
An explicit expression of space-time correlation $R_{H,\beta}$ will be computed in Section \ref{sec:stoc_from_op}.

%%%%%%%%%%%%%%%%%%%%%%%%%%%%%%%%%%%%%%%%%%%%%%%%%%%%%%%%%%%%%%%%%%%%%%%%%%%%%%%%%%%%%%%%%%%%%%%%%%%%%%%%%%%%%%%%%%%%%%%%%%%%%%%%%%%%%%%%%%%%%%%%
\subsection{Limiting behavior as temperature tends to zero}\label{sec:zero_temp_limit}
Let us keep the same notations as in the previous section. As mentioned in Remark \ref{rem:limit_trans_projection}, we have 
\[\lim_{\beta \to \infty} K_{H, \beta}=K_{H}^{(-)}\]
strongly, where $K_{H}^{(-)}$ denotes the spectral projection onto $\{H<0\}$. Thus, as $\beta\to\infty$, the state $\varphi_{H, \beta}$ weakly converges to $\varphi_{K_{H}^{(-)}}$. 

\begin{proposition}\label{prop:dynam_corr_zero_temp}
    Assume that $\varphi_{H, \beta}$ is stochastically positive with respect to $\mathfrak{D}(\mathfrak{X})$ for all $\beta>0$. Then, so is $\varphi_{K_{H}^{(-)}}$. Moreover, the associated stochastic process $(X_t)_{-\infty < t< \infty}$ on $\mathcal{C}(\mathfrak{X})$ satisfies 
    \[\mathbb{P}[(X_{t_1})_{x_1}=1, \dots, (X_{t_n})_{x_n}=1]=\det[R_{H}(x_i, t_i; x_j, t_j)]_{i, j=1}^n\]
    for any $x_1, \dots, x_n\in \mathfrak{X}$ and $t_1\leq \cdots \leq t_n$, where 
    \[R_{H}(x, t; y, s):=\begin{dcases}\langle e^{(s-t)H}K_{H}^{(-)}\delta_x, \delta_y\rangle & t\leq s, \\ -\langle e^{-(t-s)H}(1-K_{H}^{(-)})\delta_x, \delta_y\rangle & t>s.\end{dcases}\]
\end{proposition}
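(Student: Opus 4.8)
The plan is to obtain $\varphi_{K_H^{(-)}}$ as the zero-temperature ($\beta\to\infty$) limit of the finite-temperature states $\varphi_{H,\beta}$ and to transport both the stochastic positivity and the determinantal formula through this limit, using Lemma~\ref{lem:convergence}, Proposition~\ref{prop:convergence}, and Theorem~\ref{thm:dynamical_correlation}. Concretely, I would fix a sequence $\beta_N\nearrow\infty$ and work with the states $\varphi_{H,\beta_N}$, each of which is stochastically positive with respect to $\mathfrak{D}(\mathfrak{X})$ by hypothesis.

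First I would verify that $\varphi_{H,\beta_N}\to\varphi_{K_H^{(-)}}$ in the weak${}^*$ topology, as already observed in the text preceding the statement. Since each $\varphi_{H,\beta_N}$ is quasi-free, its value on any monomial in the creation and annihilation operators is a determinant of two-point functions $\langle K_{H,\beta_N}h_i,k_j\rangle$; by Remark~\ref{rem:limit_trans_projection} we have $K_{H,\beta_N}\to K_H^{(-)}$ strongly, so these two-point functions converge, and hence $\varphi_{H,\beta_N}(A)\to\varphi_{K_H^{(-)}}(A)$ first on the dense $*$-subalgebra generated by the $a(h)$ and then, by uniform boundedness of states, on all of $\mathfrak{A}(\mathfrak{X})$. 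With $\beta_N\nearrow\infty$ and each $\varphi_{H,\beta_N}$ stochastically positive, Lemma~\ref{lem:convergence} identifies the weak${}^*$ limit $\varphi_{K_H^{(-)}}$ as an $\alpha^H$-KMS state at $\beta=\infty$ (a ground state) that is again stochastically positive with respect to $\mathfrak{D}(\mathfrak{X})$; applying Theorem~\ref{thm:stochastic_process} then produces the stationary process $(X_t)_{t\in\mathbb{R}}$. This settles the first assertion.

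For the determinantal formula I would pass to the limit in Theorem~\ref{thm:dynamical_correlation}. By Proposition~\ref{prop:convergence} the finite-dimensional distributions of $(X^{(\beta_N)}_t)$ converge weakly to those of $(X_t)$; applying this to the continuous (indeed clopen) indicators $\rho_{x_i}$ gives
\[
\mathbb{P}[(X_{t_1})_{x_1}=1,\dots,(X_{t_n})_{x_n}=1]
=\lim_{N\to\infty}\det[R_{H,\beta_N}(x_i,t_i;x_j,t_j)]_{i,j=1}^n .
\]
It then remains to compute $\lim_{\beta\to\infty}R_{H,\beta}=R_H$ entrywise. Writing the $t\le s$ entry of $R_{H,\beta}$ as a matrix element of $e^{(s-t)H}K_{H,\beta}$ and the $t>s$ entry as a matrix element of $-\,e^{-(t-s)H}(1+e^{-\beta H})^{-1}$, I would analyze the spectral functions $f_\beta(\lambda)=e^{(s-t)\lambda}e^{-\beta\lambda}(1+e^{-\beta\lambda})^{-1}$ and $g_\beta(\lambda)=e^{-(t-s)\lambda}(1+e^{-\beta\lambda})^{-1}$, show that for $\beta>|t-s|$ they are uniformly bounded by $1$ and that they converge pointwise (off $\lambda=0$) to $e^{(s-t)\lambda}\mathbbm{1}_{\{\lambda<0\}}$ and $e^{-(t-s)\lambda}\mathbbm{1}_{\{\lambda>0\}}$ respectively, and then invoke the functional-calculus convergence theorem (as in \cite[Theorem~VIII.5]{RS1}) to deduce strong operator convergence $f_\beta(H)\to e^{(s-t)H}K_H^{(-)}$ and $g_\beta(H)\to e^{-(t-s)H}(1-K_H^{(-)})$, hence convergence of the matrix elements to exactly $R_H$.

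The main obstacle is this final entrywise limit. Although the \emph{combined} operators $R_{H,\beta}$ are bounded, each carries an unbounded factor $e^{\pm(t-s)H}$, so one cannot simply multiply the strong limit of $(1+e^{-\beta H})^{-1}$ by $e^{\pm(t-s)H}$. The resolution is to keep the exponential factor inside the spectral function and check the uniform bound directly on $f_\beta$ and $g_\beta$ (using the elementary estimates $1+e^{-\beta\lambda}\ge 1$ on $\{\lambda>0\}$ and $1+e^{-\beta\lambda}\ge e^{-\beta\lambda}$ on $\{\lambda<0\}$), which is precisely what forces the restriction $\beta>|t-s|$ and makes the dominated-convergence argument for the functional calculus applicable.
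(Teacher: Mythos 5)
Your proposal is correct and follows essentially the same route as the paper's proof: stochastic positivity of $\varphi_{K_H^{(-)}}$ via Lemma \ref{lem:convergence}, entrywise strong convergence $R_{H,\beta}\to R_H$ via \cite[Theorem VIII.5]{RS1} (keeping the factor $e^{\pm(t-s)H}$ inside the spectral function, which is exactly the point behind the paper's remark that $e^{-(\beta-t)H}(1+e^{-\beta H})^{-1}$ is bounded for $t<\beta$), and transfer of the determinantal formula through Theorem \ref{thm:dynamical_correlation} and Proposition \ref{prop:convergence}. Your explicit uniform bounds on $f_\beta$ and $g_\beta$ merely spell out the details the paper leaves to the cited reference.
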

\begin{proof}
    By Lemma \ref{lem:convergence}, $\varphi_{K_{H}^{(-)}}$ is also stochastically positive. Moreover, by \cite[Theorem VIII.5]{RS1},
    \[\lim_{\beta\to\infty; t<\beta} e^{-(\beta-t)H}(1+e^{-\beta H})^{-1}=e^{t H}K_{H}^{(-)}\]
    in the strong operator topology for any $t\in \mathbb{R}$. Here, $e^{-(\beta-t)H}(1+e^{-\beta H})^{-1}$ is bounded when $t<\beta$. Similarly,
    \[\lim_{\beta\to \infty} e^{-t H}(1+e^{-\beta H})^{-1}= e^{-t H}(1-K_{H}^{(-)})\]
    strongly for any $t\geq 0$. Thus, by Theorem \ref{thm:dynamical_correlation} and Equation \eqref{eq:zero_temp_limit}, the assertion holds true.
\end{proof}

%%%%%%%%%%%%%%%%%%%%%%%%%%%%%%%%%%%%%%%%%%%%%%%%%%%%%%%%%%%%%%%%%%%%%%%%%%%%%%%%%%%%%%%%%%%%%%%%%%%%%%%%%%%%%%%%%%%%%%%%%%%%%%%%%%%%%%%%%%%%%%%%
\subsection{Limiting behavior by convergences of self-adjoint operators}\label{sec:LT_by_strong_res_conv}
Let $(H_N)_{N=1, 2, \dots}$ be a sequence of self-adjoint operators on $\ell^2(\mathfrak{X})$. We assume the following twofold:
\begin{itemize}
    \item $\varphi_{H_N, \beta}$ is stochastically positive with respect to $\mathfrak{D}(\mathfrak{X})$ for all $N\geq1$.
    \item the sequence $(H_N)_{N=1}^\infty$ converges to the self-adjoint operator $H$ in the strong resolvent sense, i.e., $\lim_{N\to\infty}(\lambda-H_N)^{-1}= (\lambda-H)^{-1}$ strongly for any $\lambda\in \mathbb{C}\backslash\mathbb{R}$.
\end{itemize}

By the first assumption and Theorem \ref{thm:stochastic_process}, we obtain the stochastic process $(X^{(N)}_t)_{t\in [-\beta/2, \beta/2]}$ from $(\mathfrak{A}(\mathfrak{X}), \mathfrak{D}(\mathfrak{X}), \alpha^{H_N}, \varphi_{H_N, \beta})$. Moreover, the second assumption and \cite[Theorem VIII.20]{RS1} imply that $K_{H_N, \beta}$ strongly converges to $K_{H, \beta}$ as $N\to \infty$, where $K_{H_N, \beta}$ and $K_{H, \beta}$ are defined by Equation \eqref{eq:H_K} for $H_N$ and $H$. Thus, $\lim_{N\to \infty}\varphi_{H_N, \beta}=\varphi_{H, \beta}$ in the weak${}^*$ topology. 

We denote by $\mathcal{S}^{(N)}$ and $\mathcal{S}$ the functions defined by Equation \eqref{eq:S_func_from_KMS} for $\varphi_{H_N, \beta}$ and $\varphi_{H, \beta}$, respectively. Here, we have to pay attention that $\alpha^{H_N}$ depends on $N$ as well as $\varphi_{H_N, \beta}$. Nevertheless, we obtain the following statement:

\begin{proposition}\label{prop:limit_by_strong_resolvent_convergence}
    $\varphi_{H, \beta}$ is stochastically positive with respect to $\mathfrak{D}(\mathfrak{X})$. Let $(X_t)_{t\in [-\beta/2, \beta/2]}$ be the associated stochastic process on $\mathcal{C}(\mathfrak{X})$. Then, $(X^{(N)}_t)_{t\in [-\beta/2, \beta/2]}$ converges to $(X_t)_{t\in [-\beta/2, \beta/2]}$ in the sense of Proposition \ref{prop:convergence}. Moreover, for any $x_1, \dots, x_n\in \mathfrak{X}$ and $-\beta/2\leq t_1\leq \cdots \leq t_n\leq \beta/2$
    \[\mathbb{P}[(X_{t_1})_{x_1}=1, \dots, (X_{t_n})_{x_n}=1]=\det[R_{H, \beta}(x_i, t_i; x_j, t_j)]_{i, j=1}^n\]
    holds, where $R_{H, \beta}$ is defined by Equation \eqref{eq:zero_temp_limit}.
\end{proposition}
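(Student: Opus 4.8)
The plan is to avoid a direct appeal to Lemma \ref{lem:convergence} and Proposition \ref{prop:convergence}, which are formulated for a \emph{single} fixed flow $\alpha$, and instead to read everything off the explicit determinantal structure of the quasi-free Schwinger functions, transported to the limit by functional calculus. The $N$-dependence of the flow $\alpha^{H_N}$ is the only genuinely new feature, and it is controlled precisely because each $\alpha^{H_N}$ enters the relevant quantities through bounded continuous functions of $H_N$.

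First I would prove that $\varphi_{H, \beta}$ is stochastically positive. By Lemma \ref{lem:cond_stoc_positivity_abst} it suffices to check $\mathcal{S}_{\rho_{x_1}, \dots, \rho_{x_n}}(t_1, \dots, t_n) \geq 0$. Applying Lemma \ref{lem:qf_state_at_analytic} with $h_i = k_i = \delta_{x_i}$, together with the two-point evaluations carried out in the proof of Theorem \ref{thm:dynamical_correlation}, yields the purely algebraic identities
\[\mathcal{S}^{(N)}_{\rho_{x_1}, \dots, \rho_{x_n}}(t_1, \dots, t_n) = \det[R_{H_N, \beta}(x_i, t_i; x_j, t_j)]_{i, j=1}^n, \qquad \mathcal{S}_{\rho_{x_1}, \dots, \rho_{x_n}}(t_1, \dots, t_n) = \det[R_{H, \beta}(x_i, t_i; x_j, t_j)]_{i, j=1}^n,\]
valid for any self-adjoint operator, irrespective of stochastic positivity. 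The left-hand side of the first identity is nonnegative for every $N$ because $\varphi_{H_N, \beta}$ is stochastically positive and each $\rho_{x_i}\in \mathfrak{D}(\mathfrak{X})_+$, so the assertion reduces to passing to the limit in the determinants.

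The crux is the entrywise convergence $R_{H_N, \beta}(x, t; y, s) \to R_{H, \beta}(x, t; y, s)$. Each entry has the form $\langle F_{t, s}(H_N)\delta_x, \delta_y\rangle$, where $F_{t, s}(\lambda) = e^{-(\beta - s + t)\lambda}(1 + e^{-\beta\lambda})^{-1}$ when $t \leq s$ and $F_{t, s}(\lambda) = -e^{-(t - s)\lambda}(1 + e^{-\beta\lambda})^{-1}$ when $t > s$. Because $0 \leq \beta - s + t \leq \beta$ and $0 < t - s \leq \beta$ on the relevant ranges of $(t, s)\in [-\beta/2, \beta/2]^2$, a direct inspection of the behavior as $\lambda \to \pm\infty$ shows that $F_{t, s}$ is a \emph{bounded} continuous function on $\mathbb{R}$: the factor $(1 + e^{-\beta\lambda})^{-1}$ decays like $e^{\beta\lambda}$ as $\lambda \to -\infty$ and precisely cancels the growth of the exponential numerator. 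Hence, exactly as in the derivation of $K_{H_N, \beta} \to K_{H, \beta}$ preceding this proposition, the strong resolvent convergence $H_N \to H$ together with \cite[Theorem VIII.20]{RS1} gives $F_{t, s}(H_N) \to F_{t, s}(H)$ strongly, and pairing with the fixed vectors $\delta_x, \delta_y$ yields the entrywise convergence. Continuity of the determinant then gives $\mathcal{S}^{(N)}_{\rho_{x_1}, \dots, \rho_{x_n}} \to \mathcal{S}_{\rho_{x_1}, \dots, \rho_{x_n}}$; since each term on the left is $\geq 0$, so is the limit, and $\varphi_{H, \beta}$ is stochastically positive. Theorem \ref{thm:stochastic_process} then produces $(X_t)_{t\in [-\beta/2, \beta/2]}$, and the displayed space-time determinantal formula is Theorem \ref{thm:dynamical_correlation} applied to $\varphi_{H, \beta}$.

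It remains to upgrade this pointwise convergence to the convergence of finite-dimensional distributions in the sense of Proposition \ref{prop:convergence}, i.e. $\mathcal{S}^{(N)}_{f_1, \dots, f_n}(t_1, \dots, t_n) \to \mathcal{S}_{f_1, \dots, f_n}(t_1, \dots, t_n)$ for all $f_i \in C(\mathcal{C}(\mathfrak{X})) \cong \mathfrak{D}(\mathfrak{X})$. Using the identity $\mathcal{S}_{A_1, \dots, A_jB, \dots, A_n} = \mathcal{S}_{A_1, \dots, A_j, B, \dots, A_n}$ with a repeated time from the proof of Lemma \ref{lem:cond_stoc_positivity_abst}, the convergence just established for $\mathcal{S}^{(N)}_{\rho_{x_1}, \dots, \rho_{x_n}}$ (now allowing coincident times) extends to all $f_i$ that are products of the $\rho_x$. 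Since the functions $\mathcal{S}^{(N)}_{f_1, \dots, f_n}$ are multilinear and uniformly bounded by $\|f_1\|\cdots\|f_n\|$, and linear combinations of such products are dense in $\mathfrak{D}(\mathfrak{X})$, a routine approximation argument extends the convergence to arbitrary $f_i \in C(\mathcal{C}(\mathfrak{X}))$. This is exactly the weak convergence of the distribution of $(X^{(N)}_{t_1}, \dots, X^{(N)}_{t_n})$ to that of $(X_{t_1}, \dots, X_{t_n})$. The only real obstacle throughout is that the flow varies with $N$, ruling out a verbatim use of Lemma \ref{lem:convergence}; this is resolved because the quasi-free two-point functions depend on $H_N$ solely through the bounded continuous operators $F_{t, s}(H_N)$, for which strong resolvent convergence is tailor-made.
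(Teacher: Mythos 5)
Your proposal is correct and follows essentially the same route as the paper: reduce stochastic positivity to nonnegativity of $\mathcal{S}_{\rho_{x_1},\dots,\rho_{x_n}}$ via Lemma \ref{lem:cond_stoc_positivity_abst}, identify these Schwinger functions with $\det[R_{H,\beta}]$ and $\det[R_{H_N,\beta}]$, pass to the limit entrywise using strong resolvent convergence, and then extend to all of $\mathfrak{D}(\mathfrak{X})$ through the algebra generated by $\{\rho_x\}$ and Stone--Weierstrass. The only differences are that you spell out details the paper leaves implicit — that the determinantal identity is purely algebraic (so no circular appeal to the stochastic-positivity hypothesis of Theorem \ref{thm:dynamical_correlation} is needed) and that each kernel entry is a bounded continuous function of $H_N$, so that \cite[Theorem VIII.20]{RS1} applies — which is a welcome clarification rather than a different method.
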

\begin{proof}
    First, we show $\mathcal{S}_{\rho_{x_1}, \dots, \rho_{x_n}}(t_1, \dots, t_n)\geq 0$ for any $x, \dots, x_n\in \mathfrak{X}$ and $(t_1, \dots, t_n)\in [-\beta/2, \beta/2]^n_\leq$. Since $H_N$ converges to $H$ in the strong resolvent sense, we have 
    \begin{align*}
        \mathcal{S}_{\rho_{x_1}, \dots, \rho_{x_n}}(t_1, \dots, t_n)
        &=\det[R_{H, \beta}(x_i, t_i; x_j, t_j)]_{i, j=1}^n \quad (\text{by Theorem \ref{thm:dynamical_correlation}})\\
        &=\lim_{N\to \infty} \det[R_{H_N, \beta}(x_i, t_i; x_j, t_j)]_{i, j=1}^n\\
        &=\lim_{N\to \infty} \mathcal{S}^{(N)}_{\rho_{x_1}, \dots, \rho_{x_n}}(t_1, \dots, t_n) \quad (\text{by Theorem \ref{thm:dynamical_correlation}}) \\
        &\geq 0.
    \end{align*}
    Thus, by Lemma \ref{lem:cond_stoc_positivity_abst}, $\varphi_{H, \beta}$ is stochastically positive.

    Let $f_1, \dots, f_n$ be in the $\mathbb{C}$-algebra generated by $\{\rho_x\mid x\in \mathfrak{X}\}$. By above convergence (see also the proof of Lemma \ref{lem:cond_stoc_positivity_abst}), we have $\lim_{N\to \infty}\mathcal{S}^{(N)}_{f_1, \dots, f_n}(t_1, \dots, t_n)=\mathcal{S}_{f_1, \dots, f_n}(t_1, \dots, t_n)$ for any $(t_1, \dots, t_n)\in [-\beta/2, \beta/2]^n_{\leq}$. Furthermore, by the Stone--Weierstrass theorem, the $\mathbb{C}$-algebra generated by $\{\rho_x\mid x\in \mathfrak{X}\}$ is norm dense in $\mathfrak{D}(\mathfrak{X})$, and hence, the same convergence holds for all $f_1, \dots, f_n\in \mathfrak{D}(\mathfrak{X})$. Namely, $(X^{(N)}_t)_{t\in [-\beta/2, \beta/2]}$ converges to $(X_t)_{t\in [-\beta/2, \beta/2]}$ in the sense of Proposition \ref{prop:convergence}.
\end{proof}

%%%%%%%%%%%%%%%%%%%%%%%%%%%%%%%%%%%%%%%%%%%%%%%%%%%%%%%%%%%%%%%%%%%%%%%%%%%%%%%%%%%%%%%%%%%%%%%%%%%%%%%%%%%%%%%%%%%%%%%%%%%%%%%%%%%%%%%%%%%%%%%%%%
\section{Determinantal point processes related to orthogonal polynomials}\label{sec:6}
Orthogonal polynomial ensembles are one of the most fundamental classes of determinantal point processes. In this section, we introduce their ``finite temperature'' version. Moreover, we also discuss their limiting behavior. We will study stationary processes with respect to these DPPs in the next section.

%%%%%%%%%%%%%%%%%%%%%%%%%%%%%%%%%%%%%%%%%%%%%%%%%%%%%%%%%%%%%%%%%%%%%%%%%%%%%%%%%%%%%%%%%%%%%%%%%%%%%%%%%%%%%%%%%%%%%%%%%%%%%%%%%%%%%%%%%%%%%%%%%%%%%%%%
\subsection{Orthogonal polynomial ensembles}\label{sec:ope}

Let $\mathfrak{X}\subset \mathbb{R}$ and $w\colon \mathfrak{X}\to \mathbb{R}_{>0}$ be a weight function such that $\sum_{x\in \mathfrak{X}}x^n w(x)<\infty$ for all $n\geq 0$. By assumption, $1, x, x^2, \dots$ belong to $L^2(\mathfrak{X}, w)$, and hence, we obtain orthogonal monic polynomials $(\tilde p_n(x))_{n\geq 0}$ with respect to $w$. Namely, $\tilde p_n(x)$ is a polynomial with degree $n$ in which the leading coefficient of $x^n$ is 1 for each $n\geq 1$, and $\sum_{x\in \mathfrak{X}}\tilde p_n(x)\tilde p_m(x)w(x)=0$ if $n\neq m$. Moreover, we define $p_n(x):=\tilde p_n(x)w(x)^{1/2}/\|\tilde p_n\|_{L^2(\mathfrak{X}, w)}$ for every $n\geq 0$. By this normalization, we have $p_n\in \ell^2(\mathfrak{X})$, and they form an orthonormal basis.

Let us fix $N\geq 1$ and define the \emph{normalized Christoffel--Darboux kernel} $K_{w, N}$ by
\begin{equation}\label{eq:CD_kernel}
    K_{w, N}(x, y):=\sum_{n=0}^{N-1}p_n(x)p_n(y) \quad (x, y\in \mathfrak{X}).
\end{equation}
The integral operator of $K_{w, N}$,  denoted by the same symbol $K_{w, N}$, is the orthogonal projection onto the linear span of $p_0, \dots, p_{N-1}$. Thus, $K_{w, N}$ is a positive contraction operator on $\ell^2(\mathfrak{X})$. Therefore, as mentioned in Section \ref{sec:DPP_CAR}, we obtain the quasi-free state $\varphi_{w, N}$ on the CAR algebra $\mathfrak{A}(\mathfrak{X})$ such that $\varphi_{w, N}(a^*(h)a(k))=\langle K_{w, N}h, k\rangle$ for any $h, k\in \ell^2(\mathfrak{X})$. Moreover, $\varphi_{w, N}$ gives the determinantal point process $\mathbb{P}_{w, N}$ on $\mathfrak{X}$ with correlation kernel $K_{w, N}$.

We call $\mathbb{P}_{w, N}$ the \emph{orthogonal polynomial ensemble} with wight $w$. It can be realized explicitly as follows: Let $\mathcal{C}_N(\mathfrak{X}):=\{\omega\in \mathcal{C}(\mathfrak{X})\mid \sum_{x\in \mathfrak{X}}\omega_x=N\}$. Since $w$ is strictly positive and its all moments are finite, we have
\[0< Z_{w, N} := \sum_{\omega\in \mathcal{C}_N(\mathfrak{X})} \left(\prod_{1\leq i < j\leq N}(x_i-x_j)^2\right)\prod_{i=1}^N w(x_i)<\infty, \]
where $x_1, \dots, x_N\in \mathfrak{X}$ satisfy $\omega_{x_i}=1$ for each $i=1, \dots, N$. We remark that each summand in $Z_{w, N}$ is well defined as a function of $\omega$, i.e., it does not depend on the order $x_1, \dots, x_N$. Then, $\mathbb{P}_{w, N}$ is given by
\begin{equation}\label{eq:op_ensemble}
    \mathbb{P}_{w, N}(\omega)=\frac{1}{Z_{w, N}}\left(\prod_{1\leq i< j \leq N}(x_i-x_j)^2\right)\prod_{i=1}^Nw(x_i) \quad (\omega\in \mathcal{C}_N(\mathfrak{X})).
\end{equation}

Later, we will deal with concrete examples of orthogonal polynomials.

%%%%%%%%%%%%%%%%%%%%%%%%%%%%%%%%%%%%%%%%%%%%%%%%%%%%%%%%%%%%%%%%%%%%%%%%%%%%%%%%%%%%%%%%%%%%%%%%%%%%%%%%%%%%%%%%%%%%%%%%%%%%%%%%%%%%%%%%%%%%%%%%%%
\subsection{Orthogonal polynomials of hypergeometric type}\label{sec:op_hg}

We summarize necessary facts about orthogonal polynomials of \emph{hypergeometric type} (see \cite[Chapter 2]{NSU} for more details). Let us assume that $\mathfrak{X}$ is a uniform lattice, that is, $\mathfrak{X}=\mathbb{Z}+a$ or $\mathbb{Z}_{\geq 0}+a$ for some $a\in \mathbb{R}$. The following two kinds of difference operators on $\mathfrak{X}$ are defined:
\[[\Delta y](x):=y(x+1)-y(x), \quad [\nabla y](x):=y(x)-y(x-1) \quad (x\in \mathfrak{X}) \]
for any function $y$ on $\mathfrak{X}$, where $y(x-1):=0$ if $x-1\not\in \mathfrak{X}$. We consider the difference equation
\begin{equation}\label{eq:hyp_eq}
    \sigma(x)\Delta\nabla y(x)+\tau(x)\Delta y(x)+\lambda y(x) = 0,
\end{equation}
where $\sigma(x)$ and $\tau(x)$ are polynomials with $\deg \sigma(x)\leq 2$ and $\deg\tau(x)\leq 1$, and $\lambda$ is a constant. Clearly, a constant function solves Equation \eqref{eq:hyp_eq} with $\lambda=0$. Let $m_n:=\tau' n + \sigma '' n(n-1)/2$ for every $n\geq 0$. If $m_n\neq m_k$ for $k=0, \dots, n-1$, then there exists a polynomial solution $y_n(x)$ of Equation \eqref{eq:hyp_eq} with degree $n$ and $\lambda = -m_n$. Furthermore, we assume that the weight function $w$ satisfies that $\Delta[\sigma(x)w(x)]=\tau(x)w(x)$ for any $x\in \mathfrak{X}$ and the following boundary condition:
\begin{itemize}
    \item $\mathfrak{X}=\mathbb{Z}+a$: $x^n\sigma(x)w(x)\to 0$ as $x\to \pm \infty$ for $n=0, 1, \dots, 2N-1$,
    \item $\mathfrak{X}=\mathbb{Z}_{\geq 0}+ a$: $a^n\sigma(a) w(a)=0$ and $x^nw(x)\sigma(x)\to 0$ as $x\to \infty$ for $n=0, 1, \dots, 2N-1$.
\end{itemize}

In this case, $(y_n)_{n=0}^{N-1}$ belong to $L^2(\mathfrak{X}, w)$, and they are orthogonal. We call them (discrete) orthogonal polynomials of \emph{hypergeometric type}.

\medskip
We give fundamental examples. See \cite{KS, NSU} for more details. In what follows, we use the Pochhammer symbol $(a)_x:=a(a+1)\cdots (a+x-1)$. In first two cases, we assume that $\mathfrak{X}=\mathbb{Z}_{\geq 0}$.

\begin{example}[Meixner polynomials]\label{ex:Meixner}
    For $c>0$ and $\xi\in (0, 1)$ we define the negative binomial distribution $w_{c, \xi}$ on $\mathbb{Z}_{\geq 0}$ by
    \[w_{c, \xi}(x):=\frac{(c)_x \xi^x}{(1-\xi)^c x!} \quad (x\in \mathbb{Z}_{\geq 0}).\]
    The associated monic orthogonal polynomials, denoted by $\widetilde m^{(c, \xi)}_n(x)$, are called the \emph{Meixner polynomials}. They solve Equation \eqref{eq:hyp_eq} with $\sigma(x):=x$, $\tau(x):=-(1-\xi)x+c\xi$, and $\lambda = (1-\xi)n$ for all $n\geq 0$.
\end{example}

Let $K_{\mathrm{Meixner}(c, \xi), N}$ denote the normalized Christoffel--Darboux kernel on $\mathbb{Z}_{\geq 0}\times \mathbb{Z}_{\geq 0}$ given by Equation \eqref{eq:CD_kernel} with $p_n=m^{(c, \xi)}_n:=\widetilde m^{(c, \xi)}_n w_{c, \xi}^{1/2}/\|\widetilde m^{(c, \xi)}_n\|$ for $n=0, \dots, N-1$.

\begin{example}[Charlier polynomials]\label{ex:Charlier}
    For any $\mu>0$ the Poisson distribution $w_\mu$ on $\mathbb{Z}_{\geq 0}$ is defined by
    \[ w_\mu(x):=\frac{e^{-\mu}\mu^x}{x!} \quad (x\in \mathbb{Z}_{\geq 0}).\]
    The associated monic orthogonal polynomial, denoted by $\widetilde c^{(\mu)}_n(x)$, is called the \emph{Charlier polynomials}. They solve Equation \eqref{eq:hyp_eq} with $\sigma(x):=x$, $\tau(x):=\mu -x$, and $\lambda = n$ for all $n\geq0$.
\end{example}

Let $K_{\mathrm{Charlier}(\mu), N}$ denote the normalized Christoffel--Darboux kernel on $\mathbb{Z}_{\geq 0}\times \mathbb{Z}_{\geq 0}$ given by Equation \eqref{eq:CD_kernel} with $p_n=c^{(\mu)}_n:=\widetilde c^{(\mu)}_n w_\mu^{1/2}/\|\widetilde c^{(\mu)}_n\|$ for $n=0, \dots, N-1$.

\medskip
The following are orthogonal polynomials on a finite set $\mathfrak{X}=\{0, 1, \dots, M\}$.
\begin{example}[Krawtchouk, Hahn, Racah polynomials]
    The weight functions $w$ on $\mathfrak{X}$ for the \emph{Krawtchouk}, \emph{Hahn}, and \emph{Racah} polynomials are given by 
    \[w(x):=\begin{dcases}
        \binom{M}{x}p^x(1-p)^{M-x} & \text{Krawtchouk case}, \\
        \binom{a+x}{x}\binom{M+b-x}{M-x} & \text{Hahn case}, \\
        \frac{(\gamma+\delta+1)_x((\gamma+\delta+3)/2)_x(\alpha+1)_x(\beta+\delta+1)_x(\gamma+1)_x}{x!((\gamma+\delta+1)/2)_x(\gamma+\delta-\alpha+1)_x (\gamma-\beta+1)_x(\delta+1)_x} & \text{Racah case}.
    \end{dcases}\]
    Here, the above parameters satisfy 
    \[p\in (0, 1), \quad a, b>-1,\]
    \[\alpha+1=-M \quad \text{or} \quad \beta+\delta+1=-M \quad \text{or} \quad \gamma+1=-M.\]
    These orthogonal polynomials solve Equation \eqref{eq:hyp_eq} with
    \[\sigma(x):=\begin{dcases}
    (1-p)x & \text{Krawtchouk case}, \\
    x(M+b+1-x) & \text{Hahn case} \\ 
    \frac{x(x+\delta)(\beta-\gamma-x)(x-\alpha+\gamma+\delta)}{(2x+\gamma+\delta+1)(2x+\gamma+\delta)} & \text{Racah case},
    \end{dcases}\]
    \[\tau(x)+\sigma(x):=\begin{dcases}
        p(M-x) & \text{Krawtchouk case}, \\
        (x+a+1)(M-x) & \text{Hahn case}, \\
        \frac{(-\alpha-1-x)(x+\gamma+1)(x+\gamma+\delta+1)(x+\beta+\delta+1)}{(2x+\gamma+\delta+1)(2x+\gamma+\delta+2)} & \text{Racah case}.
    \end{dcases}\]
    Moreover, for the $n$-th Krawtchouk, Hahn, and Racah polynomial ($n=0, \dots, M$), 
    \[\lambda= \begin{dcases}
        n & \text{Krawtchouk case}, \\
        n(n+a+b+1) & \text{Hahn case}, \\
        n(n+\alpha+\beta+1) & \text{Racah case}.
    \end{dcases}\]
\end{example}

Let $N\leq M$. Hence, there exist Krawtchouk, Hahn, and Racah orthogonal polynomials of degrees $0, \dots, N-1$. We denote by $K_{\mathrm{Krawtchouk}_M(p), N}$, $K_{\mathrm{Hahn}_M(a, b), N}$, and $K_{\mathrm{Racah}_M(\alpha, \beta, \gamma, \delta), N}$ the normalized Christoffel--Darboux kernel on $\mathfrak{X}\times \mathfrak{X}$ given by Equation \ref{eq:CD_kernel} with the associated normalized orthonormal polynomials of degrees $0, \dots, N-1$, respectively.

\medskip 
Let $(\widetilde p_n(x))_{n}$ be orthogonal polynomials with respect to $w$ of hypergeometric type with $\sigma(x)$ and $\tau(x)$. Namely, they satisfy that $\mathcal{D}\widetilde p_n=m_n\widetilde p_n$ for each $n$, where $\mathcal{D}:=\sigma(x)\Delta\nabla +\tau(x)\Delta$ and $m_n:=\tau'n+\sigma''n(n-1)/2$. More precisely, the domain of $\mathcal{D}$ is defined as 
\[\mathrm{dom}(\mathcal{D}):=\{y\in L^2(\mathfrak{X}, w)\mid \mathcal{D}y\in L^2(\mathfrak{X}, w)\}.\] 
In what follows, we assume that $\mu_x:=\sigma(x)>0$ on $\mathfrak{X}$, and hence, $\lambda_x:= \sigma(x)+\tau(x)=\sigma(x+1)w(x+1)/w(x)>0$ on $\mathfrak{X}$. Then, we define $D:=w^{1/2} \mathcal{D} w^{-1/2}$ on $\ell^2(\mathfrak{X})$. Namely, for any finitely supported function $f$ on $\mathfrak{X}$,
\begin{equation}\label{eq:difference_operator}
    [Df](x) = \sqrt{\mu_{x+1}\lambda_x} f(x+1) - (\mu_x + \lambda_x) f(x) +\sqrt{\mu_x \lambda_{x-1}}f(x-1) \quad (x\in \mathfrak{X}).
\end{equation}
By \cite[Lemma 3.1]{S24}, $D$ is closed symmetric. Moreover, its spectrum is given by $\{m_n\}_n$. Thus, $D$ is self-adjoint. We remark that $e^{\beta D}$ is of trace class for all $\beta>0$ in the above examples, i.e., $\mathrm{Tr}(e^{\beta \mathcal{D}})=\sum_{n=0}^\infty e^{\beta m_n}<\infty$ holds.

We suppose that $m_0=0>m_1>\cdots$ as in the above examples and set $-\mu \in (m_N, m_{N-1})$. Then, $K_{w, N}$ is equal to the spectral projection onto $\{D+\mu>0\}$.

\medskip
Askey--Lesky polynomials played an important role in the harmonic analysis of the infinite-dimensional unitary group $U(\infty)$ in \cite{Olshanski03,BO05}. As similar to the above two examples, Askey--Lesky polynomials are also eigenfunctions of a self-adjoint operator $D$. However, $D$ can not be diagonalized by these polynomials. In fact, the existence is guaranteed for only a finite number of polynomials, while the state space $\mathfrak{X}$ is infinite. Let us explain below in more detail.

A pair $z, z'\in \mathbb{C}$ is said to be
\begin{itemize}
    \item \emph{principal} if $z, z'\in \mathbb{C}\backslash \mathbb{R}$ and $z' = \bar{z}$,
    \item \emph{complementary} if $z, z'\in \mathbb{R}$ and $k < z, z' < k+1$ for some $k\in \mathbb{Z}$.
\end{itemize}
Either $(z, z')$ is principal or complementary if and only if $(z+k)(z'+k)>0$ for all $k\in \mathbb{Z}$.

\begin{example}[Askey--Lesky polynomials, see \cite{Olshanski03, BO05}]\label{ex:AL}
    Let $\mathfrak{X}=\mathbb{Z}$ and $u, u', w, w'\in \mathbb{C}$ such that each pair $u, u'$ and $w, w'$ are principal or complementary. A weight function $w:=w_{u, u', w, w'}$ on $\mathbb{Z}$, called the \emph{Askey--Lesky weight function}, is defined by
    \[w(x):=\frac{1}{\Gamma(u-x+1)\Gamma(u'-x+1)\Gamma(w+x+1)\Gamma(w'+x+1)}.\]
    By \cite[Lemma 3.2]{Olshanski03}, $w(x)>0$ holds for all $x\in \mathfrak{X}$. Moreover, we have $\Delta[\sigma(x)w(x)]=\tau(x)w(x)$ on $\mathfrak{X}$, where
    \[\sigma(x):=(x+w)(x+w'), \quad \tau(x):=-(u+u'+w+w')x + uu'-ww'.\]
    We suppose that $u+u'+w+w'> 2N+1$. For $n=0, \dots, N-1$, there exists a polynomial solution of Equation \eqref{eq:hyp_eq} with $\lambda=-m_n= (u+u'+w+w')x + uu'-ww'$, and they are orthogonal with respect to $w$. Following \cite{Olshanski03,BO05}, we call them the \emph{Askey--Lesky polynomials}. The associated difference operator $D$ is given by Equation \eqref{eq:difference_operator} with
    \[\mu_x:=\sigma(x)=(x+w)(x+w'), \quad \lambda_x:=\sigma(x)+\tau(x)=(x-u)(x-u').\]
\end{example}

%%%%%%%%%%%%%%%%%%%%%%%%%%%%%%%%%%%%%%%%%%%%%%%%%%%%%%%%%%%%%%%%%%%%%%%%%%%%%%%%%%%%%%%%%%%%%%%%%%%%%%%%%%%%%%%%%%%%%%%%%%%%%%%%%%%%%%%%%%%%%%%%%%%%%%%%
\subsection{Orthogonal polynomial ensembles at finite temperature}
We use the same notation as in the previous section. In particular, we obtain the orthogonal polynomial ensemble $\mathbb{P}_{w, N}$ by Equation \ref{eq:op_ensemble}, and its correlation kernel $K_{w, N}$ is given by Equation \eqref{eq:CD_kernel}. Now, we apply the results in Section \ref{sec:density_op_fermion_Fock_sp} for $H=-D$.

Let $\mu\in \mathbb{R}$ and $\varphi_{-D-\mu, \beta}$ denote the quasi-free state on the CAR algebra $\mathfrak{A}(\mathfrak{X})$ associated with $K_{-D-\mu, \beta}$ (see Equation \eqref{eq:H_K}). Through the inclusion $C(\mathcal{C}(\mathfrak{X}))\cong \mathfrak{D}(\mathfrak{X})\subset \mathfrak{A}(\mathfrak{X})$, the state $\varphi_{-D-\mu, \beta}$ produces a DPP $\mathbb{P}_{-D-\mu, \beta}$ on $\mathfrak{X}$ whose correlation kernel is given by $K_{-D-\mu, \beta}$.

\begin{definition}
    The DPP $\mathbb{P}_{-D-\mu, \beta}$ is called the \emph{orthogonal polynomial ensemble at inverse temperature $\beta$}.
\end{definition}

As mentioned in Remark \ref{rem:limit_trans_projection}, $K_{-D-\mu, \beta}$ strongly converges to the spectral projection onto $\{D+\mu>0\}$ as $\beta$ tends to infinity. Moreover, as observed in the previous section, if the spectrum of $D$ forms a non-increasing sequence $m_0>m_1>\cdots$ and $-\mu\in (m_N, m_{N-1})$, then this spectral projection is nothing but the $K_{w, N}$. Therefore, $\mathbb{P}_{-D-\mu, \beta}$ weakly converges to $\mathbb{P}_{w, N}$. This observation justifies the term ``at inverse temperature.''

\begin{example}
    Let $\mathfrak{X}=\mathbb{Z}_{\geq 0}$ and $w$ the negative binomial distribution or the Poisson distribution. Thus, the associated orthogonal polynomials $(\tilde p_n(x))_{n=0}^\infty$ are the Meixner polynomials or Charlier polynomials (see Examples \ref{ex:Meixner} \ref{ex:Charlier}). In both cases, there exists a self-adjoint operator $D$ who can be diagonalized by the normalized orthogonal polynomials $(p_n)_{n=0}^\infty$, and those eigenvalues $(m_n)_{n=0}^\infty$ form a decreasing sequence. For fixed $N\geq 1$ we take $-\mu\in (m_N, m_{N-1})$. Then, $K_w^{\beta, \mu}$ strongly converges to $K_{w, N}$ as $\beta\to \infty$.
\end{example}

In the Askey--Lesky polynomials case, we can not completely determine the spectrum of $D$. However, we obtain the quasi-free state $\varphi_{-D-\mu, \beta}$ on $\mathfrak{A}(\mathbb{Z})$ for all $\beta, \mu\in \mathbb{R}$. Therefore, the Askey--Lesky polynomial ensemble $\mathbb{P}_{-D-\mu, \beta}$ at inverse temperature $\beta$ can be defined. However, the author can not reveal whether $\mathbb{P}_{-D-\mu, \beta}$ weakly converges to $\mathbb{P}_{w_{u, u', w, w'}, N}$ as $\beta$ tends to $\infty$ for an appropriate $\mu$ and $N\geq 1$.

%%%%%%%%%%%%%%%%%%%%%%%%%%%%%%%%%%%%%%%%%%%%%%%%%%%%%%%%%%%%%%%%%%%%%%%%%%%%%%%%%%%%%%%%%%%%%%%%%%%%%%%%%%%%%%%%%%%%%%%%%%%%%%%%%%%%%%%%%%%%%%%%%%
\subsection{Limiting behavior: point processes from continuous orthogonal polynomials}\label{sec:limit_OP}

In this section, we discuss determinantal point processes on $\mathbb{Z}_{\geq 0}$ derived from orthogonal polynomials with respect to non-atomic measures on $\mathbb{R}$ through the duality in the sense of \cite{BO17,KS22}.

We assume that $w\colon \mathbb{R}\to \mathbb{R}_{\geq 0}$ satisfies the following two conditions:
\begin{itemize}
    \item[(1)] $\int_\mathbb{R} u^n w(u)du <\infty$ for every $n=0, 1, \dots$,
    \item[(2)] the moment problem for $w(u)du$ is determinate\footnote{Namely, $w(u)du$ is a Radon measure uniquely determined by its moments.}.
\end{itemize}
By Condition (1), there exist monic polynomials $(p_x(u))_{x=0}^\infty$ which are orthogonal in $L^2(\mathbb{R}, w)$, that is, $\int_\mathbb{R}p_x(u)p_y(u)w(u)du=0$ if $x\neq y$. Moreover, by Condition (2), $\tilde p_x := p_x/\|p_x\|_{L^2(\mathbb{R}, w)}$ form an orthonormal basis for $L^2(\mathbb{R}, w)$ (see e.g., \cite[Theorem 6.10]{Schmuden}). Thus, we obtain the unitary map $U\colon L^2(\mathbb{R}, w)\to \ell^2(\mathbb{Z}_{\geq 0})$ by $U\tilde p_x:=\delta_x$ for all $x\in \mathbb{Z}_{\geq 0}$, where $(\delta_x)_{x\in \mathbb{Z}_{\geq 0}}$ is the canonical orthonormal basis for $\ell^2(\mathbb{Z}_{\geq 0})$.

Let $I\subset \mathbb{R}$ be an interval and $K_{w, I}$ the orthogonal projection from $\ell^2(\mathbb{Z}_{\geq 0})$ onto $UL^2(I, w|_I)$. It gives rise to the determinantal point process $\mathbb{P}_{w, I}$ on $\mathbb{Z}_{\geq 0}$ with correlation kernel $K_{w, I}$, called the \emph{discrete ensemble associated with $(p_x)_{x\in \mathbb{Z}_{\geq 0}}$}. The kernel $K_{w, I}$ can be expressed explicitly by
\begin{equation}\label{eq:int_kernel}
    K_{w, I}(x, y)=\int_I \tilde p_x(u) \tilde p_y(u)w(u)du.
\end{equation}

The following three-therm recurrence relation is well known
\[u\tilde p_x(u)=a_x\tilde p_{x+1}(u)+b_x \tilde p_x(u) + a_{x-1}\tilde p_{x-1}(u)\]
for some $(a_x)_{x\geq 0}$ and $(b_x)_{x\geq 0}$, where $a_{-1}$ and $p_{-1}(t)$ are supposed to be zero. Thus, an operator $T$ on $\ell^2(\mathbb{Z})$ defined by 
\begin{equation}\label{eq:multi_operator}
    T\delta_x:=a_x\delta_{x+1}+b_x\delta_x+a_{x-1}\delta_{x-1}
\end{equation}
is unitarily equivalent to the multiplication operator by $u$ on $L^2(\mathbb{R}, w)$. Moreover, $T$ is essentially self-adjoint, denoted by $T$ its self-adjoint extension, and the spectrum of $T$ coincides with the support of $w$. For any $r\in \mathbb{R}$ the spectral projection onto $\{\pm(T-r)>0\}$ is given by the kernel $K_{w, (r, \infty)}$ or $K_{w, (-\infty, r)}$, respectively. See \cite[Proposition 5.2]{BO17}.

Similarly to discrete orthogonal polynomial ensembles, we introduce a finite temperature version of discrete ensemble associated with $(p_x)_{x\in \mathbb{Z}_{\geq 0}}$.
\begin{definition}
    A determinantal point process with correlation kernel $e^{\mp\beta(T-r)}(1+e^{\mp(T-r)})^{-1}$ is called the discrete ensemble associated with $(p_x)_{x\in \mathbb{Z}_{\geq 0}}$ \emph{at inverse temperature $\beta$}.
\end{definition}

As $\beta\to\infty$, the discrete ensemble associated with $(p_x)_{x\in \mathbb{Z}_{\geq 0}}$ weakly converges to the discrete ensemble with correlation kernel $K_{w, (r, \infty)}$ and $K_{w, (-\infty, r)}$, respectively.

\medskip
Let us collect several examples from \cite{BO17}.

\begin{example}[discrete Hermite ensembles]
    Let $w(u):=e^{-u^2}$ for all $u\in \mathbb{R}$. The associated monic orthogonal polynomials are called the \emph{Hermite polynomials}, denoted by $(h_x(u))_{x=0}^\infty$, and they satisfy $\|h_x\|_{L^2(\mathbb{R}, w)}^2=\pi^{1/2}2^{-x}x!$ for every $x=0, 1, \dots$. Thus, for any interval $I\subset \mathbb{R}$, the kernel $K_{\mathrm{DHermite}, I}$ in Equation \eqref{eq:int_kernel} is equal to  
    \[K_{\mathrm{DHermite}, I}(x, y)=\frac{1}{(\pi 2^{-x-y}x!y!)^{1/2}}\int_I h_x(u)h_y(u)e^{-u^2}du.\]
    The determinantal point process on $\mathbb{Z}_{\geq 0}$ with correlation kernel $K_{\mathrm{DHermite}, I}$ is called the \emph{discrete Hermite ensemble}.
\end{example}

\begin{example}[discrete Laguerre enesmbles]
    Let $c>0$ and $w(u)=u^{c-1}e^{-u}$ for $u\in [0, \infty)$. The associated monic orthogonal polynomials are called the \emph{Laguerre polynomials}, denoted by $(l^{(c)}_x(u))_{x=0}^\infty$, and they satisfy $\|l^{(c)}_x\|_{L^2([0, \infty), w)}^2=x!\Gamma(x+c)$ for every $x=0, 1, \dots$. Thus, for any interval $I\subset [0, \infty)$, the kernel $K_{\mathrm{DLaguerre}(c), I}$ in Equation \eqref{eq:int_kernel} is equal to 
    \[K_{\mathrm{DLaguerre}(c), I}(x, y)=\frac{1}{(x!\Gamma(x+c)y!\Gamma(y+c))^{1/2}}\int_Il^{(c)}_x(u)l^{(c)}_y(u)u^{c-1}e^{-u}du.\]
    The determinantal point process on $\mathbb{Z}_{\geq 0}$ with correlation kernel $K_{\mathrm{DLaguerre}(c), I}$ is called the \emph{discrete Laguerre ensemble}.
\end{example}

\begin{example}[discrete Jacobi ensembles]
    Let $a, b>-1$ and $w(u):=(1-u)^a(1+u)^b$ for $u\in (-1, 1)$. The associated monic orthogonal polynomials are called the \emph{Jacobi polynomials}, denoted by $(j^{(a, b)}_x(u))_{x=0}^\infty$, and they satisfy
    \[\|j_x^{(a, b)}\|:=\|j_x^{(a, b)}\|_{L^2([-1, 1], w)}^2=\frac{2^{a+b+1}}{2x+a+b+1}\frac{\Gamma(x+a+1)\Gamma(x+b+1)}{x!\Gamma(x+a+b+1)}\]
    for every $x=0, 1, \dots$. Thus, for any interval $I\subset [-1, 1]$, the kernel $K_{\mathrm{DJacobi}(a, b), I}$ in Equation \eqref{eq:int_kernel} is equal to 
    \[K_{\mathrm{DJacobi}(a, b), I}(x, y)=\frac{1}{\|j^{(a, b)}_x\|\|j^{(a, b)}_y\|}\int_Ij^{(a, b)}_x(u)j^{(a, b)}_y(u)(1-u)^a(1+u)^bdu.\]
    The determinantal point process on $\mathbb{Z}_{\geq 0}$ with correlation kernel $K_{\mathrm{DJacobi}(a, b), I}$ is called the \emph{discrete Jacobi ensemble}.
\end{example}

As shown in \cite{BO17}, discrete Hermite/Laguerre/Jacobi ensembles can be obtained as limit transitions of Meixner, Charlier, Krawtchouk, Hahn, and Racah polynomial ensembles at certain regimes. The essential idea is to show strong resolvent convergences of difference operators of orthogonal polynomials. Let us explain this idea briefly in the case of limit transition from a Charlier polynomial ensemble to a discrete Hermite ensemble.

Let $D$ denote the self-adjoint operator in the Charlier case with parameter $\mu$, i.e.,
\[[D f](x):=\sqrt{\mu(x+1)}f(x+1)-(x+\mu)f(x)+\sqrt{\mu x}f(x-1).\]
We have $D c^{(\mu)}_n=-n c^{(\mu)}_n$ for all $n\geq 1$ (see Example \ref{ex:Charlier}). Thus, $K_{\mathrm{Charlier}(\mu), N}$ coincides with the spectral projection onto $\{\sqrt{2N}^{-1}(D+N)>0\}$. For all $r\in \mathbb{R}$, we consider the limit transition regime where $\mu, N\to \infty$ with $(\mu-N)/\sqrt{N}\to \sqrt{2}r$. For every $x\in \mathbb{Z}_{\geq 0}$ we have 
\[\lim \frac{1}{\sqrt{2N}}(D+N)f(x)=\sqrt{\frac{x+1}{2}}f(x+1)-r f(x)+\sqrt{\frac{x}{2}}f(x-1)=(T-r)f(x),\]
where $T$ is given by Equation \eqref{eq:multi_operator} in the Hermite polynomials case. Moreover, $\sqrt{2N}^{-1}(D+N)$ converges to $T-r$ in the strong resolvent sense. It implies that the strong convergence of the spectral projections. In particular, $K_{\mathrm{Charlier}(\mu), N}$ strongly converges to $K_{\mathrm{DHermite}, (r, \infty)}$.

The same argument implies following results.
\begin{example}[{Meixner $\to$ discrete Hermite (\cite[Theorem 6.3]{BO17})}]\label{ex:M_dH}
    $\sqrt{2\xi\beta}^{-1}(D+(1-\xi)N)$ converges to $T-r$ when $\xi c\to \infty$ and $(1-\xi)N/\sqrt{\xi c}-\sqrt{\xi c}\to -\sqrt{2}r$, where $D$ and $T$ are the self-adjoint operators associated with $\mathrm{Mexiner}(c, \xi)$ and $\mathrm{DHermite}$, respectively. Hence, $K_{\mathrm{Meixner}(c, \xi), N}$ converges to $K_{\mathrm{DHermite}, (r, \infty)}$ strongly in this regime.
\end{example}

Let $s$ denote the linear operator on $\ell^2(\mathbb{Z}_{\geq 0})$ defined by $s\delta_x:=(-1)^x\delta_x$ for all $x\in \mathbb{Z}_{\geq 0}$.
\begin{example}[{Meixner $\to$ discrete Laguerre (\cite[Theorem 6.2]{BO17})}]\label{ex:M_dL}
    $D+(1-\xi)N$ converges to $-s(T-r)s$ when $(1-\xi)N\to r\in \mathbb{R}$, where $D$ and $T$ are the self-adjoint operators associated with $\mathrm{Meixner}(c, \xi)$ and $\mathrm{DLaguerre}(c)$, respectively. Hence, $K_{\mathrm{Meixner}(c, \xi), N}$ converges to $K_{\mathrm{DLaguerre}(c), (-\infty, r)}$ strongly in this regime.
\end{example}

In the case of Krawtchouk, Hahn, and Racah polynomials, the self-adjoint operator $D$ is originally defined on $\ell^2(\{0, \dots, M\})$. However, in the following three examples, $D$ extends into $\ell^2(\mathbb{Z}_{\geq 0})=\ell^2(\{0, \dots, M\})\oplus \ell^2(\{M+1, M+2, \dots\})$, where it coincides with $m_N-1$ on the second direct summand.

\begin{example}[{Krawtchouk $\to$ discrete Hermite (\cite[Theorem 6.4]{BO17})}]\label{ex:K_dH}
    $\sqrt{2pM}^{-1}(D+N)$ converges to $T-r$ when $pM\to \infty$ and $-\sqrt{pM}+N/\sqrt{pM}\to -\sqrt{2}r$, where $D$ and $T$ are the self-adjoint operators associated with $\mathrm{Krawtchouk}_M(p)$ and $\mathrm{DHermite}$, respectively. Hence, $K_{\mathrm{Krawtchouk}_M(p), N}$ converges to $K_{\mathrm{DHermite}, (r, \infty)}$ strongly in this regime.
\end{example}

\begin{example}[{Hahn $\to$ discrete Laguerre (\cite[Theorem 6.5]{BO17})}]\label{ex:H_dL}
    $M^{-1}(D+N(N+a+b+1))$ converges to $-s(T-r)s$ when $N, M\to \infty$ such that $rM/N^2\to 1$, where $D$ and $T$ are the self-adjoint operators associated with $\mathrm{Hahn}_M(a, b)$ and $\mathrm{DLaguerre}(a+1)$, respectively. Hence, $K_{\mathrm{Hahn}_M(a, b), N}$ converges to $K_{\mathrm{DLaguerre}(a+1), (-\infty, r)}$ strongly in this regime.
\end{example}

\begin{example}[{Racah $\to$ discrete Jacobi (\cite[Theorem 6.6]{BO17})}]\label{ex:R_dJ}
    $2M^{-2}(D+N+\alpha+\beta+1)$ converges to $T-r$ when 
    \[N, M\to \infty, \quad \frac{N}{M}\to \sqrt{\frac{1-r}{2}}, \quad \alpha=-M-1, \quad \beta=M+a+1, \quad \gamma=a, \quad \delta=b,\]
    where $D$ and $T$ are the self-adjoint operators in $\mathrm{Racah}_M(\alpha, \beta, \gamma, \delta)$ and $\mathrm{DJacobi}(a, b)$, respectively. Hence, $K_{\mathrm{Racah}_M(\alpha, \beta, \gamma, \delta), N}$ converge to $K_{\mathrm{DJacobi}(a, b), (r, \infty)}$ strongly in this regime.
\end{example}

As observed in Section \ref{sec:LT_by_strong_res_conv}, the strong resolvent convergence pf self-adjoint operators induces the convergence of stochastic processes. We will return to this point in Section \ref{sec:ltcpop}.

\section{Stochastic processes from orthogonal polynomials}\label{sec:stoc_from_op}
We apply the theory of stochastically positive KMS systems developed in the previous sections to the setting from orthogonal polynomials and construct stochastic processes on the point configuration space. The resulting stochastic processes contain two kinds of parameters, which are the inverse temperature $\beta$ and ones concerning orthogonal polynomials. Accordingly, we will study two types of limiting behavior of stochastic processes relating these parameters.

%%%%%%%%%%%%%%%%%%%%%%%%%%%%%%%%%%%%%%%%%%%%%%%%%%%%%%%%%%%%%%%%%%%%%%%%%%%%%%%%%%%%%%%%%%%%%%%%%%%%%%%%%%%%%%%%%%%%%%%%%%%%%%%%%%%%%%%%%%%%%%%%
\subsection{Stochastic processes from orthogonal polynomial ensembles}
We return to the same setting in Section \ref{sec:op_hg}. Namely, $\mathfrak{X}\subset \mathbb{R}$ is a uniform lattice equipped with a weight function $w$ with finite moments of all orders. Moreover, the associated orthogonal polynomials $(\tilde p_n(x))_{n\geq 0}$ are of hypergeometric type, and we have $\mathcal{D}\tilde p_n=m_n\tilde p_n$ for all $n\geq 0$. Here, $\mathcal{D}:=\sigma(x)\Delta\nabla+\tau(x)\Delta$ and $m_n:=\tau' n + \sigma'' n(n-1)/2$ for every $n\geq 0$. Similar to the previous section, we define $D:=w^{1/2}\mathcal{D}w^{-1/2}$ and $p_n(x):= \tilde p_n(x)w(x)^{1/2}/\|\tilde p_n\|_{L^2(\mathfrak{X}, w)}$ for any $x\in \mathfrak{X}$ and $n\geq 0$. Thus, $(p_n)_{n\geq 0}$ form an orthonormal basis for $\ell^2(\mathfrak{X})$ and $Dp_n = m_n p_n$ holds. 

\medskip
Let us explain a few notions. We denoted by $C_b(\mathfrak{X})$ the space of bounded (continuous) functions on $\mathfrak{X}$. A linear map $P$ on $C_b(\mathfrak{X})$ is said to be \emph{positive} if $Pf\geq 0$ for any $f\in C_b(\mathfrak{X})$ with $f\geq 0$. In addition, $P\mathbbm{1}=\mathbbm{1}$, then $P$ is called \emph{Markov}. A \emph{Markov semigroup} $(P_t)_{t\geq 0}$ is a semigroup of Markov operators and $P_0$ is the identity operator.  

\medskip
Throughout this section, we assume the following twofold:
\begin{itemize}
    \item $e^{\beta D}$ is a trace class operator for some $\beta>0$,
    \item $(e^{t\mathcal{D}})_{t\geq 0}$ forms a Markov semigroup on $\mathfrak{X}$.
\end{itemize}

\begin{remark}
    Let us recall that for any finite support function $f$ on $\mathfrak{X}$ 
    \[[\mathcal{D}f](x)=\lambda_xf(x+1)-(\lambda_x+\mu_x)f(x)+\mu_xf(x-1),\]
    where $\lambda_x=\sigma(x)+\tau(x)=\sigma(x+1)w(x+1)/w(x)$ and $\mu_x=\sigma(x)$. If $\mu_x>0$ for all $x\in \mathfrak{X}$, then we have $\lambda_x>0$. Moreover, the Markov semigroup generated by $\mathcal{D}$ gives a \emph{birth-and-death process} with a hopping rate of $\lambda_x$ to the right and a hopping rate of $\mu_x$ to the left.
\end{remark}

Let us recall that $\varphi_{-D-\mu, \beta}$ is the quasi-free state on $\mathfrak{A}(\mathfrak{X})$ associated with $K_{-D-\mu, \beta}$. By the first assumption, it is given by Equation \eqref{eq:qf_state_density_op}. Moreover, $\varphi_{-D-\mu, \beta}$ is a $\alpha^{-D-\mu}$-KMS state at $\beta$, where the $\mathbb{R}$-flow $\alpha^{-D-\mu}\colon \mathbb{R}\curvearrowright \mathfrak{A}(\mathfrak{X})$ is given by $\alpha^{-D-\mu}_t(a(h)):=a(e^{-\mathrm{i}t (D+\mu)}h)$ for all $t\in \mathbb{R}$ and $h\in \ell^2(\mathfrak{X})$. The associated family $\{\mathcal{S}_{A_1, \dots, A_n}\}_{A_1, \dots, A_n\in \mathfrak{A}(\mathfrak{X})}$ is given by Equation \eqref{eq:schwinger_func_trace}. 

By the second assumption and the Karlin--McGregor formula (see \cite{KMc57,KMc59}), we have 
\[\det[\langle e^{\beta D}\delta_{x_i}, \delta_{y_j}\rangle \rangle]_{i, j=1}^n\geq 0\] 
for any $x_1>\dots>x_n, y_1>\dots>y_n\in \mathfrak{X}$ and $n\geq 1$. In fact, it is proportional with a positive coefficient to the probability that $n$ independent Markov processes generated by $\mathcal{D}$ start from $x_1>\cdots >x_n$ and reach $y_1>\cdots > y_n$ at time $t$ without coincident. 

The following is a consequence of Theorems \ref{thm:stochastic_process}, \ref{thm:dynamical_correlation}:

\begin{proposition}\label{prop:op_stoc_proc}
    $\varphi_{-D-\mu, \beta}$ is stochastically positive with respect to $\mathfrak{D}(\mathfrak{X})$. The associated stochastic process $(X^{(\beta)}_t)_{t\in [-\beta/2, \beta/2]}$ on $\mathcal{C}(\mathfrak{X})$ by Theorem \ref{thm:stochastic_process} satisfies
    \[\mathbb{P}[(X^{(\beta)}_{t_1})_{x_1}=1, \dots, (X^{(\beta)}_{t_n})_{x_n}=1] = \det[\mathcal{K}_{-D-\mu, \beta}(x_i, t_i; x_j, t_j)]_{i, j=1}^n\]
    for any $x_1, \dots, x_n\in \mathfrak{X}$ and $-\beta/2\leq t_1\leq \cdots \leq t_n\leq \beta/2$. Here, $\mathcal{K}_{-D-\mu, \beta}$ is defined as follows:
    \[\mathcal{K}_{-D-\mu, \beta}(x, t; y, s):=\begin{dcases} \sum_{n=0}^\infty p_n(x)p_n(y)\frac{e^{(\beta+t-s)(m_n+\mu)}}{1+e^{\beta (m_n+\mu)}} & t\leq s, \\ -\sum_{n=0}^\infty p_n(x)p_n(y) \frac{e^{(t-s)(m_n+\mu)}}{1+e^{\beta (m_n+\mu)}} & t>s.\end{dcases}\]
\end{proposition}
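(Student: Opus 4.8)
The plan is to put $H := -D-\mu$, so that $\varphi_{-D-\mu,\beta}$ is exactly the state $\varphi_{H,\beta}$ of Section~\ref{sec:density_op_fermion_Fock_sp}, and then feed the situation into Lemma~\ref{lem:cond_stoc_positivity} and Theorem~\ref{thm:dynamical_correlation}. First I would check the hypotheses of Lemma~\ref{lem:cond_stoc_positivity}. The operator $e^{-\beta H}=e^{\beta\mu}e^{\beta D}$ is trace class by the first standing assumption, so the trace-class requirement is met. It then remains to verify the sign condition
\[\det[\langle e^{-tH}\delta_{x_i},\delta_{y_j}\rangle]_{i,j=1}^n\geq 0\]
for all $x_1<\cdots<x_n$, $y_1<\cdots<y_n$ in $\mathfrak{X}$ and $0\leq t\leq\beta$.

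For this I would write $e^{-tH}=e^{t\mu}e^{tD}$ and pull the positive scalar out of every row, so that the determinant equals $e^{nt\mu}\det[\langle e^{tD}\delta_{x_i},\delta_{y_j}\rangle]_{i,j=1}^n$. The second factor is nonnegative: this is precisely the Karlin--McGregor nonnegativity recorded just above the statement, which holds for every $t\geq 0$ (the displayed instance is $t=\beta$, but the birth-and-death interpretation applies at all times) and where the positive proportionality constant absorbs the conjugation $D=w^{1/2}\mathcal{D}w^{-1/2}$, i.e.\ the weight factors $w(x_i)^{-1/2}w(y_j)^{1/2}$. The passage from the decreasing ordering used there to the increasing ordering required by Lemma~\ref{lem:cond_stoc_positivity} permutes rows and columns by the same reversal and hence leaves the determinant unchanged. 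Lemma~\ref{lem:cond_stoc_positivity} then yields that $\varphi_{-D-\mu,\beta}$ is stochastically positive with respect to $\mathfrak{D}(\mathfrak{X})$, and Theorem~\ref{thm:stochastic_process} produces the process $(X^{(\beta)}_t)_{t\in[-\beta/2,\beta/2]}$.

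It remains to evaluate the space--time correlation kernel. Theorem~\ref{thm:dynamical_correlation} gives the determinantal formula with kernel $R_{H,\beta}$ of Equation~\eqref{eq:zero_temp_limit}, and the last step is to identify $R_{-D-\mu,\beta}$ with $\mathcal{K}_{-D-\mu,\beta}$ by functional calculus. Since $(p_n)_{n\geq 0}$ is an orthonormal eigenbasis of $D$ with $Dp_n=m_np_n$, both operators $e^{-(\beta-s+t)H}(1+e^{-\beta H})^{-1}$ (for $t\leq s$) and $e^{-(t-s)H}(1+e^{-\beta H})^{-1}$ (for $t>s$) are diagonal in this basis, acting on $p_n$ through the scalars $e^{(\beta+t-s)(m_n+\mu)}/(1+e^{\beta(m_n+\mu)})$ and $e^{(t-s)(m_n+\mu)}/(1+e^{\beta(m_n+\mu)})$. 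Expanding $\langle\,\cdot\,\delta_x,\delta_y\rangle$ in this basis and using $\langle\delta_x,p_n\rangle=p_n(x)$, $\langle p_n,\delta_y\rangle=p_n(y)$ turns $R_{-D-\mu,\beta}(x,t;y,s)$ into exactly the two series defining $\mathcal{K}_{-D-\mu,\beta}$, which completes the proof.

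I expect the only genuine subtlety to be the nonnegativity of the minors, i.e.\ the appeal to Karlin--McGregor together with the birth-and-death reading of $\mathcal{D}$; the remaining ingredients are bookkeeping, namely the scalar factor $e^{nt\mu}$, the weight conjugation, and the spectral expansion. One should also keep in mind the boundedness of $e^{-(\beta-s+t)H}(1+e^{-\beta H})^{-1}$ for $0\leq s-t\leq\beta$ already noted in Theorem~\ref{thm:dynamical_correlation}, which here is inherited from $m_n\to-\infty$ (forced by the trace-class assumption $\sum_n e^{\beta m_n}<\infty$).
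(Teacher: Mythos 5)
Your proof is correct and follows essentially the same route as the paper's: the paper likewise deduces stochastic positivity from Lemma \ref{lem:cond_stoc_positivity} via the Karlin--McGregor nonnegativity recorded just before the proposition, and obtains the kernel by expanding $R_{-D-\mu,\beta}$ from Theorem \ref{thm:dynamical_correlation} (Equation \eqref{eq:zero_temp_limit}) in the eigenbasis $(p_n)_{n\geq 0}$. The only difference is that you spell out the bookkeeping (the scalar $e^{nt\mu}$, the weight conjugation, and the order reversal) that the paper leaves implicit.
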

\begin{proof}
    The first statement follows from Lemma \ref{lem:cond_stoc_positivity}. Using orthonormal basis $(p_n)_{n=0}^\infty$, we obtain the expression of the space-time correlation kernel $\mathcal{K}_{-D-\mu, \beta}$ by Equation \ref{eq:zero_temp_limit}.
\end{proof}

Let us discuss concrete examples. We suppose that $\mathfrak{X}=\mathbb{Z}_{\geq 0}$ and $w$ is the negative binomial distribution or the Poisson distribution (see Exapmples \ref{ex:Meixner}, \ref{ex:Charlier}). Then, $e^{\beta D}$ is of trace class for any $\beta>0$, i.e., these examples satisfy the above first condition. Furthermore, $\mathcal{D}$ generates a Markov process (birth-and-death process) on $\mathfrak{X}$. See \cite[Corollary 6.6]{BO13}\footnote{For the Meixner case, the parameter $\xi$ is replaced by $r/(1+r)$ in \cite{BO13}. These references did not address the Charlier case, but it follows the same argument as others.}. More precisely, there exists a Markov semigroup $(P_t)_{t\geq 0}$ on $\mathfrak{X}$ satisfying Kolmogorov's backward and forward equations with $\mathcal{D}$. In these case, $(P_t)_{t\geq 0}$ preserves the Banach space $C_0(\mathfrak{X})$ of functions on $\mathfrak{X}$ vanishing at infinity, and the infinitesimal generator of $(P_t|_{C_0(\mathfrak{X})})_{t\geq 0}$ coincides with $\mathcal{D}$.Thus, the above second condition holds true. Namely, we obtain the stochastic process satisfying the determinantal formula in Proposition \ref{prop:op_stoc_proc}.

\medskip
In this section, the stochastic positivity of $\varphi_{-D-\mu}$ relies on the existence of a stochastic dynamics $(e^{t\mathcal{D}})_{t\geq 0}$ on the underlying space $\mathfrak{X}$ and the Karlin--McGregor formula. The Karlin--McGregor formula, which extends stochastic dynamics on $\mathfrak{X}$ to one on $\mathcal{C}(\mathfrak{X})$, has commonly been discussed in previous works as well as our research. In this paper, we have revealed an algebraic aspect of such a construction of stochastic processes on $\mathcal{C}(\mathfrak{X})$. Due to the algebraic nature, we obtained the above determinantal formulas of space-time correlations of stochastic processes on $\mathcal{C}(\mathfrak{X})$. Furthermore, as demonstrated in the next sections, we can analyze limiting behavior of stochastic processes in an algebraic way. 

%%%%%%%%%%%%%%%%%%%%%%%%%%%%%%%%%%%%%%%%%%%%%%%%%%%%%%%%%%%%%%%%%%%%%%%%%%%%%%%%%%%%%%%%%%%%%%%%%%%%%%%%%%%%%%%%%%%%%%%%%%%%%%%%%%%%%%%%%%%%%%%%
\subsection{Limiting behavior as temperature tends to zero}
We apply the results in Section \ref{sec:zero_temp_limit} for the current setup. Let us suppose $-\mu \in (m_N, m_{N-1})$. Thus, we have $K_{-D-\mu, \beta}\to K_{w, N}$ as $\beta\to \infty$, and hence, $\varphi_{-D-\mu, \beta}$ converges to $\varphi_{w, N}$ in the weak${}^*$ topology. Here, the quasi-free state $\varphi_{w, N}$ on $\mathfrak{A}(\mathfrak{X})$ was introduced in Section \ref{sec:ope}. 

\begin{proposition}\label{prop:process_zero_temp}
    $\varphi_{w, N}$ is stochastically positive. As $\beta\to\infty$, the stochastic process associated with $\varphi_{-D-\mu, \beta}$ converges to the stochastic process $(X_t)_{t\in \mathbb{R}}$ associated with $\varphi_{w, N}$, and it satisfies 
    \[\mathbb{P}[(X_{t_1})_{x_1}=1, \dots, (X_{t_n})_{x_n}=1] = \det[\mathcal{K}_{-D, N}(x_i, t_i; x_j, t_j)]_{i, j=1}^n\]
    for every $x_1, \dots, x_n \in \mathfrak{X}$ and $t_1\leq \cdots \leq t_n $, where 
    \[\mathcal{K}_{-D, N}(x, t; y, s)=\begin{dcases} \sum_{n=0}^{N-1} e^{-(s-t)m_n} p_n(x)p_n(y) & t\leq s, \\ -\sum_{n=N}^\infty e^{(t-s)m_n} p_n(x)p_n(y) & t>s.\end{dcases}\]
\end{proposition}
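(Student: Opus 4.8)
The plan is to specialize Proposition \ref{prop:dynam_corr_zero_temp} to the operator $H := -D - \mu$ and then simplify the resulting space-time kernel $R_H$. First I would check the hypothesis of that proposition, namely that $\varphi_{-D-\mu, \beta} = \varphi_{H, \beta}$ is stochastically positive with respect to $\mathfrak{D}(\mathfrak{X})$ for every $\beta > 0$; this is exactly Proposition \ref{prop:op_stoc_proc}, whose underlying input (Karlin--McGregor positivity of the birth-and-death semigroup $(e^{t\mathcal{D}})_{t \geq 0}$) is available for all $t \geq 0$. Proposition \ref{prop:dynam_corr_zero_temp} then gives both that $\varphi_{K_H^{(-)}}$ is stochastically positive and that the $\beta \to \infty$ limiting process has determinantal space-time correlations with kernel $R_H$. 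It remains to identify $K_H^{(-)}$: the spectral subspace $\{H < 0\}$ equals $\{D > -\mu\}$, and since $m_0 > m_1 > \cdots$ with $-\mu \in (m_N, m_{N-1})$, this subspace is precisely $\mathrm{span}\{p_0, \dots, p_{N-1}\}$, i.e. $K_H^{(-)} = K_{w, N}$ and hence $\varphi_{K_H^{(-)}} = \varphi_{w, N}$. This establishes the stochastic positivity of $\varphi_{w,N}$, and the asserted convergence of processes follows from Proposition \ref{prop:convergence}, applied with the fixed flow $\alpha^{-D-\mu}$ and inverse temperatures tending to infinity.

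Next I would compute $R_H$ from the spectral decomposition $D = \sum_n m_n \langle \cdot, p_n\rangle p_n$. For $t \leq s$ one has $e^{(s-t)H} K_H^{(-)} = \sum_{n=0}^{N-1} e^{-(s-t)(m_n + \mu)} \langle \cdot, p_n \rangle p_n$, so that $R_H(x, t; y, s) = \sum_{n=0}^{N-1} e^{-(s-t)(m_n+\mu)} p_n(x) p_n(y)$; for $t > s$, using $1 - K_H^{(-)} = \sum_{n \geq N} \langle \cdot, p_n\rangle p_n$ together with $m_n \to -\infty$ (so the series converges), one gets $R_H(x,t;y,s) = -\sum_{n \geq N} e^{(t-s)(m_n+\mu)} p_n(x) p_n(y)$. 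Comparing with the definition of $\mathcal{K}_{-D, N}$ in the statement, both cases can be written uniformly as $R_H(x, t; y, s) = e^{(t-s)\mu}\, \mathcal{K}_{-D, N}(x, t; y, s)$.

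Finally I would remove the surviving scalar factor from the determinant. Writing $R_H(x_i, t_i; x_j, t_j) = e^{t_i \mu}\, \mathcal{K}_{-D, N}(x_i, t_i; x_j, t_j)\, e^{-t_j \mu}$ exhibits the matrix $[R_H(x_i,t_i;x_j,t_j)]_{i,j}$ as the conjugate $\Lambda\, [\mathcal{K}_{-D,N}(x_i,t_i;x_j,t_j)]_{i,j}\, \Lambda^{-1}$ of the kernel matrix by the diagonal matrix $\Lambda = \mathrm{diag}(e^{t_1\mu}, \dots, e^{t_n \mu})$, so the two determinants coincide and the claimed formula follows. This cancellation is the only substantive step; conceptually it reflects gauge invariance, since the chemical-potential shift $\mu$ enters the flow as the gauge transformation $\gamma_{e^{-\mathrm{i}t\mu}}$, under which the occupation observables $\rho_x$ are fixed, so $\mu$ must drop out of the law of the $\mathcal{C}(\mathfrak{X})$-valued process even though it is present in the individual functions $R_H$.
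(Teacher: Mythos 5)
Your proposal is correct and follows essentially the same route as the paper: apply Proposition \ref{prop:dynam_corr_zero_temp} with $H=-D-\mu$ (the paper gets the stochastic positivity of $\varphi_{w,N}$ directly from Lemma \ref{lem:convergence}, which is what that proposition's proof invokes anyway), identify the spectral projection onto $\{H<0\}$ with $K_{w,N}$, and observe that $R_{-D-\mu}(x,t;y,s)=e^{t\mu}\,\mathcal{K}_{-D,N}(x,t;y,s)\,e^{-s\mu}$, so that conjugation by the diagonal matrix $\mathrm{diag}(e^{t_1\mu},\dots,e^{t_n\mu})$ leaves the determinant unchanged. Your write-up is somewhat more explicit than the paper's (spelling out the spectral computation, the hypothesis check via Proposition \ref{prop:op_stoc_proc}, and the gauge-invariance interpretation), but the substance is identical.
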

\begin{proof}
    The first statement follows from Lemma \ref{lem:convergence}. For any $x, y\in \mathfrak{X}$ and $t, s\in \mathbb{R}$, we have 
    \[R_{-D-\mu}(x, t; y, s)=e^{t\mu}\mathcal{K}_{-D, N}(s, t; y, s)e^{-s\mu}.\]
    Here, the transformation of the kernel multipliying $e^{t\mu}$ and $e^{-s\mu}$ does not affect the determinant in the statement. Namely, we have 
    \begin{align*}
        \mathbb{P}[(X_{t_1})_{x_1}=1, \dots, (X_{t_n})_{x_n}=1] 
        & = \det[R_{-D-\mu}(x_i, t_i; x_j, t_j)]_{i, j=1}^n\\
        &= \det[\mathcal{K}_{-D, N}(x_i, t_i; x_j, t_j)]_{i, j=1}^n.
    \end{align*}
\end{proof}

As mentioned in the previous section, this proposition is applicable to Meixner and Charlier cases. Here, we would like to emphasize that the appearance of space-time correlation kernels in the aforementioned form is well known in both harmonic analysis and random matrix theory. See \cite{BO06, KT} etc. In this paper, we have established an algebraic derivation of such space-time correlation kernels. 

\medskip
Furthermore, we can show the Markov property of the above stochastic process $(X_t)_{t\in \mathbb{R}}$ by an algebraic argument. The perfectness of the orthogonal polynomial ensemble $\mathbb{P}_{w, N}$ plays an essential role.
\begin{proposition}\label{prop:Markov_property}
    The stochastic process $(X_t)_{t\in \mathbb{R}}$ in Proposition \ref{prop:process_zero_temp} has the Markov property.
\end{proposition}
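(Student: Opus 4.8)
The plan is to obtain the Markov property from the abstract sufficient condition in Corollary~\ref{cor:suf_cond_Markov}. Since the process $(X_t)_{t\in\mathbb{R}}$ of Proposition~\ref{prop:process_zero_temp} is the non-periodic ($\beta=\infty$) case, it suffices to verify that $\{\pi_\varphi(f)\Omega_\varphi\mid f\in C(E)\}$ is dense in the relevant GNS space, where $E=\mathcal{C}(\mathfrak{X})$, $C(E)\cong\mathfrak{D}(\mathfrak{X})$, and $\varphi=\varphi_{w,N}$. The key point, as the statement anticipates, is that this density is precisely the \emph{perfectness} of the orthogonal polynomial ensemble $\mathbb{P}_{w,N}$ in the sense of Olshanski~\cite{Olshanski20}.

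First I would replace the ambient algebra $\mathfrak{A}(\mathfrak{X})$ by the gauge-invariant subalgebra $\mathfrak{I}(\mathfrak{X})$. The flow $\alpha^{-D-\mu}$ is gauge-invariant and therefore maps $\mathfrak{I}(\mathfrak{X})$ onto itself, while $\Omega_\varphi$ and the self-adjoint generator implementing $\alpha^{-D-\mu}$ in the GNS representation both respect the invariant subspace $\overline{\pi_\varphi(\mathfrak{I}(\mathfrak{X}))\Omega_\varphi}$. Because $\mathfrak{D}(\mathfrak{X})\subset\mathfrak{I}(\mathfrak{X})$ and the functions $\mathcal{S}_{f_1,\dots,f_n}$ defining $(X_t)$ only involve arguments $f_i\in\mathfrak{D}(\mathfrak{X})$ together with the flow, the stochastically positive KMS system $(\mathfrak{I}(\mathfrak{X}),\mathfrak{D}(\mathfrak{X}),\alpha^{-D-\mu}|_{\mathfrak{I}(\mathfrak{X})},\varphi_{w,N}|_{\mathfrak{I}(\mathfrak{X})})$ produces the very same process $(X_t)_{t\in\mathbb{R}}$, and its GNS space is exactly $\overline{\pi_\varphi(\mathfrak{I}(\mathfrak{X}))\Omega_\varphi}$.

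With this reduction the hypothesis of Corollary~\ref{cor:suf_cond_Markov} becomes $\overline{\pi_\varphi(\mathfrak{D}(\mathfrak{X}))\Omega_\varphi}=\overline{\pi_\varphi(\mathfrak{I}(\mathfrak{X}))\Omega_\varphi}$. Under the identification $\overline{\pi_\varphi(\mathfrak{D}(\mathfrak{X}))\Omega_\varphi}\cong L^2(\mathcal{C}(\mathfrak{X}),\mathbb{P}_{w,N})$ recorded in Section~\ref{sec:DPP_CAR}, this equality says precisely that the GNS representation space of $\varphi_{w,N}|_{\mathfrak{I}(\mathfrak{X})}$ can be identified with $L^2(\mathcal{C}(\mathfrak{X}),\mathbb{P}_{w,N})$, i.e.\ that $\mathbb{P}_{w,N}$ is perfect. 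Invoking the perfectness of $\mathbb{P}_{w,N}$ then supplies the density required by Corollary~\ref{cor:suf_cond_Markov}, and the Markov property of $(X_t)_{t\in\mathbb{R}}$ follows.

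The main obstacle is the perfectness of $\mathbb{P}_{w,N}$ itself, where all the genuine work lies; the surrounding operator-algebraic bookkeeping is routine once perfectness is available. Concretely, one must show that the charge-zero sector $\overline{\pi_\varphi(\mathfrak{I}(\mathfrak{X}))\Omega_\varphi}$, spanned by the images $\pi_\varphi(A)\Omega_\varphi$ of gauge-invariant $A$, is already exhausted by the closed span of the configuration vectors $\pi_\varphi(\rho_{x_1}\cdots\rho_{x_m})\Omega_\varphi$. For the finite-$N$ orthogonal polynomial ensembles at hand (in particular the Meixner and Charlier cases) this holds, and I would establish it following the appendix and \cite{Olshanski20}; it is exactly the perfectness input on which the proof depends.
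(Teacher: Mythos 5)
Your proposal is correct and takes essentially the same route as the paper: the paper also invokes the perfectness of $\mathbb{P}_{w,N}$ (citing \cite[Theorem 5.1]{Olshanski20}) to identify the GNS space of $\varphi_{w,N}|_{\mathfrak{I}(\mathfrak{X})}$ with $L^2(\mathcal{C}(\mathfrak{X}),\mathbb{P}_{w,N})$, so that $\mathbbm{1}$ is cyclic for $T(\mathfrak{D}(\mathfrak{X}))$, and then concludes via Corollary \ref{cor:suf_cond_Markov}. The reduction to the gauge-invariant subalgebra that you spell out is left implicit in the paper's proof, but it is the same bookkeeping.
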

\begin{proof}
    By \cite[Theorem 5.1]{Olshanski20}, $\mathbb{P}_{w, N}$ is perfect, and $K_{w, N}$ is its canonical kernel (see Appendix \ref{sec:perfectness} for more details). Thus, there exists a representation $(T, L^2(\mathcal{C}(\mathfrak{X}), \mathbb{P}_{w, N}))$ of the GICAR algebra $\mathfrak{I}(\mathfrak{X})$ (see Section \ref{sec:DPP_CAR}) such that $\varphi_{w, N}(A)=\langle T(A)\mathbbm{1}, \mathbbm{1}\rangle$ for all $A\in \mathfrak{I}(\mathfrak{X})$ and $T(f)\mathbbm{1}=f$ for all $f\in C(\mathcal{C}(\mathfrak{X}))\cong \mathfrak{D}(\mathfrak{X})$. In particular, $(T, L^2(\mathcal{C}(\mathfrak{X})), \mathbbm{1})$ is the GNS triple associated with $\varphi_{w, N}$, and $\mathbbm{1}$ is cyclic for $T(\mathfrak{D}(\mathfrak{X}))$. Therefore, by Corollary \ref{cor:suf_cond_Markov}, $(X_t)_{t\in \mathbb{R}}$ is Markov.
\end{proof}

%%%%%%%%%%%%%%%%%%%%%%%%%%%%%%%%%%%%%%%%%%%%%%%%%%%%%%%%%%%%%%%%%%%%%%%%%%%%%%%%%%%%%%%%%%%%%%%%%%%%%%%%%%%%%%%%%%%%%%%%%%%%%%%%%%%%%%%%%%%%%%%%
\subsection{Limiting behavior concerning parameters of orthogonal polynomials}\label{sec:ltcpop} 
In Section \ref{sec:limit_OP}, we discussed several limiting behavior of orthogonal polynomial ensembles. The essential idea was that their correlation kernels are spectral projections of certain self-adjoint operators, and the convergence of self-adjoint operators induces the convergence of the associated point processes. By Proposition \ref{prop:limit_by_strong_resolvent_convergence}, the associated stochastic processes converge as well as point processes. In particular, by convergences in Examples \ref{ex:M_dH}, \ref{ex:M_dL}, \ref{ex:K_dH}, \ref{ex:H_dL}, \ref{ex:R_dJ}, we obtain stationary processes with respect to discrete Hermite/Laguerre/Jacobi ensembles.

\medskip
Let $T$ denote the self-adjoint operator given by Equation \eqref{eq:multi_operator} in the discrete Hermite case.
\begin{proposition}[stochastic process on the discrete Hermite ensemble at finite temperature]
    The quasi-free state $\varphi_{T-r, \beta}$ on $\mathfrak{A}(\mathbb{Z}_{\geq 0})$ is stochastically positive for all $\beta>0$. The associated stationary process $(X^{(\beta)}_t)_{t\in [-\beta/2, \beta/2]}$ on $\mathcal{C}(\mathbb{Z}_{\geq 0})$ with respect to the discrete Hermite ensemble at inverse temperature $\beta$ satisfies 
    \[\mathbb{P}[(X^{(\beta)}_{t_1})_{x_1}=1, \dots, (X^{(\beta)}_{t_n})_{x_n}=1]=\det[\mathcal{K}_{\mathrm{DHermite}, (r, \infty)}^{(\beta)}(x_i, t_i; x_j, t_j)]_{i, j=1}^n\]
    for all $x_1, \dots, x_n\in \mathbb{Z}_{\geq 0}$ and $t_1\leq \cdots \leq t_n$ in $[-\beta/2, \beta/2]$. Here, $\mathcal{K}_{\mathrm{DHermite}, (r, \infty)}^{(\beta)}$ is given by 
    \[\mathcal{K}_{\mathrm{DHermite}, (r, \infty)}^{(\beta)}(x, t; y, s):=\begin{dcases} \int_\mathbb{R} \widetilde{h_x}(u)\widetilde{h_y}(u) \frac{e^{-(\beta-s+t)(u-r)}}{1+e^{-\beta(u-r)}}e^{-u^2}du & t\geq s, \\ 
    -\int_\mathbb{R}\widetilde{h_x}(u)\widetilde{h_y}(u)\frac{e^{-(t-s)(u-r)}}{1+e^{-\beta(u-r)}}e^{-u^2}du & t>s,\end{dcases}\]
    where $\widetilde{h_x}:=h_x/\|h_x\|$ for all $x=0, 1, \dots$.
\end{proposition}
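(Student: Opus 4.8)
The plan is to show that the self-adjoint operator $T$ arising as the Jacobi matrix of the Hermite three-term recurrence satisfies the hypotheses required to apply the machinery already developed, namely Lemma \ref{lem:cond_stoc_positivity} for stochastic positivity and Theorem \ref{thm:dynamical_correlation} for the determinantal space-time correlation. Writing $H:=T-r$, the associated quasi-free state is $\varphi_{T-r,\beta}=\varphi_{H,\beta}$, and the correlation kernel $K_{H,\beta}$ of the stationary point process is precisely the finite-temperature discrete Hermite kernel $e^{-\beta(T-r)}(1+e^{-\beta(T-r)})^{-1}$. Thus the core of the argument is to verify stochastic positivity, after which the stated determinantal formula follows by specializing Equation \eqref{eq:zero_temp_limit} and inserting the spectral resolution of $T$.

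First I would establish stochastic positivity via Lemma \ref{lem:cond_stoc_positivity}. Since $T$ is unitarily equivalent (through $U$) to multiplication by $u$ on $L^2(\mathbb{R},w)$ with $w(u)=e^{-u^2}$, the operator $-(T-r)$ generates a Markov semigroup: indeed $e^{-t(T-r)}=e^{rt}U^{-1}M_{e^{-tu}}U$ corresponds to the positivity-preserving multiplication semigroup $e^{-tu}$ conjugated by the orthogonal-polynomial unitary, and the relevant Karlin--McGregor-type determinantal positivity $\det[\langle e^{-t(T-r)}\delta_{x_i},\delta_{y_j}\rangle]_{i,j}\geq 0$ for $x_1<\cdots<x_n$, $y_1<\cdots<y_n$ holds because this is the transition determinant of a one-dimensional birth-and-death chain (the Jacobi matrix $T$ has the tridiagonal, nearest-neighbor form of Equation \eqref{eq:multi_operator}). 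This is exactly the hypothesis of Lemma \ref{lem:cond_stoc_positivity}, so $\varphi_{T-r,\beta}$ is stochastically positive with respect to $\mathfrak{D}(\mathbb{Z}_{\geq 0})$ for every $\beta>0$. By Theorem \ref{thm:stochastic_process} we then obtain the desired stationary process $(X^{(\beta)}_t)_{t\in[-\beta/2,\beta/2]}$.

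Next I would compute the space-time correlation kernel. Theorem \ref{thm:dynamical_correlation} gives the occupation probabilities as a determinant of $R_{H,\beta}$ with $H=T-r$, where $R_{H,\beta}(x,t;y,s)$ equals $\langle e^{-(\beta-s+t)H}(1+e^{-\beta H})^{-1}\delta_x,\delta_y\rangle$ for $t\leq s$ and $-\langle e^{-(t-s)H}(1+e^{-\beta H})^{-1}\delta_x,\delta_y\rangle$ for $t>s$. Transporting to $L^2(\mathbb{R},w)$ via $U$, the operator $H=T-r$ becomes multiplication by $u-r$, and $\delta_x$ corresponds to the normalized Hermite function $\widetilde{h_x}=h_x/\|h_x\|$; the functional calculus then replaces each bounded function of $H$ by the corresponding scalar function of $u-r$ inside the integral $\int_\mathbb{R}\widetilde{h_x}(u)\widetilde{h_y}(u)\,(\cdots)\,e^{-u^2}\,du$. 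Substituting $e^{-(\beta-s+t)(u-r)}(1+e^{-\beta(u-r)})^{-1}$ and $e^{-(t-s)(u-r)}(1+e^{-\beta(u-r)})^{-1}$ for the two cases yields precisely $\mathcal{K}^{(\beta)}_{\mathrm{DHermite},(r,\infty)}$ as stated, and the determinantal formula follows verbatim from Theorem \ref{thm:dynamical_correlation}.

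The main obstacle, and the step warranting the most care, is the verification that $\det[\langle e^{-t(T-r)}\delta_{x_i},\delta_{y_j}\rangle]\geq 0$, i.e.\ that the Karlin--McGregor positivity genuinely applies here. Concretely one must confirm that $T$ is essentially self-adjoint on finitely supported sequences (which holds because the moment problem for $w$ is determinate, Condition (2) of Section \ref{sec:limit_OP}) and that its semigroup really is a conservative or at least sub-Markovian birth-and-death transition semigroup, so that the total-positivity argument of \cite{KMc57,KMc59} is legitimate; the factor $e^{rt}$ from shifting by $r$ is a common positive scalar and does not affect the sign of the determinant. A secondary technical point is the interchange of the spectral (integral) representation with the bounded operators $e^{-(\beta-s+t)H}(1+e^{-\beta H})^{-1}$ and $e^{-(t-s)H}(1+e^{-\beta H})^{-1}$; these are bounded precisely because $0\leq s-t\leq\beta$, exactly as remarked in the proof of Theorem \ref{thm:dynamical_correlation}, so the functional calculus and the passage to the integral kernel are justified without further hypotheses.
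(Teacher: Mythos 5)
Your second step---deriving the kernel formula from Theorem \ref{thm:dynamical_correlation} by transporting $H=T-r$ through the unitary $U$ to multiplication by $u-r$ on $L^2(\mathbb{R},e^{-u^2}du)$---is correct and is exactly what the paper does. The gap is in your first, and crucial, step: stochastic positivity. You invoke Lemma \ref{lem:cond_stoc_positivity} for $H=T-r$, but that lemma is stated and proved under the standing hypothesis that $e^{-\beta H}$ is trace class: its proof goes through the trace representation \eqref{eq:schwinger_func_trace} of the Schwinger functions, which requires $e^{-\beta d\Gamma(H)}$ to be trace class. Here $T$ is unitarily equivalent to multiplication by $u$ on $L^2(\mathbb{R},e^{-u^2}du)$, so its spectrum is all of $\mathbb{R}$ and $e^{-\beta(T-r)}$ is not even a bounded operator, let alone trace class. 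The paper states explicitly, right after Propositions \ref{prop:dH_zero}, \ref{prop:dL_zero}, \ref{prop:dJ_zero}, that Lemma \ref{lem:cond_stoc_positivity} is \emph{not} applicable in the Hermite/Laguerre/Jacobi cases.

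Worse, the Karlin--McGregor positivity you assert is simply false for $H=T-r$. The generator of $e^{-t(T-r)}$ is $-(T-r)$, whose off-diagonal entries $-a_x=-\sqrt{(x+1)/2}$ are negative, so it is not a birth-and-death generator; and since $T-r$ is unbounded below, $e^{-t(T-r)}$ is unbounded and cannot be a (sub-)Markov transition semigroup. Concretely, already for $n=1$,
\[
\langle e^{-t(T-r)}\delta_0,\delta_1\rangle
=e^{tr}\,\frac{\sqrt{2}}{\sqrt{\pi}}\int_{\mathbb{R}}u\,e^{-tu}e^{-u^2}\,du
=-\frac{t}{\sqrt{2}}\,e^{tr+t^2/4}<0 \qquad (t>0),
\]
so even the $1\times 1$ determinants in the hypothesis of Lemma \ref{lem:cond_stoc_positivity} are negative. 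Your identification of $e^{-t(T-r)}$ with a ``positivity-preserving multiplication semigroup'' conflates pointwise positivity of functions of $u$ in $L^2(\mathbb{R},w)$ with entrywise positivity in the $\delta_x$-basis of $\ell^2(\mathbb{Z}_{\geq 0})$; the unitary $U$ does not intertwine these two cones. The paper's actual argument avoids this entirely: stochastic positivity is first proved for the Meixner (or Krawtchouk) states $\varphi_{-D-\mu,\beta}$ via Proposition \ref{prop:op_stoc_proc}, where the trace-class and birth-and-death hypotheses genuinely hold; the rescaled difference operators converge to $T-r$ in the strong resolvent sense (Examples \ref{ex:M_dH}, \ref{ex:K_dH}); and Proposition \ref{prop:limit_by_strong_resolvent_convergence} then transports stochastic positivity, convergence of the processes, and the determinantal space-time formula to the limit. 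This approximation step is what your proof is missing, and it cannot be replaced by a direct application of Lemma \ref{lem:cond_stoc_positivity}.
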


As $\beta$ tends to infinity, $\varphi_{T-r, \beta}$ converges to the quasi-free state $\varphi_{\mathrm{DHertmite}, (r, \infty)}$ associated with $K_{\mathrm{DHermite}, (r, \infty)}$. The following is a consequence of Lemma \ref{lem:convergence} and Proposition \ref{prop:dynam_corr_zero_temp}:

\begin{proposition}[stochastic process on the discrete Hermite ensemble at zero temperature]\label{prop:dH_zero}
    The quasi-free state $\varphi_{\mathrm{DHertmite}, (r, \infty)}$ on $\mathfrak{A}(\mathbb{Z}_{\geq 0})$ is stochastically positive. The associated stationary process $(X_t)_{t\in \mathbb{R}}$ on $\mathcal{C}(\mathbb{Z}_{\geq 0})$ with respect to the discrete Hermite ensemble satisfies 
    \[\mathbb{P}[(X_{t_1})_{x_1}=1, \dots, (X_{t_n})_{x_n}=1]=\det[\mathcal{K}_{\mathrm{DHermite}, (r, \infty)}(x_i, t_i; x_j, t_j)]_{i, j=1}^n\]
    for all $x_1, \dots, x_n\in \mathbb{Z}_{\geq 0}$ and $t_1\leq \cdots \leq t_n$ in $\mathbb{R}$. Here, $\mathcal{K}_{\mathrm{DHermite}, (r, \infty)}$ is given by 
    \[\mathcal{K}_{\mathrm{DHermite}, (r, \infty)}(x, t; y, s):=\begin{dcases}
        \int_r^\infty \widetilde{h_x}(u)\widetilde{h_y}(u)e^{(s-t)u}e^{-u^2}du & t\leq s, \\ 
        -\int_{-\infty}^r \widetilde{h_x}(u)\widetilde{h_y}(u)e^{-(t-s)u}e^{-u^2}du & t>s.\end{dcases}\]
\end{proposition}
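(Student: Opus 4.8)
The plan is to obtain this statement exactly as announced, as the zero–temperature ($\beta\to\infty$) degeneration of the finite–temperature process of the preceding proposition, the two engines being Lemma \ref{lem:convergence} (which transports stochastic positivity and convergence of finite–dimensional distributions to the limit) and Proposition \ref{prop:dynam_corr_zero_temp} (which hands us the explicit space–time kernel). Concretely, let $H:=r-T$ be the self–adjoint operator whose strictly negative spectral subspace $\{H<0\}=\{T>r\}$ has range equal to the discrete Hermite projection $K_{\mathrm{DHermite},(r,\infty)}$; this is precisely the operator for which $K_{H,\beta}=e^{-\beta H}(1+e^{-\beta H})^{-1}$ is the correlation kernel of the discrete Hermite ensemble at inverse temperature $\beta$. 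By the finite–temperature proposition, $\varphi_{H,\beta}$ is stochastically positive with respect to $\mathfrak{D}(\mathbb{Z}_{\geq 0})$ for every $\beta>0$.

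First I would record the weak${}^*$ convergence $\varphi_{H,\beta}\to\varphi_{K_H^{(-)}}$. By Remark \ref{rem:limit_trans_projection} (that is, \cite[Theorem VIII.5]{RS1}) one has $K_{H,\beta}\to K_H^{(-)}$ strongly, and $K_H^{(-)}=K_{\mathrm{DHermite},(r,\infty)}$ by the identification of the spectral projection of $T$ with $K_{w,(r,\infty)}$ in Section \ref{sec:limit_OP}; since a quasi–free state is pinned down by its two–point function $\langle K\,\cdot,\cdot\rangle$, strong convergence of the kernels forces weak${}^*$ convergence of the states. Lemma \ref{lem:convergence} then yields at once that $\varphi_{K_H^{(-)}}=\varphi_{\mathrm{DHermite},(r,\infty)}$ is stochastically positive and, via Proposition \ref{prop:convergence}, that the finite–temperature processes converge to the limiting process. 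This disposes of the stochastic–positivity assertion and of the convergence claim.

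For the determinantal formula I would invoke Proposition \ref{prop:dynam_corr_zero_temp} directly: the limiting process is determinantal with space–time kernel $R_H$, where $R_H(x,t;y,s)=\langle e^{(s-t)H}K_H^{(-)}\delta_x,\delta_y\rangle$ for $t\le s$ and $R_H(x,t;y,s)=-\langle e^{-(t-s)H}(1-K_H^{(-)})\delta_x,\delta_y\rangle$ for $t>s$. It remains only to render these matrix elements as integrals. Using the unitary $U\colon L^2(\mathbb{R},w)\to\ell^2(\mathbb{Z}_{\geq 0})$ of Section \ref{sec:limit_OP}, which sends the normalized Hermite function $\widetilde{h_x}$ to $\delta_x$ and intertwines $T$ with multiplication by $u$ (so that $H$ becomes multiplication by $r-u$, the projection $K_H^{(-)}$ becomes multiplication by $\mathbbm{1}_{(r,\infty)}$, and $1-K_H^{(-)}$ by $\mathbbm{1}_{(-\infty,r)}$), each inner product collapses to an integral $\int \widetilde{h_x}(u)\widetilde{h_y}(u)\,e^{\pm(s-t)(r-u)}\,e^{-u^2}\,du$ over the appropriate half–line, in the spirit of Equation \eqref{eq:int_kernel}. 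Finally I would strip off the $\beta$–independent shift by $r$: exactly as in the proof of Proposition \ref{prop:process_zero_temp}, conjugating the space–time kernel by the gauge factors $e^{\pm r t}$ (which cancel in the determinant because the row and column time variables run over the same index set) absorbs the factor $e^{(s-t)r}$ and puts the kernel into the stated form $\mathcal{K}_{\mathrm{DHermite},(r,\infty)}$.

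The step demanding the most care — and essentially the only place an error can creep in — is the bookkeeping of the sign of the exponent $e^{(t-s)u}$ and of the half–line of integration: which of $(r,\infty)$ and $(-\infty,r)$ appears is dictated entirely by the sign of $H$ (equivalently by the choice of $\mp$ in $e^{\mp\beta(T-r)}$), and the exponent produced by the gauge transformation must be cross–checked against that choice, using the discrete analogue in Proposition \ref{prop:process_zero_temp} (where the clean kernel carries $e^{-(s-t)m_n}$ with $m_n$ the eigenvalue of $D$) as a guide. Everything else is a routine translation of the abstract kernel $R_H$ through $U$, together with the already–established strong convergence $K_{H,\beta}\to K_H^{(-)}$.
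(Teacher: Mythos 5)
Your proposal follows the paper's own route exactly: the paper offers no separate proof of Proposition \ref{prop:dH_zero}, deriving it, just as you do, from the finite-temperature proposition together with Lemma \ref{lem:convergence} (stochastic positivity and finite-dimensional distributions pass to the weak${}^*$ limit, via Proposition \ref{prop:convergence}) and Proposition \ref{prop:dynam_corr_zero_temp} (the limiting space--time kernel $R_H$), after which the matrix elements are transported through the unitary $U$ and a gauge conjugation. In strategy and in substance your argument is the intended one, and your choice $H=r-T$ is the correct one: it is the operator whose negative spectral projection $K_H^{(-)}$ equals $K_{\mathrm{DHermite},(r,\infty)}$ (the paper's finite-temperature proposition is labelled $\varphi_{T-r,\beta}$, whose $\beta\to\infty$ limit would instead be the ensemble on $(-\infty,r)$; that label should likewise read $r-T$).

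The sign bookkeeping you flagged as the delicate step is worth settling, because completing it reveals a typo in the printed statement rather than a gap in your argument. With $H=r-T$, Proposition \ref{prop:dynam_corr_zero_temp} gives, for $t\le s$,
\[
R_H(x,t;y,s)=\langle e^{(s-t)H}K_H^{(-)}\delta_x,\delta_y\rangle=\int_r^\infty \widetilde{h_x}(u)\,\widetilde{h_y}(u)\,e^{(s-t)(r-u)}\,e^{-u^2}\,du,
\]
and stripping the gauge factor $e^{(s-t)r}$ leaves the exponent $e^{-(s-t)u}$, not $e^{(s-t)u}$ as displayed in the proposition (similarly $-e^{(t-s)u}$ on $(-\infty,r)$ for $t>s$). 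Your own proposed cross-checks confirm this: in Proposition \ref{prop:process_zero_temp} the kernel carries $e^{-(s-t)m_n}$ with $m_n$ the eigenvalues on the upper spectral subspace, and the discrete hypergeometric kernel $\mathcal{K}^{\mathrm{dHyp}}_{z,z',\xi}$ later in the paper carries $e^{-(s-t)a}$ over $a>0$. One can also see directly that the printed signs are impossible: taking $x_1=x_2=x$ and $\Delta=t_2-t_1\to\infty$, the printed kernel gives $\mathbb{P}[(X_{t_1})_x=1,(X_{t_2})_x=1]=K(x,x)^2+\bigl(\int_r^\infty \widetilde{h_x}^2e^{\Delta u}e^{-u^2}du\bigr)\bigl(\int_{-\infty}^r \widetilde{h_x}^2e^{-\Delta u}e^{-u^2}du\bigr)$, which grows like $e^{\Delta^2/2}$ and so cannot be a probability, whereas with the negated exponents the same product is bounded by $K(x,x)(1-K(x,x))$. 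So your derivation is correct as written up to its last sentence: the gauge conjugation does not land on the kernel "as stated" but on the corrected kernel with both exponents negated, and it is the proposition's display that should be amended accordingly.
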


Let $T$ denote the self-adjoint operator given by Equation \eqref{eq:multi_operator} in the discrete Laguerre case with parameter $c>0$, and $s$ is the linear operator on $\ell^2(\mathbb{Z}_{\geq 0})$ defined by $s\delta_x:=(-1)^x\delta_x$ (see Example \ref{ex:M_dL}). We obtain the same results for the discrete Laguerre ensemble. 
\begin{proposition}[stochastic process on the discrete Laguerre ensemble at finite temperature]
    The quasi-free state $\varphi_{-s(T-r)s, \beta}$ on $\mathfrak{A}(\mathbb{Z}_{\geq 0})$ is stochastically positive for all $\beta>0$. The associated stationary process $(X^{(\beta)}_t)_{t\in [-\beta/2, \beta/2]}$ on $\mathcal{C}(\mathbb{Z}_{\geq 0})$ with respect to the discrete Laguerre ensemble at inverse temperature $\beta$ satisfies
    \[\mathbb{P}[(X^{(\beta)}_{t_1})_{x_1}=1, \dots, (X^{(\beta)}_{t_n})_{x_n}=1]=\det[\mathcal{K}_{\mathrm{DLaguerre}(c), (0, r)}^{(\beta)}(x_i, t_i; x_j, t_j)]_{i, j=1}^n\]
    for all $x_1, \dots, x_n\in \mathbb{Z}_{\geq 0}$ and $t_1\leq \cdots \leq t_n$ in $[-\beta/2, \beta/2]$. Here, $\mathcal{K}_{\mathrm{DLaguerre}(c), (0, r)}^{(\beta)}$ is given by
    \[\mathcal{K}_{\mathrm{DLaguerre}(c), (0, r)}^{(\beta)}(x, t; y, s):=\begin{dcases} \int_0^\infty \widetilde{l^{(c)}_x}(u)\widetilde{l^{(c)}_y}(u) \frac{e^{-(\beta-s+t)(u-r)}}{1+e^{-\beta(u-r)}}u^{c-1}e^{-u}du & t\geq s, \\ 
    -\int_0^\infty \widetilde{l^{(c)}_x}(u) \widetilde{l^{(c)}_y}(u)\frac{e^{-(t-s)(u-r)}}{1+e^{-\beta(u-r)}}u^{c-1}e^{-u}du & t>s,\end{dcases}\]
    where $\widetilde{l^{(c)}_x}:=l^{(c)}_x/\|l^{(c)}_x\|$ for all $x=0, 1,\dots$.
\end{proposition}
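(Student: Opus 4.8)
The plan is to realize this proposition as a strong-resolvent limit of the finite-temperature Meixner process, following the programme announced at the end of Section~\ref{sec:ltcpop}. A direct verification of stochastic positivity via Lemma~\ref{lem:cond_stoc_positivity} and Proposition~\ref{prop:op_stoc_proc} is not available here, because the self-adjoint operator $T$ has purely continuous spectrum equal to the support $[0,\infty)$ of the Laguerre weight, so $e^{-\beta H}$ is not of trace class and the Karlin--McGregor argument cannot be run for the limit operator itself. The Meixner approximants, by contrast, have discrete spectrum and a genuine birth-and-death structure, which is exactly what Proposition~\ref{prop:op_stoc_proc} needs.

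Concretely, I would fix $c>0$ and choose Meixner parameters $\xi_N\uparrow 1$ together with $\mu_N:=(1-\xi_N)N\to r$ so as to enter the regime of Example~\ref{ex:M_dL}. Writing $D_N:=w_{c,\xi_N}^{1/2}\mathcal{D}\,w_{c,\xi_N}^{-1/2}$ for the symmetrized Meixner difference operator and $H_N:=-(D_N+\mu_N)$, each state $\varphi_{H_N,\beta}=\varphi_{-D_N-\mu_N,\beta}$ is stochastically positive with respect to $\mathfrak{D}(\mathbb{Z}_{\geq 0})$ by Proposition~\ref{prop:op_stoc_proc}; this uses that $e^{\beta D_N}$ is trace class and that $\mathcal{D}$ generates a birth-and-death Markov semigroup, so that the Karlin--McGregor formula supplies the required nonnegativity. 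Example~\ref{ex:M_dL} (that is, \cite[Theorem~6.2]{BO17}) gives $D_N+\mu_N\to -s(T-r)s$ in the strong resolvent sense, hence $H_N\to s(T-r)s$ in the same sense (strong resolvent convergence is stable under negation). Thus both hypotheses of Proposition~\ref{prop:limit_by_strong_resolvent_convergence} hold, and that proposition yields at once the stochastic positivity of the limit state, the convergence of processes in the sense of Proposition~\ref{prop:convergence}, and the determinantal formula with space-time kernel $R_{s(T-r)s,\beta}$ of Equation~\eqref{eq:zero_temp_limit}. (The operator $s(T-r)s$ produced by the limit is the one governing the stated kernel; I would reconcile it with the sign written in the statement when fixing the $\mu_N$.)

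It then remains to evaluate $R_{s(T-r)s,\beta}$ explicitly. Here I would diagonalize $T$ through the unitary $U\colon L^2([0,\infty),u^{c-1}e^{-u}\,du)\to\ell^2(\mathbb{Z}_{\geq 0})$ sending the normalized Laguerre function $\widetilde{l^{(c)}_x}$ to $\delta_x$, under which $T$ becomes multiplication by $u$. Since $e^{-\tau s(T-r)s}=s\,e^{-\tau(T-r)}s$ and $s\delta_x=(-1)^x\delta_x$ is a diagonal unitary, each matrix element $\langle e^{-\tau s(T-r)s}(1+e^{-\beta s(T-r)s})^{-1}\delta_x,\delta_y\rangle$ equals $(-1)^{x+y}$ times $\int_0^\infty\widetilde{l^{(c)}_x}(u)\widetilde{l^{(c)}_y}(u)\,e^{-\tau(u-r)}\bigl(1+e^{-\beta(u-r)}\bigr)^{-1}u^{c-1}e^{-u}\,du$, with $\tau=\beta-s+t$ on the branch $t\le s$ and $\tau=t-s$ on the branch $t>s$. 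Inserting these into \eqref{eq:zero_temp_limit} reproduces $\mathcal{K}^{(\beta)}_{\mathrm{DLaguerre}(c),(0,r)}$ up to the prefactors $(-1)^{x_i+x_j}$, and these cancel in $\det[\,\cdot\,]_{i,j=1}^n$ since they factor as $(-1)^{x_i}$ along rows and $(-1)^{x_j}$ along columns, i.e.\ as conjugation by a diagonal $\pm1$ matrix, which leaves the determinant unchanged.

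The genuinely hard analytic input, the strong resolvent convergence of the Meixner difference operators to the Laguerre multiplication operator, is imported from \cite{BO17}, so the work specific to this proof is organizational. I expect the main delicate point to be the simultaneous bookkeeping of the chemical-potential normalization $\mu_N\to r$ and the $s$-conjugation, since a slip there would flip the integration regime from $(0,r)$ to $(r,\infty)$ (equivalently, replace the limiting kernel by its particle--hole complement). Everything past that is a direct application of Propositions~\ref{prop:op_stoc_proc} and \ref{prop:limit_by_strong_resolvent_convergence} followed by the functional-calculus computation above.
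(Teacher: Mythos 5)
Your proposal is correct and is essentially the paper's own argument: the paper presents this proposition as a direct consequence of Proposition~\ref{prop:limit_by_strong_resolvent_convergence} applied to the Meixner approximants in the regime of Example~\ref{ex:M_dL} (stochastic positivity of each $\varphi_{-D_N-\mu_N,\beta}$ coming from Proposition~\ref{prop:op_stoc_proc} via Karlin--McGregor), followed by exactly the diagonalization of $T$ by the normalized Laguerre functions and the observation that the $(-1)^{x_i+x_j}$ factors from the $s$-conjugation cancel in the determinant. Your sign bookkeeping is also right, and in fact sharper than the printed statement: the limiting Hamiltonian produced by $H_N=-(D_N+\mu_N)$ is $s(T-r)s$, which is the operator matching the displayed kernel $\mathcal{K}^{(\beta)}_{\mathrm{DLaguerre}(c),(0,r)}$, so the ``$-$'' in $\varphi_{-s(T-r)s,\beta}$ is a typo in the paper rather than something to be absorbed into the choice of $\mu_N$.
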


Let $\varphi_{\mathrm{DLaguerre}(c), (0, r)}$ denote the quasi-free state associated with $K_{\mathrm{DLaguerre}(c), (0, r)}$. Similarly to the above, the following is a consequence of Lemma \ref{lem:convergence} and Proposition \ref{prop:dynam_corr_zero_temp}:
\begin{proposition}[stochastic process on the discrete Laguerre ensemble at zero temperature]\label{prop:dL_zero}
    The quasi-free state $\varphi_{\mathrm{DLaguerre}(c), (0, r)}$ on $\mathfrak{A}(\mathbb{Z}_{\geq 0})$ is stochastically positive. The associated stationary process $(X_t)_{t\in \mathbb{R}}$ on $\mathcal{C}(\mathbb{Z}_{\geq 0})$ with respect to the discrete Laguerre ensemble satisfies
    \[\mathbb{P}[(X_{t_1})_{x_1}=1, \dots, (X_{t_n})_{x_n}=1]=\det[\mathcal{K}_{\mathrm{DLaguerre}(c), (0, r)}(x_i, t_i; x_j, t_j)]_{i, j=1}^n\]
    for all $x_1, \dots, x_n\in \mathbb{Z}_{\geq 0}$ and $t_1\leq \cdots \leq t_n$ in $\mathbb{R}$. Here, $\mathcal{K}_{\mathrm{DLaguerre}(c), (0, r)}$ is given by
    \[\mathcal{K}_{\mathrm{DLaguerre}(c), (0, r)}(x, t; y, s):=\begin{dcases}
            \int_0^r \widetilde{l^{(c)}_x}(u)\widetilde{l^{(c)}_y}(u)e^{(s-t)u}u^{c-1}e^{-u}du & t\leq s, \\ -\int_r^\infty \widetilde{l^{(c)}_x}(u)\widetilde{l^{(c)}_y}(u)e^{-(t-s)u}u^{c-1}e^{-u}du & t>s.\end{dcases}\]
\end{proposition}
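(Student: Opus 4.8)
The plan is to mirror the proof of the discrete Hermite zero-temperature statement (Proposition \ref{prop:dH_zero}), realizing $\varphi_{\mathrm{DLaguerre}(c),(0,r)}$ as the $\beta\to\infty$ limit of the finite-temperature discrete Laguerre states and then reading off the space-time kernel from Proposition \ref{prop:dynam_corr_zero_temp}. Concretely, let $U\colon L^2([0,\infty),w)\to\ell^2(\mathbb{Z}_{\geq0})$ be the unitary of \eqref{eq:int_kernel} diagonalizing the multiplication operator $T$ of \eqref{eq:multi_operator}, and let $H$ denote the self-adjoint operator acting, in this spectral representation, as multiplication by $u-r$. Then $K_H^{(-)}$, the spectral projection onto $\{H<0\}$, corresponds to multiplication by $\mathbbm{1}_{(0,r)}(u)$, which by Remark \ref{rem:limit_trans_projection} is exactly $K_{\mathrm{DLaguerre}(c),(0,r)}$; thus $\varphi_{\mathrm{DLaguerre}(c),(0,r)}=\varphi_{K_H^{(-)}}$ is the $\alpha^H$-ground state. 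First I would record that $K_{H,\beta}\to K_H^{(-)}$ strongly by \cite[Theorem VIII.5]{RS1}, whence $\varphi_{H,\beta}\to\varphi_{\mathrm{DLaguerre}(c),(0,r)}$ in the weak${}^*$ topology as $\beta\to\infty$.

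Granting this, the stochastic positivity is immediate from Lemma \ref{lem:convergence}: each finite-temperature state $\varphi_{H,\beta}$ is stochastically positive with respect to $\mathfrak{D}(\mathbb{Z}_{\geq0})$ (the content of the preceding finite-temperature discrete Laguerre proposition), and since $\beta\nearrow\infty$, the weak${}^*$ limit $\varphi_{\mathrm{DLaguerre}(c),(0,r)}$ is again stochastically positive. Theorem \ref{thm:stochastic_process} then supplies the stationary process $(X_t)_{t\in\mathbb{R}}$.

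For the determinantal formula I would invoke Proposition \ref{prop:dynam_corr_zero_temp}, which yields
\[
\mathbb{P}[(X_{t_1})_{x_1}=1,\dots,(X_{t_n})_{x_n}=1]=\det[R_H(x_i,t_i;x_j,t_j)]_{i,j=1}^n ,
\]
and then evaluate $R_H$ through $U$. For $t\leq s$ one has $R_H(x,t;y,s)=\langle e^{(s-t)H}K_H^{(-)}\delta_x,\delta_y\rangle=\int_0^r \widetilde{l^{(c)}_x}(u)\widetilde{l^{(c)}_y}(u)\,e^{(s-t)(u-r)}w(u)\,du$, while for $t>s$ one has $R_H(x,t;y,s)=-\langle e^{-(t-s)H}(1-K_H^{(-)})\delta_x,\delta_y\rangle=-\int_r^\infty \widetilde{l^{(c)}_x}(u)\widetilde{l^{(c)}_y}(u)\,e^{-(t-s)(u-r)}w(u)\,du$. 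On the respective spectral subspaces $(0,r)$ and $(r,\infty)$ the exponents are nonpositive, so the integrands are dominated by $|\widetilde{l^{(c)}_x}\widetilde{l^{(c)}_y}|\,w$ and the integrals converge by Cauchy--Schwarz. These kernels differ from the stated $\mathcal{K}_{\mathrm{DLaguerre}(c),(0,r)}$ only through the factor $e^{-(s-t)r}$ (resp.\ $e^{(t-s)r}$), i.e.\ by the diagonal conjugation $R\mapsto e^{-t_i r}\,R\,e^{t_j r}$. Exactly as in the proof of Proposition \ref{prop:process_zero_temp}, such a time-dependent gauge leaves every determinant $\det[R_H(x_i,t_i;x_j,t_j)]$ invariant, since $\prod_i e^{-t_i r}\prod_j e^{t_j r}=1$. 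Replacing $e^{(s-t)(u-r)}$ by $e^{(s-t)u}$ and $e^{-(t-s)(u-r)}$ by $e^{-(t-s)u}$ therefore produces precisely $\mathcal{K}_{\mathrm{DLaguerre}(c),(0,r)}$.

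The main obstacle is bookkeeping rather than conceptual: one must set up $H$ and its functional calculus against the \emph{continuous} spectrum of $T$ on $[0,\infty)$ — in contrast to the Meixner and Charlier cases, where the relevant operator has discrete spectrum $\{m_n\}$ and the space-time kernel is a sum rather than an integral — and verify that the unbounded exponentials $e^{\pm\tau H}$ appearing in $R_H$ are in fact bounded on the spectral subspace on which they are evaluated. This is the only place where the sign structure of the exponents genuinely enters; once it is checked, the statement reduces entirely to the convergence machinery of Lemma \ref{lem:convergence} together with the zero-temperature formula of Proposition \ref{prop:dynam_corr_zero_temp}.
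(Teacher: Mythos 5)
Your proposal is correct and follows essentially the paper's own route: the paper obtains this proposition precisely as a consequence of Lemma \ref{lem:convergence} and Proposition \ref{prop:dynam_corr_zero_temp}, with the finite-temperature stochastic positivity imported from the preceding finite-temperature Laguerre proposition and the residual factors $e^{-(s-t)r}$, $e^{(t-s)r}$ removed by the same diagonal (gauge) conjugation used in the proof of Proposition \ref{prop:process_zero_temp}. The only cosmetic difference is that you work directly with $H=T-r$ whereas the paper phrases the finite-temperature statement with the $s$-conjugated operator; since conjugation by $s$ ($s\delta_x=(-1)^x\delta_x$) fixes every $\rho_x$ and hence $\mathfrak{D}(\mathbb{Z}_{\geq 0})$ pointwise, the Schwinger functions restricted to $\mathfrak{D}(\mathbb{Z}_{\geq 0})$, and therefore the stochastic positivity and the resulting process, are unchanged.
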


Let $T$ denote the self-adjoint operator given by Equation \ref{eq:multi_operator} in the discrete Jacobi case with parameter $a, b>-1$. We obtain the same results as above for the discrete Jacobi ensemble. 
\begin{proposition}[stocahstic process on the discrete Jabobi ensemble at finite temperature]
    The quasi-free state $\varphi_{T-r, \beta}$ on $\mathfrak{A}(\mathbb{Z}_{\geq 0})$ is stochastically positive for all $\beta>0$. The associated stationary process $(X^{(\beta)}_t)_{t\in [-\beta/2, \beta/2]}$ on $\mathcal{C}(\mathbb{Z}_{\geq 0})$ with respect to the discrete Jacobi ensemble at inverse temperature $\beta$ satisfies 
    \[\mathbb{P}[(X^{(\beta)}_{t_1})_{x_1}=1, \dots, (X^{(\beta)}_{t_n})_{x_n}=1]=\det[\mathcal{K}_{\mathrm{DJacobi}(a, b), (r, 1)}^{(\beta)}(x_i, t_i; x_j, t_j)]_{i, j=1}^n\]
    for all $x_1, \dots, x_n\in \mathbb{Z}_{\geq 0}$ and $t_1\leq \cdots \leq t_n$ in $[-\beta/2, \beta/2]$. Here, $\mathcal{K}_{\mathrm{DJacobi}(a, b), (0, r)}^{(\beta)}$ is given by
    \[\mathcal{K}_{\mathrm{DJacobi}(a, b), (r, 1)}^{(\beta)}(x, t; y, s):=\begin{dcases} \int_{-1}^1 \widetilde{j^{(a, b)}_x}(u) \widetilde{j^{(a, b)}_y}(u) \frac{e^{-(\beta-s+t)(u-r)}}{1+e^{-\beta(u-r)}}(1-u)^a(1+u)^bdu & t\geq s, \\ 
    -\int_{-1}^1 \widetilde{j^{(a, b)}_x}(u) \widetilde{j^{(a, b)}_y}(u)\frac{e^{-(t-s)(u-r)}}{1+e^{-\beta(u-r)}}(1-u)^a(1+u)^b du & t>s,\end{dcases}\]
    where $\widetilde{j^{(a, b)}_x}:=j^{(a, b)}_x/\|j^{(a, b)}_x\|$ for all $x=0, 1, \dots$.
\end{proposition}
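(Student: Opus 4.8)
The plan is to deduce this proposition from the strong-resolvent-convergence machinery of Section \ref{sec:LT_by_strong_res_conv}, in exactly the way the discrete Hermite and Laguerre statements are handled. The idea is to realize the discrete Jacobi Hamiltonian as a strong resolvent limit of the Racah difference operators and then to quote Proposition \ref{prop:limit_by_strong_resolvent_convergence}. Concretely, Example \ref{ex:R_dJ} records that the rescaled operators $2M^{-2}(D_{\mathrm{Racah}}+N+\alpha+\beta+1)$ converge to $T-r$ in the strong resolvent sense along the indicated regime, where $T$ is the Jacobi matrix of Equation \eqref{eq:multi_operator} attached to the Jacobi weight. Each Racah operator is a self-adjoint tridiagonal operator coming from orthogonal polynomials of hypergeometric type on a finite set, so Proposition \ref{prop:op_stoc_proc} (resting on the Karlin--McGregor positivity of the associated birth-and-death semigroup, invoked through Lemma \ref{lem:cond_stoc_positivity}) furnishes the stochastic positivity of the corresponding quasi-free states.

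First I would verify the two hypotheses of Proposition \ref{prop:limit_by_strong_resolvent_convergence} for the sequence $H_M\to T-r$ (or $H_M\to -s(T-r)s$, if the off-diagonal band must first be flipped by the sign operator $s$ so that the relevant exponential is a genuine totally positive semigroup, as in the Laguerre case): namely, that each $\varphi_{H_M,\beta}$ is stochastically positive with respect to $\mathfrak{D}(\mathbb{Z}_{\geq 0})$, and that $H_M$ converges in the strong resolvent sense. The former is the content of the previous paragraph, after extending the finite-set Racah operator to all of $\ell^2(\mathbb{Z}_{\geq 0})$ as explained in Section \ref{sec:op_hg}; the latter is precisely Example \ref{ex:R_dJ}. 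Proposition \ref{prop:limit_by_strong_resolvent_convergence} then simultaneously yields the stochastic positivity of the limiting state, the determinantal formula with space-time kernel $R_{T-r,\beta}$ of Equation \eqref{eq:zero_temp_limit}, and the identification of $(X^{(\beta)}_t)$ as the limit of the Racah processes.

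It then remains only to make $R_{T-r,\beta}$ explicit. Here I would use that $T$ is unitarily equivalent, via the map $U$ sending the orthonormal Jacobi functions $\widetilde{j^{(a,b)}_x}$ to $\delta_x$, to multiplication by $u$ on $L^2([-1,1],w)$ with $w(u)=(1-u)^a(1+u)^b$, so that $T=UM_uU^*$. For any bounded Borel function $g$ one then has $\langle g(T)\delta_x,\delta_y\rangle=\int_{-1}^1 g(u)\,\widetilde{j^{(a,b)}_x}(u)\,\widetilde{j^{(a,b)}_y}(u)\,w(u)\,du$; substituting $g(u)=e^{-(\beta-s+t)(u-r)}(1+e^{-\beta(u-r)})^{-1}$ for $t\le s$ and $g(u)=-e^{-(t-s)(u-r)}(1+e^{-\beta(u-r)})^{-1}$ for $t>s$ into Equation \eqref{eq:zero_temp_limit} reproduces $\mathcal{K}^{(\beta)}_{\mathrm{DJacobi}(a,b),(r,1)}$. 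This is a routine functional-calculus substitution and carries no real difficulty.

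The main obstacle is the first step, the rigorous transfer of stochastic positivity through the limit. Because $T$ is bounded, $e^{-\beta(T-r)}$ fails to be trace class, so Lemma \ref{lem:cond_stoc_positivity} cannot be applied directly to the limiting operator and must be invoked only at the discrete (Racah) level before passing to the limit via Proposition \ref{prop:limit_by_strong_resolvent_convergence}. One must therefore check carefully that the finite-set Racah operators, once extended to $\ell^2(\mathbb{Z}_{\geq 0})$, still produce stochastically positive quasi-free states, and that the sign of the off-diagonal band is arranged so that the relevant exponential is a birth-and-death (hence totally positive) semigroup; this is exactly the point that matches the state with the ensemble supported on $(r,1)$ and that the sign operator $s$ is designed to control. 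Once the discrete approximants are correctly set up, Proposition \ref{prop:limit_by_strong_resolvent_convergence} performs all the analytic work and the rest is bookkeeping.
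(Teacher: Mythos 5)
Your proposal follows essentially the same route as the paper: the proposition is obtained by combining the stochastic positivity of the (extended) Racah approximants from Proposition \ref{prop:op_stoc_proc}, the strong resolvent convergence of Example \ref{ex:R_dJ}, and Proposition \ref{prop:limit_by_strong_resolvent_convergence}, with the explicit kernel coming from the realization of $T$ as multiplication by $u$ on $L^2([-1,1],(1-u)^a(1+u)^b du)$. Your hedge about the sign operator $s$ resolves itself: unlike the Hahn--Laguerre case, Example \ref{ex:R_dJ} converges directly to $T-r$, so no conjugation by $s$ is needed here.
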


Let $\varphi_{\mathrm{DJacobi}(a, b), (r, 1)}$ denote the quasi-free state associated with $K_{\mathrm{DJacobi}(a, b), (r, 1)}$. Similarly to the above, the following is a consequence of Lemma \ref{lem:convergence} and Proposition \ref{prop:dynam_corr_zero_temp}:
\begin{proposition}[stochastic process on the discrete Jabobi ensemble at zero temperature]\label{prop:dJ_zero}
    The quasi-free state $\varphi_{\mathrm{DJacobi}(a, b), (r, 1)}$ on $\mathfrak{A}(\mathbb{Z}_{\geq 0})$ is stochastically positive. The associated stationary process $(X_t)_{t\in \mathbb{R}}$ on $\mathcal{C}(\mathbb{Z}_{\geq 0})$ with respect to the discrete Jacobi ensemble satisfies
    \[\mathbb{P}[(X_{t_1})_{x_1}=1, \dots, (X_{t_n})_{x_n}=1]=\det[\mathcal{K}_{\mathrm{DJacobi}(a, b), (r, 1)}(x_i, t_i; x_j, t_j)]_{i, j=1}^n\]
    for all $x_1, \dots, x_n\in \mathbb{Z}_{\geq 0}$ and $t_1\leq \cdots \leq t_n$ in $\mathbb{R}$. Here, $\mathcal{K}_{\mathrm{DJacobi}(a, b), (r, 1)}$ is given by
    \[\mathcal{K}_{\mathrm{DJacobi}(a, b), (r, 1)}(x, t; y, s):=\begin{dcases}
            \int_0^r \widetilde{j^{(a, b)}_x}(u)\widetilde{j^{(a, b)}_y}(u)e^{(s-t)u}(1-u)^a(1+u)^bdu & t\leq s, 
            \\ -\int_r^\infty \widetilde{j^{(a, b)}_x}(u)\widetilde{j^{(a, b)}_y}(u)e^{-(t-s)u}(1-u)^a(1+u)^bdu & t>s.\end{dcases}\]
\end{proposition}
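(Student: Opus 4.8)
The plan is to obtain $\varphi_{\mathrm{DJacobi}(a,b),(r,1)}$ as the zero-temperature limit of the finite-temperature states, exactly as in the discrete Hermite and Laguerre cases treated in Propositions \ref{prop:dH_zero} and \ref{prop:dL_zero}. Let $T$ be the self-adjoint multiplication operator of Equation \eqref{eq:multi_operator} in the discrete Jacobi case; since the support of $w$ is $[-1,1]$, the operator $T$ is bounded with spectrum $[-1,1]$. Set $H:=T-r$. By the preceding finite-temperature discrete Jacobi result, the states $\varphi_{H,\beta}=\varphi_{T-r,\beta}$ are stochastically positive with respect to $\mathfrak{D}(\mathbb{Z}_{\geq 0})$ for every $\beta>0$. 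By Remark \ref{rem:limit_trans_projection}, $K_{H,\beta}$ converges strongly as $\beta\to\infty$ to the spectral projection $K_H^{(-)}$ onto $\{H<0\}$, so $\varphi_{H,\beta}$ converges in the weak${}^*$ topology to the quasi-free state attached to $K_H^{(-)}$, which by definition of the ensemble is $\varphi_{\mathrm{DJacobi}(a,b),(r,1)}$. Lemma \ref{lem:convergence} then gives stochastic positivity of this limit state, and Proposition \ref{prop:convergence} yields convergence of the associated processes; in particular the stationary process $(X_t)_{t\in\mathbb{R}}$ exists.

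Next I would read off the space-time correlation kernel from the abstract formula. Applying Proposition \ref{prop:dynam_corr_zero_temp} with $H=T-r$ expresses the left-hand side of the claimed identity as $\det[R_H(x_i,t_i;x_j,t_j)]_{i,j=1}^n$, where the entries are built from the bounded operators $e^{(s-t)H}K_H^{(-)}$ for $t\le s$ and $-e^{-(t-s)H}(1-K_H^{(-)})$ for $t>s$. It then remains only to rewrite these matrix entries as the asserted integrals.

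For that, I would invoke the unitary $U\colon L^2([-1,1],w)\to \ell^2(\mathbb{Z}_{\geq 0})$ determined by $U\widetilde{j^{(a,b)}_x}=\delta_x$, which diagonalizes $T$: the operator $U^*TU$ is multiplication by $u$, hence $U^*HU$ is multiplication by $u-r$, the projection $U^*K_H^{(-)}U$ is multiplication by the indicator of the spectral subset of $[-1,1]$ cut out by the sign of $u-r$, and $U^*e^{(s-t)H}U$ is multiplication by $e^{(s-t)(u-r)}$, all of this being the spectral theorem for the bounded operator $T$. Transporting $\langle\,\cdot\,\delta_x,\delta_y\rangle_{\ell^2}$ back through $U$ turns each entry into an integral of $\widetilde{j^{(a,b)}_x}(u)\widetilde{j^{(a,b)}_y}(u)(1-u)^a(1+u)^b$ against the relevant exponential multiplier over the subinterval of $[-1,1]$ determined by $r$ according to the time-ordering. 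As in the proof of Proposition \ref{prop:process_zero_temp}, the scalar factors $e^{\pm(\,\cdot\,)r}$ introduced by the shift $H=T-r$ amount to conjugating the kernel by the diagonal matrix $\operatorname{diag}(e^{t_i r})$, which leaves the determinant unchanged; discarding them recovers exactly $\mathcal{K}_{\mathrm{DJacobi}(a,b),(r,1)}$.

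Since every ingredient is a direct specialization of results already established, I expect no genuine obstacle; this case is in fact cleaner than the Hermite and Laguerre analogues, because the boundedness of $T$ removes all domain subtleties from the functional calculus and from the strong convergence used in Proposition \ref{prop:dynam_corr_zero_temp}. The only step requiring a little care is the bookkeeping at the end: correctly matching the two time-orderings to the complementary spectral pieces selected by $K_H^{(-)}$ and $1-K_H^{(-)}$, together with the exponential multipliers, so as to reproduce the two branches of $\mathcal{K}_{\mathrm{DJacobi}(a,b),(r,1)}$.
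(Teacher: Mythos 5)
Your overall route is exactly the paper's: the paper disposes of this proposition in one line, as a consequence of the finite-temperature discrete Jacobi proposition, Lemma \ref{lem:convergence}, and Proposition \ref{prop:dynam_corr_zero_temp}, and the steps you spell out (strong convergence of the kernels via Remark \ref{rem:limit_trans_projection}, weak${}^*$ convergence of the quasi-free states, convergence of the processes via Proposition \ref{prop:convergence}, diagonalization of $T$ by the unitary $U$, and the determinant-preserving conjugation by $\operatorname{diag}(e^{rt_i})$) are precisely the intended ones.

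There is, however, one step that fails as written, and it is exactly the step you postponed as ``bookkeeping.'' With $H=T-r$, the projection $K_H^{(-)}=\mathbbm{1}_{\{H<0\}}$ is the spectral projection onto $\{T<r\}$; under $U$ this is multiplication by the indicator of $(-1,r)$, so the weak${}^*$ limit of $\varphi_{T-r,\beta}$ is the quasi-free state of $K_{\mathrm{DJacobi}(a,b),(-1,r)}$, \emph{not} of $K_{\mathrm{DJacobi}(a,b),(r,1)}$, which by the definition in Section \ref{sec:limit_OP} is the projection onto $UL^2((r,1),w|_{(r,1)})$, i.e.\ onto $\{T-r>0\}$. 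To land on the $(r,1)$ ensemble you must take $H=r-T$; this is also what the Racah limit forces, since in Example \ref{ex:R_dJ} the scaled Racah operator converges to $T-r$ while the stochastically positive states of Proposition \ref{prop:op_stoc_proc} are built from $-D-\mu$, whose scaled limit is $-(T-r)$. Moreover, your two concluding claims cannot hold simultaneously: with $H=T-r$ the entries of $R_H$ from Proposition \ref{prop:dynam_corr_zero_temp} do reproduce the two displayed branches (integration below $r$ for $t\leq s$, above $r$ for $t>s$, with the displayed exponents, reading the limits $\int_0^r$ and $\int_r^\infty$ as $\int_{-1}^r$ and $\int_r^1$), but then the stationary law is the $(-1,r)$ ensemble; with $H=r-T$ the stationary law is the $(r,1)$ ensemble, but the two spectral pieces (and the signs in the exponents) come out interchanged relative to the display. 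In fairness, this sign tension is already present in the paper itself -- the finite-temperature Jacobi proposition uses $\varphi_{T-r,\beta}$ while its label points to $(r,1)$ -- but a proof cannot silently use both conventions at once: you must fix one choice of $H$ and make the limit state, the ensemble label, and the two kernel branches agree, which your write-up as it stands does not do.
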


In the above results, the self-adjoint operator $T$ is equivalent to the multiplication operator on $L^2(\mathbb{R}, w)$ by the coordinate function, where $w$ is the weight function associated with the Hermite/Laguerre/Jacobi cases. It implies that $T$ has purely continuous spectrum on the support of a weight function $w$. Thus, Lemma \ref{lem:cond_stoc_positivity} is not applicable in the these three cases. However, we can show that quasi-free states in these cases are stochastically positive through the approximation by discrete orthogonal polynomial ensembles. It breathes that our framework of stochastically positive KMS systems is quite useful in various situations.

By \cite[Theorem 8.2]{Olshanski20}, discrete Hermite/Laguerre/Jacobi ensembles are perfect, and their canonical kernels are given by $K_{\mathrm{DHermite}, (r, \infty)}$, $s K_{\mathrm{DLaguerre}, (0, r)}s$, and $K_{\mathrm{DJacobi}, (r, 1)}$, respectively. Therefore, by the same argument in Proposition \ref{prop:Markov_property}, we conclude the following:
\begin{proposition}\label{prop:Markov_2}
    Let $(X_t)_{t\in \mathbb{R}}$ be a one of stochastic processes in Propositions \ref{prop:dH_zero}, \ref{prop:dL_zero}, \ref{prop:dJ_zero}. Then, $(X_t)_{t\in \mathbb{P}}$ has the Markov property.
\end{proposition}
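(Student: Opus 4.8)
The plan is to reduce the Markov property of each process in Propositions \ref{prop:dH_zero}, \ref{prop:dL_zero}, \ref{prop:dJ_zero} to the sufficient condition established in Corollary \ref{cor:suf_cond_Markov}, exactly as was done for the discrete case in Proposition \ref{prop:Markov_property}. Since each of these processes is the $\beta\to\infty$ limit, it is a stationary (non-periodic) process associated with a $\alpha$-ground state, so the relevant criterion is the non-periodic one: it suffices to show that $\{\pi_\varphi(f)\Omega_\varphi\mid f\in C(E)\}$ is dense in $\mathcal{H}_\varphi$, where $\varphi$ is the relevant quasi-free state and $(\pi_\varphi,\mathcal{H}_\varphi,\Omega_\varphi)$ its GNS-triple. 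In our setting $E=\mathcal{C}(\mathbb{Z}_{\geq 0})$ and $C(E)\cong\mathfrak{D}(\mathbb{Z}_{\geq 0})$, so this density is precisely the statement that $\Omega_\varphi$ is cyclic for the representation of the commutative subalgebra $\mathfrak{D}(\mathbb{Z}_{\geq 0})$.

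The key input is the \emph{perfectness} of the underlying DPPs. By \cite[Theorem 8.2]{Olshanski20}, the discrete Hermite, Laguerre, and Jacobi ensembles are perfect, with canonical correlation kernels $K_{\mathrm{DHermite},(r,\infty)}$, $sK_{\mathrm{DLaguerre},(0,r)}s$, and $K_{\mathrm{DJacobi},(r,1)}$ respectively. The notion of perfectness (recalled in Appendix \ref{sec:perfectness}) guarantees, just as in the proof of Proposition \ref{prop:Markov_property}, that for each such DPP $\mathbb{P}$ there is a representation $(T,L^2(\mathcal{C}(\mathbb{Z}_{\geq 0}),\mathbb{P}))$ of the GICAR algebra $\mathfrak{I}(\mathbb{Z}_{\geq 0})$ satisfying $\varphi(A)=\langle T(A)\mathbbm{1},\mathbbm{1}\rangle$ for all $A\in\mathfrak{I}(\mathbb{Z}_{\geq 0})$ together with $T(f)\mathbbm{1}=f$ for all $f\in C(\mathcal{C}(\mathbb{Z}_{\geq 0}))\cong\mathfrak{D}(\mathbb{Z}_{\geq 0})$. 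Consequently $(T,L^2(\mathcal{C}(\mathbb{Z}_{\geq 0})),\mathbbm{1})$ realizes the GNS-triple of the quasi-free state $\varphi$ (restricted to $\mathfrak{I}(\mathbb{Z}_{\geq 0})$), and the cyclic vector $\mathbbm{1}$ is already cyclic for the commutative subalgebra $T(\mathfrak{D}(\mathbb{Z}_{\geq 0}))$, since $\{T(f)\mathbbm{1}=f\mid f\in C(\mathcal{C}(\mathbb{Z}_{\geq 0}))\}$ spans a dense subspace of $L^2(\mathcal{C}(\mathbb{Z}_{\geq 0}),\mathbb{P})$. Thus the density hypothesis of the non-periodic case of Corollary \ref{cor:suf_cond_Markov} is verified, and the Markov property follows.

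First I would fix one of the three cases, say the discrete Hermite ensemble, and identify the associated quasi-free state $\varphi_{\mathrm{DHermite},(r,\infty)}$ together with the ground-state $\mathbb{R}$-flow $\alpha^{T-r}$ produced by the limit $\beta\to\infty$ in Proposition \ref{prop:dH_zero}. Then I would invoke \cite[Theorem 8.2]{Olshanski20} to obtain perfectness and the identification of the canonical kernel, and transcribe the GICAR-representation argument of Proposition \ref{prop:Markov_property} verbatim. The Laguerre case requires one small bookkeeping adjustment: its canonical kernel is $sK_{\mathrm{DLaguerre},(0,r)}s$ rather than $K_{\mathrm{DLaguerre},(0,r)}$ itself, but the conjugation by the sign operator $s$ (defined by $s\delta_x:=(-1)^x\delta_x$) is a gauge-type automorphism of $\mathfrak{A}(\mathbb{Z}_{\geq 0})$ that preserves $\mathfrak{D}(\mathbb{Z}_{\geq 0})$ and fixes $\mathbbm{1}\in C(\mathcal{C}(\mathbb{Z}_{\geq 0}))$, so the cyclicity of $\mathbbm{1}$ for the commutative subalgebra is untouched. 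The Jacobi case is then identical to the Hermite case.

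The main obstacle is not the algebraic reduction, which is routine once perfectness is in hand, but rather confirming that the quasi-free states here are genuine ground states for which the non-periodic criterion of Corollary \ref{cor:suf_cond_Markov} applies, and that the GNS-space provided by perfectness coincides with the $L^2$-space $\mathcal{H}_{\varphi,\{0\}}$ appearing in that criterion. Concretely, I must check that the spectral projections $K_{\mathrm{DHermite},(r,\infty)}$ etc.\ indeed arise as $e^{-\beta H}(1+e^{-\beta H})^{-1}$ limits with $H=\pm(T-r)$ (this is exactly the content of Proposition \ref{prop:dynam_corr_zero_temp} combined with the weak$^*$-convergence noted before each proposition), so that the ground-state flow underlying the limiting process is the one for which perfectness yields the GNS-identification with $L^2(\mathcal{C}(\mathbb{Z}_{\geq 0}),\mathbb{P})$. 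Once this identification is secured, the density of $\{\pi_\varphi(f)\Omega_\varphi\mid f\in C(E)\}$ is immediate from $T(f)\mathbbm{1}=f$, and Corollary \ref{cor:suf_cond_Markov} delivers the Markov property in all three cases simultaneously.
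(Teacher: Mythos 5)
Your proposal is correct and follows essentially the same route as the paper: perfectness of the discrete Hermite/Laguerre/Jacobi ensembles via \cite[Theorem 8.2]{Olshanski20}, the resulting GICAR representation on $L^2(\mathcal{C}(\mathbb{Z}_{\geq 0}),\mathbb{P})$ with $\mathbbm{1}$ cyclic for $T(\mathfrak{D}(\mathbb{Z}_{\geq 0}))$, and then the non-periodic criterion of Corollary \ref{cor:suf_cond_Markov}, which is exactly the argument of Proposition \ref{prop:Markov_property} that the paper invokes. Your explicit treatment of the Laguerre case (the conjugation by $s$ fixes $\mathfrak{D}(\mathbb{Z}_{\geq 0})$ pointwise, so cyclicity of $\mathbbm{1}$ is unaffected) is a detail the paper leaves implicit, and it is handled correctly.
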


% Moreover, we can apply the same argument to the discrete sine process (see Section \ref{sec:dsine}).
% \begin{proposition}
%     The quasi-free state $\varphi_{D^\mathrm{sine}_\varphi, \beta}$ is stochastically positive for all $\phi\in (0, \pi)$ and $\beta>0$. The associated stochastic process $X^{(\beta)}=(X^{(\beta)}_t)_{t\in [-\beta, \beta/2]}$ on $\mathcal{C}(\mathbb{Z})$ is stationary with respect to the discrete sine process at the inverse temperature $\beta$ and satisfies the determinantal space-time correlation formula in Theorem \ref{thm:dynamical_correlation}. Moreover, there exists a Markov process $X=(X_t)_{t\in \mathbb{R}}$ on $\mathcal{C}(\mathbb{Z})$ that is stationary with respect to the discrete sine process and that satisfies the determinantal space-time correlation formula in Proposition \ref{prop:dynam_corr_zero_temp}. As $\beta\to \infty$, $X^{(\beta)}$ converges to $X$ in the sense of Proposition \ref{prop:convergence}.
% \end{proposition}
% \begin{proof}
%     Only the Markov property of $X$ is nontrivial. However, by \cite[Theorem 8.3]{Olshanski20}, the discrete sine process is perfect, and $K^\mathrm{dSine}_\phi$ is its canonical kernel. Thus, the proof is complete by the same argument in Propositions \ref{prop:Markov_property} , \ref{prop:Markov_2}.
% \end{proof}

%%%%%%%%%%%%%%%%%%%%%%%%%%%%%%%%%%%%%%%%%%%%%%%%%%%%%%%%%%%%%%%%%%%%%%%%%%%%%%%%%%%%%%%%%%%%%%%%%%%%%%%%%%%%%%%%%%%%%%%%%%%%%%%%%%%%%%%%%%%%%%%%
\subsection{DPPs related to the hypergeometric difference operator}
In the rest of the paper, we discuss DPPs related to the hypergeometric difference operator. In fact, although our framework is applicable even in this case, but we should slightly modify the previous discussion.

Throughout this section, our state space $\mathfrak{X}$ is assumed to be $\mathbb{Z}':=\mathbb{Z}+1/2$. Let $\xi\in (0, 1)$ and $z, z'\in \mathbb{C}$. We suppose that $(z+k)(z'+k)>0$ holds for every $k\in \mathbb{Z}$. As already mentioned, $(z, z')$ is divided into two cases, called principal and complementary (see Example \ref{ex:AL}). We define the \emph{hypergeometric difference operator} $D:=D_{z, z', \xi}$ on $\mathbb{Z}'$ by 
\[[Df](x):=\frac{1}{1-\xi}\left\{a_xf(x+1)-(x+\xi(z+z'+x))f(x)+a_{x-1}f(x-1)\right\},\]
\[a_x:=\sqrt{\xi(z+x+1/2)(z'+x+1/2)} \quad (x\in \mathbb{Z}').\]
By \cite[Proposition 2.2]{Olshanski08}, $D$ defined on the space of finite supported functions in $\ell^2(\mathbb{Z}')$ is essentially self-adjoint. In what follows, its unique self-adjoint extension is denoted by the same symbol $D$. Moreover, by \cite[Proposition 2.4]{Olshanski08}, the spectrum of $D$ consists of eigenvalues with multiplicity one, and they fill $\mathbb{Z}'$. Thus, $e^{-\beta D}$ is never of trace class for all $\beta$. Namely, we do not possess the first requirement in Section \ref{sec:stoc_from_op}.

Let $(\psi_x)_{x\in \mathbb{Z}'}$ be an orthonormal basis for $\ell^2(\mathbb{Z}')$ such that $D\psi_x=x\psi_x$. We define the \emph{discrete hypergeometric kernel} $K:=K_{z, z', \xi}$ on $\mathbb{Z}'\times \mathbb{Z}'$ by 
\[K(x, y):=\sum_{a\in \mathbb{Z}'_{>0}}\psi_a(x)\psi_a(y) \quad (x, y\in \mathbb{Z}').\]
By definition, the integral operator of $K$ is nothing but the spectral projection onto $\{D>0\}$. In particular, it is positive and contractive. Thus, we obtain the associated quasi-free state $\varphi_K$ on $\mathfrak{A}(\mathbb{Z}')$ and a DPP $\mathbb{P}^\mathrm{dHyp}_{z, z', \xi}$ on $\mathbb{Z}'$ with correlation kernel $K$. We call $\mathbb{P}^\mathrm{dHyp}_{z, z', \xi}$ the \emph{discrete hypergeometric process}. In addition, there exists a DPP $\mathbb{P}^\mathrm{dHyp}_{z, z', \xi; \beta}$ on $\mathbb{Z}'$ with correlation kernel $K_{-D, \beta}:=e^{\beta D}(1+e^{\beta D})^{-1}$ for all $\beta>0$, called the discrete hypergeometric process \emph{at inverse temperature $\beta$}. By definition, $\mathbb{P}^\mathrm{dHyp}_{z, z', \xi; \beta}$ weakly converges to $\mathbb{P}^\mathrm{dHyp}_{z, z', \xi}$ as $\beta \to \infty$.

We remark that $K$ and $U_t:=e^{\mathrm{i}t D}$ commute for all $t\in \mathbb{R}$. Thus, $\varphi_K$ is invariant under the $\mathbb{R}$-flow $\alpha^D_t\colon \mathbb{R}\curvearrowright \mathfrak{A}(\mathbb{Z}')$ given by $\alpha^D_t(a(h))=a(U_t h)$ for any $h\in \ell^2(\mathbb{Z}')$. 

Let us consider the GICAR algebra $\mathfrak{I}(\mathbb{Z}')$, and $\varphi_K$ denotes the restriction of $\varphi_K$ to $\mathfrak{I}(\mathbb{Z}')$. Moreover, we denote by $(\pi_K, \mathcal{H}_K, \Omega_K)$ the associated GNS-triple. We remark that the $\mathbb{R}$-flow $\alpha^D$ preserves $\mathfrak{I}(\mathbb{Z}')$. Thus, the invariance of $\varphi_K$ under $\alpha^D$ induces the unique self-adjoint operator $d\Gamma(D)$ on $\mathcal{H}_K$ such that $e^{\mathrm{i}t d\Gamma(D)}\pi_K(A)\Omega_K=\pi_K(\alpha^D_t(A))\Omega_K$ for all $A\in \mathfrak{I}(\mathbb{Z}')$ and $t\in \mathbb{R}$. As shown in \cite[Proposition 4.3]{S24}, the spectrum of $d\Gamma(D)$ coincides with $\{-n\}_{n\in \mathbb{Z}_{\geq 0}}$, and every $-n$ is eigenvalue whose multiplicity is equal to the number of partition of $n$. In particular, $e^{\beta d\Gamma(D)}$ is of trace class for all $\beta>0$. 

We define a new state $\varphi_{-D, \beta}$ on $\mathfrak{I}(\mathbb{Z}')$ by 
\[\varphi_{-D, \beta}(A):=\frac{\mathrm{Tr}(e^{\beta d\Gamma(D)} \pi_K(A))}{\mathrm{Tr}(e^{\beta d\Gamma(D)})} \quad (A\in \mathfrak{I}(\mathbb{Z}')).\]
By the same argument in \cite[Proposition 5.2.23]{BR2}, we can show that $\varphi_{-D, \beta}$ is quasi-free state on $\mathfrak{I}(\mathbb{Z}')$ associated with $K_{-D, \beta}$. Moreover, $\varphi_{-D, \beta}$ is $\alpha^D$-KMS state at $\beta$.

\begin{proposition}
    $\varphi_{-D, \beta}$ is stochastically positive with respect to $\mathfrak{D}(\mathbb{Z}')$. The associated stochastic process $X^{(\beta)}=(X^{(\beta)}_t)_{t\in [-\beta/2, \beta/2]}$ on $\mathcal{C}(\mathbb{Z}')$ is stationary for the discrete hypergeometric process $\mathbb{P}^\mathrm{dHyp}_{z, z', \xi; \beta}$ at inverse temperature $\beta$ and satisfies 
    \[\mathbb{P}[(X^{(\beta)}_{t_1})_{x_1}=1, \dots, (X^{(\beta)}_{t_n})_{x_n}=1]=\det[\mathcal{K}^\mathrm{dHyp}_{z, z', \xi; \beta}(x_i, t_i; x_j, t_j)]_{i, j=1}^n\]
    for all $x_1, \dots, x_n\in \mathbb{Z}'$ and $-\beta/2\leq t_1\leq \cdots \leq t_n\leq \beta/2$, where 
    \[\mathcal{K}^\mathrm{dHyp}_{z, z', \xi; \beta}(x, t; y, s):=\begin{dcases}
        \sum_{a\in \mathbb{Z}'}\frac{e^{(\beta-s+t)a}}{1+e^{\beta a}}\psi_a(x)\psi_a(y) & t\leq s, \\ -\sum_{a\in \mathbb{Z}'}\frac{e^{(t-s)a}}{1+e^{\beta a}}\psi_a(x)\psi_y(y) & t>s.
    \end{dcases}\]
\end{proposition}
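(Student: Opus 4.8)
The plan is to pass to the GNS space $\mathcal{H}_K$, where the relevant exponential \emph{is} of trace class, and to deduce stochastic positivity there, after which the determinantal formula is a Wick computation. Concretely I would set $H:=-D$, so that the correlation kernel $K_{-D,\beta}=e^{\beta D}(1+e^{\beta D})^{-1}$ is exactly \eqref{eq:H_K} with this $H$, and the flow is $\alpha^{D}$. Although $e^{\beta D}$ is unbounded on $\ell^2(\mathbb{Z}')$ (the spectrum of $D$ fills $\mathbb{Z}'$), the second quantization $e^{\beta d\Gamma(D)}$ is of trace class on $\mathcal{H}_K$ by \cite[Proposition 4.3]{S24}. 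Hence $\varphi_{-D,\beta}$ is a Gibbs state for the dynamics implemented by $d\Gamma(D)$, and the argument of \cite[Theorem 17.5]{KL81} applies verbatim on $\mathcal{H}_K$ (with $L:=-d\Gamma(D)$, so that $e^{-sL}=e^{s\,d\Gamma(D)}$) to give the multi-time trace formula of the shape \eqref{eq:schwinger_func_trace}, namely $\mathcal{S}_{A_1,\dots,A_n}(t_1,\dots,t_n)=\mathrm{Tr}\bigl(e^{(t_1+\beta/2)d\Gamma(D)}\pi_K(A_1)\cdots\pi_K(A_n)e^{(\beta/2-t_n)d\Gamma(D)}\bigr)/\mathrm{Tr}(e^{\beta d\Gamma(D)})$ for $A_i\in\mathfrak{I}(\mathbb{Z}')$.

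With this in hand, stochastic positivity reduces by Lemma \ref{lem:cond_stoc_positivity_abst} to the nonnegativity of $\mathcal{S}_{\rho_{x_1},\dots,\rho_{x_n}}$. Here I would expand the trace in the infinite wedge basis $\{v_\lambda\}_{\lambda\in\mathcal{P}}$ of $\mathcal{H}_K$, in which each $\pi_K(\rho_x)$ is diagonal with eigenvalue the indicator $\mathbf{1}[x\in\mathfrak{S}(\lambda)]$ by \eqref{eq:num_op_on_inf_wedge}. Inserting resolutions of the identity, $\mathcal{S}_{\rho_{x_1},\dots,\rho_{x_n}}$ becomes a normalized cyclic sum over $\lambda^0,\dots,\lambda^{n-1}$ of products of these indicators together with imaginary-time propagator entries $\langle e^{s\,d\Gamma(D)}v_\mu,v_\lambda\rangle$ with $s\geq 0$ (the consecutive gaps $t_1+\beta/2,\,t_{i+1}-t_i,\,\beta/2-t_n$, which are nonnegative and sum to $\beta$). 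The entire expression is then manifestly nonnegative once one knows that these propagator entries are nonnegative.

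The crux, and the step I expect to be the main obstacle, is precisely the positivity $\langle e^{s\,d\Gamma(D)}v_\mu,v_\lambda\rangle\geq 0$. It cannot be reached through the single-particle route of Lemma \ref{lem:cond_stoc_positivity}, since $e^{sD}$ is unbounded and so $\det[\langle e^{sD}\delta_{x_i},\delta_{y_j}\rangle]$ is not even defined; the positivity is a genuinely many-body statement rescued by the filled Dirac sea. The right picture is the Karlin--McGregor / non-intersecting path phenomenon \cite{KMc57,KMc59} transported to Maya diagrams: $d\Gamma(D)$ is the second quantization of the tridiagonal $D$ with strictly positive off-diagonal weights $a_x=\sqrt{\xi(z+x+1/2)(z'+x+1/2)}>0$, so it connects configurations differing by a single nearest-neighbour hop, and $e^{s\,d\Gamma(D)}$ generates infinitely many interacting birth-and-death walks of which only finitely many move between $\mathfrak{S}(\mu)$ and $\mathfrak{S}(\lambda)$. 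On the one-dimensional lattice $\mathbb{Z}'$ the fermionic signs are removed by a diagonal gauge $W=\mathrm{diag}(\epsilon_\lambda)$, $\epsilon_\lambda\in\{\pm1\}$, which commutes with every diagonal $\pi_K(\rho_x)$; because in the cyclic trace each index $\lambda^i$ occurs exactly twice the gauge factors cancel, so $\mathcal{S}_{\rho_{x_1},\dots,\rho_{x_n}}$ equals the same trace with the positivity-preserving semigroup $e^{s\,W d\Gamma(D)W^{-1}}$, which is a Markov-type kernel on $\mathcal{P}$ in the spirit of the Borodin--Olshanski theory \cite{BO13}. An alternative I would keep in reserve is to approximate $D$ by truncated Jacobi operators $D_N$ for which $e^{\beta D_N}$ is trace class and $e^{sD_N}$ is totally nonnegative, apply Lemma \ref{lem:cond_stoc_positivity}, and pass to the limit via Proposition \ref{prop:limit_by_strong_resolvent_convergence}, though the truncation must be arranged to retain both trace-class-ness and strong resolvent convergence on $\ell^2(\mathbb{Z}')$.

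Finally, the determinantal formula and stationarity are routine. The probabilities $\mathbb{P}[(X^{(\beta)}_{t_i})_{x_i}=1]$ involve only $\rho_{x_i}\in\mathfrak{I}(\mathbb{Z}')$, so by the quasi-free Wick expansion of Lemma \ref{lem:qf_state_at_analytic} applied with $H=-D$ they equal the determinant of the two-point Schwinger functions built from $K_{-D,\beta}$, exactly as in Theorem \ref{thm:dynamical_correlation} and Proposition \ref{prop:op_stoc_proc}; note that the relevant operators $e^{-(\beta-s+t)(-D)}(1+e^{\beta D})^{-1}$ and $e^{-(t-s)(-D)}(1+e^{\beta D})^{-1}$ are bounded for $0\le s-t\le\beta$ even though $e^{\beta D}$ is not. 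Expanding $R_{-D,\beta}$ of \eqref{eq:zero_temp_limit} in the eigenbasis $(\psi_a)_{a\in\mathbb{Z}'}$ with $D\psi_a=a\psi_a$ and using $\delta_x=\sum_a\psi_a(x)\psi_a$ turns $\langle e^{(\beta-s+t)D}(1+e^{\beta D})^{-1}\delta_x,\delta_y\rangle$ into $\sum_a\frac{e^{(\beta-s+t)a}}{1+e^{\beta a}}\psi_a(x)\psi_a(y)$, and likewise for $t>s$, giving precisely $\mathcal{K}^\mathrm{dHyp}_{z,z',\xi;\beta}$. Stationarity of $(X^{(\beta)}_t)_{t\in[-\beta/2,\beta/2]}$ with respect to $\mathbb{P}^\mathrm{dHyp}_{z,z',\xi;\beta}$ then follows immediately from Theorem \ref{thm:stochastic_process}.
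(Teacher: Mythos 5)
Your overall architecture --- passing to $\mathcal{H}_K$ where $e^{\beta d\Gamma(D)}$ is of trace class, writing the Schwinger functions as a normalized trace, reducing stochastic positivity via Lemma \ref{lem:cond_stoc_positivity_abst} to the nonnegativity of $\langle e^{s\,d\Gamma(D)}v_\mu, v_\lambda\rangle$ for $s\geq 0$, and then obtaining the kernel by expanding Equation \eqref{eq:zero_temp_limit} in the eigenbasis $(\psi_a)_{a\in\mathbb{Z}'}$ --- is sound and matches the paper's, and you correctly diagnose both why Lemma \ref{lem:cond_stoc_positivity} is unusable (unboundedness of $e^{sD}$, since the spectrum of $D$ fills $\mathbb{Z}'$) and that the final determinantal formula and stationarity follow from Theorem \ref{thm:dynamical_correlation} and Theorem \ref{thm:stochastic_process}. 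The problem is that the step you yourself flag as the crux is never actually closed. The assertion that after a diagonal sign gauge $W$ the semigroup $e^{s\,W d\Gamma(D)W^{-1}}$ ``is a Markov-type kernel on $\mathcal{P}$'' is exactly the statement that needs proof, not a device that yields it: even granting that $d\Gamma(D)$ has nonnegative off-diagonal matrix entries in the basis $\{v_\lambda\}$ (which is in fact automatic --- for nearest-neighbour hops on $\mathbb{Z}'$ the fermionic signs in the wedge basis cancel identically, so the gauge $W$ is a red herring), $d\Gamma(D)$ is an unbounded operator, and ``generator with nonnegative off-diagonal entries $\Rightarrow$ semigroup with nonnegative entries'' is not a free implication in that setting; one needs either conservativeness (a genuine Markov generator structure) or an approximation/Duhamel argument, neither of which you supply. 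Your reserve option (truncation plus Proposition \ref{prop:limit_by_strong_resolvent_convergence}) has the same status: you note yourself that the truncations must simultaneously keep $e^{\beta D_N}$ of trace class, keep Karlin--McGregor total nonnegativity, and converge in the strong resolvent sense, and you do not exhibit such a family.

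The paper closes precisely this gap by citation rather than by re-derivation: by \cite[Section 5.3]{S24}, the operator $d\Gamma(D)$ on $\mathcal{H}_K$ is unitarily equivalent to a Markov generator on $L^2(\mathbb{Y}, M_{z,z',\xi})$, where $M_{z,z',\xi}$ is the $z$-measure on Young diagrams --- this is the rigorous form of your ``Borodin--Olshanski'' heuristic, the relevant transformation being a positive diagonal (ground-state/Doob) rescaling of the $v_\lambda$ basis, not a sign gauge --- and then \cite[Theorem 18.4]{KL81} applies directly to a KMS state given by a density operator whose Hamiltonian is a Markov generator, yielding stochastic positivity in one stroke. So your proposal is not wrong in outline, but as written it replaces the one nontrivial input by an unproven claim; to complete it you would either invoke \cite[Section 5.3]{S24} and \cite[Theorem 18.4]{KL81} as the paper does, or genuinely prove the positivity and conservativeness of the semigroup generated by $d\Gamma(D)$ on $\mathcal{P}$, which is a substantive result about the $z$-measure dynamics rather than a routine verification.
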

\begin{proof}
    As shown in \cite[Section 5.3]{S24}, $d\Gamma(D)$ on $\mathcal{H}_K$ is unitarily equivalent to a Markov generator on $L^2(\mathbb{Y}, M_{z, z', \xi})$, where $\mathbb{Y}$ is the set of Young diagrams and $M_{z, z', \xi}$ is the so-called $z$-measure. Thus, by \cite[Theorem 18.4]{KL81}, $\varphi_{-D, \beta}$ is stochastically positive. The determinantal space-time formula for the stochastic process $X^{(\beta)}$ is follows from Theorem \ref{thm:dynamical_correlation} and Equation \eqref{eq:zero_temp_limit}.
\end{proof}

The following is a consequence of Proposition \ref{prop:dynam_corr_zero_temp}:
\begin{proposition}
    $\varphi_K$ is stochastically positive with respect to $\mathfrak{D}(\mathbb{Z}')$. The associated stochastic process $X=(X_t)_{t\in \mathbb{R}}$ is stationary for the discrete hypergeometric process $\mathbb{P}^\mathrm{dHyp}_{z, z', \xi}$ and satisfies 
    \[\mathbb{P}[(X^{(\beta)}_{t_1})_{x_1}=1, \dots, (X^{(\beta)}_{t_n})_{x_n}=1]=\det[\mathcal{K}^\mathrm{dHyp}_{z, z', \xi}(x_i, t_i; x_j, t_j)]_{i, j=1}^n\]
    for all $x_1, \dots, x_n\in \mathbb{Z}'$ and $t_1\leq \cdots \leq t_n$ in $\mathbb{R}$, where 
    \[\mathcal{K}^\mathrm{dHyp}_{z, z', \xi}(x, t; y, s):=\begin{dcases}
        \sum_{a\in \mathbb{Z}'_{>0}}e^{-(s-t)a}\psi_a(x)\psi_a(y) & t\leq s, \\ -\sum_{a\in \mathbb{Z}'_{<0}}e^{(t-s)a}\psi_a(x)\psi_y(y) & t>s.
    \end{dcases}\]
    Moreover, $X^{(\beta)}$ converges to $X$ in the sense of Proposition \ref{prop:convergence}.
\end{proposition}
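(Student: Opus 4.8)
The plan is to recognize the present situation as the zero–temperature limit of the finite–temperature family constructed in the preceding proposition, so that the statement reduces almost entirely to Proposition \ref{prop:dynam_corr_zero_temp}. Concretely, I would set $H=-D$ in the notation of Section \ref{sec:density_op_fermion_Fock_sp}, so that $K_{H,\beta}=e^{\beta D}(1+e^{\beta D})^{-1}=K_{-D,\beta}$ and the associated quasi-free state is $\varphi_{-D,\beta}$, which the previous proposition has already shown to be stochastically positive with respect to $\mathfrak{D}(\mathbb{Z}')$ for every $\beta>0$. Since the spectrum of $D$ fills $\mathbb{Z}'=\mathbb{Z}+1/2$ and contains no zero, the spectral projection $K_H^{(-)}$ onto $\{-D<0\}=\{D>0\}$ is exactly the discrete hypergeometric kernel $K$, whence $\varphi_{K_H^{(-)}}=\varphi_K$. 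Invoking Proposition \ref{prop:dynam_corr_zero_temp} then immediately yields the stochastic positivity of $\varphi_K$ and the determinantal space-time formula with kernel $R_{-D}$.

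Next I would translate $R_{-D}$ into the eigenbasis $(\psi_a)_{a\in\mathbb{Z}'}$. Writing the spectral decompositions $D=\sum_a a\,|\psi_a\rangle\langle\psi_a|$, $K=\sum_{a>0}|\psi_a\rangle\langle\psi_a|$, and $1-K=\sum_{a<0}|\psi_a\rangle\langle\psi_a|$ (there is no $a=0$ term since $0\notin\mathbb{Z}'$), the operator $e^{(s-t)(-D)}K$ acts as $e^{-(s-t)a}$ on $\psi_a$ for $a>0$ and annihilates the rest, while $e^{-(t-s)(-D)}(1-K)$ acts as $e^{(t-s)a}$ on $\psi_a$ for $a<0$. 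Taking matrix elements against $\delta_x,\delta_y$ and using that the $\psi_a$ may be chosen real-valued reproduces $\mathcal{K}^\mathrm{dHyp}_{z,z',\xi}$ exactly in its two cases $t\leq s$ and $t>s$. For the stationarity assertion, I would read off from Theorem \ref{thm:stochastic_process} that at a single time the law of $X_t$ is governed by $\mathcal{S}_f(t)=\varphi_K(f)$ for $f\in C(E)\cong\mathfrak{D}(\mathbb{Z}')$, which is precisely the DPP $\mathbb{P}^\mathrm{dHyp}_{z,z',\xi}$ with correlation kernel $K$; combined with the stationarity part of Theorem \ref{thm:stochastic_process} in the $\beta=\infty$ case, this shows $(X_t)_{t\in\mathbb{R}}$ is stationary with common marginal $\mathbb{P}^\mathrm{dHyp}_{z,z',\xi}$. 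Finally, the convergence $X^{(\beta)}\to X$ follows from Proposition \ref{prop:convergence}: by Remark \ref{rem:limit_trans_projection} the kernels $K_{-D,\beta}$ converge strongly to $K$ as $\beta\to\infty$, hence $\varphi_{-D,\beta}\to\varphi_K$ in the weak${}^*$ topology, and applying Proposition \ref{prop:convergence} along any sequence $\beta_N\nearrow\infty$ gives the weak convergence of all finite-dimensional distributions.

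The one point requiring care — and the main obstacle — is that $e^{-\beta D}$ is never trace class here, so $\varphi_{-D,\beta}$ is not defined by Equation \eqref{eq:qf_state_density_op} on the full CAR algebra $\mathfrak{A}(\mathbb{Z}')$ but only on the GICAR algebra $\mathfrak{I}(\mathbb{Z}')$, via the trace-class operator $e^{\beta d\Gamma(D)}$ on $\mathcal{H}_K$. Thus I must check that Proposition \ref{prop:dynam_corr_zero_temp} and the abstract convergence results of Section \ref{sec:stoc_proc} remain valid with the ambient algebra $\mathfrak{A}$ replaced by $\mathfrak{I}(\mathbb{Z}')$. This is harmless: the correlation functions and the functions $\mathcal{S}_{\rho_{x_1},\dots,\rho_{x_n}}$ involve only the gauge-invariant elements $\rho_{x_1}\cdots\rho_{x_n}\in\mathfrak{I}(\mathbb{Z}')$, the state $\varphi_{-D,\beta}$ is a genuine $\alpha^D$-KMS quasi-free state on $\mathfrak{I}(\mathbb{Z}')$, and the strong convergence $K_{-D,\beta}\to K$ is exactly what the weak${}^*$-convergence argument through Equation \eqref{eq:analytic} and Remark \ref{rem:approximation} requires. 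Once this identification is in place, no further computation is needed beyond the eigenbasis expansion above, since all the nontrivial analytic content has been absorbed into the results cited.
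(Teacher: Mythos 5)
Your proposal is correct and follows essentially the same route as the paper, whose entire proof is the one-line remark that the statement ``is a consequence of Proposition \ref{prop:dynam_corr_zero_temp}'': you reduce to that zero-temperature limit result via $H=-D$, identify the spectral projection onto $\{D>0\}$ with the discrete hypergeometric kernel $K$, expand $R_{-D}$ in the eigenbasis $(\psi_a)_{a\in\mathbb{Z}'}$, and obtain the convergence $X^{(\beta)}\to X$ from Lemma \ref{lem:convergence} and Proposition \ref{prop:convergence}. The only difference is that you make explicit the point the paper passes over silently --- that $e^{\beta D}$ is not trace class, so the finite-temperature states live on the GICAR algebra $\mathfrak{I}(\mathbb{Z}')$ rather than on $\mathfrak{A}(\mathbb{Z}')$, and one must check the limit arguments survive this replacement --- which is a legitimate refinement, not a deviation.
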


We remark that the resulting stochastic process $X$ and the space-time correlation kernel $\mathcal{K}^\mathrm{dHyp}_{z, z', \xi}$ had already appeared in \cite[Theorem A]{BO06}. 

Furthermore, as shown in \cite{Olshanski08}, the discrete hypergeometric difference operator converges to the discrete Bessel difference operator and the discrete sine difference operator in the strong resolvent sense. Therefore, by Proposition \ref{prop:limit_by_strong_resolvent_convergence}, we obtain stationary processes with respect to the discrete Bessel process and the discrete sine process. See also \cite{Esaki,Katori15} for another realization of stationary process on the discrete sine process via relaxation phenomena.

To show the stochastic positivity of $\varphi_{-D, \beta}$, we used the fact that the self-adjoint operator $d\Gamma(D)$ acting on the GNS-representation space associated with $\varphi_K$ is unitarily equivalent to a Markov generator. Thus, in this sense, our construction seems to be a tautology. However, throughout our construction, we can easily derive the above determinantal formula of space-time correlations of stochastic processes.

%%%%%%%%%%%%%%%%%%%%%%%%%%%%%%%%%%%%%%%%%%%%%%%%%%%%%%%%%%%%%%%%%%%%%%%%%%%%%%%%%%%%%%%%%%%%%%%%%%%%%%%%%%%%%%%%%%%%%%%%%%%%%%%%%%%%%%%%%%%%%%%%
\section{Conclusions}

In this paper, we further developed the previous work \cite{S24} on the dynamical relationship between DPPs and CAR algebras. Especially, motivated by the work \cite{KL81} by Klein and Landau, we discussed stochastically positive, quasi-free, KMS states on CAR algebras and contracted stochastic processes on point configuration spaces that are stationary with respect to the associated DPPs. Because of an algebraic nature of resulting stochastic processes, we obtained the determinantal formula of space-time correlations, and we developed an algebraic method of analyzing limiting behaviors of stochastic processes. Moreover, we demonstrated that our framework is applicable for various DPPs related to (discrete) orthogonal polynomials. 

On the other hand, there remain several topics that we have not discussed in the paper but that are certainly interesting. Firstly, we did not discuss $q$-hypergeometric orthogonal polynomials, although they appear in various fields of mathematics, including the intersecting area of point processes and representation theory (see \cite{GO}). When $q$-orthogonal polynomials are eigenfunctions of a self-adjoint operator, we can apply our framework and obtain stationary processes with respect to $q$-orthogonal polynomial ensembles and their finite-temperature variations.

Secondly, Pfaffian point processes are quite interesting research topic even in the discrete setting (see \cite{BS,P11}), but we have not yet succeeded in extending the static relationship between Pfaffian point processes and self-dual CAR algebras to the dynamical level. It remains for future work.

Thirdly, one of the most interesting and natural research directions is extending our work to the continuum setting, i.e., developing an algebraic approach to studying stationary processes with respect to point processes on continuum spaces. It is known that such stochastic processes mainly appear in asymptotic representation theory and random matrix theory. In asymptotic representation theory, we are interested in the point processes define on dual spaces of inducitive limits of compact groups, which are continuum spaces. In random matrix theory, random eigenvalues of random matrices is actively studied, and they usually form point processes on $\mathbb{R}$ or $\mathbb{C}$. An operator algebraic approach to constructing stochastic processes related to random matrices (e.g., Dyson's brownian motion) seems very interesting research direction, and it will be open new interaction between random matrix theory and operator algebra theory.

%%%%%%%%%%%%%%%%%%%%%%%%%%%%%%%%%%%%%%%%%%%%%%%%%%%%%%%%%%%%%%%%%%%%%%%%%%%%%%%%%%%%%%%%%%%%%%%%%%%%%%%%%%%%%%%%%%%%%%%%%%%%%%%%%%%%%%%%%%%%%%%%
\appendix
%%%%%%%%%%%%%%%%%%%%%%%%%%%%%%%%%%%%%%%%%%%%%%%%%%%%%%%%%%%%%%%%%%%%%%%%%%%%%%%%%%%%%%%%%%%%%%%%%%%%%%%%%%%%%%%%%%%%%%%%%%%%%%%%%%%%%%%%%%%%%%%%
\section{Pfaffian point processes and self-dual CAR algebras}
Our interest in the paper is in determinantal point processes rather than on Pfaffian point processes. However, we employ a computational technique involving Pfaffians to derive space-time correlations of stochastic processes. To ensure the paper is self-contained, we summarize the relationship between Pfaffian point processes and self-dual CAR algebras. See also \cite{Koshida}.

\subsection{Pfaffian point processes}
To define a Pfaffian point process, let us recall some notations. For a $2n\times 2n$ skew-symmetric matrix $A=[a_{ij}]_{i, j=1}^{2n}$, its \emph{Pfaffian} $\mathrm{Pf}[A]$ is defined by
\[\mathrm{Pf}[A]:=(-1)^{n(n-1)/2}\sum_{\sigma}\mathrm{sgn}(\sigma)\prod_{j=1}^n a_{\sigma(j)\sigma(j+n)},\]
where $\sigma$ runs over all permutations of $\{1, \dots, 2n\}$ such that $\sigma(1)<\cdots <\sigma(n)$ and $\sigma(j)<\sigma(j+n)$ for all $j=1, \dots, n$. We remark that $\mathrm{Pf}[A]$ is determined only by the upper triangular entries of $A$. Thus, we sometimes write $\mathrm{Pf}(a_{ij})_{1\leq i<j\leq 2n}$. Let $A_{ij}$ ($i, j=1, \dots, n$) denote the $2\times 2$ matrix in $(i, j)$-block of $A$, i.e., $A=(A_{ij})_{i, j=1}^n$ and
\[A_{ij}=\begin{pmatrix} a_{2i-1, 2j-1} & a_{2i-1, 2j} \\ a_{2i, 2j-1} & a_{2i, 2j}\end{pmatrix}.\]
Since $A^T=(A_{ji}^T)_{i, j=1}^n$, we have $A_{ij}^T=-A_{ji}$.

Let $\mathfrak{X}$ be a countable set with the discrete topology. A point process $\mathbb{P}$ on $\mathfrak{X}$ is called a \emph{Pfaffian point process} if there exists a $2\times 2$  matrix-valued function $K\colon \mathfrak{X}\times \mathfrak{X}\to \mathbb{M}_2(\mathbb{C})$ such that $K(x, y)^T=-K(y, x)$ for all $x, y\in \mathfrak{X}$ and for all $x_1, \dots, x_n\in \mathfrak{X}$ we have
\[\rho^{(n)}_\mathbb{P}(x_1, \dots, x_n)=\mathrm{Pf}[K(x_i, x_j)]_{i, j=1}^n.\]
We call $K$ a \emph{correlation kernel} of $\mathbb{P}$. Let $K_{ij}\colon \mathfrak{X}\times \mathfrak{X}\to \mathbb{C}$ ($i, j=1, 2$) be defined by
\[K(x, y)=\begin{pmatrix} K_{11}(x, y) & K_{12}(x, y) \\ K_{21}(x, y) & K_{22}(x, y) \end{pmatrix} \quad (x, y\in \mathfrak{X}).\]
If $K_{11}(x, y)=K_{22}(x, y)=0$ for all $x, y\in \mathfrak{X}$, we have $\mathrm{Pf}[K(x_i, x_j)]_{i, j=1}^n = \det[K_{12}(x_i, x_j)]_{i, j=1}^n$. Thus, a Pfaffian point process is a generalization of a determinantal point process.

\subsection{Self-dual CAR algebras and gauge-invariant quasi-free states}
Let $\mathcal{K}$ be a complex Hilbert space and $\Gamma$ its complex conjugate, i.e., $\Gamma$ is a conjugate-linear, involutive ($\Gamma^2=\mathrm{id}$), unitary map on $\mathcal{K}$. The \emph{self-dual CAR algebra} $\mathfrak{A}_\mathrm{SDC}(\mathcal{K}, \Gamma)$ is a universal unital $C^*$-algebra generated by $b(h)$ ($h\in \mathcal{K}$) such that the mapping $h\in \mathcal{K}\mapsto b(h)\in \mathfrak{A}_\mathrm{SDC}(\mathcal{K}, \Gamma)$ is linear and the \emph{self-dual canonical anti-commutation relation} (self-dual CAR) holds:
\[b(h)b(k)^*+b(k)^*b(h)=\langle h, k\rangle 1, \quad b(h)^*=b(\Gamma h) \quad (h, k\in \mathcal{K}).\]
Here, the inner product is complex linear in the left variable.

A state $\varphi$ on $\mathfrak{A}_\mathrm{SDC}(\mathcal{K}, \Gamma)$ is said to be (\emph{gauge-invariant}) \emph{quasi-free} if we have
\[\varphi(b(h_1)\cdots b(h_{2n+1}))=0, \quad \varphi(b(h_1)\cdots b(h_{2n}))=\mathrm{Pf}[A_\varphi(h_1, \dots, h_{2n})]\]
for all $h_1, \dots, h_{2n+1}\in \mathcal{K}$, where $A_\varphi(h_1, \dots, h_{2n})$ is the unique skew-symmetric $2n\times 2n$ matrix whose upper triangular entries are given as
\[A_\varphi(h_1, \dots, h_{2n})_{ij}:=\varphi(b(h_i)b(h_j)) \quad (1\leq i\leq j \leq 2n).\]
By the Riesz representation theorem, there exists a unique bounded linear operator $S$ on $\mathcal{K}$, called the \emph{covariance operator}, such that
\begin{equation}\label{eq:covariance_operator1}
    \varphi(b(h)^*b(k))=\langle Sk, h\rangle \quad (h, k\in \mathcal{K}),
\end{equation}
\begin{equation}\label{eq:covariance_operator2}
    0\leq S\leq 1, \quad S+\Gamma S\Gamma =1.
\end{equation}
Conversely, if a bounded linear operator $S$ on $\mathcal{K}$ satisfies Equation \eqref{eq:covariance_operator2}, then there exists a unique quasi-free state on $\mathfrak{A}_\mathrm{SDC}(\mathcal{K}, \Gamma)$, denoted by $\varphi_S$, satisfying Equation \eqref{eq:covariance_operator1}. Thus, there exists a bijection between quasi-free states on $\mathfrak{A}_\mathrm{SDC}(\mathcal{K}, \Gamma)$ and covariance operators on $(\mathcal{K}, \Gamma)$. A covariance operator is called a \emph{basic projection} if it is an orthogonal projection.

Let $E$ be a basic projection on $(\mathcal{K}, \Gamma)$ and $\mathcal{H}:=E\mathcal{K}$. Then, $\hat a(h):=b(h)^*\in \mathfrak{A}_\mathrm{SDC}(\mathcal{K}, \Gamma)$ ($h\in \mathcal{H}$) satisfy the CAR over $\mathcal{H}$. Thus, there exists a unital $*$-algebra homomorphism from $\mathfrak{A}(\mathcal{H})$ to $\mathfrak{A}_\mathrm{SDC}(\mathcal{K}, \Gamma)$ assigning $a(h)$ to $\hat a(h)$ for any $h\in \mathcal{H}$. Conversely, we can easily check that $\hat b(h):=a^*(Eh)+a(E\Gamma h) \in \mathfrak{A}(\mathcal{H})$ ($h\in \mathcal{K}$) satisfy the self-dual CAR over $(\mathcal{K}, \Gamma)$. Thus, there exists a unital $*$-algebra homomorphism from $\mathfrak{A}_\mathrm{SDC}(\mathcal{K}, \Gamma)$ to $\mathfrak{A}(\mathcal{H})$ assigning $b(h)$ to $\hat b(h)$ for any $h\in \mathcal{K}$. Therefore, in conclusion, two $C^*$-algebras $\mathfrak{A}_\mathrm{SDC}(\mathcal{K}, \Gamma)$ and $\mathfrak{A}(\mathcal{H})$ are $*$-isomorphic.

In what follows, we freely identify $\mathfrak{A}_\mathrm{SDC}(\mathcal{K}, \Gamma)$ and $\mathfrak{A}(\mathcal{H})$ while a basic projection $E$ is fixed. In particular, we use the same symbols $a(h)$ (resp. $b(h)$) to denote $\hat a(h)$ (resp. $\hat b(h)$).

\medskip
The following is helpful (see Section \ref{sec:density_op_fermion_Fock_sp}) and is the main result in this Appendix.
\begin{lemma}\label{lem:qf_st_calc}
    Let $N\geq 1$ and $X_i:=a^*(h^{(i)}_1)\cdots a^*(h^{(i)}_{n_i})a(k^{(i)}_1)\cdots a(k^{(i)}_{n_i})$ for each $i=1, \dots, N$, where $n_i\geq 1$ and $h^{(i)}_l, k^{(i)}_l\in \mathcal{H}$ for each $l=1, \dots, n_i$ and $i=1, \dots, N$. If $SE=ES$, then
    \[\varphi_S(X_1\cdots X_N)=\det[B_{ij}]_{i, j=1}^N\]
    holds, where $B_{ij}$ is the $n_i\times n_j$ matrix given as
    \[B_{ij}:=\begin{dcases} \begin{bmatrix} \varphi_S(a^*(h^{(i)}_l)a(k^{(j)}_m)) \end{bmatrix}_{\substack{l=1, \dots, n_i, \\ m=1, \dots, n_j}} & i\leq j, \\ - \begin{bmatrix} \varphi_S(a(k^{(j)}_m)a^*(h^{(i)}_l)) \end{bmatrix}_{\substack{l=1, \dots, n_i, \\ j=1, \dots, n_j}} & i>j. \end{dcases}\]
\end{lemma}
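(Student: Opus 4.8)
The plan is to reduce the claimed block-determinant formula to Wick's theorem (the defining multilinear Pfaffian/determinant structure of the gauge-invariant quasi-free state) and then to identify which pairings survive. First I would observe that each $X_i$ is a product of creation operators followed by annihilation operators, so the full product $X_1\cdots X_N$ is a word in the $a^*$'s and $a$'s. Since $\varphi_S$ is gauge-invariant quasi-free, $\varphi_S(X_1\cdots X_N)$ vanishes unless the number of creation and annihilation operators agree (which they do here, each $X_i$ contributing $n_i$ of each), and otherwise expands as a determinant over all pairings of an $a^*$ with an $a$, weighted by the two-point functions $\varphi_S(a^*(h)a(k))=\langle SEh,k\rangle$ and the sign coming from reordering. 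Concretely, I would invoke the quasi-free expansion to write $\varphi_S(X_1\cdots X_N)$ as a signed sum over bijections between the collection of all creation arguments and all annihilation arguments, each term a product of two-point functions.

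The key computational step is to organize this determinant into the block form $[B_{ij}]$. Within a single $X_i$ the creation operators stand to the left of the annihilation operators, so an $a^*(h^{(i)}_l)$ paired with an $a(k^{(j)}_m)$ for $j\ge i$ sits in its natural order and contributes $\varphi_S(a^*(h^{(i)}_l)a(k^{(j)}_m))$, giving the $i\le j$ block. For $i>j$, the annihilation operator $a(k^{(j)}_m)$ stands to the left of the creation operator $a^*(h^{(i)}_l)$ in the word, so the relevant two-point function is $\varphi_S(a(k^{(j)}_m)a^*(h^{(i)}_l))$; the transposition needed to bring the pairing into standard determinant position accounts for the overall minus sign in the $i>j$ block. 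The hypothesis $SE=ES$ is exactly what guarantees that the anomalous two-point functions $\varphi_S(a^*(h)a^*(k))$ and $\varphi_S(a(h)a(k))$ vanish on $\mathcal{H}=E\mathcal{K}$, so that only the $a^*$--$a$ pairings contribute and the Pfaffian collapses to an ordinary determinant; I would verify this vanishing using $\varphi_S(b(h)^*b(k))=\langle Sk,h\rangle$ together with $S+\Gamma S\Gamma=1$ and the commutation $SE=ES$.

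The cleanest way to carry this out rigorously is to appeal to the general determinantal Wick formula for gauge-invariant quasi-free states, which states that for operators of the form $a^*(g_1)\cdots a^*(g_p)\,a(f_q)\cdots a(f_1)$ the expectation equals $\det[\varphi_S(a^*(g_\alpha)a(f_\gamma))]$, and more generally for mixed orderings one picks up signs and the ``wrong-order'' two-point functions. I would apply this to the full word, then recognize the resulting $(\sum n_i)\times(\sum n_i)$ determinant as having the stated block decomposition, with the diagonal and upper-triangular blocks in creation-before-annihilation order and the lower-triangular blocks in the reversed order with a sign. The main obstacle I anticipate is bookkeeping the signs: one must carefully track the permutation parity incurred when moving each annihilation operator past the creation operators lying to its left across different factors $X_i$, and confirm that these parities assemble exactly into the single global sign attached to the $i>j$ blocks rather than scattering through the determinant. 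Handling this uniformly — ideally by an induction on $N$ or by a clean normal-ordering argument that isolates the sign as a property of the block position — is the delicate part; once the signs are pinned down, the identification with $\det[B_{ij}]$ is immediate.
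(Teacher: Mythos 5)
Your proposal is correct and follows essentially the same route as the paper: expand $\varphi_S(X_1\cdots X_N)$ via the quasi-free (Pfaffian/Wick) formula, use $SE=ES$ to show the anomalous correlators $\varphi_S(a^*a^*)$ and $\varphi_S(aa)$ vanish, and then collapse the Pfaffian to the block determinant. The sign bookkeeping you flag as delicate is handled in the paper exactly by the ``clean'' mechanism you anticipate: writing the Pfaffian in $2n_i\times 2n_j$ blocks $A_{ij}$, noting the diagonal sub-blocks vanish so that $\mathrm{Pf}[A_{ij}]_{i,j=1}^N=\det[(A_{ij})_{1,2}]_{i,j=1}^N$, and reading off the $i>j$ sign from skew-symmetry, $(A_{ij})_{12}=-(A_{ji})_{21}^{T}=B_{ij}$.
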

\begin{proof}
    We have $\varphi_S(X_1\cdots X_N)=\mathrm{Pf}[A_{ij}]_{i, j=1}^N$, where $A_{ij}$ is $2n_i\times 2n_j$ matrix given as
    \[A_{ij}= \begin{bmatrix} [\varphi_S(a^*(h^{(i)}_l)a^*(h^{(j)}_m))]_{l, m} & [\varphi_S(a^*(h^{(i)}_l)a(k^{(j)}_m))]_{l, m} \\ [\varphi_S(a(k^{(i)}_l)a^*(h^{(k)}_m))]_{l, m} & [\varphi_S(a(k^{(i)}_l)a(k^{(j)}_m))]_{l, m}\end{bmatrix}\]
    and $A_{ji}=-A_{ij}^T$ for each $i<j$, and
    \[A_{ii} = \begin{bmatrix} [\varphi_S(a^*(h^{(i)}_l)a^*(h^{(i)}_m))]_{l, m} & [\varphi_S(a^*(h^{(i)}_l)a(k^{(i)}_m))]_{l, m} \\ -[\varphi_S(a^*(h^{(i)}_m)a(k^{(i)}_l))]_{l, m} & [\varphi_S(a(k^{(i)}_l)a(k^{(i)}_m))]_{l, m} \end{bmatrix}\]
    for each $i=1, \dots, N$. Let $(A_{ij})_{rs}$ denote $(r, s)$-block of $A_{ij}$ for each $r, s = 1, 2$. Since $SE=ES$, for any $h, k\in \mathcal{H}$ we have
    \[\varphi_S(a^*(h)a^*(k))=\langle k, S\Gamma h\rangle = 0,\quad \varphi_S(a(h)a(k))=\langle \Gamma k, Sh\rangle =0.\]
    Thus, $(A_{ij})_{1, 1}$ and $(A_{ij})_{2, 2}$ are null matrices. Hence, we have $\mathrm{Pf}[A_{ij}]_{i, j=1}^N=\det[(A_{ij})_{1, 2}]_{i, j=1}^N$. Moreover, $(A_{ij})_{1, 2} = B_{ij}$ holds for $i\leq j$, and $(A_{ij})_{12} = -(A_{ji})_{21}^T = B_{ij}$ holds for $i>j$. Therefore, we have $\varphi_S(X_1\cdots X_N) = \det[B_{ij}]_{i, j=1}^N$.
\end{proof}

\subsection{Relationship to Pfaffian point processes}
Let $\mathfrak{X}$ be a countable set. We assume that $\mathcal{K}:=\ell^2(\mathfrak{X})\oplus \ell^2(\mathfrak{X})$ and its complex conjugation $\Gamma$ is given by $\Gamma (f\oplus g):=J g\oplus Jf$ for any $f\oplus g\in \mathcal{K}$, where $Jf:=\overline{f}$. We simply denote by $\mathfrak{A}_\mathrm{SDC}(\mathfrak{X})$ the self-dual CAR algebra $\mathfrak{A}_\mathrm{SDC}(\mathcal{K}, \Gamma)$.
% We define $J\colon \ell^2(\mathfrak{X})\to \ell^2(\mathfrak{X})$ by $Jf:=\overline{f}$ for any $f\in \ell^2(\mathfrak{X})$. Then, we have
% \[\Gamma=\begin{pmatrix} 0& J\\ J&0\end{pmatrix}.\]
We also fix a basic projection $E$ on $(\mathcal{K}, \Gamma)$ given by $E(f\oplus g):=f\oplus 0$ for any $f\oplus g\in \mathcal{K}$. By the previous discussion, we have $\mathfrak{A}_\mathrm{SDC}(\mathfrak{X})\cong \mathfrak{A}(\mathfrak{X})$, and we freely identify these $C^*$-algebras.

Let us recall that $\mathfrak{D}(\mathfrak{X})$ is the $C^*$-subalgebra generated by $a^*_xa_x(=b(\delta_x\oplus 0)b(0\oplus \delta_x))$ ($x\in \mathfrak{X}$), and it is isomorphic to $C(\mathcal{C}(\mathfrak{X}))$. Thus, for every state $\varphi$ on $\mathfrak{A}_\mathrm{SDC}(\mathfrak{X})$, we obtain the point process on $\mathfrak{X}$, denoted by $\mathbb{P}_\varphi$, whose correlation functions are given by Equation \eqref{eq:corr_state}.

\medskip
The following gives the static relationship between Pfaffian point processes and quasi-free states on self-dual CAR algebras (see also \cite[Proposition 1.3]{Koshida}):
\begin{proposition}\label{prop:ex_pfpp}
    Let $\varphi$ be a quasi-free state on $\mathfrak{A}_\mathrm{SDC}(\mathfrak{X})$. The point process $\mathbb{P}_\varphi$ is a Pfaffian point process with correlation kernel $K_\varphi$ given by
    \[K_\varphi(x, y):=\begin{pmatrix} \varphi(a^*_xa^*_y) & \varphi(a^*_xa_y)\\ \varphi(a_xa^*_y)-\delta_{x, y} & \varphi(a_xa_y)\end{pmatrix} \quad (x, y\in \mathfrak{X}).\]
    In particular, if the covariance operator $S$ of $\varphi$ is given by 
    \[S=\begin{pmatrix} S_{11} & S_{12} \\ S_{21} & S_{22} \end{pmatrix},\]
    then $K_\varphi$ is given by
    \[
        K_\varphi(x, y)=\begin{pmatrix}\langle e_y, S_{12}e_x\rangle & \langle e_y, S_{22}e_x\rangle \\ \langle e_y, (S_{11}-1)e_x\rangle & \langle e_y, S_{21}e_x\rangle\end{pmatrix} \quad (x, y\in \mathfrak{X}).
    \]
\end{proposition}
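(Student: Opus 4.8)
The plan is to reduce both assertions to the defining Pfaffian formula for a quasi-free state, combined with the explicit identification of the density operators $\rho_x = a^*_x a_x$ in the self-dual picture. First I would record, from $b(h)^* = b(\Gamma h)$, $\hat b(h) = a^*(Eh) + a(E\Gamma h)$, and $\Gamma(\delta_x \oplus 0) = 0 \oplus \delta_x$ (using that $\delta_x$ is real), the two identifications $a^*_x = b(\delta_x \oplus 0)$ and $a_x = b(0 \oplus \delta_x)$, together with their dual forms $a^*_x = b(0 \oplus \delta_x)^*$ and $a_x = b(\delta_x \oplus 0)^*$. The first pair writes every creation/annihilation operator as a genuine $b(\cdot)$, which is what the Pfaffian formula needs; the dual pair is what brings the covariance operator into play.

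For the first statement I would start from \eqref{eq:corr_state}: for distinct $x_1, \dots, x_n$, the identity recorded just after it gives $\rho^{(n)}_{\mathbb{P}_\varphi}(x_1, \dots, x_n) = \varphi(\rho_{x_1}\cdots\rho_{x_n}) = \varphi(a^*_{x_1}a_{x_1}a^*_{x_2}a_{x_2}\cdots a^*_{x_n}a_{x_n})$. Reading this as $\varphi(b(h_1)\cdots b(h_{2n}))$ with the alternating ordering $c_{2i-1} = a^*_{x_i}$, $c_{2i} = a_{x_i}$, the quasi-free definition yields $\mathrm{Pf}[A]$, where $A$ is skew-symmetric with $A_{kl} = \varphi(c_k c_l)$ for $k<l$. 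The crux is to read off the $2\times 2$ blocks of $A$: for $i<j$ the block is exactly $K_\varphi(x_i, x_j)$ (here distinctness kills the $\delta_{x_i x_j}$ correction), whereas for the diagonal block $i=j$ skew-symmetry forces off-diagonal entries $\pm\varphi(a^*_{x_i}a_{x_i})$ and zero on the diagonal; matching this against $K_\varphi(x_i,x_i)$ uses the CAR relations $(a^*_x)^2 = a_x^2 = 0$ and $a_x a^*_x = 1 - a^*_x a_x$, the latter reproducing precisely the $-\delta_{x,y}$ term. In parallel I would verify $K_\varphi(x,y)^T = -K_\varphi(y,x)$ directly from the CAR, confirming that $K_\varphi$ is an admissible Pfaffian kernel.

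For the second statement I would substitute the covariance formula \eqref{eq:covariance_operator1}, $\varphi(b(h)^*b(k)) = \langle Sk, h\rangle$. Writing each of the four entries of $K_\varphi(x,y)$ in the form $\varphi(b(u)^*b(v))$ via the dual identifications, each becomes $\langle Sv, u\rangle$ with $u,v$ among $\{\delta_x \oplus 0,\, 0\oplus\delta_x,\, \delta_y\oplus 0,\, 0\oplus\delta_y\}$, which after projecting onto the relevant summand collapses to a single block element $\langle S_{ab}\delta_y, \delta_x\rangle$. Invoking $0 \leq S \leq 1$, hence $S = S^*$ and $S_{ab}^* = S_{ba}$, I would rewrite $\langle S_{ab}\delta_y, \delta_x\rangle = \langle \delta_y, S_{ba}\delta_x\rangle = \langle e_y, S_{ba}e_x\rangle$, arriving at the claimed block expressions; the $(2,1)$ entry additionally absorbs $-\delta_{x,y} = -\langle e_y, e_x\rangle$ into $\langle e_y, (S_{11}-1)e_x\rangle$.

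I expect the main obstacle to be careful bookkeeping rather than anything conceptual: keeping the Pfaffian ordering consistent with the $2\times 2$ block layout of $K_\varphi$, and especially getting the diagonal blocks right. The delicate point is that the lower-left entry of $K_\varphi$ carries the shift $-\delta_{x,y}$, which is invisible off the diagonal but indispensable on it; this shift is exactly the manifestation of the anticommutator $a_x a^*_x + a^*_x a_x = 1$, and it is what reconciles the diagonal block of the skew-symmetric matrix $A$ (whose diagonal must vanish) with the naive entry $\varphi(a_x a^*_x)$.
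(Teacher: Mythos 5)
Your proposal is correct and follows essentially the same route as the paper's proof: reduce to the quasi-free Pfaffian formula applied to the alternating arguments $e_{x_1}, \Gamma e_{x_1}, \dots, e_{x_n}, \Gamma e_{x_n}$, read off the off-diagonal $2\times 2$ blocks directly, fix the diagonal blocks via skew-symmetry together with $a_x a^*_x = 1 - a^*_x a_x$ (which is exactly where the $-\delta_{x,y}$ enters), and then obtain the covariance form of the kernel by substituting Equation \eqref{eq:covariance_operator1} and using $S = S^*$. The only step the paper includes that you omit is the case of repeated points ($x_i = x_j$ with $i \neq j$), where both sides of the correlation identity must be checked to vanish—a routine verification, since the Pfaffian of a skew-symmetric matrix with coinciding row/column pairs is zero.
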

\begin{proof}
    The later statement immediately follows from the first one. By Equation \eqref{eq:corr_state}, it suffices to show that $\varphi(a^*_{x_n}\cdots a^*_{x_1}a_{x_1}\cdots a_{x_n})=\mathrm{Pf}[K_\varphi(x_i, x_j)]_{i, j=1}^n$ for any $x_1, \dots, x_n\in \mathfrak{X}$ and $n\geq1$. If $x_i=x_j$ for some $i, j=1,\dots, n$ with $i\neq j$, then both sides are 0. If $x_1, \dots, x_n$ are distinct, then we have
    \[\varphi(a^*_{x_n}\cdots a^*_{x_1}a_{x_1}\cdots a_{x_n})=\varphi(a^*_{x_1}a_{x_1}\cdots a^*_{x_n}a_{x_n})=\mathrm{Pf}[A_\varphi(e_{x_1}, \Gamma e_{x_1}, \dots, e_{x_n}, \Gamma e_{x_n}))].\]
    Let us recall that for each $i<j$ the $(i, j)$-block of $A_\varphi(e_{x_1}, \Gamma e_{x_1}, \dots, e_{x_n}, \Gamma e_{x_n})$ is given as
    \[\begin{pmatrix}\varphi(a^*_{x_i}a^*_{x_j})& \varphi(a^*_{x_i}a_{x_j}) \\ \varphi(a_{x_i}a^*_{x_j}) & \varphi(a_{x_i}a_{x_j})\end{pmatrix}.\]
    Moreover, the $(1, 2)$-entry of the $(i, i)$-block is $\varphi(a^*_{x_i}a_{x_i})$ for each $i=1, \dots, n$. Thus, the $(2,1)$-entry is $-\varphi(a^*_{x_i}a_{x_i})=\varphi(a_{x_i}a^*_{x_i})-1$, and hence we have
    \[A_\varphi(e_{x_1}, \Gamma e_{x_1}, \dots, e_{x_n}, \Gamma e_{x_n}) = [K_\varphi(x_i, x_j)]_{i, j=1}^n.\]
    Therefore, $\varphi(a^*_{x_n}\cdots a^*_{x_1}a_{x_1}\cdots a_{x_n})=\mathrm{Pf}[K_\varphi(x_i, x_j)]_{i, j=1}^n$ holds true.
\end{proof}

\begin{remark}\label{rem:kernel}
    If $S_{12}=S_{21}=0$, we have
    \begin{align*}
        \varphi(a^*(h_n)\cdots a^*(h_1)a(k_1)\cdots a(k_n))
        & =\mathrm{Pf}[A_\varphi(h_n, \dots, h_1, \Gamma k_1, \dots, \Gamma k_n)] \\
        & =\det[\varphi(a^*(h_i)a(k_j))]_{i, j=1}^n.
    \end{align*}
    Thus, $\varphi$ is a quasi-free state on $\mathfrak{A}(\mathfrak{X})$, and $\mathbb{P}_\varphi$ is determinantal with correlation kernel given as $K_\varphi(x, y)_{12}=\langle S_{22}e_x, e_y\rangle$ for any $x, y\in \mathfrak{X}$.
\end{remark}

Let us return to a arbitrary state $\varphi$ on $\mathfrak{A}_\mathrm{SDC}(\mathfrak{X})$, and $(\pi_\varphi, \mathcal{H}_\varphi, \Omega_\varphi)$ denotes the associated GNS-triple. We denote by $\mathcal{H}_{\varphi|_{\mathfrak{D}(\mathfrak{X})}}$ the closure of $\pi_\varphi(\mathfrak{D}(\mathfrak{X}))\Omega_\varphi$.

Under the isomorphism $\mathfrak{D}(\mathfrak{X})\cong C(\mathcal{C}(\mathfrak{X}))$, we denoted by the same symbol $f$ the element in $\mathfrak{D}(\mathfrak{X})$ associated with $f \in C(\mathcal{C}(\mathfrak{X}))$.

\begin{lemma}\label{lem:L2}
    There exists a unitary map from $\mathcal{H}_{\varphi|_{\mathfrak{D}(\mathfrak{X})}}$ to $L^2(\mathcal{C}(\mathfrak{X}), \mathbb{P}_\varphi)$ assigning $\pi_\varphi(f)\Omega_\varphi$ to $f$ for every $f\in C(\mathcal{C}(\mathfrak{X}))$.
\end{lemma}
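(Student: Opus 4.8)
The plan is to construct the asserted unitary directly on the canonical dense subspace and to verify it is isometric and surjective. Define a linear map $V_0$ on $\pi_\varphi(\mathfrak{D}(\mathfrak{X}))\Omega_\varphi$ by $V_0(\pi_\varphi(f)\Omega_\varphi):=f$, where on the right-hand side $f\in C(\mathcal{C}(\mathfrak{X}))$ denotes the continuous function corresponding to $f\in\mathfrak{D}(\mathfrak{X})$ under the isomorphism $\mathfrak{D}(\mathfrak{X})\cong C(\mathcal{C}(\mathfrak{X}))$. The entire argument rests on a single norm identity. Since $\Omega_\varphi$ implements $\varphi$ as the GNS vector, for every $f\in\mathfrak{D}(\mathfrak{X})$ one has
\[
\|\pi_\varphi(f)\Omega_\varphi\|^2_{\mathcal{H}_\varphi}=\langle \pi_\varphi(f^*f)\Omega_\varphi,\Omega_\varphi\rangle=\varphi(f^*f).
\]
Because $\mathfrak{D}(\mathfrak{X})$ is commutative and the identification $\mathfrak{D}(\mathfrak{X})\cong C(\mathcal{C}(\mathfrak{X}))$ is a $*$-isomorphism, $f^*f$ corresponds to $|f|^2$; and by the very definition of $\mathbb{P}_\varphi$ through the Riesz--Markov--Kakutani theorem we have $\varphi(g)=\int_{\mathcal{C}(\mathfrak{X})}g\,d\mathbb{P}_\varphi$ for all $g\in\mathfrak{D}(\mathfrak{X})$. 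Hence
\[
\|\pi_\varphi(f)\Omega_\varphi\|^2_{\mathcal{H}_\varphi}=\int_{\mathcal{C}(\mathfrak{X})}|f|^2\,d\mathbb{P}_\varphi=\|f\|^2_{L^2(\mathcal{C}(\mathfrak{X}),\mathbb{P}_\varphi)}.
\]

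First I would use this identity to settle both well-definedness and isometry of $V_0$ at once: if $\pi_\varphi(f)\Omega_\varphi=\pi_\varphi(g)\Omega_\varphi$, then applying the identity to $f-g$ yields $\|f-g\|_{L^2}=0$, so $V_0$ is well-defined, and the identity itself says $V_0$ is norm-preserving. Next I would extend $V_0$ by continuity: since $\mathcal{H}_{\varphi|_{\mathfrak{D}(\mathfrak{X})}}$ is by definition the closure of $\pi_\varphi(\mathfrak{D}(\mathfrak{X}))\Omega_\varphi$, the isometry $V_0$ extends uniquely to an isometry $V\colon\mathcal{H}_{\varphi|_{\mathfrak{D}(\mathfrak{X})}}\to L^2(\mathcal{C}(\mathfrak{X}),\mathbb{P}_\varphi)$ sending $\pi_\varphi(f)\Omega_\varphi$ to $f$, as required.

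It then remains only to show $V$ is onto, for which it suffices that its range be dense. But the range of $V$ contains all of $C(\mathcal{C}(\mathfrak{X}))$, and $C(\mathcal{C}(\mathfrak{X}))$ is dense in $L^2(\mathcal{C}(\mathfrak{X}),\mathbb{P}_\varphi)$ since $\mathcal{C}(\mathfrak{X})=\{0,1\}^{\mathfrak{X}}$ is compact metrizable (here the standing assumption that $\mathfrak{X}$ is countable is used) and $\mathbb{P}_\varphi$ is a finite, hence regular, Borel measure on it. Thus $V$ is a surjective isometry, i.e.\ a unitary. I do not expect any serious obstacle; the only point that deserves explicit mention is the density of $C(\mathcal{C}(\mathfrak{X}))$ in the $L^2$-space, and the self-dual framework plays no special role beyond supplying the same commutative subalgebra $\mathfrak{D}(\mathfrak{X})\cong C(\mathcal{C}(\mathfrak{X}))$ and the same correlation identity as in Equation \eqref{eq:corr_state}, so the proof is verbatim the same as in the gauge-invariant CAR setting.
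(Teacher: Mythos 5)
Your proof is correct and follows essentially the same route as the paper's: both define the map on the dense subspace $\pi_\varphi(\mathfrak{D}(\mathfrak{X}))\Omega_\varphi$, verify well-definedness and isometry from the identity $\varphi(f^*f)=\int|f|^2\,d\mathbb{P}_\varphi$ (which the paper leaves implicit in the phrase ``by the definition of $\mathbb{P}_\varphi$''), extend by continuity, and conclude surjectivity from the density of $C(\mathcal{C}(\mathfrak{X}))$ in $L^2(\mathcal{C}(\mathfrak{X}),\mathbb{P}_\varphi)$ via regularity of the measure. Your write-up merely makes the norm computation and the source of regularity explicit, which is a faithful expansion rather than a different argument.
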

\begin{proof}
    By the definition of $\mathbb{P}_\varphi$, the mapping $\pi_\varphi(f)\Omega_\varphi\in \pi_\varphi(\mathfrak{D}(\mathfrak{X}))\Omega_\varphi \mapsto f\in L^2(\mathcal{C}(\mathfrak{X}), \mathbb{P}_\varphi)$ is well defined and isometric. Thus, it can be uniquely extended to an isometric map on $\mathcal{H}_{\varphi|_{\mathfrak{D}(\mathfrak{X})}}$. Moreover, since $\mathbb{P}_\varphi$ is regular, $C(\mathcal{C}(\mathfrak{X}))$ is dense in $L^2(\mathcal{C}(\mathfrak{X}), \mathbb{P}_\varphi)$ (see \cite[Theorem 3.14]{Rudin}), and hence the range of this map coincides with $L^2(\mathcal{C}(\mathfrak{X}), \mathbb{P}_\varphi)$.
\end{proof}

%%%%%%%%%%%%%%%%%%%%%%%%%%%%%%%%%%%%%%%%%%%%%%%%%%%%%%%%%%%%%%%%%%%%%%%%%%%%%%%%%%%%%%%%%%%%%%%%%%%%%%%%%%%%%%%%%%%%%%%%%%%%%%%%%%%%%%%%%%%%%%%%%%
\section{Perfectness of point processes}\label{sec:perfectness}
%%%%%%%%%%%%%%%%%%%%%%%%%%%%%%%%%%%%%%%%%%%%%%%%%%%%%%%%%%%%%%%%%%%%%%%%%%%%%%%%%%%%%%%%%%%%%%%%%%%%%%%%%%%%%%%%%%%%%%%%%%%%%%%%%%%%%%%%%%%%%%%%%%
\subsection{Perfectness of point processes}
In this section, we will discuss the property of discrete point processes, called \emph{perfectness}, due to Olshanski \cite{Olshanski20} and Koshida \cite{Koshida}.

Let $\mathfrak{X}$ be a countable set as in previous sections. We define the gauge-invariant self-dual CAR algebra $\mathfrak{I}_\mathrm{SDC}(\mathfrak{X})$ as follows. First, as similar to the case of $\mathfrak{A}(\mathfrak{X})$, we define a torus group action $\gamma \colon \mathbb{T}\curvearrowright \mathfrak{A}_\mathrm{SDC}(\mathfrak{X})$, called the \emph{gauge action}, is defined by $\gamma_z(b(f\oplus g)):=b(zf\oplus \bar z g)$ for any $f\oplus g\in \ell^2(\mathfrak{X})\oplus \ell^2(\mathfrak{X})$ and $z\in \mathbb{T}$. Then, $\mathfrak{I}_\mathrm{SDC}(\mathfrak{X})$ is defined by 
\[\mathfrak{I}_\mathrm{SDC}(\mathfrak{X}):=\{A\in \mathfrak{A}_\mathrm{SDC}(\mathfrak{X})\mid \gamma_z(A)=A \text{ for all }z\in \mathbb{T}\}.\]
We remark that $\mathfrak{I}_\mathrm{SDC}(\mathfrak{X})$ is isomorphic to $\mathfrak{I}(\mathfrak{X})$ by the isomorphism $\mathfrak{A}_\mathrm{SDC}(\mathfrak{X})\cong \mathfrak{A}(\mathfrak{X})$. In the study of perfectness, $\mathfrak{I}_\mathrm{SDC}(\mathfrak{X})$ plays a fundamental role.

Let $\mathfrak{S}(\mathfrak{X})$ denote the group of finite permutations on $\mathfrak{X}$, i.e., every $\sigma\in \mathfrak{S}(\mathfrak{X})$ is a bijection on $\mathfrak{X}$ such that $\sigma(x)=x$ for any but finitely many $x\in \mathfrak{X}$. We remark that $\mathfrak{S}(\mathfrak{X})$ naturally acts on $\mathcal{C}(\mathfrak{X})$ and $C(\mathcal{C}(\mathfrak{X}))$ by 
\[\sigma\cdot \omega:=(\omega_{\sigma(x)})_{x\in \mathfrak{X}}, \quad [\sigma\cdot f](\sigma^{-1}\cdot \omega)\] 
for any $\omega\in \mathcal{C}(\mathfrak{X})$, $f\in C(\mathcal{C}(\mathfrak{X}))$, and $\sigma\in \mathfrak{S}(\mathfrak{X})$. Thus, we obtain the crossed product $C^*$-algebra $C(\mathcal{C}(\mathfrak{X}))\rtimes \mathfrak{S}(\mathfrak{X})$ (see, e.g., \cite[Section 4.1]{BO}). We suppose that $\mathbb{P}$ is $\mathfrak{S}(\mathfrak{X})$-quasi-invariant. Thus, the following Radon--Nikodym derivative is well defined:
\[\phi(\omega, \sigma):=\frac{d\mathbb{P}\circ \sigma^{-1}}{d\mathbb{P}}(\omega) \quad (\omega \in \mathcal{C}(\mathfrak{X}), \sigma \in \mathfrak{S}(\mathfrak{X})).\]
We obtain a representation $\mathcal{T}_\mathbb{P}\colon C(\mathcal{C}(\mathfrak{X}))\rtimes \mathfrak{S}(\mathfrak{X})\curvearrowright L^2(\mathcal{C}(\mathfrak{X}), \mathbb{P})$ such that
\[[\mathcal{T}_\mathbb{P}(f)h](\omega):=f(\omega)h(\omega), \quad [\mathcal{T}_\mathbb{P}(\sigma)h](\omega):=h(\sigma^{-1}(\omega))\phi(\omega, \sigma)^{1/2} \quad (h\in L^2(\mathcal{C}(\mathfrak{X}), \mathbb{P}))\]
for any $f\in C(\mathcal{C}(\mathfrak{X}))$ and $\sigma \in \mathfrak{S}(\mathfrak{X})$. 

We suppose that $\mathfrak{X}$ is equipped with a linear order, that is, $\mathfrak{X}$ is isomorphic to $\mathbb{Z}_{\geq0}$ or $\mathbb{Z}$ as an ordered set. Let us recall that $C(\mathcal{C}(\mathfrak{X}))$($\cong \mathfrak{D}(\mathfrak{X})$) is naturally embedded into $C(\mathcal{C}(\mathfrak{X}))\rtimes \mathfrak{S}(\mathfrak{X})$ and $\mathfrak{I}_\mathrm{SDC}(\mathfrak{X})$. By \cite{Olshanski20,Koshida}, there exists a surjective $*$-homomorphism
\[\pi\colon C(\mathcal{C}(\mathfrak{X}))\rtimes \mathfrak{S}(\mathfrak{X})\to \mathfrak{I}_\mathrm{SDC}(\mathfrak{X})\]
such that $\pi|_{C(\mathcal{C}(\mathfrak{X}))}=\mathrm{id}_{C(\mathcal{C}(\mathfrak{X}))}$. Moreover, $\ker(\pi)\subset \ker(\mathcal{T}_\mathbb{P})$ holds. Therefore, the representation $(\mathcal{T}_\mathbb{P}, L^2(\mathcal{C}(\mathfrak{X}), \mathbb{P}))$ of $C(\mathcal{C}(\mathfrak{X}))\rtimes \mathfrak{S}(\mathfrak{X})$ factors through $\mathfrak{I}_\mathrm{SDC}(\mathfrak{X})$. Namely, we obtain the representation $T_\mathbb{P}\colon \mathfrak{I}_\mathrm{SDC}(\mathfrak{X})\curvearrowright L^2(\mathcal{C}(\mathfrak{X}), \mathbb{P})$ such that $\mathcal{T}_\mathbb{P}=T_\mathbb{P}\circ \pi$. Using this representation, we obtain a state $\varphi_\mathbb{P}$ on $\mathfrak{I}_\mathrm{SDC}(\mathfrak{X})$ by $\varphi_\mathbb{P}(A):=\langle T_\mathbb{P}(A)\mathbbm{1}, \mathbbm{1}\rangle$ for any $A\in \mathfrak{I}_\mathrm{SDC}(\mathfrak{X})$, where $\mathbbm{1}$ is the constant function equal to 1. By Lemma \ref{lem:L2}, $\mathbbm{1}$ is cyclic for $T_\mathbb{P}(C(\mathcal{C}(\mathfrak{X})))$, (that is, for $T_\mathbb{P}(\mathfrak{I}_\mathrm{SDC}(\mathfrak{X}))$). Thus, $(T_\mathbb{P}, L^2(\mathcal{C}(\mathfrak{X}), \mathbb{P}), \mathbbm{1})$ is the unique GNS-triple associated with $\varphi_\mathbb{P}$.

The definition of the perfectness is as follows: A $\mathfrak{S}(\mathfrak{X})$-quasi-invariant point process $\mathbb{P}$ on $\mathfrak{X}$ is said to be \emph{perfect} if the state $\varphi_\mathbb{P}$ is quasi-free. Then, the associated covariance operator, denoted by $S_\mathbb{P}$, is called the \emph{canonical} covariance operator of $\mathbb{P}$.

Moreover, the restriction $\varphi_\mathbb{P}|_{C(\mathcal{C}(\mathfrak{X}))}$ to $C(\mathcal{C}(\mathfrak{X}))$ coincides with the expectation with respect to $\mathbb{P}$. Thus, for any $x_1, \dots, x_n\in \mathfrak{X}$ and $n\geq1$ we have
\begin{align*}
    \rho^{(n)}_\mathbb{P}(x_1, \dots, x_n)
    &=\varphi_\mathbb{P}(a^*_{x_n}\cdots a^*_{x_1}a_{x_1}\cdots a_{x_n})\\
    &=\mathrm{Pf}[K_\mathbb{P}(x_i, x_j)]_{i, j=1}^n,
\end{align*}
where $K_\mathbb{P}$ is given in Proposition \ref{prop:ex_pfpp} for $S_\mathbb{P}$. Thus, $\mathbb{P}$ is a Pfaffian point process, $K_\mathbb{P}$ is called the \emph{canonical} kernel. 

\begin{example}[{\cite[Theomrem 5.1]{Olshanski20}}]
    The $N$-point orthogonal polynomial ensemble $\mathbb{P}_{w, N}$ with weight function $w$ on $\mathfrak{X}$ is perfect, and the normalized Christoffel--Darboux kernel $K_{w, N}$ given in Equation \eqref{eq:CD_kernel} is its canonical kernel.
\end{example}

%%%%%%%%%%%%%%%%%%%%%%%%%%%%%%%%%%%%%%%%%%%%%%%%%%%%%%%%%%%%%%%%%%%%%%%%%%%%%%%%%%%%%%%%%%%%%%%%%%%%%%%%%%%%%%%%%%%%%%%%%%%%%%%%%%%%%%%%%%%%%%%%%%
\subsection{Observation from the modular theory}
We keep the same notation in the previous section. Let $\varphi$ be an arbitrary state of $\mathfrak{I}_\mathrm{SDC}(\mathfrak{X})$ and $(\pi_\varphi, \mathcal{H}_\varphi, \Omega_\varphi)$ the associated GNS-triple. We denote by $\mathfrak{M}$ the von Neumann algebra generated by $\pi_\varphi(\mathfrak{I}_\mathrm{SDC}(\mathfrak{X}))$, that is, $\mathfrak{M}$ is the closure of $\pi_\varphi(\mathfrak{I}_\mathrm{SDC}(\mathfrak{X}))$ with respect to the weak operator topology. Let $\widehat \varphi$ denote the state of $\mathfrak{M}$ given by $\widehat\varphi(A):=\langle A\Omega_\varphi, \Omega_\varphi\rangle$ for any $A\in \mathfrak{M}$. 

In this section, we assume that $\Omega_\varphi$ is \emph{separating} for $\mathfrak{M}$. Namely, $A\Omega_\varphi=0$ implies $A=0$ for any $A\in \mathfrak{M}$. We summarize basic facts in the modular theory. See \cite[Section 2.5]{BR1}, \cite[Section 5.3]{BR2} for more details.

Since $\Omega_\varphi$ is separating for $\mathfrak{M}$, the conjugate linear operator $S_0$ given by $S_0A\Omega_\varphi:=A^*\Omega_\varphi$ ($A\in \mathfrak{M}$) is well defined on the dense domain $\mathfrak{M}\Omega_\varphi$. Moreover, it is known that $S_0$ is closable, and $S$ denotes its closure. Let $S=J\Delta^{1/2}$ be the polar decomposition, where $\Delta$ is a positive self-adjoint operator, called the \emph{modular operator}, and $J$ is a complex conjugation, called the \emph{modular conjugation}. The Tomita--Takesaki theorem (see \cite[Theorem 2.5.14]{BR1}) states that $\Delta^{\mathrm{i}t}\mathfrak{M}\Delta^{-\mathrm{i}t}=\mathfrak{M}$ holds for all $t\in \mathbb{R}$. Moreover, $\widehat\varphi$ satisfies the KMS condition at $-1$ with respect to the \emph{modular automorphism group} $\sigma\colon \mathbb{R}\curvearrowright\mathfrak{M}$ defined by $\sigma_t:=\mathrm{Ad}\Delta^{\mathrm{i}t}$. We remark that $\widehat \varphi$ is tracial (i.e., $\widehat\varphi(AB)=\widehat\varphi(BA)$ holds for any $A, B\in \mathfrak{M}$) if the modular automorphism group is trivial. Conversely, if $\widehat\varphi$ is tracial, then $S_0$ is isometric on $\mathfrak{M}\xi_\varphi$, and hence, $\Delta$ becomes the identity operator. Thus, the modular automorphism group is trivial.

We assume that a $\mathfrak{S}(\mathfrak{X})$-quasi-invariant point process $\mathbb{P}$ on $\mathfrak{X}$ is perfect, and $\varphi_\mathbb{P}$ denotes the associated quasi-free state. Then, $\varphi_\mathbb{P}$ is not tracial. In fact, we can easily show that $\varphi_\mathbb{P}$ is tracial if and only if $S_\mathbb{P}=1/2$. It implies that $\mathbb{P}$ is the product measure of the Bernoulli measure with parameter $1/2$. However, it is known that the product measure of a Bernoulli measure is not perfect (see \cite[Remark 4.11]{Olshanski20}).

\begin{proposition}\label{prop:nec_cond_perfectness}
    If a point process $\mathbb{P}$ on $\mathfrak{X}$ is perfect, then $\mathbbm{1}\in L^2(\mathcal{C}(\mathfrak{X}), \mathbb{P})$ is not separating for the von Neumann algebra $\mathfrak{M}$ generated by $T_\mathbb{P}(\mathfrak{I}_\mathrm{SDC}(\mathfrak{X}))$. Moreover, $\ker(1-S_\mathbb{P})\neq \{0\}$, or equivalently, $\ker(S_\mathbb{P})\neq \{0\}$.
\end{proposition}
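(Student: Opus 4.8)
The plan is to tie the separating property of $\mathbbm{1}$ to the spectrum of the covariance operator $S_\mathbb{P}$ at the endpoints $0$ and $1$, exploiting on one hand the commutative ``maximal abelian'' subalgebra furnished by perfectness, and on the other hand the fact that quasi-free states with $0<S_\mathbb{P}<1$ are KMS states and hence have separating cyclic vectors. I would first dispose of the stated equivalence ``$\ker(1-S_\mathbb{P})\neq\{0\}\iff\ker(S_\mathbb{P})\neq\{0\}$'', which is purely algebraic: from $S_\mathbb{P}+\Gamma S_\mathbb{P}\Gamma=1$ (Equation \eqref{eq:covariance_operator2}) one gets $1-S_\mathbb{P}=\Gamma S_\mathbb{P}\Gamma$, and since $\Gamma$ is a conjugate-linear involutive bijection, $(1-S_\mathbb{P})v=0\iff S_\mathbb{P}(\Gamma v)=0$, so that $\ker(1-S_\mathbb{P})=\Gamma(\ker S_\mathbb{P})$. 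Thus the two kernels are nonzero simultaneously.

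Next I would establish that $\mathbbm{1}$ is \emph{not} separating for $\mathfrak{M}$, arguing by contradiction. Assume $\mathbbm{1}$ is separating. By perfectness and Lemma \ref{lem:L2}, $\mathbbm{1}$ is cyclic for $\mathcal{A}:=T_\mathbb{P}(\mathfrak{D}(\mathfrak{X}))''$; under the identification $\mathfrak{D}(\mathfrak{X})\cong C(\mathcal{C}(\mathfrak{X}))$ acting by multiplication on $L^2(\mathcal{C}(\mathfrak{X}),\mathbb{P})$, this algebra is $L^\infty(\mathbb{P})$, which is maximal abelian, so $\mathcal{A}'=\mathcal{A}$. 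Since $\mathfrak{D}(\mathfrak{X})\subset\mathfrak{I}_\mathrm{SDC}(\mathfrak{X})$ we have $\mathcal{A}\subseteq\mathfrak{M}$, whence $\mathfrak{M}'\subseteq\mathcal{A}'=\mathcal{A}$, so $\mathfrak{M}'$ is abelian. As $\mathbbm{1}$ is cyclic (already for $\mathcal{A}$) and, by assumption, separating for $\mathfrak{M}$, the Tomita--Takesaki theorem yields $J\mathfrak{M}J=\mathfrak{M}'$; hence $\mathfrak{M}$ is $*$-anti-isomorphic to the abelian $\mathfrak{M}'$ and is therefore itself abelian. Then $\widehat\varphi$, and consequently $\varphi_\mathbb{P}$, is tracial, contradicting the non-traciality of $\varphi_\mathbb{P}$ noted above (traciality forces $S_\mathbb{P}=1/2$, i.e. a Bernoulli product, which is not perfect). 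Therefore $\mathbbm{1}$ is not separating for $\mathfrak{M}$.

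It then remains to deduce $\ker S_\mathbb{P}\neq\{0\}$. Suppose, to the contrary, $\ker S_\mathbb{P}=\{0\}$; by the first paragraph also $\ker(1-S_\mathbb{P})=\{0\}$, so $0<S_\mathbb{P}<1$ and $H:=\log\!\big((1-S_\mathbb{P})S_\mathbb{P}^{-1}\big)$ is a (possibly unbounded) self-adjoint operator with $S_\mathbb{P}=e^{-H}(1+e^{-H})^{-1}$. Then $\varphi_\mathbb{P}$ is the quasi-free $\alpha^{H}$-KMS state at $\beta=1$; since the quasi-free flow $\alpha^{H}$ preserves the gauge-invariant algebra $\mathfrak{I}_\mathrm{SDC}(\mathfrak{X})$, the restriction is again KMS, and the cyclic GNS vector of a KMS state is separating for the generated von Neumann algebra. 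This contradicts the non-separating property just proved. Hence $\ker S_\mathbb{P}\neq\{0\}$, and therefore $\ker(1-S_\mathbb{P})\neq\{0\}$ as well.

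The delicate point is the final step. The standard criterion ``$0<S_\mathbb{P}<1\Rightarrow\mathbbm{1}$ separating'' is usually stated for the full self-dual CAR algebra, whereas $\mathfrak{M}$ is generated only by the gauge-invariant part $\mathfrak{I}_\mathrm{SDC}(\mathfrak{X})$ — and the full algebra need not even act on $L^2(\mathcal{C}(\mathfrak{X}),\mathbb{P})$. One must therefore check carefully that restricting the quasi-free KMS state to the $\alpha^{H}$-invariant subalgebra $\mathfrak{I}_\mathrm{SDC}(\mathfrak{X})$ preserves the KMS condition, and that the separating property of the KMS vector indeed passes to $\mathfrak{M}$; this is precisely where the unboundedness of the modular Hamiltonian $H$ and the gauge-invariance require genuine attention.
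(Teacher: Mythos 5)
Your first two steps are correct, but the third step --- the deduction of $\ker S_\mathbb{P}\neq\{0\}$ from the non-separating property --- contains a genuine gap, and it is exactly the one you flag at the end without resolving. You assert that the quasi-free flow $\alpha^H$, $H=\log\bigl((1-S_\mathbb{P})S_\mathbb{P}^{-1}\bigr)$, preserves $\mathfrak{I}_\mathrm{SDC}(\mathfrak{X})$. This is false in general: $\alpha^H_t$ maps the fixed-point algebra of the gauge action $\gamma_z(b(f\oplus g))=b(zf\oplus\bar z g)$ into itself (for all $t$) precisely when $e^{\mathrm{i}tH}$ commutes with the gauge unitaries $f\oplus g\mapsto zf\oplus\bar zg$, i.e.\ when $H$ --- equivalently $S_\mathbb{P}=(1+e^{H})^{-1}$ --- is block-diagonal on $\ell^2(\mathfrak{X})\oplus\ell^2(\mathfrak{X})$. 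The proposition concerns arbitrary perfect (hence Pfaffian) processes, whose canonical covariance operator may have nonzero off-diagonal blocks $S_{12},S_{21}$; then $e^{\mathrm{i}tH}$ mixes the creation and annihilation parts, $\alpha^H_t(a^*_xa_y)$ acquires terms of type $a^*a^*$ and $aa$, and it leaves $\mathfrak{I}_\mathrm{SDC}(\mathfrak{X})$. (The unboundedness of $H$, which you also worry about, is harmless; gauge invariance is the real obstruction.) So the contradiction claimed in your last paragraph is not established as written.

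The repair does not go through invariance of the subalgebra at all. If $0<S_\mathbb{P}<1$, take the GNS triple $(\pi,\mathcal{H},\Omega)$ of the quasi-free state $\varphi_{S_\mathbb{P}}$ on the \emph{full} algebra $\mathfrak{A}_\mathrm{SDC}(\mathfrak{X})$. The flow $\alpha^H$ is well defined there, since $S_\mathbb{P}+\Gamma S_\mathbb{P}\Gamma=1$ forces $\Gamma H\Gamma=-H$, hence $e^{\mathrm{i}tH}\Gamma=\Gamma e^{\mathrm{i}tH}$, and $\varphi_{S_\mathbb{P}}$ is KMS for it; thus $\Omega$ is separating for $\pi(\mathfrak{A}_\mathrm{SDC}(\mathfrak{X}))''$ and a fortiori for the subalgebra $\mathfrak{N}:=\pi(\mathfrak{I}_\mathrm{SDC}(\mathfrak{X}))''$. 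Perfectness identifies $L^2(\mathcal{C}(\mathfrak{X}),\mathbb{P})$ with $P\mathcal{H}$, where $P\in\mathfrak{N}'$ is the projection onto $\overline{\pi(\mathfrak{I}_\mathrm{SDC}(\mathfrak{X}))\Omega}$, and identifies $\mathfrak{M}$ with the induced algebra $\{A|_{P\mathcal{H}}:A\in\mathfrak{N}\}$ (the induction map is a normal $*$-homomorphism, so its image is a von Neumann algebra containing $T_\mathbb{P}(\mathfrak{I}_\mathrm{SDC}(\mathfrak{X}))$ and hence equal to $\mathfrak{M}$). If $B=A|_{P\mathcal{H}}\in\mathfrak{M}$ satisfies $B\mathbbm{1}=0$, then $A\Omega=0$, so $A=0$ and $B=0$; thus $\mathbbm{1}$ would be separating for $\mathfrak{M}$, the desired contradiction. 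This descent is precisely what the paper's one-line appeal to \cite[Corollary 4.10]{Araki70} encapsulates: $\ker(1-S_\mathbb{P})=\{0\}$ already implies that $\mathbbm{1}$ is separating for $\mathfrak{M}$.

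The rest of your proposal is sound. The kernel equivalence via $1-S_\mathbb{P}=\Gamma S_\mathbb{P}\Gamma$ is exactly right, and your non-separating argument is a correct alternative to the paper's: you combine maximal abelianness of $T_\mathbb{P}(\mathfrak{D}(\mathfrak{X}))''=L^\infty(\mathbb{P})$ with Tomita--Takesaki to force $\mathfrak{M}$ abelian, hence $\varphi_\mathbb{P}$ tracial; the paper instead computes that, on the dense subspace $C(\mathcal{C}(\mathfrak{X}))$, the operator $S_0$ acts as $f\mapsto\bar f$, so its closure is a complex conjugation, the modular operator is $1$, and $\varphi_\mathbb{P}$ is again tracial. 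Both routes end at the same contradiction with non-traciality of perfect processes.
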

\begin{proof}
    Since $\mathbb{P}$ is perfect, $(T_\mathbb{P}, L^2(\mathcal{C}(\mathfrak{X})), \mathbbm{1})$ is the GNS-triple associated with $\varphi_\mathbb{P}$. By \cite[Corollary 4.10]{Araki70}, $\mathbbm{1}$ is separating for $\mathfrak{M}$ if $\ker(1-S_\mathbb{P})=\{0\}$. Thus, the later statement follows from the former one. We show that $\mathbbm{1}$ is never separating for $\mathfrak{M}$. In fact, if $\mathbbm{1}$ is a separating vector, then, as we saw, the conjugate linear operator $S_0$ given by $S_0A\mathbbm{1}:=A^*\mathbbm{1}$ ($A\in \mathfrak{M}$) is well defined. In particular, we have $S_0f=S_0T_\mathbb{P}(f)\mathbbm{1}=\overline{f}$ for any $f\in C(\mathcal{C}(\mathfrak{X}))$. Thus, $S_0$ is isometric on $C(\mathcal{C}(\mathfrak{X}))\subset L^2(\mathcal{C}(\mathfrak{X}), \mathbb{P})$. However, $C(\mathcal{C}(\mathfrak{X}))$ is dense in $L^2(\mathcal{C}(\mathfrak{X}), \mathbb{P})$ since $\mathbb{P}$ is regular (see \cite[Theorem 3.14]{Rudin}). Thus, $S_0$ is uniquely extended to a complex conjugation on $L^2(\mathcal{C}(\mathfrak{X}), \mathbb{P})$. Therefore, the modular operator associated with the pair $(\mathfrak{M}, \mathbbm{1})$ is the identity operator. However, it is a contradiction since $\varphi$ is not tracial. Thus, $\mathbbm{1}$ is not a separating vector for $\mathfrak{M}$.
\end{proof}

Let us discuss the determinantal case. Namely, we assume that there exists a positive contraction operator $K_\mathbb{P}$ on $\ell^2(\mathfrak{X})$ such that the canonical covariance operator $S_\mathbb{P}$ has the form 
\[S_\mathbb{P}:=\begin{pmatrix} 1-JK_\mathbb{P}J & 0\\ 0 & K_\mathbb{P} \end{pmatrix}.\]
Thus, $\mathbb{P}$ is a DPP with correlation kernel $K_\mathbb{P}$ (see Remark \ref{rem:kernel}). By Proposition \ref{prop:nec_cond_perfectness}, we obtain the following:
\begin{corollary}\label{cor:nec_cond_perfectness}
    If $\mathbb{P}$ is perfect and its canonical covariance operator has the above form, then $\ker(K_\mathbb{P})\neq \{0\}$ or $\ker(1-K_\mathbb{P})\neq \{0\}$.
\end{corollary}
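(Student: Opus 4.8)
The plan is to read off the statement directly from Proposition \ref{prop:nec_cond_perfectness}, which already supplies the only nontrivial input: since $\mathbb{P}$ is perfect, $\ker(S_\mathbb{P})\neq\{0\}$ (equivalently $\ker(1-S_\mathbb{P})\neq\{0\}$). What remains is purely a matter of translating this nontriviality statement for $S_\mathbb{P}$, acting on $\mathcal{K}=\ell^2(\mathfrak{X})\oplus\ell^2(\mathfrak{X})$, into the corresponding statement for $K_\mathbb{P}$ on a single summand, using the assumed block-diagonal form $S_\mathbb{P}=\mathrm{diag}(1-JK_\mathbb{P}J,\,K_\mathbb{P})$.

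First I would compute the kernel of $S_\mathbb{P}$ blockwise: an element $f\oplus g$ lies in $\ker(S_\mathbb{P})$ precisely when $(1-JK_\mathbb{P}J)f=0$ and $K_\mathbb{P}g=0$, so that $\ker(S_\mathbb{P})=\ker(1-JK_\mathbb{P}J)\oplus\ker(K_\mathbb{P})$. The second summand is already of the desired shape, so the remaining point is to relate $\ker(1-JK_\mathbb{P}J)$ to $\ker(1-K_\mathbb{P})$. Here I would use that $J$ is a conjugate-linear, involutive ($J^2=\mathrm{id}$), unitary map, whence $1-JK_\mathbb{P}J=J(1-K_\mathbb{P})J$; since $J$ is a bijection this gives $\ker(1-JK_\mathbb{P}J)=J\,\ker(1-K_\mathbb{P})$, and in particular $\ker(1-JK_\mathbb{P}J)\neq\{0\}$ if and only if $\ker(1-K_\mathbb{P})\neq\{0\}$.

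Combining these two observations, the hypothesis $\ker(S_\mathbb{P})\neq\{0\}$ from Proposition \ref{prop:nec_cond_perfectness} forces at least one of the summands $\ker(1-JK_\mathbb{P}J)$ and $\ker(K_\mathbb{P})$ to be nonzero, which by the identification above is exactly the assertion that $\ker(K_\mathbb{P})\neq\{0\}$ or $\ker(1-K_\mathbb{P})\neq\{0\}$. I expect no genuine obstacle in this argument: all the substance is already carried by Proposition \ref{prop:nec_cond_perfectness}, and the only step that deserves care is the bookkeeping of the conjugate-linearity of $J$ when passing between $\ker(JK_\mathbb{P}J)$, $\ker(1-JK_\mathbb{P}J)$ and the kernels of $K_\mathbb{P}$, $1-K_\mathbb{P}$. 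Because $J$ is an involutive bijection it maps each of these kernels onto the corresponding one as a real-linear isomorphism, so nontriviality is transported in both directions, and the conclusion follows.
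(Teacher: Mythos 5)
Your proof is correct and follows essentially the same route as the paper: both reduce the claim to Proposition \ref{prop:nec_cond_perfectness} via a blockwise kernel computation for the diagonal covariance operator, with the conjugation $J$ transporting nontriviality of kernels between $JK_\mathbb{P}J$ (resp. $1-JK_\mathbb{P}J$) and $K_\mathbb{P}$ (resp. $1-K_\mathbb{P}$). The only cosmetic difference is that you work with $\ker(S_\mathbb{P})$ while the paper works with $\ker(1-S_\mathbb{P})$; these are stated as equivalent in the proposition, so the arguments coincide.
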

\begin{proof}
    We remark that $\ker(1-S_\mathbb{P})=\{0\}$ if and only if $\ker(K_\mathbb{P})=\{0\}$ and $\ker(1-K_\mathbb{P})=\{0\}$. Thus, the statement immediately follows from Proposition \ref{prop:nec_cond_perfectness}.
\end{proof}

We define $\mathcal{C}_0(\mathfrak{X}):=\{\omega\in \mathcal{C}(\mathfrak{X}) \mid \sum_{x\in \mathfrak{X}}\omega_x<\infty\}$, i.e., $\mathcal{C}_0(\mathfrak{X})$ is the set of finite simple point configurations on $\mathfrak{X}$. Let $L$ be a trace class operator on $\ell^2(\mathfrak{X})$ and assume that $\det L_\omega$ is nonnegative for any $\omega\in \mathcal{C}_0(\mathfrak{X})$, where $L_\omega$ is the restriction of $L$ to $\mathrm{span}\{\delta_{x} \mid \omega_x=1\}$. By assumption, we have
\[\det(1+L)=\sum_{\omega\in \mathcal{C}_0(\mathfrak{X})}\det L_\omega<\infty.\]
Thus, we obtain a point process $\mathbb{P}$, called the \emph{$L$-ensemble}, by
\[\mathbb{P}(\omega):=\frac{\det L_\omega}{\det(1+L)} \quad (\omega\in\mathcal{C}_0(\mathfrak{X})).\]
It is known that $\mathbb{P}$ is determinantal with correlation kernel $K:=L(1+L)^{-1}$. 

The following are consequences of Corollary \ref{cor:nec_cond_perfectness}.

\begin{corollary}
    Let $L$ be a self-adjoint operator of trace class. If $\ker(K)=\{0\}$, or equivalently, $\ker(L)=\{0\}$, then $K$ is never a canonical kernel of the associated $L$-ensemble.
\end{corollary}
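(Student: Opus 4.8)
The plan is to argue by contradiction using Corollary \ref{cor:nec_cond_perfectness}. Suppose that $K=L(1+L)^{-1}$ is the canonical kernel of the associated $L$-ensemble $\mathbb{P}$. Since $\mathbb{P}$ is determinantal with correlation kernel $K$, its canonical covariance operator is of the block-diagonal shape displayed just before Corollary \ref{cor:nec_cond_perfectness}, so that corollary applies and forces $\ker(K)\neq\{0\}$ or $\ker(1-K)\neq\{0\}$. The hypothesis $\ker(K)=\{0\}$ already excludes the first alternative, so the entire argument reduces to showing that the second alternative cannot occur either, i.e., that $1-K$ is injective.

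This is a one-line computation. First I would rewrite $1-K=(1+L)(1+L)^{-1}-L(1+L)^{-1}=(1+L)^{-1}$. Since $L$ is trace class (hence bounded) and $1+L$ is invertible (which is implicit in the very definition of $K$ and of the $L$-ensemble), the operator $(1+L)^{-1}$ is a genuine bounded operator with $(1+L)(1+L)^{-1}=1$, and is therefore injective: $(1+L)^{-1}v=0$ yields $v=(1+L)(1+L)^{-1}v=0$. Hence $\ker(1-K)=\{0\}$, which together with $\ker(K)=\{0\}$ contradicts Corollary \ref{cor:nec_cond_perfectness}. Consequently $K$ cannot be a canonical kernel.

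To justify the parenthetical equivalence $\ker(K)=\{0\}\iff\ker(L)=\{0\}$, I would observe that $(1+L)^{-1}$ is a bijection that fixes $\ker(L)$ pointwise (if $Lw=0$ then $(1+L)w=w$), so that $\ker(K)=\ker\bigl(L(1+L)^{-1}\bigr)=(1+L)\ker(L)=\ker(L)$; thus the two kernel conditions are genuinely interchangeable, and the statement is internally consistent.

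I do not expect a real obstacle here, since the result is essentially a direct specialization of Corollary \ref{cor:nec_cond_perfectness} to the DPP setting. The only point requiring mild care is the bookkeeping about domains and invertibility: one must confirm that $1+L$ is invertible so that both $(1+L)^{-1}$ and the identity $1-K=(1+L)^{-1}$ are legitimate, and that the $L$-ensemble really is a DPP whose canonical covariance operator has the block-diagonal form needed to invoke Corollary \ref{cor:nec_cond_perfectness}. Once these formalities are settled, the contradiction is immediate.
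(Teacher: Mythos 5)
Your proposal is correct and follows essentially the same route as the paper: both invoke Corollary \ref{cor:nec_cond_perfectness}, use the identity $1-K=(1+L)^{-1}$ to rule out $\ker(1-K)\neq\{0\}$, and derive a contradiction with the hypothesis $\ker(K)=\{0\}$. Your additional verifications (invertibility of $1+L$ via the $L$-ensemble normalization, and the equivalence $\ker(K)=\ker(L)$) are sound elaborations of points the paper leaves implicit.
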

\begin{proof}
    Since $1-K=(1+L)^{-1}$, its kernel must be $\{0\}$. Thus, if $K$ is a canonical kernel of the $L$-ensemble, we have $\ker(K)\neq\{0\}$. It is contradict to assumption.
\end{proof}

\begin{corollary}
    Let $H$ be a self-adjoint operator on $\ell^2(\mathfrak{X})$ such that $L:=e^{-\beta H}$ is of trace class for some $\beta\in \mathbb{R}$. Then, $K:=e^{-\beta H}(1+e^{\beta H})^{-1}$ is never a canonical kernel of the associated $L$-ensemble.
\end{corollary}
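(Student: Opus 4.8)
The plan is to reduce the statement directly to the preceding corollary for self-adjoint trace-class operators, so that the entire argument amounts to verifying its hypothesis. First I would record the structural facts: $L=e^{-\beta H}$ is positive and self-adjoint, being $f(H)$ for the continuous function $f(t)=e^{-\beta t}$, and it is trace class by assumption. Consequently each finite restriction $L_\omega$ is positive semidefinite, so $\det L_\omega\geq 0$ and the $L$-ensemble is genuinely well defined, with correlation kernel $K=L(1+L)^{-1}$. This identifies $K$ as precisely the kernel treated in the previous corollary, so that corollary becomes applicable once its kernel-condition is checked.

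The heart of the matter is then to show $\ker(L)=\{0\}$. Since $L=f(H)$ with $f(t)=e^{-\beta t}>0$ for every $t\in\mathbb{R}$, the spectral theorem yields injectivity: if $L\xi=0$ then $\int |f(t)|^2\,d\langle E_t\xi,\xi\rangle=0$, which forces the spectral measure of $\xi$ to be concentrated on the zero set of $f$; but that set is empty, so $\xi=0$. Equivalently, $L$ is a compact operator whose eigenvalues are all strictly positive and accumulate only at $0$, so that $0$ belongs to the spectrum of $L$ but is not an eigenvalue. Because $1+L$ is boundedly invertible, the same holds for $K=L(1+L)^{-1}$, i.e. $\ker(K)=\{0\}$.

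With $\ker(L)=\{0\}$ (equivalently $\ker(K)=\{0\}$) established, the preceding corollary applies verbatim and shows that $K$ cannot serve as the canonical kernel of the associated $L$-ensemble, which is the claim. I do not expect a genuine obstacle: the entire content lies in the injectivity of $e^{-\beta H}$, which is immediate from the strict positivity of the exponential. The only point that will require care is the bookkeeping—confirming that the stated $K$ really coincides with $L(1+L)^{-1}$, so that it is the correlation kernel of the $L$-ensemble, and that $L$ is self-adjoint, so that the self-adjoint case of the earlier corollary may indeed be invoked.
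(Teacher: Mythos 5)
Your proof is correct and is exactly the argument the paper intends: the corollary appears there without proof as an immediate consequence of the preceding corollary, and your verification that $\ker(e^{-\beta H})=\{0\}$ (strict positivity of $t\mapsto e^{-\beta t}$ via the spectral theorem), together with $\ker(K)=\ker(L)$ since $1+L$ is invertible, is precisely the missing hypothesis check. Note only that the kernel displayed in the statement, $e^{-\beta H}(1+e^{\beta H})^{-1}$, contains a sign typo: the intended kernel is $K_{H,\beta}=e^{-\beta H}(1+e^{-\beta H})^{-1}=L(1+L)^{-1}$, the correlation kernel of the $L$-ensemble, which is how you correctly read it.
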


}
%%%%%%%%%%%%%%%%%%%%%%%%%%%%%%%%%%%%%%%%%%%%%%%%%%%%%%%%%%%%%%%%%%%%%%%%%%%%%%%%%%%%%%%%%%%%%%%%%%%%%%%%%%%%%%%%%%%%%%%%%%%%%%%%%%%%%%%%%%%%%%%%

\end{document}